\DeclareSymbolFont{greeksymbols}{U}{jkpssmia}{m}{it}
\DeclareMathSymbol{\lambda}{\mathalpha}{greeksymbols}{21}
\DeclareMathSymbol{\sigma}{\mathalpha}{greeksymbols}{27}
\DeclareMathSymbol{\tau}{\mathalpha}{greeksymbols}{28}
\DeclareFontFamily{U}{matha}{\hyphenchar\font45}
\DeclareFontShape{U}{matha}{m}{n}{
      <5> <6> <7> <8> <9> <10> gen * matha
      <10.95> matha10 <12> <14.4> <17.28> <20.74> <24.88> matha12
      }{}
\DeclareSymbolFont{matha}{U}{matha}{m}{n}
\DeclareMathSymbol{\cap}           {2}{matha}{"58}
\DeclareMathSymbol{\cup}           {2}{matha}{"59}
\DeclareMathSymbol{\sim}           {3}{matha}{"12}
\DeclareFontFamily{U}{mathx}{\hyphenchar\font45}
\DeclareFontShape{U}{mathx}{m}{n}{
      <5> <6> <7> <8> <9> <10>
      <10.95> <12> <14.4> <17.28> <20.74> <24.88>
      mathx10
      }{}
\DeclareSymbolFont{mathx}{U}{mathx}{m}{n}
\DeclareMathDelimiter{(}              {matha}{"70}{mathx}{"00}
\DeclareMathDelimiter{)}              {matha}{"71}{mathx}{"08}
\DeclareMathDelimiter{\langle}     {4}{matha}{"78}{mathx}{"40}
\DeclareMathDelimiter{\rangle}     {5}{matha}{"79}{mathx}{"44}
\DeclareMathDelimiter{\lbrace}     {4}{matha}{"74}{mathx}{"20}
\DeclareMathDelimiter{\rbrace}     {5}{matha}{"75}{mathx}{"28}
\newcommand{\osum}{ 
  \mathop{
    \mathchoice
      {\buildosum{\displaystyle}{0.1}}
      {\buildosum{\textstyle}{0.075}}
      {\buildosum{\scriptstyle}{0.075}}
      {\buildosum{\scriptscriptstyle}{0.075}}
  }\displaylimits 
}
\newcommand\buildosum[2]{%
  \begin{tikzpicture}[baseline=(char.base), inner sep=0, outer sep=0]
    \draw (-0.2ex,0) circle (#2); 
    \node (char) at (0,0) {$#1\sum$};
  \end{tikzpicture}%
}
\newcommand{\myuline}[1]{%
  \uline{\phantom{#1}}%
  \llap{\contour{white}{#1}}%
}
\let\leq\leqslant
\let\geq\geqslant
\let\epsilon\varepsilon
\let\theta\vartheta
\renewcommand{\P}{\mathds{P}}
\newcommand{\C}{\mathbb{C}}
\newcommand{\N}{\mathbb{N}}
\newcommand{\R}{\mathds{R}}
\newcommand{\sphere}{\mathbb{S}}
\newcommand{\F}{\mathscr{F}}
\newcommand{\M}{\mathcal{M}}
\newcommand{\B}{\mathsf{B}}
\newcommand{\sfE}{\mathsf{N}}
\newcommand{\sfF}{\mathsf{F}}
\newcommand{\sfL}{\mathsf{L}}
\newcommand{\sfP}{\mathsf{P}}
\newcommand{\frakG}{\mathfrak{G}}
\newcommand{\PS}{\mathsf{PS}}
\newcommand{\regPS}{\mathsf{regPS}}
\newcommand{\schwartz}{\mathsf{S}}
\newcommand{\E}{\mathscr{E}}
\newcommand{\scrA}{\mathscr{A}}
\newcommand{\scrD}{\mathscr{D}}
\newcommand{\scrZ}{\mathscr{Z}}
\newcommand{\AAS}{\mathsf{AAS}}
\newcommand{\UCB}{\mathsf{UCB}}
\let\silcrow\S
\renewcommand{\S}{$\silcrow$}
\newcommand{\dualradon}{\mathcal{R}^*}
\renewcommand{\Re}{\mathsf{Re}}
\DeclareMathOperator{\cis}{cis}
\DeclareMathOperator{\Si}{Si}
\newcommand{\stirlingi}{\genfrac{[}{]}{0pt}{}}
\newcommand{\superimpose}[2]{{%
  \ooalign{%
    \hfil$\m@th#1\@firstoftwo#2$\hfil\cr
    \hfil$\m@th#1\@secondoftwo#2$\hfil\cr
  }%
}}
\newcommand{\asymplim}{\mathrel{\mathpalette\superimpose{{\asymp}{-}}}}
\theoremstyle{definition}
\newtheorem*{theorem*}{Theorem}
\newtheorem{theorem}{Theorem}[section]
\newtheorem{lem}[theorem]{Lemma}
\newtheorem{prop}[theorem]{Proposition}
\newtheorem{cor}[theorem]{Corollary}
\newtheorem{defn}[theorem]{Definition}
\theoremstyle{remark}
\newtheorem*{rem}{Remark}
\title{Does the Barron space really defy \\ the curse of dimensionality?}
\author{Olov Schavemaker}
\date{}
\begin{document}
\maketitle
\raggedright

\section{Introduction}\label{sec:intro}

Let $\F\!$ denote the Fourier transform (on Schwartz distributions).

The Barron space $\B^1=\{\F\{\phi\}:\phi,\lvert\cdot\rvert\phi\in\ell_1(\R^n)\}$ has been heralded as a set of functions that can be efficiently approximated by shallow neural networks, i.e., functions of the form
\begin{align*}
\R^n\ni x\mapsto\sum_{j=1}^m a_j\rho(\langle w_j\mid x\rangle-b_j)\qquad(a_j\in\C,b_j\in\R,\rho:\R\to\R,w_j\in\R^n)
\end{align*}
in the sense that shallow neural networks of the above form can approximate Barron functions with an accuracy rate of $m^{-1/2}$ \cite{felix,barron}; the exponent of $m$ \\ being independent of $n$ suggests that the Barron space may defy the curse of dimensionality, a piece of mathematical folklore which says that accuracy rates \\ are often of the form
\begin{align*}
(\text{number of terms})^{-(\text{smoothness})/(\text{dimension})}
\end{align*}
which looks like $m^{-(\text{smoothness})/n}$ in the case of the Barron space. So, unless the Barron space has approximately $n/2$ ``units of smoothness'', it defies the curse \\ of dimensionality. Barron functions indeed need not have anywhere near $n/2$ \\ units of classical smoothness (an example is given in Appendix \ref{sec:barron_example}) so they do \\ defy the curse of dimensionality where classical smoothness is concerned, but \\ are we sure classical smoothness is the most appropriate notion of smoothness when it comes to (shallow) neural networks?

A remark before we continue. The term ``Barron space'' is ambiguous within the literature and may refer to either of two related yet distinct function spaces: the one we refer to as the Barron space or a space $\sfE(\phi)$ of functions $f:U\to\R$ (with $U\subset\R^n$ nonempty) of the form
\begin{center}
$f(x)=\textstyle\int\displaystyle a\cdot\phi(\langle w\mid x\rangle+c)\,\mu(da,dw,dc)$ where $\phi:\R\to\R$ \\ and $\mu$ is a Borel probability measure on $\R\times\R^n\times\R$
\end{center}
which may be thought of as a shallow neural network of infinite width \cite{felix}.

In this paper, we introduce a new kind of smoothness spaces based on a decom- position via spherical harmonics and differential operators defined as multiplier operators of a distributional Mellin transform. We call these spaces ADZ spaces, and use them to demonstrate that the Barron space does not defy the curse of dimensionality in the sense that $\B^1\subset\sfL^{\!1}$ where $\sfL^{\!1}$ is one of the ADZ spaces.

The goal of the author in writing this paper was primarily to motivate the idea that classical smoothness may not be the most appropriate notion of smoothness \\ of approximands when we are concerned with approximating them with shallow neural networks. Our main result (Theorem \ref{thm:main}) perfectly illustrates our main takeaway that the Barron space possesses nonclassical smoothness which is con- ducive to being approximable by shallow neural networks; Theorem \ref{thm:main} reads as follows: for $\alpha\in\N(\not\ni 0),$
\begin{center}
$\B^\alpha\subset\sfL^{\!\alpha}\subset\sfE^\alpha$ with $\lVert{}\cdot{}\!\mid\sfE^\alpha\rVert\leq\lVert{}\cdot{}\!\mid\sfL^{\!\alpha}\rVert\leq\lVert{}\cdot{}\!\mid\B^\alpha\rVert,$
\end{center}
where $\B^\alpha=\{\F\{\phi\}:\phi,\lvert\cdot\rvert^\alpha\phi\in\ell_1(\R^n)\},$ (obviously a generalization of the Barron space), $\sfL^{\!\alpha}$ is the aforementioned ADZ space and $\sfE^\alpha$ is defined in Definition \ref{def:n_alpha}.

In {\S\,}\ref{sec:main_NN} we investigate the probability that $f\in\sfE^\alpha$ (with $\alpha>1)$ is within $\epsilon$ (uni- fomly on a compactum) of a shallow neural network \myuline{with random inner weights} \myuline{and biases} (most commonly dubbed a Random Vector Functional Link network (RVFL) in the literature). We are not the first ones to investigate what sorts of functions RVFLs can approximate or/and how well they can approximate them \cite{bresler,needell,hsu}.

While the inner weights and biases have nice distributions in \cite{bresler} (Gau{\ss}ian inner weights and uniform biases all independent of each other), we follow \cite{needell} in the sense that the distribution we draw our inner weights and biases from depends \\ on the approximand, and may thus be quite ``ugly'' for some approximands; this makes sense however, as we allow more approximands than \cite{bresler}. We get back to this in {\S\,}\ref{sec:main_NN}.

While our main result concerns $\alpha\in\N,$ the result can be adapted to $\alpha=0$ with suitable modifications (Theorem \ref{thm:main_base_case}). We then get that $\B\subset\sfL\subset\sfE$ with $\lVert\cdot\rVert_\B\leq$ $\lVert\cdot\rVert_\sfL\leq\lVert\cdot\rVert_\sfE$ where $\B=\F\{\ell_1(\R^n)\}$ is the so-called Wiener algebra and $\sfE$ is the image of a mixed Lebesgue space under the dual Radon transform; see {\S\,}\ref{sec:main_statement} for more details.

We know of three papers, i.e., \cite{radonNN,felix,liflyand}, whose results or/and ideas may be likened to ours. In \cite{radonNN} shallow neural networks are related to the dual Radon transform too. In \cite{felix} it is shown that $\B^\alpha\subset\sfE(\delta^{(-\alpha)})$ for $\alpha=1,2,$ where $\delta^{(-\alpha)}$ \\ is defined in {\S\,}\ref{sec:conventions}. Note that $\sfE^\alpha$ is decently similar to $\sfE(\delta^{(-\alpha)}).$ In \cite{felix} it is also shown that $\B^1\not\subset\sfE(\delta^{(-2)})\subset\sfE(\delta^{(-1)}).$ The theorem in \cite{liflyand} exposes some of the smoothness hiding within the Wiener algebra $\B,$ as does our Theorem \ref{thm:main_base_case}. See \\ {\S\,}\ref{sec:main_discussion} for more detailed comparisons.

In {\S\,}\ref{sec:conventions}, we gather some conventions we will use throughout the paper. In {\S\,}\ref{sec:prelims}, we gather some theory/concepts that will be used in the sections thereafter. In {\S\,}\ref{sec:adz} \\ we define $\B^\alpha$ \& $\B$ and discuss some aspects of their definitions. Section \ref{sec:main_base_case} is dedi- cated to stating and proving Theorem \ref{thm:main_base_case}. Section \ref{sec:main} consists of three parts: first we state and prove our main theorem, Theorem \ref{thm:main}, leveraging Theorem \ref{thm:main_base_case} and \\ its proof, then we connect with RVFLs in {\S\,}\ref{sec:main_NN}, and lastly we discuss our results and compare them to extant results in detail in {\S\,}\ref{sec:main_discussion}. In the appendices we have gathered proofs too long to include in the sections of their statements.

\section{Conventions}\label{sec:conventions}

We gather here conventions we use throughout the paper. In no particular order,
\begin{itemize}
\item $\cis_\ell\coloneq\begin{dcases*}
\cos & if $\ell$ is even \\
i\sin & if $\ell$ is odd
\end{dcases*}$
\item All function spaces will be over $\R$ or $\R^n$ for some $n\geq2$ unless indicated otherwise. So $\ell_1$ means either $\ell_1(\R)$ or $\ell_1(\R^n).$
\item by default we consider $(0,\infty)$ to be equipped with the measure $dt/t$ so that $\ell_1(0,\infty)=\ell_1(dt/t)$
\item ``$\overset{\text{F}}{=}$'' symbolizes equality due to Fubini's theorem
\item every function is complex-valued in general
\item the Pochhammer symbol, $(a)_k,$ denotes the rising factorial
\item $\N_0=\N\cup\{0\}$ and $\N$ does not include 0
\item underlined integration elements like $\underline{d\alpha}$ or $\underline{d\theta}$ are w.r.t.\ the surface area measure on $\sphere^{n-1}$ (so $\lvert\sphere^{n-1}\rvert=\textstyle\int\displaystyle\underline{d\alpha}).$
\item $a\lesssim b$ means that $a\leq bc$ for some $c>0;$ we may write $\lesssim_n$ or $\lesssim_{n,\ell}$ to specify that the hidden constant depends only on $n$ or $n,\ell$ etc.
\item $C$ denotes the space of continuous functions
\item $\UCB$ denotes the space of uniformly continuous, bounded functions
\item $\partial$ is always w.r.t.\ the unnamed variable or $t$
\item $\hat{u}\coloneq u/\lvert u\rvert$ for $u\neq0$
\item $C_0$ comprises those continuous functions which vanish at infinity
\item by $\int^\to f$ and $\int_a f$ we mean $x\mapsto\lim_{m\to\infty}\int_x^m f$ and $x\mapsto\int_a^x f$ resp.
\item $K(r)\coloneq\{x\in\R^n:\lvert x\rvert\leq r\}$
\item $\delta^{(-\alpha)}:\R\ni b\mapsto b_+^{\alpha-1}/(\alpha-1)!$ for $\alpha>1$ and $\delta^{(-1)}(b)=
\begin{dcases*}
1&if $b>0$\\
0&if $b<0$
\end{dcases*}
$
\item square brackets may denote Iverson brackets
\item $L\coloneq\{2,4,6,8,\ldots\}$
\item $[m]\coloneq\{1,\ldots,m\}$
\item if $\psi$ is Lipschitz continuous, $\mathrm{Lip}(\psi)$ denotes its Lipschitz constant
\item $\%$ denotes the modulo operation
\item the space of locally integrable functions will be denoted $\ell_1^\text{loc}$
\item $\Delta$ symbolizes the Laplacian on $\R^n$
\end{itemize}
Lastly, we use the asymptotic notation introduced in \cite[pp.\ 2f.]{hardy}; i.e.,
\begin{itemize}
\item $f\asymplim\phi$ means that $\lim f/\phi$ exists
\item $f\sim\phi$ means that $\lim f/\phi=1$
\item $f\preccurlyeq\phi$ means that $f=O(\phi)$
\end{itemize}
Since $\sim$ is already taken, we use $\propto$ to denote proportionality. As with $\lesssim$ we may write $\preccurlyeq_n$ or $\propto_{\ell,n}$ to indicate the hidden constant depends only on $n$ or $\ell,n$ etc.

\section{Preliminaries}\label{sec:prelims}

In this section we introduce (and motivate) some theory/concepts that we shall need later.

\subsection{Stirling numbers of the first kind}

Let $\alpha,m\in\N_0$ in this subsection.

The \myuline{unsigned Stirling numbers of the first kind} $\textstyle\stirlingi{\alpha}{m}$ count how many permutations of $\{1,\ldots,\alpha\}$ have exactly $m$ cycles. They satisfy the following recurrence relation:
\begin{align*}
\stirlingi{\alpha+1}{m}=\alpha\stirlingi{\alpha}{m}+\stirlingi{\alpha}{m-1}
\end{align*}
The \myuline{signed Stirling numbers of the first kind} may be defined as follows:
\begin{align*}
s(\alpha,m)=(-1)^{\alpha-m}\stirlingi{\alpha}{m}
\end{align*}
They satisfy the recurrence relation $s(\alpha+1,m)=s(\alpha,m-1)-\alpha s(\alpha,m).$

More information about them may be found in, e.g., \cite[{\S\,}26.8]{dlmf}.

\subsection{Distributions}\label{sec:fungkang}

\begin{defn}
$O_M\coloneq\{N\in C^\infty:\omega\partial^\alpha N\text{ is bounded for all }\alpha\in\N_0\text{ and }\omega\in\schwartz\}.$
\end{defn}
The function space $O_M$ is characterized by the following fact \cite[Thm 4.11.2]{horvath}:
\begin{prop}\label{prop:O_M}
$N\in O_M\Rightarrow N\psi\in\schwartz'$ for all $\psi\in\schwartz'$
\end{prop}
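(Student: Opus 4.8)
My plan is to define the product $N\psi$ by duality and then check that it lands in $\schwartz'$ — i.e.\ to show that every $N\in O_M$ is a multiplier of the space of tempered distributions. Concretely, set $\langle N\psi,\phi\rangle\coloneq\langle\psi,N\phi\rangle$ for $\phi\in\schwartz$; this is the only reasonable prescription since it agrees with pointwise multiplication when $\psi$ is represented by a function. For it to make sense and to yield a tempered distribution, two things must be verified: (i) $N\phi\in\schwartz$ whenever $\phi\in\schwartz$, so that the right-hand pairing is defined; and (ii) the multiplication map $M_N\colon\phi\mapsto N\phi$ is continuous $\schwartz\to\schwartz$. Granting both, $\phi\mapsto\langle N\psi,\phi\rangle=\langle\psi,M_N\phi\rangle$ is a composition of continuous linear maps, hence a continuous linear functional on $\schwartz$, i.e.\ an element of $\schwartz'$.

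For (i) I would recall that $\phi\in\schwartz$ iff $x^\mu\partial^\beta\phi$ is bounded for all multi-indices $\mu,\beta$, and that $\schwartz$ is stable under differentiation and under multiplication by polynomials. The Leibniz rule gives
\[
x^\mu\partial^\beta(N\phi)=\sum_{\gamma\leq\beta}\binom{\beta}{\gamma}\,(\partial^\gamma N)\bigl(x^\mu\partial^{\beta-\gamma}\phi\bigr),
\]
and for each $\gamma$ the function $\omega_\gamma\coloneq x^\mu\partial^{\beta-\gamma}\phi$ lies in $\schwartz$; hence, by the defining property of $O_M$ applied to the derivative $\partial^\gamma N$ and the Schwartz function $\omega_\gamma$, the product $(\partial^\gamma N)\omega_\gamma$ is bounded. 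A finite sum of bounded functions being bounded, $x^\mu\partial^\beta(N\phi)$ is bounded for every $\mu,\beta$, so $N\phi\in\schwartz$.

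For (ii) I would invoke the closed graph theorem, which applies since $\schwartz$ is a Fr\'echet space. The linear map $M_N$ has closed graph: if $\phi_k\to\phi$ and $N\phi_k\to g$ in $\schwartz$, then in particular $\phi_k\to\phi$ and $N\phi_k\to g$ uniformly (the sup-norm is, up to constants, one of the Schwartz seminorms), hence pointwise; so $g(x)=\lim_k N(x)\phi_k(x)=N(x)\phi(x)$ for every $x$, i.e.\ $g=N\phi=M_N\phi$. Therefore $M_N$ is continuous, which completes the verification of (ii) and with it the proof.

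The one genuinely delicate point is (ii). The definition of $O_M$ given above constrains $\partial^\gamma N$ only through the family of conditions ``$\omega\,\partial^\gamma N$ bounded for each $\omega\in\schwartz$'', not via a single polynomial majorant $\lvert\partial^\gamma N(x)\rvert\lesssim(1+\lvert x\rvert)^{k_\gamma}$, so one cannot estimate the Schwartz seminorms of $N\phi$ against those of $\phi$ directly without first upgrading these pointwise-in-$\omega$ bounds to such uniform polynomial bounds (a routine but not wholly trivial Baire-category / uniform-boundedness argument). Routing around this with the closed graph theorem, as above, is the cleanest option; the alternative is to carry out that upgrade and then bound $\sup_x\lvert x^\mu\partial^\beta(N\phi)(x)\rvert$ term by term from the Leibniz expansion.
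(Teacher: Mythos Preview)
Your argument is correct. Part (i) is exactly the standard Leibniz computation, and the closed-graph shortcut in (ii) is clean and valid since $\schwartz$ is Fr\'echet; the composition $\phi\mapsto\langle\psi,M_N\phi\rangle$ then lands in $\schwartz'$ as claimed. Your closing remark is also apt: the alternative route---upgrading the $O_M$ hypothesis to polynomial bounds $\lvert\partial^\gamma N(x)\rvert\lesssim(1+\lvert x\rvert)^{k_\gamma}$ via Baire/uniform boundedness---is how Horv\'ath's text organizes the material, but the closed-graph argument bypasses it neatly.

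There is nothing to compare against, however: the paper does not prove this proposition at all but simply quotes it as \cite[Thm~4.11.2]{horvath}. Your write-up is a correct self-contained proof of a result the paper treats as background.
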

Let $\schwartz$ and $\schwartz'$ denote the familiar spaces of Schwartz functions and distributions on the real line resp. Now consider the logarithmic/exponential substitution
\begin{align*}
\E^{-1}:\schwartz\ni\omega\mapsto\omega(\ln t)/t\qquad\text{as a function of }t\in(0,\infty)
\end{align*}
We define $\PS\coloneq\E^{-1}\{\schwartz\}$ so that $\E^{-1}:\schwartz\to\PS$ is an isomorphism. Its inverse is plainly $\E:\PS\ni\omega\mapsto e^y\omega(e^y),$ where $\R\ni y\mapsto e^y\omega(e^y)$ is a Schwartz function.

Let $\PS'$ be the space of continuous linear functionals on $\PS.$ We now define
\begin{center}
$\E:\PS'\to\schwartz'$ by letting $\langle\E\{\psi\}\mid\omega\rangle=\langle\psi\mid\E^{-1}\{\omega\}\rangle$ where $\psi\in\PS'$ and $\omega\in\schwartz$

$\E^{-1}:\schwartz'\to\PS'$ by letting $\langle\E^{-1}\{\psi\}\mid\omega\rangle=\langle\psi\mid\E\{\omega\}\rangle$ where $\psi\in\schwartz'$ and $\omega\in\PS$
\end{center}
Plainly $\E$ and $\E^{-1}$ are linear and sequentially continuous inverses of each other (and therefore bijections). If $\psi\in\ell_1^\text{loc}\cap\schwartz'$ and $\omega\in\PS,$ then substituting $t=e^y$ yields that
\begin{align*}
\langle\E^{-1}\{\psi\}\mid\omega\rangle&=\langle\psi\mid\E\{\omega\}\rangle=\int\psi(y)\E\{\omega\}(y)\,dy=\int\psi(y)e^y\omega(e^y)\,dy\\
&=\int_0^\infty\psi(\ln t)\omega(t)\,dt
\end{align*}
Since $\psi\in\ell_1^\text{loc}\cap\schwartz'$ is conventionally identified with $\schwartz\ni\omega\mapsto\textstyle\int\psi(y)\omega(y)\,dy,$ it thus makes sense to identify $\psi(\ln t)\in\ell_1^\text{loc}(dt/t)$ with $\PS\ni\omega\mapsto\textstyle\int_0^\infty\psi(\ln t)\omega(t)\,dt.$ Ergo, $\E^{-1}\{\psi\}[t]=\psi(\ln t)$ if $\psi\in\ell_1^\text{loc}\cap\schwartz';$ we wrote ``$[t]$'' to indicate that we identify the distribution preceding the variable in square brackets with the function with said dummy variable on the other side of the equal sign.

Conversely, if $\psi\in\ell_1^\text{loc}(0,\infty)\cap\PS'$ and $\omega\in\schwartz,$ substituting $y=\ln t$ yields that
\begin{align*}
\langle\E\{\psi\}\mid\omega\rangle&=\langle\psi\mid\E^{-1}\{\omega\}\rangle=\int_0^\infty\psi(t)\E^{-1}\{\omega\}(t)\,dt=\int_0^\infty\psi(t)\omega(\ln t)\,\frac{dt}{t}\\
&=\int\psi(e^y)\omega(y)\,dy=\langle\psi(e^y)\mid\omega(y)\rangle
\end{align*}
so $\E\{\psi\}=\psi(e^y)\in\ell_1^\text{loc}(dy)$ if $\psi\in\ell_1^\text{loc}(0,\infty)\cap\PS'\eqcolon\regPS'.$

For more details on $\PS',$ see \cite{fungkang}.

Returning to $\schwartz'$ for a second, let us recall that $\F\{\psi\},$ the Fourier transform of $\psi,$ may be defined for any $\psi\in\schwartz'$ by means of \cite[Definition 14.23]{kolk}
\begin{align*}
\langle\F\{\psi\}\mid\omega\rangle=\langle\psi\mid\F\{\omega\}\rangle
\end{align*}
where $\omega\in\schwartz$ and $\F\{\phi\}(x)\coloneq\textstyle\int\displaystyle\phi(u)\exp(i\langle u\mid x\rangle)\,du$ for all $\phi\in\ell_1.$ It is well-known that $\F:\schwartz'\to\schwartz'$ defined thusly is a linear isomorphism that extends the Fourier transform on $\ell_1(\R)\supset\schwartz$ \cite[Thm 14.24]{kolk}. Do note that our normalization of the Fourier transform is different to many of the sources we cite.

In addition to $\schwartz'$ and $\PS'$ we shall also need the space of compactly supported $C^\infty(0,\infty)$ functions and the usual space of continuous linear functionals thereon, which we style as $\sfP$ and $\sfP'$ resp. There exists a continuous restriction mapping $\iota:\PS'\to\sfP'$ \cite[pg.\ 585]{fungkang}.

One of the nice properties of $\schwartz'$ is that it is closed under derivatives. An analogous property holds for $\PS'.$ Indeed, \cite[Prop.\ 1(i)]{fungkang} allows us to deduce that $\PS'$ is closed under $t\partial.$ We may therefore conclude that
\begin{lem}\label{lem:stirling}
$\PS'$ is closed under $t^\alpha\partial^{\alpha-1}t^{-1}=\textstyle\sum_{m=1}^\alpha\displaystyle s(\alpha,m)(t\partial)^{m-1}$ for all $\alpha\in\N$
\end{lem}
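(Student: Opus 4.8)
The plan is to reduce the closure statement to the operator identity displayed in the lemma, and then to prove that identity by induction on $\alpha$. For the reduction: the paper has already recorded that $\PS'$ is closed under $t\partial$, so it is closed under each power $(t\partial)^{m-1}$ ($m\geq 1$; the case $m=1$ is the identity map), and hence, $\PS'$ being a vector space, under any constant-coefficient polynomial in $t\partial$, in particular under $\sum_{m=1}^\alpha s(\alpha,m)(t\partial)^{m-1}$. Thus it suffices to establish
\begin{align*}
t^\alpha\partial^{\alpha-1}t^{-1}=\sum_{m=1}^\alpha s(\alpha,m)(t\partial)^{m-1}
\end{align*}
as an identity of differential operators, after which the operator on the left is to be read, on $\PS'$, as the polynomial in $t\partial$ on the right.

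I would first put the right-hand side in factored form. The signed Stirling numbers of the first kind are characterised by $\prod_{j=0}^{\alpha-1}(x-j)=\sum_{m=0}^\alpha s(\alpha,m)x^m$ --- one way to define them, immediate from the recurrence $s(\alpha+1,m)=s(\alpha,m-1)-\alpha s(\alpha,m)$ recorded earlier, and in any case standard \cite{dlmf}. Since $s(\alpha,0)=0$ for $\alpha\in\N$, dividing through by $x$ gives $\sum_{m=1}^\alpha s(\alpha,m)x^{m-1}=\prod_{j=1}^{\alpha-1}(x-j)$, so the identity to be proved is equivalent to
\begin{align*}
t^\alpha\partial^{\alpha-1}t^{-1}=(t\partial-1)(t\partial-2)\cdots(t\partial-\alpha+1),
\end{align*}
with the convention that the empty product (case $\alpha=1$) is the identity operator.

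This last identity I would prove by induction on $\alpha\in\N$. The base case $\alpha=1$ is just $t\cdot t^{-1}=\mathrm{id}$. The one computation the inductive step needs is that, for $\beta\in\N_0$ and $f\in C^\infty(0,\infty)$,
\begin{align*}
t^{\beta+1}\partial\bigl(t^{-\beta}f\bigr)=t^{\beta+1}\bigl(-\beta t^{-\beta-1}f+t^{-\beta}\partial f\bigr)=(t\partial-\beta)f,
\end{align*}
i.e.\ $t^{\beta+1}\partial t^{-\beta}=t\partial-\beta$. Inserting $t^{-\alpha}t^\alpha=\mathrm{id}$ between the two differential factors and applying this with $\beta=\alpha$ together with the inductive hypothesis,
\begin{align*}
t^{\alpha+1}\partial^\alpha t^{-1}=\bigl(t^{\alpha+1}\partial t^{-\alpha}\bigr)\bigl(t^\alpha\partial^{\alpha-1}t^{-1}\bigr)=(t\partial-\alpha)\prod_{j=1}^{\alpha-1}(t\partial-j)=\prod_{j=1}^{\alpha}(t\partial-j),
\end{align*}
which closes the induction.

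A word on where the care goes. All of the above is most transparently carried out for differential operators acting on $C^\infty(0,\infty)$, where the individual factors $t^{\pm\beta}$ and $\partial$ are unambiguous --- even though, in contrast to $t\partial$, neither multiplication by $t^{\pm\beta}$ nor the bare $\partial$ preserves $\PS'$. The substance of the lemma is precisely that the composite $t^\alpha\partial^{\alpha-1}t^{-1}$ collapses to a constant-coefficient polynomial in $t\partial$; granting that, the closure is the triviality from the first paragraph. So there is no real obstacle, only the bookkeeping of keeping track of which operators are being composed on $C^\infty(0,\infty)$ versus interpreted on $\PS'$; the combinatorial identity and the differential-operator computations themselves are routine.
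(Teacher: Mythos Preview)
Your proof is correct and follows essentially the same approach as the paper: both argue by induction on $\alpha$, reducing the inductive step to the single product-rule identity $(t\partial-\alpha)\bigl(t^\alpha\partial^{\alpha-1}t^{-1}\bigr)=t^{\alpha+1}\partial^\alpha t^{-1}$. The only cosmetic difference is that you pass through the factored form $\prod_{j=1}^{\alpha-1}(t\partial-j)$ while the paper keeps the Stirling-number sum and invokes the recurrence $s(\alpha+1,m)=s(\alpha,m-1)-\alpha s(\alpha,m)$ directly; your packaging is arguably a touch cleaner, but the content is the same.
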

\begin{proof}
By induction. The case $\alpha=1$ is trivial. By the recurrence relation the $s(\alpha,m)$ satisfy (and the identities $s(\alpha,\alpha+1)=0$ and $s(\alpha,0)=0)$:
\begin{align*}
\sum_{m=1}^{\alpha+1}s(\alpha+1,m)(t\partial)^{m-1}&=\sum_{m=1}^{\alpha+1}s(\alpha,m-1)(t\partial)^{m-1}-\alpha\sum_{m=1}^{\alpha+1}s(\alpha,m)(t\partial)^{m-1}\\
&=\sum_{m=0}^\alpha s(\alpha,m)(t\partial)^m-\alpha\sum_{m=1}^\alpha s(\alpha,m)(t\partial)^{m-1}\\
&=\sum_{m=1}^\alpha s(\alpha,m)(t\partial)^m-\alpha t^\alpha\partial^{\alpha-1}t^{-1}\\
&=t\partial(t^\alpha\partial^{\alpha-1}t^{-1})-\alpha t^\alpha\partial^{\alpha-1}t^{-1}=t^{\alpha+1}\partial^\alpha t^{-1}
\end{align*}
by the product rule.
\end{proof}
\begin{cor}\label{cor:stirling}
$\PS'$ is closed under $t^\alpha\partial^\alpha=\textstyle\sum_{m=0}^\alpha s(\alpha,m)(t\partial)^m$ for every $\alpha\in\N_0$
\end{cor}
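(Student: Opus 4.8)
The plan is to reduce everything to Lemma \ref{lem:stirling} via the trivial operator identity $\partial = t^{-1}\circ(t\partial)$ on $\PS'$, which yields the factorization
\begin{align*}
t^\alpha\partial^\alpha \;=\; \bigl(t^\alpha\partial^{\alpha-1}t^{-1}\bigr)\circ(t\partial).
\end{align*}
First I would dispose of the degenerate case $\alpha=0$: there $t^0\partial^0$ and $\sum_{m=0}^{0}s(0,0)(t\partial)^0$ are both the identity operator, which plainly maps $\PS'$ into itself. For $\alpha\geq1$ I would then recall two facts: $\PS'$ is closed under $t\partial$ (noted just before Lemma \ref{lem:stirling}, via \cite[Prop.\ 1(i)]{fungkang}), and $\PS'$ is closed under $t^\alpha\partial^{\alpha-1}t^{-1}$ by Lemma \ref{lem:stirling} itself. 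Composing these two closure statements along the displayed factorization immediately gives closure of $\PS'$ under $t^\alpha\partial^\alpha$.

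It remains to verify the operator identity $t^\alpha\partial^\alpha=\sum_{m=0}^{\alpha}s(\alpha,m)(t\partial)^m$. For $\alpha=0$ both sides are the identity. For $\alpha\geq1$ I would invoke the identity part of Lemma \ref{lem:stirling}, which exhibits $t^\alpha\partial^{\alpha-1}t^{-1}=\sum_{m=1}^\alpha s(\alpha,m)(t\partial)^{m-1}=:P(t\partial)$ as a polynomial in the single operator $t\partial$; in particular it commutes with $t\partial$, so the factorization above reads $t^\alpha\partial^\alpha=P(t\partial)\circ(t\partial)=\sum_{m=1}^{\alpha}s(\alpha,m)(t\partial)^m$, and this equals $\sum_{m=0}^{\alpha}s(\alpha,m)(t\partial)^m$ because $s(\alpha,0)=0$ for $\alpha\geq1$. (Alternatively one could run a direct induction using $t^{\alpha+1}\partial^{\alpha+1}=(t\partial-\alpha)\,t^\alpha\partial^\alpha$, but routing through the Lemma's identity is cleaner.)

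I do not expect a genuine obstacle: all the real work already sits in Lemma \ref{lem:stirling}. The only points that merit a line of care are (i) that $\partial=t^{-1}\circ(t\partial)$ and the resulting factorization are legitimate as identities of operators on $\PS'$ — but multiplication by $t^{\pm1}$ corresponds under $\E$ to multiplication by $e^{\pm y}\in O_M$ on $\schwartz'$, so by Proposition \ref{prop:O_M} it preserves $\schwartz'$ and hence $t^{\pm1}$ preserves $\PS'$; moreover these very operations are already implicit in the statement and proof of Lemma \ref{lem:stirling} — and (ii) not overlooking the case $\alpha=0$, where the empty/trivial conventions make both the identity and the closure claim immediate.
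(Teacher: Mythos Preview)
Your main argument---right-multiply the identity of Lemma~\ref{lem:stirling} by $t\partial$, note $s(\alpha,0)=0$, and handle $\alpha=0$ trivially---is correct and is exactly the paper's proof.

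However, your care point (i) contains a false claim: $e^{\pm y}\notin O_M$. Membership in $O_M$ forces at most polynomial growth of all derivatives, whereas $e^{\pm y}$ grows exponentially in one direction; concretely, $\omega(y)=e^{-\sqrt{1+y^2}/2}\in\schwartz$ but $\omega(y)e^y\to\infty$ as $y\to+\infty$. Consequently multiplication by $t^{\pm1}$ does \emph{not} preserve $\PS'$ in general. Fortunately this aside is unnecessary: the factorization $t^\alpha\partial^\alpha=(t^\alpha\partial^{\alpha-1}t^{-1})\circ(t\partial)$ is a purely algebraic identity of differential operators (verify it on $C^\infty(0,\infty)$, or formally, just as the product-rule step in the proof of Lemma~\ref{lem:stirling} is carried out); once $t^\alpha\partial^\alpha$ has been expressed as a polynomial in $t\partial$, only closure of $\PS'$ under $t\partial$ is needed. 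Drop the $O_M$ claim and your proof coincides with the paper's.
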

\begin{proof}
If $\alpha\in\N,$ multiply both sides of $t^\alpha\partial^{\alpha-1}t^{-1}=\textstyle\sum_{m=1}^\alpha\displaystyle s(\alpha,m)(t\partial)^{m-1}$ on the right by $t\partial$ noting that $s(\alpha,0)=0.$ If $\alpha=0,$ the claim is trivial.
\end{proof}
If $\sfP'$ is closed under $t\partial$ as well, Lemma \ref{lem:stirling} and Corollary \ref{cor:stirling} are true on $\sfP'$ too. Actually, more is true. Unlike $\PS',$ it is the case that $\sfP'$ is closed under $t$ and $\partial$ separately. Indeed, let $\psi\in\sfP'$ and $\omega\in\sfP.$ Then $\partial\omega\in\sfP,$ and hence $\partial\psi$ may be defined by $\langle\partial\psi\mid\omega\rangle=\langle\psi\mid-\partial\omega\rangle,$ where $\langle\psi\mid\omega\rangle\coloneq\textstyle\int_0^\infty\displaystyle\psi\omega$ if $\psi\in\ell_1^\text{loc}(0,\infty),$ in conformity with integration by parts. If $M\in C^\infty(0,\infty),$ we can define $M\psi$ by $\langle M\psi\mid\omega\rangle=\langle\psi\mid M\omega\rangle$ because $M\omega\in\sfP$ again. Because the identity function is smooth, $\sfP'$ is closed under multiplication by $t$ in particular.

In $\sfP'$ it is still true that derivatives are ``uglier'' than the original:
\begin{lem}\label{lem:distributional_antiderivative}
If $G\in\sfP'$ and $\partial G\in\ell_1^\text{loc}(0,\infty),$ then $G\in C(0,\infty).$
\end{lem}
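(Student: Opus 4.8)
\emph{Proof proposal.} The plan is to exhibit $G$, up to an additive constant, as an indefinite integral of $\partial G$, and then invoke the classical fact that a distribution on a connected open set with vanishing derivative is constant. Write $g\coloneq\partial G$, which by hypothesis lies in $\ell_1^\text{loc}(0,\infty)$. Fix $a\in(0,\infty)$ and define $H:(0,\infty)\to\C$ by $H(x)\coloneq\int_a^x g$. Since $g$ is locally integrable, $H$ is well defined and locally absolutely continuous; in particular $H\in C(0,\infty)$, and $H$ defines an element of $\sfP'$ via $\langle H\mid\omega\rangle=\int_0^\infty H\omega$.

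First I would check that $\partial H=g$ in $\sfP'$. Given $\omega\in\sfP$, its support lies in a compact subinterval $[b,c]\subset(0,\infty)$ on which $H$ is absolutely continuous with $H'=g$ almost everywhere; integration by parts on $[b,c]$, with vanishing boundary terms since $\omega(b)=\omega(c)=0$, yields $\langle\partial H\mid\omega\rangle=-\int_0^\infty H\,\partial\omega=-\int_b^c H\,\partial\omega=\int_b^c g\omega=\langle g\mid\omega\rangle$. Hence $\partial(G-H)=\partial G-\partial H=0$ in $\sfP'$.

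It then remains to show that $G-H$ is (represented by) a constant — this is the only real content of the lemma, and is where the connectedness of $(0,\infty)$ enters. I would argue in the standard way. Fix $\omega_0\in\sfP$ with $\int_0^\infty\omega_0=1$, and for $\omega\in\sfP$ abbreviate $\bar\omega\coloneq\int_0^\infty\omega$. Put $\eta(x)\coloneq\int_0^x(\omega-\bar\omega\,\omega_0)$. Then $\eta\in\sfP$: it is smooth as a primitive of the smooth, compactly supported function $\omega-\bar\omega\,\omega_0$, and it is compactly supported in $(0,\infty)$ because that function has total integral $\bar\omega-\bar\omega\cdot1=0$, so $\eta$ vanishes to the left of, and to the right of, the supports of $\omega$ and $\omega_0$. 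Since $\partial\eta=\omega-\bar\omega\,\omega_0$, linearity gives
\[
\langle G-H\mid\omega\rangle=\bar\omega\,\langle G-H\mid\omega_0\rangle+\langle G-H\mid\partial\eta\rangle=\bar\omega\,\langle G-H\mid\omega_0\rangle,
\]
the last equality holding because $\langle G-H\mid\partial\eta\rangle=-\langle\partial(G-H)\mid\eta\rangle=0$. Thus $G-H$ acts on $\sfP$ as the constant $\langle G-H\mid\omega_0\rangle$, so $G=H+\text{const}$ in $\sfP'$, and the right-hand side belongs to $C(0,\infty)$.

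There is no substantial obstacle here: the only slightly delicate points are the Fubini/integration-by-parts justification of $\partial H=g$ for a merely locally integrable $g$, and the verification that the auxiliary function $\eta$ really has compact support inside the \emph{open} half-line; both are routine. If anything, the "hard part" is conceptual rather than technical — recognizing that the lemma is exactly the statement that antidifferentiation on $\PS'$/$\sfP'$ behaves as on $\schwartz'$, with the constant of integration accounting for the kernel of $\partial$.
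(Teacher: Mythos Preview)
Your proof is correct and follows essentially the same line as the paper's: construct the indefinite integral $H$ of $\partial G$, verify $\partial H=\partial G$ in $\sfP'$ via integration by parts, and conclude that $G-H$ is constant. The only difference is that the paper dispatches the last step by citing a reference (a distribution on an interval with vanishing derivative is constant), whereas you spell out the standard $\omega=\bar\omega\,\omega_0+\partial\eta$ decomposition explicitly; this makes your version self-contained but otherwise identical in substance.
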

\begin{proof}
Let $\frakG\coloneq\textstyle\int_1\displaystyle\partial G(z)\,dz.$ Then $\frakG$ is (absolutely) continuous (since $\{\partial G\}$ is uniformly integrable) and, a fortiori, a $\sfP$ distribution. Moreover, $\partial\frakG=\partial G$ by \\ the Lebesgue differentiation theorem. Since any $\omega\in\sfP$ has compact support,
\begin{align*}
\langle\partial(G-\frakG)\mid\omega\rangle=\langle\partial G\mid\omega\rangle+\langle\frakG\mid\partial\omega\rangle=\int_0^\infty\partial\frakG(t)\omega(t)\,dt+\int_0^\infty\frakG(t)\partial\omega(t)\,dt=0
\end{align*}
courtesy of integration by parts. By \cite[Thm 4.3]{kolk} $G-\frakG$ is therefore a constant function, so $G,$ being the sum of $\frakG\in C(0,\infty)$ and a constant function, is thus continuous.
\end{proof}

Insightful in its own right \cite[Prop.\ 1(i)]{fungkang} says that
\begin{center}
$\psi\in\PS'$ iff $\psi[t]=(t\partial)^\alpha G(t)$ for some $G\in C(0,\infty)$ \\ behaving polylogarithmically at both 0 and $\infty$
\end{center}
Lastly, we shed some light on what kind of functions live in $\regPS'.$

It follows from \cite[Prop.\ 1(i)]{fungkang} that any $G\in C(0,\infty)$ behaving polylogarithmically at both 0 and $\infty$ is a $\PS$ distribution. Moreover, $\ell_\infty(0,\infty)\subset\PS';$ indeed, apply $\E^{-1}$ to $\ell_\infty\subset\schwartz'$ \cite[Example 14.22]{kolk}.

\subsection{Mellin transform}\label{sec:mellin}

The Mellin transform is usually defined as
\begin{align*}
\M\{\psi\}(s)\coloneq\int_0^\infty t^s\psi(t)\,\frac{dt}{t}
\end{align*}
for $s\in\mathbb{C}$ and functions $\psi$ on $(0,\infty)$ such that the integrand is $\ell_1(dt/t).$ Letting $s=c+iy$ with $y,c\in\R$ and substituting $t=e^u$ yields that
\begin{align*}
\M\{\psi\}(s)=\int_0^\infty t^{c+iy}\psi(t)\,\frac{dt}{t}=\int e^{cu}e^{iuy}\psi(e^u)\,du=\F\{e^{cu}\psi(e^u)\}(y)
\end{align*}
which is essentially a Fourier transform. This is useful because the distributional Fourier transform is more fleshed out than the distributional Mellin transform, so we will define the distributional Mellin transform as follows:
\begin{defn}
For $c\in\R$ we define $\M_0\coloneq\F\E:\PS'\to\schwartz'$ and
\begin{align*}
\schwartz'\ni\psi\mapsto\M_c^{-1}\{\psi\}[t]\coloneq t^{-c}\iota\{(\F\E)^{-1}\{\psi\}\}[t]\in\sfP'
\end{align*}
where we used the notation introduced in {\S\,}\ref{sec:fungkang}.
\end{defn}
Note that $\M_0^{-1}\M_0=\iota.$ Both $\E:\PS'\to\schwartz'$ and $\F:\schwartz'\to\schwartz'$ are isomorphisms, so $\M_0$ is too; ergo, if $G=\M_0^{-1}\{\psi\},$ then $G$ is not just any $\sfP$ distribution, but a $\PS$ distribution, and $\psi=\M_0\{G\}.$

If $\psi\in\ell_1(0,\infty),$ then $\M_0\{\psi\}[y]=\M\{\psi\}(iy)$ as expected. Indeed, for $\omega\in\schwartz,$
\begin{align*}
\langle\M_0\{\psi\}\mid\omega\rangle&=\langle\F\{\E\{\psi\}\}\mid\omega\rangle=\langle\E\{\psi\}\mid\F\{\omega\}\rangle=\langle\psi\mid\E^{-1}\{\F\{\omega\}\}\rangle\\
&=\int_0^\infty\psi(t)\E^{-1}\{\F\{\omega\}\}(t)\,dt=\int_0^\infty\psi(t)\F\{\omega\}(\ln t)\,\frac{dt}{t}\\
&=\int_0^\infty\psi(t)\int e^{iy\ln t}\omega(y)\,dy\,\frac{dt}{t}\overset{\text{F}}{=}\int\omega(y)\int_0^\infty t^{iy}\psi(t)\,\frac{dt}{t}\,dy\\
&=\int\omega(y)\M\{\psi\}(iy)\,dy=\langle\M\{\psi\}(iy)\mid\omega(y)\rangle
\end{align*}

\subsection{A family of Banach spaces}\label{sec:1infty}

The following family of Banach spaces will feature prominently.
\begin{defn}
Let $a\in[-\infty,\infty).$ We write $\ell_{1,\infty}(\sphere^{n-1},(a,\infty))$ for the space of equivalence classes of jointly measurable $h:\sphere^{n-1}\times(a,\infty)\to\C$ that are almost everywhere equal such that
\begin{align*}
\lVert h\rVert_{1,\infty}\coloneq\int\,\lVert h(\theta,\cdot)\rVert_\infty\,\underline{d\theta}<\infty
\end{align*}
\end{defn}
These spaces resemble those in \cite{oberlin}. As a shorthand we may write $\ell_{1,\infty}(a).$
\begin{theorem}
$\ell_{1,\infty}(a)$ is a Banach space
\end{theorem}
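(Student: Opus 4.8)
The plan is to check first that $\ell_{1,\infty}(a)$ is a well-defined normed vector space and then that it is complete, using the standard criterion that a normed space is Banach iff every absolutely norm-convergent series converges in it. Throughout I write $X\coloneq\sphere^{n-1}$ (with its surface area measure, which is finite) and $Y\coloneq(a,\infty)$ (with Lebesgue measure, which is $\sigma$-finite), and for jointly measurable $h:X\times Y\to\C$ I abbreviate $\nu_h(\theta)\coloneq\lVert h(\theta,\cdot)\rVert_\infty\in[0,\infty]$.

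The one genuinely technical point, and where I expect the main obstacle to lie, is showing that $\theta\mapsto\nu_h(\theta)$ is measurable, so that $\lVert h\rVert_{1,\infty}=\int\nu_h(\theta)\,\underline{d\theta}$ is even defined; this cannot be circumvented by viewing $\ell_{1,\infty}(a)$ as a Lebesgue--Bochner space $\ell_1\bigl(X;\ell_\infty(Y)\bigr),$ since $\theta\mapsto h(\theta,\cdot)$ is generally not Bochner measurable, $\ell_\infty(Y)$ being nonseparable. I would argue directly: for fixed $c\geq0$ the super-level set $\{(\theta,r)\in X\times Y:\lvert h(\theta,r)\rvert>c\}$ is jointly measurable, so Tonelli's theorem (here is where $\sigma$-finiteness of $Y$ is used) shows that $\theta\mapsto D_h(\theta,c)\coloneq\bigl(\text{Lebesgue measure of }\{r\in Y:\lvert h(\theta,r)\rvert>c\}\bigr)$ is a measurable $[0,\infty]$-valued function. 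Since $\nu_h(\theta)\leq c\iff D_h(\theta,c)=0$ — the implication $\Leftarrow$ being immediate and $\Rightarrow$ following by writing $\{r:\lvert h(\theta,r)\rvert>c\}=\bigcup_{j\in\N}\{r:\lvert h(\theta,r)\rvert>c+1/j\}$ as a countable union of sets of measure zero — we conclude that $\{\theta:\nu_h(\theta)\leq c\}=\{\theta:D_h(\theta,c)=0\}$ is measurable for every $c\geq0,$ so $\nu_h$ is measurable and $\lVert h\rVert_{1,\infty}$ makes sense.

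With this in hand the norm axioms are routine. Nonnegativity and absolute homogeneity descend from the corresponding properties of $\lVert\cdot\rVert_\infty$ and linearity of the integral; the triangle inequality follows by integrating $\nu_{h+g}(\theta)\leq\nu_h(\theta)+\nu_g(\theta)$ over $X$; and if $\lVert h\rVert_{1,\infty}=0$ then $\nu_h(\theta)=0$ for $\underline{d\theta}$-a.e.\ $\theta,$ i.e.\ $h(\theta,r)=0$ for Lebesgue-a.e.\ $r$ for a.e.\ $\theta,$ which by Tonelli's theorem forces $h=0$ a.e.\ on $X\times Y,$ i.e.\ $h$ represents the zero class (the converse being trivial). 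For completeness, take $(h_k)_{k\in\N}$ with $\sum_k\lVert h_k\rVert_{1,\infty}<\infty$ and set $g\coloneq\sum_k\nu_{h_k},$ a measurable $[0,\infty]$-valued function with $\int g\,\underline{d\theta}=\sum_k\lVert h_k\rVert_{1,\infty}<\infty$ by monotone convergence, so that $g<\infty$ off a $\underline{d\theta}$-null set $E\subset X.$ For $\theta\notin E$ the series $\sum_k h_k(\theta,\cdot)$ converges absolutely in the Banach space $\ell_\infty(Y),$ hence converges there to some $h(\theta,\cdot)$ with $\lVert h(\theta,\cdot)\rVert_\infty\leq g(\theta),$ and in particular $\sum_{k\leq N}h_k(\theta,r)\to h(\theta,r)$ for every $r;$ put $h\coloneq0$ on $E\times Y.$ Being the pointwise limit off the product-null set $E\times Y$ of the jointly measurable partial sums $\sum_{k\leq N}h_k,$ the function $h$ agrees a.e.\ with a jointly measurable one, and then $\nu_h\leq g$ together with the measurability established above gives $h\in\ell_{1,\infty}(a).$

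It remains to see $\sum_{k\leq N}h_k\to h$ in $\ell_{1,\infty}(a).$ For $\theta\notin E$ and each $N,$ $\lVert h(\theta,\cdot)-\sum_{k\leq N}h_k(\theta,\cdot)\rVert_\infty=\lVert\sum_{k>N}h_k(\theta,\cdot)\rVert_\infty\leq\sum_{k>N}\nu_{h_k}(\theta),$ so integrating over $X$ gives $\lVert h-\sum_{k\leq N}h_k\rVert_{1,\infty}\leq\sum_{k>N}\lVert h_k\rVert_{1,\infty}\to0$ as $N\to\infty;$ hence $\sum_k h_k$ converges to $h$ in $\ell_{1,\infty}(a),$ and the space is Banach. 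Apart from the measurability of $\nu_h,$ the only delicacy is the bookkeeping ensuring that ``a.e.\ in $\theta,$ everywhere in $r$'' statements genuinely upgrade to ``a.e.\ on $X\times Y,$'' which is what legitimizes the joint measurability of the limit and the Tonelli steps; everything else is standard.
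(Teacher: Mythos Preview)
Your proof is correct and follows the same approach as the paper: verify the norm axioms and then prove completeness via the absolutely-convergent-series criterion, using Tonelli/monotone convergence to get $\sum_k\nu_{h_k}\in\ell_1(\sphere^{n-1})$ and the tail bound $\lVert h-\sum_{k\leq N}h_k\rVert_{1,\infty}\leq\sum_{k>N}\lVert h_k\rVert_{1,\infty}$. The paper simply declares that $\lVert\cdot\rVert_{1,\infty}$ is ``patently a norm'' and skips the measurability of $\theta\mapsto\lVert h(\theta,\cdot)\rVert_\infty$ that you (rightly) flag and address; one small slip is your claim that $\ell_\infty$-convergence of $\sum_k h_k(\theta,\cdot)$ gives $\sum_{k\leq N}h_k(\theta,r)\to h(\theta,r)$ for \emph{every} $r$---it only gives this for a.e.\ $r$ (with the exceptional set depending on $\theta$), but Tonelli still makes the bad set product-null, so your conclusion stands.
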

\begin{proof}
$\lVert\cdot\rVert_{1,\infty}$ is patently a norm. To prove $\ell_{1,\infty}(a)$ is complete it suffices to show that every absolutely convergent series in $\ell_{1,\infty}(a)$ converges \cite[Thm 1.3.9]{megginson}. Thus we suppose that $\textstyle\sum_{k=1}^\infty\displaystyle\lVert h_k\rVert_{1,\infty}<\infty.$ By Tonelli's theorem it follows that
\begin{align*}
\int\sum_{k=1}^\infty\,\lVert h_k(\theta,\cdot)\rVert_\infty\,\underline{d\theta}=\sum_{k=1}^\infty\,\lVert h_k\rVert_{1,\infty}<\infty
\end{align*}
so $h\coloneq\textstyle\sum_{k=1}^\infty\displaystyle h_k$ exists almost everywhere. Ergo $\lVert h\rVert_{1,\infty}\leq\textstyle\sum_{k=1}^\infty\displaystyle\lVert h_k\rVert_{1,\infty}<\infty$ and
\begin{align*}
\lVert h-\sum_{k=1}^K h_k\rVert_{1,\infty}\leq\sum_{k>K}\,\lVert h_k\rVert_{1,\infty}\to 0\text{ as }K\to\infty\tag*{\qedhere}
\end{align*}
\end{proof}

\subsection{Spherical harmonics}\label{sec:zonal}

A \myuline{spherical harmonic} is the restriction of a homogeneous harmonic polynomial on $\R^n$ to the unit sphere; a spherical harmonic of degree $\ell$ being the restriction of a polynomial of degree $\ell.$ Spherical harmonics of degree $\ell$ form a finite dimensional vector space; let $N(\ell,n)$ denote its dimension. The exact value of $N(\ell,n)$ is given in \cite[(11)]{mueller}.

We henceforth consider real-valued spherical harmonics over $\R$ as in \cite{mueller}; the story of complex-valued spherical harmonics over $\C$ is analogous \cite{steinweiss}. We suitably adapt the results we cite from \cite{steinweiss} insofar necessary.

Letting $\{S_{\ell j}\}_{j=1}^{N(\ell,n)}$ be an orthonormal basis of the space of spherical harmonics of degree $\ell,$ we can define the \myuline{zonal spherical harmonic} of degree $\ell$ as follows:
\begin{align*}
Z_\ell(\theta,\alpha)\coloneq\sum_{j=1}^{\mathclap{N(\ell,n)}}S_{\ell j}(\theta)S_{\ell j}(\alpha)
\end{align*}
Zonal spherical harmonics are reproducing kernels; indeed, if $Y_\ell$ is any spherical harmonic of degree $\ell,$ then $Y_\ell=\textstyle\sum_j\displaystyle Y_{\ell j}S_{\ell j}$ with $Y_{\ell j}\coloneq\textstyle\int\displaystyle S_{\ell j}(\alpha)Y_\ell(\alpha)\,\underline{d\alpha}$ and so
\begin{align}\label{eq:zonal_reproducing}
\int Z_\ell(\theta,\alpha)Y_\ell(\alpha)\,\underline{d\alpha}=\sum_j S_{\ell j}(\theta)\int S_{\ell j}(\alpha)Y_\ell(\alpha)\,\underline{d\alpha}=\sum_j S_{\ell j}(\theta)Y_{\ell j}=Y_\ell(\theta)
\end{align}
Spherical harmonics with different degrees are independent w.r.t.\ the $\ell_2$ inner product. Some other useful facts concerning the $S_{\ell j}$ and $Z_\ell$ include that
\begin{align}\label{eq:slj_matrix}
\exists\alpha_1,\ldots,\alpha_N:\
\begin{Bmatrix}
S_{\ell,1}(\alpha_1)&\cdots&S_{\ell N}(\alpha_1)\\
\vdots&\ddots&\vdots\\
S_{\ell,1}(\alpha_N)&\cdots&S_{\ell N}(\alpha_N)
\end{Bmatrix}
\text{ is invertible, where }N=N(\ell,n)
\end{align}
\cite[Lemma 6]{mueller} and
\begin{lem}\label{lem:z_bound}
$\lVert Z_\ell\rVert_1\preccurlyeq_n\ell^{\mkern1mu n-2}$ as $\ell\to\infty$
\end{lem}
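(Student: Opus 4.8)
The plan is to control $Z_\ell$ by a single pointwise bound, namely $\lVert Z_\ell\rVert_\infty$, and then feed in the classical asymptotics of $N(\ell,n)$. The starting observation is that the diagonal value $Z_\ell(\theta,\theta)=\sum_{j}S_{\ell j}(\theta)^2$ does not depend on $\theta$: since $Z_\ell$ is the (basis-independent) reproducing kernel of the degree-$\ell$ harmonics and $\{S_{\ell j}\circ R\}_j$ is again an orthonormal basis for every $R$ in the orthogonal group (rotations preserve the surface measure), one gets $Z_\ell(R\theta,R\alpha)=Z_\ell(\theta,\alpha)$, so $Z_\ell(\theta,\theta)$ is constant and therefore equals its own average
\begin{align*}
Z_\ell(\theta,\theta)=\frac{1}{\lvert\sphere^{n-1}\rvert}\int\sum_{j}S_{\ell j}(\alpha)^2\,\underline{d\alpha}=\frac{N(\ell,n)}{\lvert\sphere^{n-1}\rvert},
\end{align*}
the last equality by orthonormality of the $S_{\ell j}$.

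Applying the Cauchy--Schwarz inequality to the finite sum defining $Z_\ell$ then gives, for all $\theta$ and $\alpha$,
\begin{align*}
\lvert Z_\ell(\theta,\alpha)\rvert\leq\Bigl(\sum_{j}S_{\ell j}(\theta)^2\Bigr)^{1/2}\Bigl(\sum_{j}S_{\ell j}(\alpha)^2\Bigr)^{1/2}=Z_\ell(\theta,\theta)^{1/2}Z_\ell(\alpha,\alpha)^{1/2}=\frac{N(\ell,n)}{\lvert\sphere^{n-1}\rvert}.
\end{align*}
Integrating this uniform estimate against the relevant (finite) measure — be it the product surface measure on $\sphere^{n-1}\times\sphere^{n-1}$, the measure on one copy of $\sphere^{n-1}$ with the other slot frozen, or the pushforward to $[-1,1]$ — yields $\lVert Z_\ell\rVert_1\preccurlyeq_n N(\ell,n)$.

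It remains to note $N(\ell,n)\preccurlyeq_n\ell^{n-2}$, which drops straight out of the closed form in \cite[(11)]{mueller}: for $\ell\geq1$ one has $N(\ell,n)=(2\ell+n-2)(\ell+n-3)!/(\ell!\,(n-2)!)$, where $(\ell+n-3)!/\ell!$ is $\preccurlyeq_n\ell^{n-3}$ for $n\geq3$ (a product of $n-3$ factors, each $\leq\ell+n-3$) and equals $1/\ell$ when $n=2$; in both cases $N(\ell,n)\preccurlyeq_n\ell\cdot\ell^{n-3}=\ell^{n-2}$, which closes the argument. I do not foresee a genuine obstacle here: the estimate is deliberately lossy — testing $Z_\ell(\theta,\cdot)$ against its own $L^2$ norm via Cauchy--Schwarz would already give the sharper order $\ell^{(n-2)/2}$ — so the only points requiring care are the normalization convention for the $S_{\ell j}$ and pinning down exactly which product measure underlies $\lVert\cdot\rVert_1$.
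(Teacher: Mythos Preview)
Your proof is correct and follows essentially the same route as the paper's: bound $\lVert Z_\ell\rVert_1\lesssim_n\lVert Z_\ell\rVert_\infty$, identify $\lVert Z_\ell\rVert_\infty$ with $N(\ell,n)/\lvert\sphere^{n-1}\rvert$, and read off the asymptotics $N(\ell,n)\preccurlyeq_n\ell^{n-2}$. The only difference is cosmetic: the paper cites \cite[Cor.\ 2.9(c)]{steinweiss} for the sup-norm identity (their $a_\ell$ is exactly your $N(\ell,n)$), whereas you rederive it via rotation invariance of the reproducing kernel and Cauchy--Schwarz --- which is precisely the argument behind that citation.
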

\begin{proof}
It follows from \cite[Cor.\ 2.9(c)]{steinweiss} that $\lVert Z_\ell\rVert_1\lesssim_n\lVert Z_\ell\rVert_\infty\lesssim_n a_\ell$ where
\begin{align*}
a_\ell=\frac{n+2\ell-2}{\ell}\binom{n+\ell-3}{\ell-1}
\end{align*}
for $\ell\geq 2$ \cite[pg.\ 140, pg.\ 145]{steinweiss}. Since $a_\ell\sim 2\ell^{\mkern1mu n-2}$ as $\ell\to\infty$ the lemma follows.
\end{proof}
In light of \cite[Thm IV.2.10]{steinweiss} the \myuline{Poisson kernel} on $\sphere^{n-1}$ may be defined by
\begin{align*}
p(\alpha,R\theta)\coloneq\sum_{\ell=0}^\infty R^\ell Z_\ell(\theta,\alpha)
\end{align*}
where $\alpha,\theta\in\sphere^{n-1}$ and $R\in[0,1).$ To shed some light on the properties of the Poisson kernel, we paraphrase Thm II.1.9 and part of Thm II.1.10 from \cite{steinweiss}:
\begin{prop}[II.1.9]\label{prop:ii_1_9}
(a) $p\geq 0;$ (b) $\textstyle\int\displaystyle p(\alpha,x)\,\underline{d\alpha}=1$ for all $\lvert x\rvert<1;$ (c)
\begin{center}
$\int_{\mathclap{\lvert\theta-\alpha\rvert>\delta}}p(\alpha,R\theta)\,\underline{d\alpha}\to 0$ as $R\uparrow 1$ uniformly in $\theta$ for each $\delta>0$
\end{center}
\end{prop}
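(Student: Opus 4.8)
The plan is to derive all three items from the classical closed form of the Poisson kernel, disposing of (b) first since it needs only material already established. For (b), the estimate $\lVert Z_\ell\rVert_\infty\preccurlyeq_n\ell^{n-2}$ recorded in the proof of Lemma~\ref{lem:z_bound} shows that for each fixed $R\in[0,1)$ the series $\sum_{\ell}R^\ell Z_\ell(\theta,\alpha)$ converges absolutely and uniformly on $\sphere^{n-1}\times\sphere^{n-1},$ so it may be integrated against $\underline{d\alpha}$ term by term. The reproducing identity~\eqref{eq:zonal_reproducing} applied to the constant function (a spherical harmonic of degree zero) gives $\int Z_0(\theta,\alpha)\,\underline{d\alpha}=1,$ while $\int Z_\ell(\theta,\alpha)\,\underline{d\alpha}=0$ for $\ell\geq1$ because $Z_\ell(\theta,\cdot)$ is a degree-$\ell$ harmonic and hence $\ell_2$-orthogonal to constants. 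Only the $\ell=0$ term survives, so $\int p(\alpha,x)\,\underline{d\alpha}=1$ for every $x=R\theta$ with $\lvert x\rvert<1,$ which is (b).

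For (a) and (c) I would first establish that, for $\alpha,\theta\in\sphere^{n-1}$ and $R\in[0,1),$
\[
p(\alpha,R\theta)=\frac{1}{\lvert\sphere^{n-1}\rvert}\cdot\frac{1-R^2}{\bigl(1-2R\langle\theta\mid\alpha\rangle+R^2\bigr)^{n/2}}=\frac{1}{\lvert\sphere^{n-1}\rvert}\cdot\frac{1-R^2}{\lvert R\theta-\alpha\rvert^n},
\]
the second equality being $\lvert R\theta-\alpha\rvert^2=1-2R\langle\theta\mid\alpha\rangle+R^2.$ The route is the addition theorem for spherical harmonics, which rewrites $Z_\ell(\theta,\alpha)$ as a constant multiple of $C_\ell^{(n-2)/2}(\langle\theta\mid\alpha\rangle)\big/C_\ell^{(n-2)/2}(1),$ followed by the Gegenbauer generating function $\sum_{\ell\geq0}R^\ell C_\ell^{\nu}(t)=(1-2tR+R^2)^{-\nu}$ (for $n=2$ one substitutes the elementary cosine series instead); what remains is the bookkeeping that collapses $\sum_{\ell}R^\ell N(\ell,n)\,C_\ell^{(n-2)/2}(t)\big/C_\ell^{(n-2)/2}(1)$ into $(1-R^2)(1-2tR+R^2)^{-n/2},$ convergence for fixed $R<1$ being supplied once more by the growth estimate on $Z_\ell.$

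Granting this closed form, (a) is immediate: $1-R^2>0$ and $\lvert R\theta-\alpha\rvert>0$ because $\lvert R\theta\rvert=R<1=\lvert\alpha\rvert,$ so $p>0.$ For (c), fix $\delta>0$ and restrict to $\lvert\theta-\alpha\rvert>\delta.$ Once $1-R<\delta/2$ the triangle inequality gives $\lvert R\theta-\alpha\rvert\geq\lvert\theta-\alpha\rvert-(1-R)>\delta/2$ uniformly in $\theta$ and in such $\alpha,$ hence $p(\alpha,R\theta)\leq\lvert\sphere^{n-1}\rvert^{-1}(1-R^2)(\delta/2)^{-n}.$ Integrating this bound over $\sphere^{n-1}$ gives
\[
\int_{\lvert\theta-\alpha\rvert>\delta}p(\alpha,R\theta)\,\underline{d\alpha}\leq(1-R^2)(\delta/2)^{-n}\to 0\text{ as }R\uparrow1,
\]
with a majorant free of $\theta,$ which is (c).

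The main obstacle is the closed-form identity of the second paragraph: one must invoke the addition theorem in the exact normalization fixed by our choice of the $S_{\ell j},$ push the Gegenbauer generating function through, and justify interchanging the summation with the algebraic simplification when $R<1.$ Once that identity is in hand, (a) and (c) take only a few lines, and (b) does not rely on it at all.
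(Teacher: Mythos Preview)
The paper does not prove this proposition at all: it is introduced as a paraphrase of Theorem~II.1.9 in Stein and Weiss \cite{steinweiss} and is simply cited without argument. Your proposal is correct and is essentially the standard proof one finds in that reference---derive the closed form of the Poisson kernel from the Gegenbauer generating identity (which is precisely the content of \cite[Thm~IV.2.10]{steinweiss}, already invoked in the paper when $p$ is defined), after which (a) and (c) are the few-line estimates you wrote, while (b) follows from termwise integration and orthogonality of spherical harmonics. There is nothing to compare against on the paper's side beyond the citation.
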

\begin{prop}[II.1.10]\label{prop:ii_1_10}
If $f$ is a continuous function on $\sphere^{n-1}$ then
\begin{align*}
u(x)\coloneq\int f(\alpha)p(\alpha,x)\,\underline{d\alpha}
\end{align*}
whenever $\lvert x\rvert<1$ and $u\coloneq f$ on $\sphere^{n-1}$ defines a continuous function on $K(1).$
\end{prop}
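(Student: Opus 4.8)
The plan is to treat $u$ on the open ball $\{\lvert x\rvert<1\}$ and on the sphere $\sphere^{n-1}$ separately. On the open ball, continuity of $u$ will come from (joint) continuity of the Poisson kernel in the interior; on $\sphere^{n-1}$ we have $u=f$ by fiat, so the task there is to show $u$ is continuous at boundary points, which is the classical approximate-identity argument using only the three properties in Proposition~\ref{prop:ii_1_9} together with the uniform continuity of $f$ on the compact set $\sphere^{n-1}$.

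\emph{Step 1: $u$ is continuous on $\{\lvert x\rvert<1\}$.} I would first show $p$ is jointly continuous on $\sphere^{n-1}\times\{\lvert x\rvert<1\}$. Writing $x=R\theta$ with $R=\lvert x\rvert$ and $\theta=\hat x$, each summand of the defining series satisfies $R^\ell Z_\ell(\theta,\alpha)=\sum_j \tilde S_{\ell j}(x)\,S_{\ell j}(\alpha)$, where $\tilde S_{\ell j}$ is the (unique) homogeneous harmonic polynomial of degree $\ell$ extending $S_{\ell j}$; in particular this summand is a polynomial in $x$, hence continuous in $x$, and visibly continuous in $\alpha$. By the estimate $\lVert Z_\ell\rVert_\infty\lesssim_n\ell^{\,n-2}$ established in the proof of Lemma~\ref{lem:z_bound}, the series $\sum_\ell R^\ell Z_\ell(\theta,\alpha)$ converges uniformly on $\sphere^{n-1}\times\{\lvert x\rvert\leq R_0\}$ for every $R_0<1$, since $\sum_\ell R_0^{\,\ell}\ell^{\,n-2}<\infty$. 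A uniform limit of jointly continuous functions on this compact set is jointly continuous, hence uniformly continuous, there; therefore $(\alpha,x)\mapsto f(\alpha)p(\alpha,x)$ is uniformly continuous on $\sphere^{n-1}\times\{\lvert x\rvert\leq R_0\}$, and integrating in $\alpha$ shows $u$ is (uniformly) continuous on $\{\lvert x\rvert\leq R_0\}$. As $R_0<1$ is arbitrary, $u$ is continuous on the open ball.

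\emph{Step 2: continuity at a boundary point $\theta_0\in\sphere^{n-1}$.} For $x=R\theta$ with $\lvert x\rvert<1$, Proposition~\ref{prop:ii_1_9}(b) gives $u(x)-f(\theta_0)=\int\bigl(f(\alpha)-f(\theta_0)\bigr)p(\alpha,R\theta)\,\underline{d\alpha}$. Given $\epsilon>0$, pick $\delta>0$ (uniform continuity of $f$) with $\lvert f(\alpha)-f(\theta)\rvert<\epsilon$ whenever $\lvert\alpha-\theta\rvert\leq\delta$, and split the integral at $\lvert\alpha-\theta\rvert=\delta$. On $\{\lvert\alpha-\theta\rvert\leq\delta\}$, bounding $\lvert f(\alpha)-f(\theta_0)\rvert\leq\lvert f(\alpha)-f(\theta)\rvert+\lvert f(\theta)-f(\theta_0)\rvert$ and using (a)–(b) yields a contribution $\leq\epsilon+\lvert f(\theta)-f(\theta_0)\rvert$; on $\{\lvert\alpha-\theta\rvert>\delta\}$, bounding $\lvert f(\alpha)-f(\theta_0)\rvert\leq 2\lVert f\rVert_\infty$ gives a contribution $\leq 2\lVert f\rVert_\infty\int_{\lvert\alpha-\theta\rvert>\delta}p(\alpha,R\theta)\,\underline{d\alpha}$, which by (c) tends to $0$ as $R\uparrow1$, uniformly in $\theta$. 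Since $\lvert x-\theta_0\rvert<\eta$ forces $R=\lvert x\rvert>1-\eta$ and $\hat x$ close to $\theta_0$, there is a neighbourhood $V$ of $\theta_0$ in $\R^n$ on which both $\lvert f(\hat x)-f(\theta_0)\rvert$ and the (c)-term are small, so $\lvert u(x)-f(\theta_0)\rvert<\epsilon'$ for all $x\in V$ with $\lvert x\rvert<1$ (for any prescribed $\epsilon'$, after renaming). Shrinking $V$ further so that also $\lvert f(x)-f(\theta_0)\rvert<\epsilon'$ for $x\in V\cap\sphere^{n-1}$, and recalling $u(\theta_0)=f(\theta_0)$, we get $\lvert u(x)-u(\theta_0)\rvert<\epsilon'$ for all $x\in V\cap K(1)$. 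Hence $u$ is continuous at each boundary point, and together with Step~1 this shows $u\in C(K(1))$.

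I expect Step~1 to be the main obstacle: the three properties recorded in Proposition~\ref{prop:ii_1_9} are tailored to the boundary behaviour and say nothing about the interior, so one has to go back to the series definition of $p$, use that each partial sum is a genuine polynomial in $x$, and invoke the growth bound $\lVert Z_\ell\rVert_\infty\lesssim_n\ell^{\,n-2}$ to get uniform convergence on compact subsets of the open ball. Step~2 is then the routine approximate-identity estimate.
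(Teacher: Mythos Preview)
Your proof is correct, and it is essentially the classical argument that appears in Stein--Weiss \cite{steinweiss}, which is precisely where the paper sources this proposition: note that the paper does not supply its own proof of Proposition~\ref{prop:ii_1_10} but merely paraphrases it from \cite[Thm II.1.10]{steinweiss}. Your two-step division into interior continuity (via locally uniform convergence of the Poisson series, using $\lVert Z_\ell\rVert_\infty\lesssim_n\ell^{\,n-2}$) and boundary continuity (via the approximate-identity properties (a)--(c) of Proposition~\ref{prop:ii_1_9}) matches the standard treatment; there is nothing to compare against in the paper itself.
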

Lastly, we return to the $Z_\ell.$ By \cite[Lemma IV.2.8(a)]{steinweiss} and \cite[Thm 2]{mueller}:
\begin{align}\label{eq:z_propto_c}
Z_\ell(\theta,\alpha)\propto_{\ell,n} C^{(n-2)/2}_\ell(\langle\theta\mid\alpha\rangle)
\end{align}
where the Gegenbauer polynomials $C_\ell^\alpha(v)$ are the (unique) system of orthogonal polynomials on the interval $[-1,1]$ w.r.t.\ the weight function $(1-v^2)^{\alpha-1/2}$ such that $C_\ell^\alpha$ is of degree $\ell$ and $C_\ell^\alpha(1)=1$ \cite[pg.\ 16]{mueller}. The Gegenbauer polynomials have the nice property that $C_\ell^\alpha$ is odd (even) for odd (even) $\ell$ \cite[Thm 1]{mueller}. An \\ even nicer property of the Gegenbauer polynomials is \cite[Lemma 12]{mueller}
\begin{prop}\label{prop:funk_hecke_gegenbauer}
If $f$ is continuous on $[-1,1],$ then $\forall\nu,\theta\in\sphere^{n-1}:$
\begin{gather*}
\int f(\langle\alpha\mid\nu\rangle)C_\ell^{(n-2)/2}(\langle\theta\mid\alpha\rangle)\,\underline{d\alpha}=\\
C_\ell^{(n-2)/2}(\langle\theta\mid\nu\rangle)\lvert\sphere^{n-2}\rvert\int_{-1}^{+1}f(v)C_\ell^{(n-2)/2}(v)(1-v^2)^{(n-3)/2}\,dv
\end{gather*}
\end{prop}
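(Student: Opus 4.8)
The plan is to prove this as a special case of the Funk--Hecke formula, drawing only on facts already assembled in {\S\,}\ref{sec:zonal}. Write $I(\theta,\nu)$ for the left-hand side. First I would rewrite the kernel: by \eqref{eq:z_propto_c}, $C_\ell^{(n-2)/2}(\langle\theta\mid\alpha\rangle)=c_{\ell,n}^{-1}Z_\ell(\theta,\alpha)$ for the nonzero proportionality constant $c_{\ell,n}$, and inserting $Z_\ell(\theta,\alpha)=\sum_j S_{\ell j}(\theta)S_{\ell j}(\alpha)$ and pulling the finite sum out of the integral displays $I(\theta,\nu)$ as $c_{\ell,n}^{-1}\sum_j\gamma_j(\nu)S_{\ell j}(\theta)$, where $\gamma_j(\nu):=\int f(\langle\alpha\mid\nu\rangle)S_{\ell j}(\alpha)\,\underline{d\alpha}$ is a scalar for each fixed $\nu$. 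In particular $\theta\mapsto I(\theta,\nu)$ is a finite linear combination of the $S_{\ell j}$, hence a spherical harmonic of degree $\ell$.

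Next I would invoke rotation invariance. For $R\in SO(n)$ the substitution $\alpha\mapsto R\alpha$ in the defining integral is legitimate ($\underline{d\alpha}$ being rotation invariant) and yields $I(R\theta,R\nu)=I(\theta,\nu)$, since $\langle R\theta\mid R\alpha\rangle=\langle\theta\mid\alpha\rangle$ and $\langle R\alpha\mid R\nu\rangle=\langle\alpha\mid\nu\rangle$. As $n\geq 2$, $SO(n)$ acts transitively on pairs of unit vectors with a prescribed inner product, so $I(\theta,\nu)$ depends only on $v:=\langle\theta\mid\nu\rangle$; write $I(\theta,\nu)=\Phi(v)$. Combining this with the first paragraph: for each fixed $\nu$, the function $\theta\mapsto\Phi(\langle\theta\mid\nu\rangle)$ is a degree-$\ell$ spherical harmonic fixed by the stabilizer of $\nu$ in $SO(n)$, hence a scalar multiple of the zonal harmonic $Z_\ell(\nu,\cdot)$ --- the one-dimensionality of this invariant subspace being exactly the property that characterizes zonal harmonics, see \cite[Ch.\ IV]{steinweiss} or \cite{mueller} --- and therefore, via \eqref{eq:z_propto_c} once more, a scalar multiple of $C_\ell^{(n-2)/2}(\langle\nu\mid\cdot\rangle)$. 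Thus $\Phi=\lambda_\ell\,C_\ell^{(n-2)/2}$ on $[-1,1]$ for a constant $\lambda_\ell\in\C$, independent of $\nu$ because $\Phi$ is.

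Finally I would pin down $\lambda_\ell$ by specializing to $\theta=\nu$, i.e.\ $v=1$. Since $C_\ell^{(n-2)/2}(1)=1$ this gives $\lambda_\ell=I(\nu,\nu)=\int(f\cdot C_\ell^{(n-2)/2})(\langle\alpha\mid\nu\rangle)\,\underline{d\alpha}$, and the standard polar-slicing identity $\int_{\sphere^{n-1}}g(\langle\alpha\mid\nu\rangle)\,\underline{d\alpha}=\lvert\sphere^{n-2}\rvert\int_{-1}^{+1}g(v)(1-v^2)^{(n-3)/2}\,dv$ (see, e.g., \cite{mueller}), applied with $g=f\cdot C_\ell^{(n-2)/2}$, reproduces precisely the value of $\lambda_\ell$ in the statement, completing the proof.

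I expect the only real friction to be the structural input in the middle paragraph --- that a degree-$\ell$ spherical harmonic fixed by the rotations stabilizing a point must be a multiple of the corresponding zonal harmonic, together with the transitivity of $SO(n)$ on inner-product-prescribed pairs of unit vectors --- but both are entirely standard and available in the references the paper already cites, so nothing genuinely new is needed. Should one wish to avoid this route, the Poisson kernel $p(\alpha,R\theta)=\sum_\ell R^\ell Z_\ell(\theta,\alpha)$ of {\S\,}\ref{sec:zonal}, Proposition \ref{prop:ii_1_10}, Lemma \ref{lem:z_bound} (to interchange sum and integral) and uniqueness of power-series coefficients give an alternative derivation that $\theta\mapsto I(\theta,\nu)$ is a degree-$\ell$ spherical harmonic, staying even closer to the tools set up here.
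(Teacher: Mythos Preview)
The paper does not give its own proof of this proposition; it simply quotes \cite[Lemma 12]{mueller}. Your argument is essentially the standard proof one finds there (and in \cite[Ch.~IV]{steinweiss}): show the left-hand side is a degree-$\ell$ spherical harmonic in $\theta$ by expanding $C_\ell^{(n-2)/2}(\langle\theta\mid\alpha\rangle)\propto Z_\ell(\theta,\alpha)=\sum_j S_{\ell j}(\theta)S_{\ell j}(\alpha)$, use rotation invariance to see it depends only on $\langle\theta\mid\nu\rangle$, conclude it is zonal, and fix the constant by evaluating at $\theta=\nu$ with the slicing identity. So there is nothing to compare strategies against; your proposal supplies exactly the argument the cited source gives.

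One small correction: your claim that ``$SO(n)$ acts transitively on pairs of unit vectors with a prescribed inner product'' fails for $n=2$ (e.g.\ $(\theta,\nu)=((1,0),(0,1))$ and $(\theta',\nu')=((1,0),(0,-1))$ cannot be matched by a rotation). The fix is painless: $\underline{d\alpha}$ is $O(n)$-invariant and inner products are preserved by all of $O(n)$, so run the invariance argument with $O(n)$ instead. Equivalently, once you know $\theta\mapsto I(\theta,\nu)$ is a degree-$\ell$ spherical harmonic, invariance under the stabilizer of $\nu$ in $O(n)$ (which for $n=2$ contains the reflection fixing $\nu$) already forces it to be a multiple of $Z_\ell(\nu,\cdot)$. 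With that adjustment the argument is complete.
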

Proposition \ref{prop:funk_hecke_gegenbauer} allows one to deduce the Funk-Hecke formula:
\begin{lem}\label{lem:funk_hecke}
If $h$ is integrable on $[-1,1],$ then, for any spherical harmonic $Y_\ell$ of degree $\ell,$
\begin{align*}
\int h(\langle\theta\mid\alpha\rangle)Y_\ell(\alpha)\,\underline{d\alpha}=Y_\ell(\theta)\lvert\sphere^{n-2}\rvert\int_{-1}^{+1}h(v)C_\ell^{(n-2)/2}(v)(1-v^2)^{(n-3)/2}\,dv
\end{align*}
\end{lem}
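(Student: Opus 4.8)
The plan is to obtain the identity by combining three facts already established: the reproducing property~\eqref{eq:zonal_reproducing} of the zonal harmonics, the proportionality~\eqref{eq:z_propto_c} between $Z_\ell$ and the Gegenbauer polynomial $C_\ell^{(n-2)/2}(\langle\cdot\mid\cdot\rangle)$, and the Gegenbauer integral formula of Proposition~\ref{prop:funk_hecke_gegenbauer}. I would first treat the case of continuous $h$ and then remove that restriction by a density argument. Let $c_{\ell,n}$ denote the unspecified constant with $Z_\ell(\sigma,\tau)=c_{\ell,n}C_\ell^{(n-2)/2}(\langle\sigma\mid\tau\rangle)$ from~\eqref{eq:z_propto_c}. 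Starting from~\eqref{eq:zonal_reproducing} (with its free variable renamed to $\alpha$ and its bound variable to $\nu$) I have $Y_\ell(\alpha)=\int Z_\ell(\alpha,\nu)Y_\ell(\nu)\,\underline{d\nu}$; substituting this into $\int h(\langle\theta\mid\alpha\rangle)Y_\ell(\alpha)\,\underline{d\alpha}$ and interchanging the order of integration gives $\int Y_\ell(\nu)\bigl(\int h(\langle\theta\mid\alpha\rangle)Z_\ell(\alpha,\nu)\,\underline{d\alpha}\bigr)\underline{d\nu}$. The interchange is legitimate because $Z_\ell$ and $Y_\ell$, being continuous on the compact sphere, are bounded, while $\alpha\mapsto h(\langle\theta\mid\alpha\rangle)$ is integrable over $\sphere^{n-1}$ — here one uses the slicing identity $\int_{\sphere^{n-1}}g(\langle\theta\mid\alpha\rangle)\,\underline{d\alpha}=\lvert\sphere^{n-2}\rvert\int_{-1}^{+1}g(v)(1-v^2)^{(n-3)/2}\,dv$, whose weight defines a finite measure on $[-1,1]$.

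Next, in the inner integral I replace $Z_\ell(\alpha,\nu)$ by $c_{\ell,n}C_\ell^{(n-2)/2}(\langle\alpha\mid\nu\rangle)$ and apply Proposition~\ref{prop:funk_hecke_gegenbauer} with $f=h$, with its parameter ``$\nu$'' taken to be my $\theta$ and its parameter ``$\theta$'' taken to be my $\nu$ (the hypothesis of Proposition~\ref{prop:funk_hecke_gegenbauer} is met since $h$ is continuous, and the symmetry of $\langle\cdot\mid\cdot\rangle$ lets the arguments line up). This turns the inner integral into $c_{\ell,n}C_\ell^{(n-2)/2}(\langle\theta\mid\nu\rangle)\lvert\sphere^{n-2}\rvert\,I_h$, where $I_h\coloneq\int_{-1}^{+1}h(v)C_\ell^{(n-2)/2}(v)(1-v^2)^{(n-3)/2}\,dv$. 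Now $c_{\ell,n}C_\ell^{(n-2)/2}(\langle\theta\mid\nu\rangle)=Z_\ell(\theta,\nu)$ again by~\eqref{eq:z_propto_c}, so the whole expression becomes $\lvert\sphere^{n-2}\rvert\,I_h\int Z_\ell(\theta,\nu)Y_\ell(\nu)\,\underline{d\nu}=\lvert\sphere^{n-2}\rvert\,I_h\,Y_\ell(\theta)$ by a second application of~\eqref{eq:zonal_reproducing}, which is exactly the asserted formula. Note that $c_{\ell,n}$ enters once going forward and once going backward, so it cancels and never has to be computed.

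It remains to drop the continuity assumption on $h$. Given $h$ integrable with respect to the weighted measure $(1-v^2)^{(n-3)/2}\,dv$ on $[-1,1]$, pick continuous $h_k$ converging to $h$ in that $L^1$ norm (possible as this is a finite measure). Both sides of the identity are controlled by the norm of $h$: the left side since $\bigl\lvert\int(h-h_k)(\langle\theta\mid\alpha\rangle)Y_\ell(\alpha)\,\underline{d\alpha}\bigr\rvert\leq\lVert Y_\ell\rVert_\infty\lvert\sphere^{n-2}\rvert\lVert h-h_k\rVert$ by the slicing identity, and the right side since $C_\ell^{(n-2)/2}$ is bounded on $[-1,1]$; hence the identity, valid for each $h_k$, passes to the limit. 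I do not expect a genuine obstacle — the argument is an assembly of facts already proved — but the points that require care are matching up the two sphere variables when invoking Proposition~\ref{prop:funk_hecke_gegenbauer} and the integrability bookkeeping: one must keep in mind that the slicing weight $(1-v^2)^{(n-3)/2}$ is unbounded when $n=2$, so the natural hypothesis on $h$ is integrability against that weight, and ``integrable on $[-1,1]$'' should be read in that sense when $n=2$.
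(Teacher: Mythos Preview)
Your argument is correct. The continuous case is handled cleanly: expanding $Y_\ell$ via the reproducing kernel, swapping the order of integration, and then invoking Proposition~\ref{prop:funk_hecke_gegenbauer} with the roles of the two sphere points matched as you describe does yield the formula, and the constant $c_{\ell,n}$ indeed cancels. Your density argument is also fine, and your remark about the weight $(1-v^2)^{(n-3)/2}$ being unbounded for $n=2$ is a legitimate point about the hypothesis.

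The paper takes a shorter route: for continuous $h$ it simply cites the Funk--Hecke theorem from M\"uller's monograph (\cite[Thm~6]{mueller}) as a black box, and then passes to integrable $h$ by approximation. Your version is more self-contained---it derives the spherical-harmonic statement from the Gegenbauer statement (Proposition~\ref{prop:funk_hecke_gegenbauer}) and the reproducing property~\eqref{eq:zonal_reproducing}, both already present in the paper---at the cost of a short Fubini-and-substitution computation. Either way the extension step is a routine $L^1$ approximation; the paper phrases it as ``bounded convergence'' while you spell out the relevant $L^1$ bounds, which is arguably cleaner.
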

\begin{proof}
That this holds for continuous $f$ in place of integrable $h$ is the content of \cite[Thm 6]{mueller}. To obtain the result for integrable $h,$ take a sequence of continuous functions converging to $h$ and apply the bounded convergence theorem.
\end{proof}
Note that some of the sources we cite normalize the Gegenbauer polynomials differently.

\section{ADZ spaces}\label{sec:adz}

In this section we define the ADZ spaces. The following notation will be properly defined in {\S\,}\ref{sec:adz_def}, but we refer to the $\sfL^{\!\alpha}$ as ADZ spaces because
\begin{align*}
\lVert f\mid\sfL^{\!\alpha}\rVert=\lVert\scrA_\alpha\{\scrD^\alpha\{\scrZ \{f\}\}\}\rVert_{1,\infty}
\end{align*}
where the ADZ operators essentially do the following:
\begin{align*}
\scrZ:f&\mapsto\{f_\ell\}_{\ell=0}^\infty\\
\scrD^\alpha:\{f_\ell(\theta,\cdot)\}_{\ell=0}^\infty&\mapsto\{\text{D}_\ell^\alpha\{f_\ell(\theta,\cdot)\}\}_{\ell\notin L}\cup\{\text{\DH}_\ell^\alpha\{f_\ell(\theta,\cdot)\}\}_{\ell\in L}\\
\scrA_\alpha:\{G_\ell\}_{\ell=0}^\infty&\mapsto\{h_\ell^\alpha\}_{\ell=0}^\infty\mapsto\osum_{\ell=0}^\infty h_\ell^\alpha
\end{align*}
Basically, we decompose $f$ into pieces, differentiate each piece differently, and put the differentiated pieces back together with a slight twist. The rest of this section will be spend on fleshing out this definition.

\subsection{A family of functions}\label{sec:N}

Featuring prominently will be the following family of functions:
\begin{defn} For $\alpha\in\N_0$ if $\ell\notin L$ and $\alpha\in\N$ if $\ell\in L,$ let
\begin{align*}
N_\ell^\alpha:\R\ni y\mapsto
\begin{dcases*}
\frac{\displaystyle\Gamma(\tfrac{n-iy}{2})(-1)^\alpha(iy)_\alpha}{2\displaystyle\pi^{(n-1)/2}\Gamma(\tfrac{1-iy}{2})}&if $\ell=0$ \\
\frac{\displaystyle\Gamma(\tfrac{n+\ell-iy}{2})(-1)^\alpha(iy)_\alpha}{2\displaystyle\pi^{(n-1)/2}\Gamma(\tfrac{2-iy}{2})(-1)^{(\ell-1)/2}(\tfrac{1+iy}{2})_{(\ell-1)/2}}&if $\ell$ is odd \\
\frac{\displaystyle\Gamma(\tfrac{n+\ell-iy}{2})(-1)^{\alpha-1}(1+iy)_{\alpha-1}}{\displaystyle\pi^{(n-1)/2}\Gamma(\tfrac{1-iy}{2})(-1)^{(\ell-2)/2}(\tfrac{2+iy}{2})_{(\ell-2)/2}}&if $\ell\in L$
\end{dcases*}
\end{align*}
\end{defn}
We remind the reader that the parentheses with subscripts are Pochhammer sym- bols. Allowing removable singularities, we can express $N_\ell^\alpha$ more compactly as
\begin{align*}
N_\ell^\alpha(y)&=\frac{\Gamma(\tfrac{n+\ell-iy}{2})\Gamma(\tfrac{2-\ell-iy}{2})\Gamma(1-iy)}{2\displaystyle\pi^{(n-1)/2}\Gamma(\tfrac{2-iy}{2})\Gamma(\tfrac{1-iy}{2})\Gamma(1-iy-\alpha)}\\
&=\frac{\Gamma(\tfrac{n+\ell-iy}{2})\Gamma(\tfrac{2-\ell-iy}{2})\Gamma(-iy)}{\displaystyle\pi^{(n-1)/2}\Gamma(\tfrac{-iy}{2})\Gamma(\tfrac{1-iy}{2})\Gamma(1-iy-\alpha)}
\end{align*}
because $\Gamma(1+s)=s\Gamma(s)$ for $s\in i\R-\{0\}.$

The $N_\ell^\alpha$ are plainly smooth, so the asymptotic relation \cite[pg.\ 244]{pribitkin}
\begin{align*}
\lvert\Gamma(\sigma+it)\rvert\sim\sqrt{2\pi}e^{-\pi\lvert t\rvert/2}\lvert t\rvert^{\sigma-1/2}\qquad\text{uniformly as }\lvert t\rvert\to\infty
\end{align*}
for $\sigma$ in any compact subset of $\R$ can be used to deduce that
\begin{align}\label{eq:n_asymp}
\lvert N_\ell^\alpha(y)\rvert\sim\frac{1}{2}\lvert y\rvert^\alpha\biggl\{\frac{\lvert y\rvert}{2\pi}\biggr\}^{\!(n-1)/2}\qquad\text{if }\lvert y\rvert\to\infty
\end{align}
suggesting that
\begin{lem}\label{lem:NOM}
$N_\ell^\alpha\in O_M$ for $\alpha\in\N_0$ if $\ell\notin L$ and $\alpha\in\N$ if $\ell\in L$
\end{lem}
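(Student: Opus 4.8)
The plan is to verify the membership $N_\ell^\alpha\in O_M$ directly from the definition of $O_M$, namely that $\omega\,\partial^k N_\ell^\alpha$ is bounded for every $k\in\N_0$ and every $\omega\in\schwartz$. Since $N_\ell^\alpha$ is smooth, the only issue is the growth at infinity, and for that it suffices to show that $N_\ell^\alpha$ and all its derivatives have at most polynomial growth in $|y|$; then multiplying by any Schwartz function (which decays faster than any polynomial) gives a bounded product. So the real content is a statement about the growth of $N_\ell^\alpha$ and its derivatives.

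First I would invoke the compact expression for $N_\ell^\alpha$ given just above the statement, writing it (allowing removable singularities) as a ratio of Gamma factors evaluated at affine functions of $iy$. The asymptotic $|\Gamma(\sigma+it)|\sim\sqrt{2\pi}\,e^{-\pi|t|/2}|t|^{\sigma-1/2}$, uniform for $\sigma$ in compacta, already yields \eqref{eq:n_asymp}: the exponential factors $e^{-\pi|y|/2}$ cancel between numerator and denominator (there are the same number of Gamma factors up to the shift bookkeeping — this is exactly what makes $N_\ell^\alpha$ polynomially bounded rather than exponentially growing or decaying), and the surviving power of $|y|$ is $|y|^\alpha(|y|/2\pi)^{(n-1)/2}$. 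In particular $N_\ell^\alpha$ itself is $O(|y|^{\alpha+(n-1)/2})$, hence is bounded after multiplication by any $\omega\in\schwartz$.

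For the derivatives I would use the logarithmic-derivative trick: since $N_\ell^\alpha$ is a product and quotient of terms of the form $\Gamma(a+biy)$ and Pochhammer symbols (polynomials in $y$), we have $\partial\log N_\ell^\alpha(y)$ equal to a sum of terms $b\,i\,\psi(a+biy)$ (digamma) plus logarithmic derivatives of the polynomial Pochhammer factors, and $\psi(\sigma+it)=O(\log|t|)$ uniformly for $\sigma$ in compacta while the polynomial parts contribute $O(1/|y|)$ away from their (finitely many) zeros. Thus $\partial N_\ell^\alpha/N_\ell^\alpha$ grows at most logarithmically, so $\partial N_\ell^\alpha$ is again polynomially bounded; iterating (each further differentiation multiplies by another factor that is at worst $O(\log|y|)$ and adds derivatives of digamma, which decay) shows every $\partial^k N_\ell^\alpha$ is $O(|y|^{\alpha+(n-1)/2}\,(\log|y|)^k)$, certainly tamed by any Schwartz weight. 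One must also check there is no trouble at the removable singularities or at real points where a Pochhammer factor vanishes — but $N_\ell^\alpha$ is genuinely smooth there (that was already asserted), so local boundedness of $\partial^k N_\ell^\alpha$ on compacta is automatic and only the behavior as $|y|\to\infty$ matters.

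The main obstacle I expect is purely bookkeeping: carefully matching the Gamma factors in numerator and denominator so that the exponential decay rates $e^{-\pi|y|/2}$ cancel exactly and tracking the net power of $|y|$, across the three cases ($\ell=0$, $\ell$ odd, $\ell\in L$) — though the unified expression for $N_\ell^\alpha$ makes this uniform and essentially reduces it to counting Gamma factors (three over three) and summing the $\sigma-\tfrac12$ exponents, which is precisely the computation behind \eqref{eq:n_asymp}. Given \eqref{eq:n_asymp} is already in hand, the cleanest writeup simply notes that \eqref{eq:n_asymp} plus smoothness gives polynomial boundedness of $N_\ell^\alpha$, then handles derivatives via the logarithmic-derivative estimate above, and concludes by Proposition-free appeal to the definition of $O_M$. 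Alternatively, one could avoid differentiating $N_\ell^\alpha$ by hand by observing it lies in the algebra generated by functions $y\mapsto\Gamma(a+biy)$ and polynomials, and that this algebra is contained in $O_M$ because each generator and its derivatives are polynomially bounded by the Stirling-type asymptotics — but spelling out the derivative bound directly is the most self-contained route.
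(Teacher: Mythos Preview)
Your argument is correct and shares the key technical ingredient with the paper's: both rely on the Stirling-type asymptotic for $\lvert\Gamma(\sigma+it)\rvert$ to control $N_\ell^\alpha$ itself, and on the fact that derivatives of (reciprocals of) Gamma functions are the function times a polynomial in di- and polygamma functions, whose growth is only polylogarithmic. The organizational difference is that the paper routes through the smaller space $O_C\subset O_M$, using the characterization that $N\in O_C$ iff $N$ is a polynomial times a smooth function all of whose derivatives vanish at infinity; it then explicitly factors out a polynomial from $N_\ell^\alpha$ and checks the remaining Gamma-ratio piece has this vanishing-derivatives property. Your approach skips the factorization and verifies the $O_M$ definition directly via polynomial bounds on $\partial^k N_\ell^\alpha$. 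Your route is a bit more streamlined and avoids the case-by-case bookkeeping the paper acknowledges is ``cumbersome''; the paper's route has the minor bonus of placing $N_\ell^\alpha$ in $O_C$, which is actually used later (in the proof of Lemma~\ref{lem:d_inverted}) for $1/N_\ell$.
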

\begin{proof}
Technical; see Appendix \ref{sec:NOM}.
\end{proof}
\begin{cor}\label{lem:N}
$N_\ell^\alpha\psi\in\schwartz'$ for all $\psi\in\schwartz'$
\end{cor}
\begin{proof}
Immediate from Lemma \ref{lem:NOM} and Proposition \ref{prop:O_M}.
\end{proof}

\subsection{The definition}\label{sec:adz_def}

We will first define the ADZ operators and then the ADZ space.
\begin{defn}
$\scrZ:C\ni f\mapsto\Bigl(\sphere^{n-1}\ni\theta\mapsto\{f_\ell(\theta,\cdot)\}_{\ell=0}^\infty\Bigr),$ where
\begin{align*}
f_\ell(\theta,t)\coloneq\int f(t\alpha)Z_\ell(\theta,\alpha)\,\underline{d\alpha}\in C(\sphere^{n-1}\times(0,\infty))
\end{align*}
\end{defn}
As a reminder, $L=\{2,4,6,8,\ldots\}.$
\begin{defn}
Let $\alpha\in\N_0$ if $\ell\notin L$ and $\alpha\in\N$ if $\ell\in L.$ We call the function
\begin{align*}
\PS'\ni\psi\mapsto\M_\alpha^{-1}\Bigl\{N_\ell^\alpha\M_0\{\psi\}\Bigr\}\in\sfP'
\end{align*}
$\text{D}_\ell^\alpha$ if $\ell\notin L$ and $\text{\DH}_\ell^\alpha$ if $\ell\in L.$ Note that this function is well-defined in light of Lemma \ref{lem:N}.
\end{defn}
The reason for the obfuscating naming will become clear later.
\begin{defn}
$\scrD^\alpha:\PS'\supset\{\psi_\ell\}_{\ell=0}^\infty\mapsto\{\text{D}_\ell^\alpha\{\psi_\ell\}\}_{\ell\notin L}\cup\{\text{\DH}_\ell^\alpha\{\psi_\ell\}\}_{\ell\in L}\subset\sfP'$
\end{defn}
To define $\scrA_\alpha$ we need Abel summation; cf., e.g., \cite[Thm 9]{mueller}.
\begin{defn}
Let $X$ be a Banach space with norm $\lVert\cdot\rVert$ and consider a sequence $\{h_\ell\}_{\ell=0}^\infty\subset X.$ Let $\{h_\ell\}_{\ell=0}^\infty$ be \myuline{absolutely Abel summable} with \myuline{Abel sum} $h\in X$ if
\begin{enumerate}
\item $\sum_{\ell=0}^\infty R^\ell\lVert h_\ell\rVert<\infty$ for all $R\in(0,1);$ ergo $h_R\coloneq\sum_{\ell=0}^\infty R^\ell h_\ell\in X$ \cite[Thm 1.3.9]{megginson};
\item $h_R\to h$ in $X$ if $R\uparrow1.$
\end{enumerate}
In this case we write $h=\textstyle\osum_{\ell=0}^\infty h_\ell.$

We denote the set of all absolutely Abel summable $\{h_\ell\}_{\ell=0}^\infty\subset\ell_{1,\infty}(a)$ by $\AAS(a).$
\end{defn}
Note that $\{G_\ell\}_{\ell=0}^\infty\in\AAS(0)\Leftrightarrow\{h_\ell^\alpha\}_{\ell=0}^\infty\in\AAS(-\infty)$ for any $\alpha\in\N,$ where
\begin{align}\label{eq:h_in_terms_of_g}
h_\ell^\alpha(\theta,t)\coloneq
\begin{dcases*}
G_\ell(\theta,t)&if $t>0$ \\
(-1)^{\ell+\alpha}G_\ell(\theta,-t)&if $t<0$
\end{dcases*}
\end{align}
\begin{rem}
However we define $\sphere^{n-1}\ni\theta\mapsto h_\ell^\alpha(\theta,0)$ is irrelevant.
\end{rem}
\begin{defn}
Letting $\{h_\ell^\alpha\}_{\ell=0}^\infty$ be defined as in \eqref{eq:h_in_terms_of_g}, 
\begin{align*}
\scrA_\alpha:\AAS(0)\ni\{G_\ell\}_{\ell=0}^\infty\mapsto\osum_{\ell=0}^\infty h_\ell^\alpha\in\ell_{1,\infty}(-\infty)
\end{align*}
Since we tend to drop sub/superscript 0's, let $\scrA\coloneq\scrA_0.$
\end{defn}
We now define the ADZ space $\sfL^{\!\alpha}$ using the $N_\ell^\alpha$ from {\S\,}\ref{sec:N}.
\begin{defn}\label{def:adz}
The \myuline{ADZ space} $\sfL^{\!\alpha}$ comprises those $f\in C$ satisfying
\begin{enumerate}
\item $\scrZ\{f\}(\theta)\subset\PS'$ for all $\theta$
\item $\scrD^\alpha\{\scrZ\{f\}(\theta)\}\eqcolon\{G_\ell^\alpha(\theta,\cdot)\}_{\ell=0}^\infty$ satisfying $\{G_\ell^\alpha\}_{\ell=0}^\infty\in\AAS(0)$
\item $\lVert f\mid\sfL^{\!\alpha}\rVert\coloneq\lVert h^\alpha\rVert_{1,\infty}$ where $h^\alpha\coloneq\scrA_\alpha\{\{G_\ell^\alpha\}_{\ell=0}^\infty\}=\osum_{\ell=0}^\infty h_\ell^\alpha$ with
\begin{align*}
h_\ell^\alpha(\theta,t)=
\begin{dcases*}
G_\ell^\alpha(\theta,t)&if $t>0$ \\
(-1)^{\ell+\alpha}G_\ell^\alpha(\theta,-t)&if $t<0$
\end{dcases*}
\end{align*}
Note that $\{G_\ell^\alpha\}_{\ell=0}^\infty\in\AAS(0)\Leftrightarrow\{h_\ell^\alpha\}_{\ell=0}^\infty\in\AAS(-\infty)\Rightarrow h^\alpha\in\ell_{1,\infty}(-\infty)\Rightarrow$ $\lVert h^\alpha\rVert<\infty.$
\end{enumerate}
\end{defn}
Threading $\scrD^\alpha$ over $\theta,$ the following are now obvious:
\begin{align*}
\{G_\ell^\alpha\}_{\ell=0}^\infty&=\scrD^\alpha\{\scrZ\{f\}\}\\
h^\alpha&=\scrA_\alpha\{\scrD^\alpha\{\scrZ \{f\}\}\}\\
\lVert f\mid\sfL^{\!\alpha}\rVert&=\lVert\scrA_\alpha\{\scrD^\alpha\{\scrZ \{f\}\}\}\rVert_{1,\infty}
\end{align*}
Lastly, we sketch a proof for the following routine yet important result.
\begin{theorem}
$\sfL^{\!\alpha}$ is a seminormed space
\end{theorem}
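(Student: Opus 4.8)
The plan is to verify the two axioms of a seminorm for $\lVert\,\cdot\mid\sfL^{\!\alpha}\rVert$ on the set $\sfL^{\!\alpha}$: that it is well-defined and finite (this is already bundled into Definition \ref{def:adz}), that $\sfL^{\!\alpha}$ is a linear subspace of $C$, and that $\lVert\,\cdot\mid\sfL^{\!\alpha}\rVert$ is absolutely homogeneous and subadditive. The key observation that makes everything work is that $\lVert f\mid\sfL^{\!\alpha}\rVert=\lVert\scrA_\alpha\{\scrD^\alpha\{\scrZ\{f\}\}\}\rVert_{1,\infty}$ is a composition of maps each of which respects the linear structure: $\scrZ$ is linear because $f\mapsto f_\ell(\theta,t)=\int f(t\alpha)Z_\ell(\theta,\alpha)\,\underline{d\alpha}$ is linear in $f$; $\scrD^\alpha$ is linear because each $\text{D}_\ell^\alpha$ (resp.\ $\text{\DH}_\ell^\alpha$) is $\M_\alpha^{-1}\{N_\ell^\alpha\M_0\{\cdot\}\}$, a composition of the linear maps $\M_0$, multiplication by the fixed function $N_\ell^\alpha$, and $\M_\alpha^{-1}$; $\scrA_\alpha$ is linear on its domain because the passage $G_\ell\mapsto h_\ell^\alpha$ in \eqref{eq:h_in_terms_of_g} is linear and Abel summation $\osum$ is linear where it converges; and finally $\lVert\cdot\rVert_{1,\infty}$ is a norm on $\ell_{1,\infty}(-\infty)$, hence in particular a seminorm on the larger ambient space, so it is absolutely homogeneous and subadditive.

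First I would show $\sfL^{\!\alpha}$ is a linear subspace of $C$. Fix $f,g\in\sfL^{\!\alpha}$ and scalars $\lambda,\mu$. Condition (1) of Definition \ref{def:adz} is preserved because $\scrZ$ is linear and $\PS'$ is a vector space, so $\scrZ\{\lambda f+\mu g\}(\theta)=\lambda\scrZ\{f\}(\theta)+\mu\scrZ\{g\}(\theta)\subset\PS'$. For condition (2), linearity of $\scrD^\alpha$ gives $G_\ell^\alpha(\lambda f+\mu g)=\lambda G_\ell^\alpha(f)+\mu G_\ell^\alpha(g)$ for each $\ell$, so I need $\AAS(0)$ to be closed under linear combinations; this is immediate from the definition, since $\sum_{\ell} R^\ell\lVert\lambda (h_\ell)_1+\mu(h_\ell)_2\rVert\leq\lvert\lambda\rvert\sum_\ell R^\ell\lVert(h_\ell)_1\rVert+\lvert\mu\rvert\sum_\ell R^\ell\lVert(h_\ell)_2\rVert<\infty$ for every $R\in(0,1)$, and the Abel limit of a linear combination is the linear combination of the Abel limits (both $(h_R)_1$ and $(h_R)_2$ converge in $\ell_{1,\infty}$, so their linear combination does too, by continuity of vector-space operations in a normed space). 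Condition (3) is then automatic: $h^\alpha(\lambda f+\mu g)=\lambda h^\alpha(f)+\mu h^\alpha(g)\in\ell_{1,\infty}(-\infty)$, so $\lVert\lambda f+\mu g\mid\sfL^{\!\alpha}\rVert<\infty$, i.e.\ $\lambda f+\mu g\in\sfL^{\!\alpha}$.

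Having established linearity of the map $f\mapsto h^\alpha=\scrA_\alpha\{\scrD^\alpha\{\scrZ\{f\}\}\}$ on $\sfL^{\!\alpha}$, the seminorm properties drop out: $\lVert\lambda f\mid\sfL^{\!\alpha}\rVert=\lVert\lambda h^\alpha(f)\rVert_{1,\infty}=\lvert\lambda\rvert\,\lVert h^\alpha(f)\rVert_{1,\infty}=\lvert\lambda\rvert\,\lVert f\mid\sfL^{\!\alpha}\rVert$ and $\lVert f+g\mid\sfL^{\!\alpha}\rVert=\lVert h^\alpha(f)+h^\alpha(g)\rVert_{1,\infty}\leq\lVert h^\alpha(f)\rVert_{1,\infty}+\lVert h^\alpha(g)\rVert_{1,\infty}=\lVert f\mid\sfL^{\!\alpha}\rVert+\lVert g\mid\sfL^{\!\alpha}\rVert$, using that $\lVert\cdot\rVert_{1,\infty}$ is a norm. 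Note we do not claim positive-definiteness — and indeed should not, since a function $f$ whose every $f_\ell$ has vanishing $\scrD^\alpha$-image (e.g.\ an odd function when we are in a regime where only the even-$\ell$ components survive) could have zero seminorm without being zero — which is exactly why the statement says seminormed and not normed.

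The main obstacle, and the only point requiring genuine care rather than bookkeeping, is verifying that $\AAS(0)$ (equivalently $\AAS(-\infty)$, via the correspondence \eqref{eq:h_in_terms_of_g}) is closed under addition in the strong sense needed — i.e.\ that when $\{(G_\ell)_1\}$ and $\{(G_\ell)_2\}$ are each absolutely Abel summable with Abel sums $h_1,h_2$, the sum $\{(G_\ell)_1+(G_\ell)_2\}$ is absolutely Abel summable with Abel sum $h_1+h_2$. The absolute-summability bound $\sum_\ell R^\ell\lVert(h_\ell)_1+(h_\ell)_2\rVert_{1,\infty}<\infty$ follows from the triangle inequality term by term; the convergence of the partial Abel sums to $h_1+h_2$ follows because $(h_R)_1\to h_1$ and $(h_R)_2\to h_2$ in the Banach space $\ell_{1,\infty}(-\infty)$ and addition is continuous there. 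Homogeneity under a scalar is identical. So even this step is routine once one unwinds the definition of $\AAS$; I would present it briefly and move on.
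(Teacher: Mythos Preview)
Your proposal is correct and follows exactly the same approach as the paper's own proof sketch, which simply states that $\sfL^{\!\alpha}$ is a vector space and $\scrA_\alpha\,\scrD^\alpha\scrZ:\sfL^{\!\alpha}\to\ell_{1,\infty}(-\infty)$ is linear. You have fleshed out the details the paper leaves implicit (closure of $\AAS$ under linear combinations, linearity of each of $\scrZ,\scrD^\alpha,\scrA_\alpha$), but the strategy is identical; your parenthetical example for why positive-definiteness may fail is speculative and not quite right, but it is an aside and does not affect the argument.
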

\begin{proof}[Proof sketch]
$\sfL^{\!\alpha}$ is a vector space and $\scrA_\alpha\,\scrD^\alpha\scrZ:\sfL^{\!\alpha}\to$ $\ell_{1,\infty}(-\infty)$ is linear.
\end{proof}

\subsection{The case $\alpha=0$}

Up till now $\alpha\in\N.$ Unfortunately, ``$\sfL^{\!0}$'' does not make sense, for if $\ell\in L,$
\begin{align*}
\text{``}\!N_\ell^0\text{''}:y\mapsto\frac{\Gamma(\tfrac{n+\ell-iy}{2})\Gamma(\tfrac{2-\ell-iy}{2})}{2\displaystyle\pi^{(n-1)/2}\Gamma(\tfrac{2-iy}{2})\Gamma(\tfrac{1-iy}{2})}
\end{align*}
has a singularity at $y=0,$ so $\text{``}\!N_\ell^0\text{''}\notin C^\infty\supset O_M;$ i.e., we may well not be able to successfully apply $\M_0^{-1}$ in the definition of ``$\text{\DH}_\ell^0$''. However, there is a work-around which we can use to define $\sfL;$ our version of ``$\sfL^{\!0}$''.

To unburden notation, we will drop a bunch of superscripts; let $N_\ell\coloneq N_\ell^{[\ell\in L]}$ and let $\M_0^{-1}\{N_\ell\M_0\{\cdot\}\}$ be denoted $\text{D}_\ell$ for $\ell\notin L$ and $\text{\DH}_\ell$ for $\ell\in L.$ To define $\scrD$ (our version of ``$\scrD^0$'') we first need another definition:
\begin{defn}\label{def:F}
Letting $\Phi_\ell$ be any antiderivative of $v\mapsto f_\ell(v)/v,$ we let
\begin{align*}
\sfF\coloneq\biggl\{\{f_\ell\}_{\ell=0}^\infty\subset C(0,\infty):
\begin{aligned}
f_\ell&\in\PS'\text{ whenever }\ell\notin L\\
\Phi_\ell&\in\PS'\text{ and }\Phi_\ell(\infty)\text{ exists whenever }\ell\in L
\end{aligned}
\biggr\}
\end{align*}
and we set $\PS'\ni\Phi_\ell(\infty)-\Phi_\ell=-\int^\to f_\ell(v)\,\frac{dv}{v}\eqcolon F_\ell$ whenever $\ell\in L.$
\end{defn}
\begin{defn}
$\scrD:\sfF\ni\{f_\ell\}_{\ell=0}^\infty\mapsto
\{\text{D}_\ell\{f_\ell\}\}_{\ell\notin L}\cup\{\text{\DH}_\ell\{F_\ell\}\}_{\ell\in L}$
\end{defn}
Note that $G=\M_0^{-1}\{N_\ell\M_0\{\psi\}\}\Rightarrow\M_0\{G\}=N_\ell\M_0\{\psi\}$ because $G$ is not just a $\sfP$ distribution, but a $\PS$ distribution (recall our discussion of this in {\S\,}\ref{sec:mellin}).

We are now ready to define $\sfL.$
\begin{defn}\label{def:gamma_d}
$\sfL$ comprises those $f\in C$ satisfying
\begin{enumerate}
\item $\{f_\ell(\theta,\cdot)\}_{\ell=0}^\infty\in\sfF$ for all $\theta$
\item $\scrD\{\scrZ\{f\}\}\eqcolon\{G_\ell\}_{\ell=0}^\infty\in\AAS(0)$
\item $\lVert f\rVert_\sfL\coloneq\lVert h\rVert_{1,\infty}$ where $h\coloneq\scrA\{\{G_\ell\}_{\ell=0}^\infty\}=\osum_{\ell=0}^\infty h_\ell$ with
\begin{align*}
h_\ell(\theta,t)=
\begin{dcases*}
G_\ell(\theta,t)&if $t>0$ \\
(-1)^{\ell}G_\ell(\theta,-t)&if $t<0$
\end{dcases*}
\end{align*}
\end{enumerate}
\end{defn}
Note that $\sfL$ is a seminormed space just like the ADZ spaces. The reasoning is entirely analogous.

\subsection{Discussion}

At the start of {\S\,}\ref{sec:adz} we posited: ``Basically, we decompose $f$ into pieces, differentiate each piece differently, and put the differentiated pieces back together with a slight twist''. Having defined everything formally, we now evince this claim.

We first show Abel summation undoes $\scrZ$ (corroborating that we are putting the pieces back together fittingly).
\begin{theorem}\label{thm:a_undoes_z}
Let $f\in C$ and  fix $t\in(0,\infty).$ If $f_{\ell\mid t}(\theta)\coloneq f_\ell(\theta,t)$ and $f_{\cdot\mid t}(\theta)\coloneq$ $f(t\theta),$ then $\{f_{\ell\mid t}\}_{\ell=0}^\infty\subset\ell_1(\sphere^{n-1})$ is absolutely Abel summable with Abel sum $f_{\cdot\mid t}.$ 
\end{theorem}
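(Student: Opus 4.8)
The plan is to recognize the sum $\sum_\ell R^\ell f_{\ell\mid t}(\theta)$ as an integral of $f(t\,\cdot\,)$ against the Poisson kernel on $\sphere^{n-1}$, and then invoke the approximate-identity properties of that kernel (Propositions \ref{prop:ii_1_9} and \ref{prop:ii_1_10}) to pass to the limit $R\uparrow 1$. Concretely, first I would fix $t$ and abbreviate $g\coloneq f_{\cdot\mid t}=f(t\,\cdot\,)$, a continuous function on $\sphere^{n-1}$. By the definition of $\scrZ$ we have $f_{\ell\mid t}(\theta)=f_\ell(\theta,t)=\int g(\alpha)Z_\ell(\theta,\alpha)\,\underline{d\alpha}$, which is (up to the reproducing-kernel normalization) the degree-$\ell$ spherical-harmonic projection of $g$; in particular $\lVert f_{\ell\mid t}\rVert_{\ell_1(\sphere^{n-1})}\leq \lVert g\rVert_\infty\,\lVert Z_\ell\rVert_1$, and Lemma \ref{lem:z_bound} gives $\lVert Z_\ell\rVert_1\preccurlyeq_n\ell^{\,n-2}$, so $\sum_\ell R^\ell\lVert f_{\ell\mid t}\rVert_1<\infty$ for every $R\in(0,1)$. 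This verifies condition (1) in the definition of absolute Abel summability and shows $g_R\coloneq\sum_{\ell=0}^\infty R^\ell f_{\ell\mid t}\in\ell_1(\sphere^{n-1})$.

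Next I would identify $g_R$ explicitly. Since the series $\sum_\ell R^\ell Z_\ell(\theta,\alpha)$ converges (absolutely, uniformly in $\alpha$, for fixed $R<1$, by the same $\ell^{\,n-2}$ bound), Fubini/Tonelli lets me interchange sum and integral:
\begin{align*}
g_R(\theta)=\sum_{\ell=0}^\infty R^\ell\int g(\alpha)Z_\ell(\theta,\alpha)\,\underline{d\alpha}=\int g(\alpha)\sum_{\ell=0}^\infty R^\ell Z_\ell(\theta,\alpha)\,\underline{d\alpha}=\int g(\alpha)\,p(\alpha,R\theta)\,\underline{d\alpha},
\end{align*}
which is exactly the function $u(R\theta)$ of Proposition \ref{prop:ii_1_10} with $f\coloneq g$. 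That proposition states $u$ extends continuously to $K(1)$ with boundary values $u=g$ on $\sphere^{n-1}$; hence $g_R(\theta)=u(R\theta)\to u(\theta)=g(\theta)=f_{\cdot\mid t}(\theta)$ as $R\uparrow1$, for each $\theta$.

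The one remaining point — and the only place any real work is needed — is upgrading this pointwise convergence to convergence in the $\ell_1(\sphere^{n-1})$ norm, as required by condition (2) of absolute Abel summability. Here I would argue uniform convergence on $\sphere^{n-1}$, which suffices since the sphere has finite measure. Uniformity follows from the standard approximate-identity estimate: $g_R(\theta)-g(\theta)=\int\bigl(g(\alpha)-g(\theta)\bigr)p(\alpha,R\theta)\,\underline{d\alpha}$ using Proposition \ref{prop:ii_1_9}(b); split the integral at $\lvert\theta-\alpha\rvert\leq\delta$ and $\lvert\theta-\alpha\rvert>\delta$, bound the near part by the modulus of continuity of $g$ (uniform continuity of $g$ on the compact sphere makes this small uniformly in $\theta$ for $\delta$ small) times $\int p\,\underline{d\alpha}=1$, and bound the far part by $2\lVert g\rVert_\infty\int_{\lvert\theta-\alpha\rvert>\delta}p(\alpha,R\theta)\,\underline{d\alpha}$, which by Proposition \ref{prop:ii_1_9}(a),(c) tends to $0$ as $R\uparrow1$ uniformly in $\theta$. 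Thus $\lVert g_R-g\rVert_\infty\to0$, hence $\lVert g_R-g\rVert_{\ell_1(\sphere^{n-1})}\to0$, which is precisely $h_R\to h$ in $\ell_1(\sphere^{n-1})$ with $h=f_{\cdot\mid t}$. Combining the two conditions, $\{f_{\ell\mid t}\}_{\ell=0}^\infty$ is absolutely Abel summable with Abel sum $f_{\cdot\mid t}$, as claimed. I expect the interchange-of-summation justification and the approximate-identity splitting to be the main (though routine) obstacles; everything else is bookkeeping with the definitions.
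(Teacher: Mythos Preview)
Your proposal is correct and follows essentially the same route as the paper's proof: bound $\lVert f_{\ell\mid t}\rVert_1$ via Lemma~\ref{lem:z_bound} to get absolute Abel summability, interchange sum and integral by Fubini to recognize the partial sums as the Poisson integral of $f_{\cdot\mid t}$, and then pass to the limit $R\uparrow1$ uniformly (hence in $\ell_1(\sphere^{n-1})$). The only cosmetic difference is that the paper simply cites ``the relevant part of the proof of \cite[Thm~II.1.10]{steinweiss}'' for the uniform convergence step, whereas you have written out that near/far approximate-identity splitting explicitly.
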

\begin{proof}
Plainly $f_{\cdot\mid t}\in\UCB(\sphere^{n-1})\subset\ell_1(\sphere^{n-1}).$ By Lemma \ref{lem:z_bound},
\begin{align*}
\lVert f_{\ell\mid t}\rVert_1\leq\lVert Z_\ell\rVert_1\lVert f_{\cdot\mid t}\rVert_1\preccurlyeq_{n,t}\ell^{\mkern1mu n-2}
\end{align*}
Accordingly, $\textstyle\sum_{\ell=0}^\infty\displaystyle R^\ell\lVert f_{\ell\mid t}\rVert_1<\infty$ for all $R\in(0,1),$ so
\begin{align*}
\sum_{\ell=0}^\infty R^\ell f_{\ell\mid t}(\theta)=\sum_{\ell=0}^\infty R^\ell\int f_{\cdot\mid t}(\alpha)Z_\ell(\theta,\alpha)\,\underline{d\alpha}\overset{\text{F}}{=}\int f_{\cdot\mid t}(\alpha)p(\alpha,R\theta)\,\underline{d\alpha}
\end{align*}
As $f_{\cdot\mid t}\in\UCB(\sphere^{n-1}),$ following the relevant part of the proof of \cite[Thm II.1.10]{steinweiss} mutatis mutandis yields that $\textstyle\sum_{\ell=0}^\infty\displaystyle R^\ell f_{\ell\mid t}\to f_{\cdot\mid t}$ uniformly if $R\uparrow1,$ which in turn yields convergence in $\ell_1(\sphere^{n-1})$ because $\lVert{}\cdot{}\!\mid\ell_1(\sphere^{n-1})\rVert\lesssim_n\lVert{}\cdot{}\!\mid\ell_\infty(\sphere^{n-1})\rVert.$
\end{proof}
We now argue why it is reasonable to interpret the $\text{D}_\ell^\alpha$ and $\text{\DH}_\ell^\alpha$ as differential operators. Since they are virtually identical, we focus on the $\text{D}_\ell^\alpha.$

It is well-known that $\psi\mapsto\F^{-1}\{\lvert\cdot\rvert^\alpha\F\{\psi\}\}$ may be reasonably interpreted as a fractional Laplacian (a differential operator); indeed,
\begin{align*}
\F^{-1}\{\lvert\cdot\rvert^\alpha\F\{\psi\}\}=(-\Delta)^{\alpha/2}\psi\qquad\text{when }\alpha\in 2\N\text{ and }\psi\in\schwartz
\end{align*}
where $\Delta$ denotes the Laplacian. Keeping in mind \eqref{eq:n_asymp}, the red highlight in
\begin{align*}
\text{D}_\ell^\alpha\{\cdot\}(t)=t^{-\alpha}\E^{-1}\{\color{red}\F^{-1}\{N_\ell^\alpha\F\{\color{black}\E\{\cdot\}\color{red}\}\}\color{black}\}(t)
\end{align*}
elucidates that $\text{D}_\ell^\alpha$ is, up to exponential substitutions, not all that dissimilar to a fractional Laplacian.

The $\text{D}_\ell^\alpha$ and $\text{\DH}_\ell^\alpha$ are in fact compositions of two different differential operators:
\begin{lem}\label{lem:t_alpha_partial_alpha}
For all $\alpha\in\N,$
\begin{align*}
\M_0^{-1}\biggl\{\frac{\Gamma(-iy)}{\Gamma(1-iy-\alpha)}\M_0\{\cdot\}\biggr\}=\iota t^\alpha\partial^{\alpha-1}t^{-1}=t^\alpha\partial^{\alpha-1}t^{-1}\iota
\end{align*}
where, as you may recall, $\iota$ embeds $\PS'$ into $\sfP'.$
\end{lem}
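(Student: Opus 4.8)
The plan is to compute the Mellin ``symbol'' of $t^\alpha\partial^{\alpha-1}t^{-1}$ on $\PS'$ and recognise it as the generating polynomial $\Gamma(-iy)/\Gamma(1-iy-\alpha)$ of the signed Stirling numbers of the first kind. The starting point is the elementary fact that $t\partial$ has Mellin symbol $-iy$, that is,
\begin{align*}
\M_0\{t\partial\psi\}=-iy\,\M_0\{\psi\}\qquad(\psi\in\PS').
\end{align*}
Since $\M_0=\F\E$, this rests on two observations: first, $\E\{t\partial\psi\}=\partial\,\E\{\psi\}$, which is the chain rule when $\psi\in\regPS'$ and follows in general by transposition together with the closedness of $\PS'$ under $t\partial$ \cite{fungkang}; second, with our normalisation $\F\{\partial g\}=-iy\,\F\{g\}$ on $\schwartz'$. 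Iterating, $\M_0\{(t\partial)^k\psi\}=(-iy)^k\M_0\{\psi\}$ for every $k\in\N_0$, each step being legitimate since $\PS'$ is closed under $t\partial$.

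Next I would invoke Lemma~\ref{lem:stirling}: for $\alpha\in\N$ we have $t^\alpha\partial^{\alpha-1}t^{-1}=\sum_{m=1}^\alpha s(\alpha,m)(t\partial)^{m-1}$ as operators on $\PS',$ so
\begin{align*}
\M_0\{t^\alpha\partial^{\alpha-1}t^{-1}\psi\}=\Bigl(\sum_{m=1}^\alpha s(\alpha,m)(-iy)^{m-1}\Bigr)\M_0\{\psi\}.
\end{align*}
By the falling-factorial expansion $\sum_{m=0}^\alpha s(\alpha,m)z^m=z(z-1)\cdots(z-\alpha+1)$ (which follows by induction from the recurrence for $s(\alpha,m)$), dividing by $z$ (legitimate since $s(\alpha,0)=0$) and using $\Gamma(z)=(z-1)(z-2)\cdots(z-\alpha+1)\,\Gamma(z-\alpha+1)$, one obtains $\sum_{m=1}^\alpha s(\alpha,m)z^{m-1}=\Gamma(z)/\Gamma(z-\alpha+1)$. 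Putting $z=-iy$ identifies the bracketed sum with $\Gamma(-iy)/\Gamma(1-iy-\alpha)$; this is a genuine polynomial in $-iy$ of degree $\alpha-1$ (the apparent singularity at $y=0$ being removable), hence lies in $O_M$ and multiplication by it is unambiguous. Thus $\M_0\{t^\alpha\partial^{\alpha-1}t^{-1}\psi\}=\frac{\Gamma(-iy)}{\Gamma(1-iy-\alpha)}\M_0\{\psi\}$.

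It remains to undo $\M_0$. As $\M_0=\F\E:\PS'\to\schwartz'$ is an isomorphism with $(\F\E)^{-1}\M_0=\mathrm{id}_{\PS'}$ and $\M_0^{-1}=\iota\circ(\F\E)^{-1},$ applying $(\F\E)^{-1}$ to the previous display and then $\iota$ yields $\M_0^{-1}\{\frac{\Gamma(-iy)}{\Gamma(1-iy-\alpha)}\M_0\{\psi\}\}=\iota\{t^\alpha\partial^{\alpha-1}t^{-1}\psi\},$ which is the first asserted identity. For the second, recall from the discussion after Corollary~\ref{cor:stirling} that $\sfP'$ is closed under $t$ and $\partial$ separately, so $t^\alpha\partial^{\alpha-1}t^{-1}$ makes sense on $\sfP'$; since $\iota$ is the transpose of the inclusion $\sfP\hookrightarrow\PS$ and $t^\alpha\partial^{\alpha-1}t^{-1}$ is built from local operations whose transposes preserve $\sfP,$ the map $\iota$ intertwines the two realisations, giving $\iota\,t^\alpha\partial^{\alpha-1}t^{-1}=t^\alpha\partial^{\alpha-1}t^{-1}\iota.$ The main obstacle I anticipate is bookkeeping rather than algebra: rigorously establishing $\E\{t\partial\psi\}=\partial\,\E\{\psi\}$ and the intertwining of $\iota$ with $t^\alpha\partial^{\alpha-1}t^{-1}$ for distributions (not merely formally), together with checking that the transpose of $t\partial$ with respect to the $\PS$-pairing is exactly the operator by which $t\partial$ is defined on $\PS'$ in \cite{fungkang}; here one must be careful that the $dt$-pairing used on $\ell_1^{\text{loc}}$ functions makes this transpose $-(1+t\partial)$ rather than $-t\partial$. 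Once those compatibilities are pinned down, the whole statement collapses to the Stirling generating-function identity above.
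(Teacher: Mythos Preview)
Your proposal is correct and follows essentially the same route as the paper: expand $t^\alpha\partial^{\alpha-1}t^{-1}$ via Lemma~\ref{lem:stirling}, use that $\E$ intertwines $t\partial$ with $\partial$ (this is exactly \cite[(6)]{fungkang}) and that $\F\{\partial g\}=-iy\,\F\{g\}$ (this is \cite[(14.28)]{kolk}), and identify the resulting polynomial $\sum_{m=1}^\alpha s(\alpha,m)(-iy)^{m-1}$ with $\Gamma(-iy)/\Gamma(1-iy-\alpha)$ via the Stirling generating identity (the paper cites \cite[(26.8.7)]{dlmf}). The bookkeeping worries you flag at the end are exactly what the paper sweeps under ``the desiderata are now trivial''; your more explicit treatment of the $\iota$-intertwining and the transpose conventions is sound and arguably an improvement in rigor.
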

\begin{proof}
By Lemma \ref{lem:stirling} in conjunction with \cite[(6)]{fungkang} and \cite[(14.28)]{kolk}:
\begin{align*}
\M_0\{t^\alpha\partial^{\alpha-1}t^{-1}\psi\}&=\sum_{m=1}^\alpha s(\alpha,m)\M_0\{(t\partial)^{m-1}\psi\}=\sum_{m=1}^\alpha s(\alpha,m)\F\{\partial^{m-1}\E\{\psi\}\}\\
&=\sum_{m=1}^\alpha s(\alpha,m)(-iy)^{m-1}\M_0\{\psi\}=\frac{\Gamma(-iy)}{\Gamma(1-iy-\alpha)}\M_0\{\psi\}
\end{align*}
for all $\psi\in\PS',$ where the last equality is ensuant on \cite[(26.8.7)]{dlmf}. The desiderata are now trivial.
\end{proof}
\begin{cor}\label{cor:t_alpha_partial_alpha}
$\M_0^{-1}\biggl\{\frac{\Gamma(1-iy)}{\Gamma(1-iy-\alpha)}\M_0\{\cdot\}\biggr\}=\iota t^\alpha\partial^\alpha=t^\alpha\partial^\alpha\iota$ for all $\alpha\in\N_0$
\end{cor}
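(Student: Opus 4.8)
The plan is to obtain this corollary from Lemma \ref{lem:t_alpha_partial_alpha} by the same device that turns Lemma \ref{lem:stirling} into Corollary \ref{cor:stirling}: composing both sides on the right with $t\partial.$ For $\alpha=0$ there is nothing to show, since then the multiplier is $\Gamma(1-iy)/\Gamma(1-iy)=1$ and $\M_0^{-1}\M_0=\iota=\iota\,t^0\partial^0;$ so suppose $\alpha\in\N$ in what follows.

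First I would record the two ingredients that make the composition legitimate. On the Mellin side, recall that $\PS'$ is closed under $t\partial,$ and that the computation already performed inside the proof of Lemma \ref{lem:t_alpha_partial_alpha} (i.e.\ \cite[(6)]{fungkang} followed by \cite[(14.28)]{kolk}) gives $\M_0\{t\partial\psi\}=\F\{\partial\E\{\psi\}\}=(-iy)\M_0\{\psi\}$ for every $\psi\in\PS'.$ On the operator side, since $\sfP'$ is closed under multiplication by $t$ and under $\partial$ separately, there $t^{-1}(t\partial\psi)=\partial\psi,$ whence the product rule gives $t^\alpha\partial^{\alpha-1}t^{-1}\circ(t\partial)=t^\alpha\partial^{\alpha-1}\partial=t^\alpha\partial^\alpha$ (this is also precisely Corollary \ref{cor:stirling}, read as $\sum_{m=1}^\alpha s(\alpha,m)(t\partial)^{m-1}\circ(t\partial)=\sum_{m=0}^\alpha s(\alpha,m)(t\partial)^m$ using $s(\alpha,0)=0$); finally $\iota,$ being a restriction map, commutes with $t\partial$ and hence with $t^\alpha\partial^\alpha.$

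Then I would feed $t\partial\psi\in\PS'$ into Lemma \ref{lem:t_alpha_partial_alpha} in place of $\psi$ and chase equalities, rewriting the multiplier via $\Gamma(1-iy)=(-iy)\Gamma(-iy)$ for $y\neq 0$:
\begin{align*}
\M_0^{-1}\biggl\{\frac{\Gamma(1-iy)}{\Gamma(1-iy-\alpha)}\M_0\{\psi\}\biggr\}
&=\M_0^{-1}\biggl\{\frac{\Gamma(-iy)}{\Gamma(1-iy-\alpha)}\,(-iy)\,\M_0\{\psi\}\biggr\}\\
&=\M_0^{-1}\biggl\{\frac{\Gamma(-iy)}{\Gamma(1-iy-\alpha)}\M_0\{t\partial\psi\}\biggr\}\\
&=\iota\,t^\alpha\partial^{\alpha-1}t^{-1}(t\partial\psi)=\iota\,t^\alpha\partial^\alpha\psi=t^\alpha\partial^\alpha\iota\psi,
\end{align*}
valid for all $\psi\in\PS',$ which is the assertion.

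The only steps needing a word of justification — and the closest thing to an obstacle here — are bookkeeping ones. The middle rearrangement is an identity of Schwartz distributions only because the multiplier $\Gamma(-iy)/\Gamma(1-iy-\alpha)=(-1)^{\alpha-1}(1+iy)_{\alpha-1}$ is a polynomial in $y,$ hence lies in $O_M,$ so that Proposition \ref{prop:O_M} and the associativity of $O_M$-multiplication permit moving the polynomial factor $(-iy)$ past it; and one must keep in mind that $\M_0^{-1}$ takes values in $\sfP',$ so the identity is genuinely one of maps $\PS'\to\sfP'$ — which is exactly why the $\iota$'s sit where they do in the displayed chain.
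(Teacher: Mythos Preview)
Your proof is correct. The paper's own proof is the one-liner ``mutatis mutandis using Corollary \ref{cor:stirling} in place of Lemma \ref{lem:stirling}'': that is, rerun the computation inside the proof of Lemma \ref{lem:t_alpha_partial_alpha} verbatim but with $t^\alpha\partial^\alpha=\sum_{m=0}^\alpha s(\alpha,m)(t\partial)^m$ in place of $t^\alpha\partial^{\alpha-1}t^{-1}=\sum_{m=1}^\alpha s(\alpha,m)(t\partial)^{m-1}$, arriving at the multiplier $\sum_{m} s(\alpha,m)(-iy)^m=\Gamma(1-iy)/\Gamma(1-iy-\alpha)$ directly. You instead reuse Lemma \ref{lem:t_alpha_partial_alpha} as a black box by feeding $t\partial\psi$ into it and simplifying on both the Mellin and operator sides. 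The two routes are equivalent --- yours is exactly the device the paper itself uses to pass from Lemma \ref{lem:stirling} to Corollary \ref{cor:stirling}, just applied one level up --- and yours is arguably the more economical, since it avoids rewriting the Stirling-number sum from scratch.
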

\begin{proof}
Mutatis mutandis using Corollary \ref{cor:stirling} in place of Lemma \ref{lem:stirling}. 
\end{proof}
\begin{lem}\label{lem:dh_decomp}
$\text{\DH}_\ell^\alpha=\partial^{\alpha-1}t^{-1}\text{\DH}_\ell$ for all $\alpha\in\N$
\end{lem}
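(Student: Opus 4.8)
The plan is to reduce the claim to Lemma~\ref{lem:t_alpha_partial_alpha} by comparing the Mellin multipliers $N_\ell^\alpha$ and $N_\ell$ directly. The key algebraic observation is that for $\ell\in L$ the only $\alpha$-dependent factor in the formula for $N_\ell^\alpha(y)$ is $(-1)^{\alpha-1}(1+iy)_{\alpha-1}$, so that (since $N_\ell=N_\ell^1$ for $\ell\in L$, and $(-1)^0=(1+iy)_0=1$)
\[
N_\ell^\alpha(y)=(-1)^{\alpha-1}(1+iy)_{\alpha-1}\,N_\ell(y).
\]
Moreover this polynomial prefactor is exactly the Mellin multiplier appearing in Lemma~\ref{lem:t_alpha_partial_alpha}: telescoping $\Gamma(z)=(z-1)\Gamma(z-1)$ at $z=-iy$ gives $\Gamma(-iy)/\Gamma(1-iy-\alpha)=\prod_{k=1}^{\alpha-1}(-iy-k)=(-1)^{\alpha-1}(1+iy)_{\alpha-1}$, which, being a polynomial, lies in $O_M$.

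With this in hand I would unwind the definitions. Fix $\psi\in\PS'$ and put $G\coloneq(\F\E)^{-1}\{N_\ell\M_0\{\psi\}\}\in\PS'$, so that $\text{\DH}_\ell\{\psi\}=\M_0^{-1}\{N_\ell\M_0\{\psi\}\}=\iota G$ and, $G$ being a genuine $\PS$ distribution (cf.\ {\S\,}\ref{sec:mellin}), $\M_0\{G\}=N_\ell\M_0\{\psi\}$. Using $\M_\alpha^{-1}=t^{-\alpha}\M_0^{-1}$ (from the definition of $\M_c^{-1}$), the displayed identity, and then Lemma~\ref{lem:t_alpha_partial_alpha} applied to $G$, one computes
\[
\text{\DH}_\ell^\alpha\{\psi\}=\M_\alpha^{-1}\bigl\{N_\ell^\alpha\M_0\{\psi\}\bigr\}=t^{-\alpha}\M_0^{-1}\Bigl\{\tfrac{\Gamma(-iy)}{\Gamma(1-iy-\alpha)}\M_0\{G\}\Bigr\}=t^{-\alpha}\,t^\alpha\partial^{\alpha-1}t^{-1}\iota G=\partial^{\alpha-1}t^{-1}\text{\DH}_\ell\{\psi\},
\]
the last equality because $t^{-\alpha}t^\alpha=1$ as multiplication operators on $\sfP'$; since $\psi$ was arbitrary, this is the claim.

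The closest thing to an obstacle is keeping the ambient spaces straight: $G$ must be viewed in $\PS'$ so that $\M_0\{G\}$ is defined and Lemma~\ref{lem:t_alpha_partial_alpha} applies, whereas the multiplication $t^{-\alpha}\iota(\,\cdot\,)$ and the cancellation $t^{-\alpha}t^\alpha=1$ happen in $\sfP'$, which (unlike $\PS'$) is closed under multiplication by $t^{\pm1}$. The remaining work — checking the $\ell\in L$ form of $N_\ell^\alpha$ carries no $\alpha$-dependence beyond the stated polynomial, and the Gamma-quotient identity — is routine.
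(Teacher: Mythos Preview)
Your proof is correct and follows the same high-level route as the paper: factor $N_\ell^\alpha=(-1)^{\alpha-1}(1+iy)_{\alpha-1}N_\ell$, identify the polynomial prefactor with $\Gamma(-iy)/\Gamma(1-iy-\alpha)$, and invoke Lemma~\ref{lem:t_alpha_partial_alpha}. The one substantive difference is the last step. After Lemma~\ref{lem:t_alpha_partial_alpha} both arguments reduce to showing $t^{-\alpha}(t^\alpha\partial^{\alpha-1}t^{-1})=\partial^{\alpha-1}t^{-1}$ on $\sfP'$; the paper establishes this by dualizing through the Stirling expansion, proving $t^{-1}\partial^{\alpha-1}t^\alpha=\sum_{m=1}^\alpha\stirlingi{\alpha}{m}(\partial t)^{m-1}$ on $\sfP$ by induction, whereas you simply observe that on $\sfP'$ (which is closed under multiplication by any smooth function on $(0,\infty)$, in particular $t^{\pm\alpha}$) the operators $t^\alpha$ and $t^{-\alpha}$ are mutual inverses, so the cancellation is immediate. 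Your shortcut is legitimate because Lemma~\ref{lem:t_alpha_partial_alpha} already delivers $t^\alpha\partial^{\alpha-1}t^{-1}\iota$ as the literal composition on $\sfP'$ (cf.\ the remark after Corollary~\ref{cor:stirling} that $\sfP'$ is closed under $t$ and $\partial$ separately), and it buys you a considerably shorter argument at no cost in rigor.
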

\begin{proof}
As is apparent from the definitions, it suffices to prove that
\begin{align*}
\M_\alpha^{-1}\biggl\{\frac{\Gamma(-iy)}{\Gamma(1-iy-\alpha)}\M_0\{\cdot\}\biggr\}=\partial^{\alpha-1}t^{-1}\iota
\end{align*}
By Lemma \ref{lem:t_alpha_partial_alpha} it in turn suffices to show that on $\sfP'$
\begin{align}\label{eq:t_alpha_cancels}
t^{-\alpha}(t^\alpha\partial^{\alpha-1}t^{-1})=\partial^{\alpha-1}t^{-1}
\end{align}
Let $\psi\in\sfP'$ and $\omega\in\sfP.$ If $t^{-1}\partial^{\alpha-1}t^\alpha=\sum_{m=1}^\alpha\stirlingi{\alpha}{m}(\partial t)^{m-1}$ on $\sfP,$ then indeed
\begin{gather*}
\langle t^{-\alpha}(t^\alpha\partial^{\alpha-1}t^{-1})\psi\mid\omega\rangle=\langle t^\alpha\partial^{\alpha-1}t^{-1}\psi\mid t^{-\alpha}\omega\rangle=\sum_{m=1}^\alpha s(\alpha,m)\langle(t\partial)^{m-1}\psi\mid t^{-\alpha}\omega\rangle\\
=\sum_{m=1}^\alpha s(\alpha,m)(-1)^{m-1}\langle\psi\mid(\partial t)^{m-1}t^{-\alpha}\omega\rangle=(-1)^{\alpha-1}\sum_{m=1}^\alpha\stirlingi{\alpha}{m}\langle\psi\mid(\partial t)^{m-1}t^{-\alpha}\omega\rangle\\
=(-1)^{\alpha-1}\langle\psi\mid(t^{-1}\partial^{\alpha-1}t^\alpha)t^{-\alpha}\omega\rangle=(-1)^{\alpha-1}\langle t^{-1}\psi\mid\partial^{\alpha-1}\omega\rangle=\langle\partial^{\alpha-1}t^{-1}\psi\mid\omega\rangle
\end{gather*}
so we finish up by proving that $t^{-1}\partial^{\alpha-1}t^\alpha=\sum_{m=1}^\alpha\stirlingi{\alpha}{m}(\partial t)^{m-1}$ on $\sfP.$

The case $\alpha=1$ is trivial, and by induction, using that $\partial t=1+t\partial,$
\begin{gather*}
\sum_{m=1}^{\alpha+1}\stirlingi{\alpha+1}{m}(\partial t)^{m-1}=\alpha\sum_{m=1}^{\alpha+1}\stirlingi{\alpha}{m}(\partial t)^{m-1}+\sum_{m=1}^{\alpha+1}\stirlingi{\alpha}{m-1}(\partial t)^{m-1}\\
=\alpha\sum_{m=1}^\alpha\stirlingi{\alpha}{m}(\partial t)^{m-1}+\sum_{m=0}^\alpha\stirlingi{\alpha}{m}(\partial t)^m=\alpha t^{-1}\partial^{\alpha-1}t^\alpha+\sum_{m=1}^\alpha\stirlingi{\alpha}{m}(\partial t)^m\\
=\alpha t^{-1}\partial^{\alpha-1}t^\alpha+t^{-1}\partial^{\alpha-1}t^\alpha(\partial t)=\alpha t^{-1}\partial^{\alpha-1}t^\alpha+t^{-1}\partial^{\alpha-1}t^\alpha(1+t\partial)\\
=(\alpha+1)t^{-1}\partial^{\alpha-1}t^\alpha+t^{-1}\partial^{\alpha-1}t^{\alpha+1}\partial=t^{-1}\partial^{\alpha-1}(\partial t^{\alpha+1})=t^{-1}\partial^\alpha t^{\alpha+1}\tag*{\qedhere}
\end{gather*}
\end{proof}
\begin{cor}\label{cor:d_decomp}
$\text{D}_\ell^\alpha=\partial^\alpha\text{D}_\ell$ for all $\alpha\in\N_0$
\end{cor}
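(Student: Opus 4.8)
The plan is to run the proof of Lemma~\ref{lem:dh_decomp} almost verbatim, replacing the Gamma-quotient $\Gamma(-iy)/\Gamma(1-iy-\alpha)$ by $\Gamma(1-iy)/\Gamma(1-iy-\alpha)$, Lemma~\ref{lem:t_alpha_partial_alpha} by Corollary~\ref{cor:t_alpha_partial_alpha}, and Lemma~\ref{lem:stirling} by Corollary~\ref{cor:stirling}. The case $\alpha=0$ requires no argument: for $\ell\notin L$ we have $N_\ell=N_\ell^{[\ell\in L]}=N_\ell^0$, whence $\text{D}_\ell^0=\text{D}_\ell=\partial^0\text{D}_\ell$. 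So assume $\alpha\in\N$ henceforth.

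First I would read off, from the first of the two compact expressions for $N_\ell^\alpha$ in {\S\,}\ref{sec:N} specialised to $\alpha=0$, the algebraic identity
\begin{align*}
N_\ell^\alpha=\frac{\Gamma(1-iy)}{\Gamma(1-iy-\alpha)}\,N_\ell^0\qquad(\ell\notin L),
\end{align*}
the prefactor being the polynomial $(1-iy-\alpha)_\alpha$. Recalling (see {\S\,}\ref{sec:mellin}, or the remark just before Definition~\ref{def:gamma_d}) that $G\coloneq\text{D}_\ell\{\psi\}$ is not merely a $\sfP$ distribution but a $\PS$ distribution, with $\M_0\{G\}=N_\ell\M_0\{\psi\}=N_\ell^0\M_0\{\psi\}$, we obtain, for every $\psi\in\PS'$,
\begin{align*}
\text{D}_\ell^\alpha\{\psi\}=\M_\alpha^{-1}\bigl\{N_\ell^\alpha\M_0\{\psi\}\bigr\}=\M_\alpha^{-1}\Bigl\{\tfrac{\Gamma(1-iy)}{\Gamma(1-iy-\alpha)}\,\M_0\{\text{D}_\ell\{\psi\}\}\Bigr\}.
\end{align*}
Thus the corollary reduces to the single operator identity $\M_\alpha^{-1}\bigl\{\tfrac{\Gamma(1-iy)}{\Gamma(1-iy-\alpha)}\M_0\{\cdot\}\bigr\}=\partial^\alpha\iota$ on $\PS'$ (whence $\text{D}_\ell^\alpha=\partial^\alpha\text{D}_\ell$, once $\text{D}_\ell\{\psi\}$ is viewed as the $\PS$ distribution underlying it).

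To prove this operator identity I would combine two facts: $\M_\alpha^{-1}\{\cdot\}[t]=t^{-\alpha}\M_0^{-1}\{\cdot\}[t]$, straight from the definition of $\M_c^{-1}$, and $\M_0^{-1}\bigl\{\tfrac{\Gamma(1-iy)}{\Gamma(1-iy-\alpha)}\M_0\{\cdot\}\bigr\}=\iota\,t^\alpha\partial^\alpha$, which is Corollary~\ref{cor:t_alpha_partial_alpha}. It then remains only to verify that $t^{-\alpha}\iota\,t^\alpha\partial^\alpha=\partial^\alpha\iota$ on $\PS'$. For this I would invoke that Corollary~\ref{cor:stirling} stays valid on $\sfP'$ and that $\iota$ intertwines $t\partial$: for $\psi\in\PS'$ this gives $\iota\{t^\alpha\partial^\alpha\psi\}=\sum_{m=0}^\alpha s(\alpha,m)(t\partial)^m\iota\{\psi\}=t^\alpha\partial^\alpha\iota\{\psi\}$ in $\sfP'$, after which the outer factor $t^\alpha$ is simply annihilated by multiplication by the smooth function $t^{-\alpha}$, leaving $\partial^\alpha\iota\{\psi\}$. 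This is exactly where the present situation is strictly easier than Lemma~\ref{lem:dh_decomp}: there the power of $t$ to be cancelled was trapped between the $\alpha-1$ derivatives, which forced the unsigned-Stirling-number bookkeeping, whereas here it sits on the outside. (One could instead pair against $\omega\in\sfP$ as in Lemma~\ref{lem:dh_decomp}, shift $t^{-\alpha}$ onto $\omega$, expand $t^\alpha\partial^\alpha=\sum_m s(\alpha,m)(t\partial)^m$, integrate each $(t\partial)^m$ by parts so that $s(\alpha,m)(-1)^m=(-1)^\alpha\stirlingi{\alpha}{m}$, reassemble via $\sum_{m=0}^\alpha\stirlingi{\alpha}{m}(\partial t)^m=\partial^\alpha t^\alpha$ on $\sfP$, cancel $t^\alpha$ against $t^{-\alpha}$, and integrate $\partial^\alpha$ back by parts.)

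I expect the only real friction to be notational rather than analytic: one must be scrupulous about identifying $\text{D}_\ell\{\psi\}\in\sfP'$ with the $\PS$ distribution underlying it — so that $\M_0$ may legitimately be applied to it and return $N_\ell^0\M_0\{\psi\}$ — and about the commutation of $\iota$ with $t\partial$ (and with $\partial$) on the relevant domains; both are already used tacitly in the excerpt. Granting those, nothing beyond Corollaries~\ref{cor:stirling} and~\ref{cor:t_alpha_partial_alpha} is needed.
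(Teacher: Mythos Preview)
Your proposal is correct and follows essentially the same route as the paper: both reduce $\text{D}_\ell^\alpha=\partial^\alpha\text{D}_\ell$ to the operator identity $\M_\alpha^{-1}\bigl\{\tfrac{\Gamma(1-iy)}{\Gamma(1-iy-\alpha)}\M_0\{\cdot\}\bigr\}=\partial^\alpha\iota$, and both invoke Corollary~\ref{cor:t_alpha_partial_alpha} to turn this into $t^{-\alpha}(t^\alpha\partial^\alpha)=\partial^\alpha$ on $\sfP'$. The paper then disposes of this last step by right-multiplying the already-established identity \eqref{eq:t_alpha_cancels} (from the proof of Lemma~\ref{lem:dh_decomp}) by $t\partial$, whereas you cancel $t^{-\alpha}$ against the outer $t^\alpha$ directly---which is entirely legitimate on $\sfP'$, since multiplication by $t^{\pm\alpha}$ are mutually inverse operators there. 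One small inaccuracy: your remark that in Lemma~\ref{lem:dh_decomp} the power of $t$ was ``trapped between the $\alpha-1$ derivatives'' is off---there too the factor $t^\alpha$ sits on the outside of $t^\alpha\partial^{\alpha-1}t^{-1}$, so the same direct cancellation would have worked; the paper's Stirling bookkeeping was not forced by any trapping. This does not affect the validity of your argument here.
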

\begin{proof}
It suffices to prove that
\begin{align}
\M_\alpha^{-1}\biggl\{\frac{\Gamma(1-iy)}{\Gamma(1-iy-\alpha)}\M_0\{\cdot\}\biggr\}=\partial^\alpha\iota\label{eq:partial_alpha}
\end{align}
In turn, it suffices to show that $t^{-\alpha}(t^\alpha\partial^\alpha)=\partial^\alpha$ on $\sfP'$ by Corollary \ref{cor:t_alpha_partial_alpha}. But this follows immediately from mutiplying both sides of \eqref{eq:t_alpha_cancels} on the right by $t\partial$ if $\alpha\in\N,$ and is trivial if $\alpha=0.$
\end{proof}

\section{Wiener algebra \& dual Radon transform}\label{sec:main_base_case}

In this section we will essentially prove that
\begin{align*}
\F\{\ell_1\}\subset\sfL\subset\dualradon\{\ell_{1,\infty}(-\infty)\}
\end{align*}
where $\dualradon\{h\}=\textstyle\int h(w,\langle w\mid\!{}\cdot{}\rangle)\,\underline{dw}$ is the \myuline{dual Radon transform} \cite[{\S\,}3]{radonNN}.

Let us formalize what we intend to prove.
\begin{defn}
The \myuline{Wiener algebra} $\B\coloneq\{\F\{\phi\}:\phi\in\ell_1\}.$ Because $f\in\B\Rightarrow$ $\exists!\phi\in\ell_1:f=\F\{\phi\}$ we may set $\lVert f\rVert_\B\coloneq\lVert\phi\rVert_1.$
\end{defn}
The Wiener algebra is plainly a Banach space.
\begin{defn}
Let $\sfE\coloneq\{\dualradon\{h\}:h\in\ell_{1,\infty}(-\infty)\}$ with
\begin{align*}
\lVert f\rVert_\sfE\coloneq\inf\Bigl\{\lVert h\rVert_{1,\infty}:f=\dualradon\{h\}\Bigr\}
\end{align*}
\end{defn}
Note that $\lVert\cdot\rVert_\sfE$ is a seminorm because $f_+,f_-\in\sfE\Rightarrow$
\begin{align*}
\lVert f_++f_-\rVert_\sfE&=\inf\Bigl\{\lVert h\rVert_{1,\infty}:f_++f_-=\dualradon\{h\}\Bigr\}\\
&\leq\inf\Bigl\{\lVert h^++h^-\rVert_{1,\infty}:f_\pm=\dualradon\{h^\mp\}\Bigr\}\\
&\leq\inf\Bigl\{\lVert h^+\rVert_{1,\infty}+\lVert h^-\rVert_{1,\infty}:f_\pm=\dualradon\{h^\mp\}\Bigr\}\\
&=\lVert f_+\rVert_\sfE+\lVert f_-\rVert_\sfE
\end{align*}
The main result of this section:
\begin{theorem}\label{thm:main_base_case}
$\B\subset\sfL\subset\sfE$ with $\lVert f\rVert_\sfE\leq\lVert f\rVert_\sfL$ if $f\in\sfL$ and $\lVert f\rVert_\sfL\leq\lVert f\rVert_\B$ if $f\in\B$
\end{theorem}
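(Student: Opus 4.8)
The plan is to establish the two inclusions and the two norm inequalities separately, chaining $f\in\B$ through the Fourier-to-polar picture into $\sfL$, and then $f\in\sfL$ through the Abel-summed decomposition onto the dual Radon transform. For the first inclusion $\B\subset\sfL$ with $\lVert f\rVert_\sfL\le\lVert f\rVert_\B$, I would take $f=\F\{\phi\}$ with $\phi\in\ell_1$ and compute the zonal pieces $f_\ell(\theta,t)=\int f(t\alpha)Z_\ell(\theta,\alpha)\,\underline{d\alpha}$. Writing $f(t\alpha)=\int\phi(u)e^{it\langle u\mid\alpha\rangle}\,du$ and using the Funk--Hecke formula (Lemma \ref{lem:funk_hecke}) to evaluate the inner $\sphere^{n-1}$ integral against $Z_\ell$ (via its proportionality to the Gegenbauer polynomial, \eqref{eq:z_propto_c}), the $u$-integral should collapse onto a one-dimensional integral over the radial variable $r=\lvert u\rvert$, i.e.\ over the Radon projection $\int_{\langle u\mid w\rangle=s}\phi(u)\,du$. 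The Gegenbauer--Bessel identity (a Hankel-transform evaluation of $\int_{-1}^{1}e^{izv}C_\ell^{(n-2)/2}(v)(1-v^2)^{(n-3)/2}\,dv$) should rewrite $f_\ell(\theta,t)$ as a Mellin-type integral whose multiplier on the Mellin side is exactly (a constant times) $\Gamma$-quotient appearing in the numerator/denominator of $N_\ell^\alpha$ — this is precisely why the $N_\ell^\alpha$ were defined the way they were. Applying $\text{D}_\ell$ or $\text{\DH}_\ell$ then amounts to multiplying by $N_\ell$ on the Mellin side, which should leave $G_\ell(\theta,t)$ expressible directly in terms of the (dual) Radon data of $\phi$, with the $\ell_{1,\infty}$-norm of the Abel sum controlled by $\lVert\phi\rVert_1=\lVert f\rVert_\B$; the $\ell_\infty$-in-$t$ bound comes from $\lvert e^{it\langle u\mid\alpha\rangle}\rvert=1$, and the $\ell_1$-in-$\theta$ bound from Fubini against $\lvert\phi\rvert$.

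For the second inclusion $\sfL\subset\sfE$ with $\lVert f\rVert_\sfE\le\lVert f\rVert_\sfL$, I would start from $f\in\sfL$, so that $\scrD\{\scrZ\{f\}\}=\{G_\ell\}\in\AAS(0)$ and $h=\osum_\ell h_\ell\in\ell_{1,\infty}(-\infty)$ with $\lVert h\rVert_{1,\infty}=\lVert f\rVert_\sfL$. The goal is to show $f=\dualradon\{h\}$. The natural route is to first reconstruct $f$ fibrewise: by Theorem \ref{thm:a_undoes_z}, for each fixed $t>0$ the sequence $\{f_\ell(\cdot,t)\}$ Abel-sums back to $f(t\,\cdot\,)$, so Abel summation undoes $\scrZ$. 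One then has to argue that applying $\scrD$ and then the $\scrA$-reassembly (with the parity twist in \eqref{eq:h_in_terms_of_g}) followed by the dual Radon transform returns the original $f$ — i.e.\ that $\dualradon\circ\scrA\circ\scrD\circ\scrZ=\mathrm{id}$ on $\sfL$. The cleanest way is to verify this identity degree-by-degree: for each $\ell$, check that $\dualradon$ applied to the single-degree piece $h_\ell$ (extended by parity to $\R$) reproduces the $\ell$-th zonal component of $f$, again using Funk--Hecke on $\sphere^{n-1}$ and the fact that $\text{D}_\ell$/$\text{\DH}_\ell$ are multiplier operators whose multiplier $N_\ell$ is exactly the reciprocal of the multiplier that the dual-Radon-plus-zonal-projection operation contributes. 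Then sum over $\ell$ using absolute Abel summability to pass the Abel limit through $\dualradon$ (justified since $\lVert\dualradon\{h_\ell\}(\,\cdot\,)\rVert$ is controlled by $\lVert h_\ell\rVert_{1,\infty}$ on compacta via Proposition \ref{prop:ii_1_9}), yielding $f=\dualradon\{h\}$ and hence $\lVert f\rVert_\sfE\le\lVert h\rVert_{1,\infty}=\lVert f\rVert_\sfL$.

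The main obstacle I expect is the bookkeeping in the first inclusion: correctly tracking the $\ell$-dependent normalization constants from \eqref{eq:z_propto_c}, the $\lvert\sphere^{n-2}\rvert$ factor from Funk--Hecke, and the Gegenbauer--Bessel constants, and checking that they conspire to produce precisely $N_\ell^\alpha$'s $\Gamma$-quotients (rather than something off by an $\ell$-dependent factor that would ruin the uniform norm bound), together with the parity case split between $\ell\in L$ and $\ell\notin L$ that forces the two different shapes of $N_\ell^\alpha$ and the sign $(-1)^{\ell+\alpha}$ in the reassembly. A secondary technical point is justifying the interchange of the $u$-integral (over $\ell_1$) with the Mellin/Fourier manipulations and with the Abel sum — this should follow from Tonelli/dominated convergence using $\phi\in\ell_1$, $\lVert Z_\ell\rVert_1\preccurlyeq_n\ell^{n-2}$ (Lemma \ref{lem:z_bound}) to get geometric-in-$R$ convergence, and the $O_M$ membership of $N_\ell$ (Lemma \ref{lem:NOM}) to keep everything in $\schwartz'$. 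Once the $\alpha=0$ case is set up this way, the authors presumably reuse the computation and its bookkeeping verbatim for general $\alpha$ in Theorem \ref{thm:main}.
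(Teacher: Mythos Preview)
Your plan is essentially the paper's own strategy: for $\B\subset\sfL$ it computes $f_\ell$ via Funk--Hecke and the Gegenbauer--Bessel identity, identifies $G_\ell(\theta,t)=\int\phi(u)Z_\ell(\theta,\hat u)\cis_\ell(t\lvert u\rvert)\,du$, and Abel-sums the resulting $h_\ell$ into an explicit $h$ with $\lVert h\rVert_{1,\infty}\le\lVert\phi\rVert_1$; for $\sfL\subset\sfE$ it verifies $f_\ell(\theta,t)=\int h_\ell(w,\langle w\mid t\theta\rangle)\,\underline{dw}$ degree-by-degree via Funk--Hecke (after expanding $h_\ell$ in the $S_{\ell j}$ basis) and passes to the Abel limit using Proposition~\ref{prop:ii_1_10}.

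Two tactical points where the paper differs from your sketch. First, rather than pushing $f_\ell$ forward through the Mellin multiplier $N_\ell$, the paper characterizes the \emph{inverse} of $\text{D}_\ell$ (Lemma~\ref{lem:d_inverted}): it recognizes $1/N_\ell$ as the Mellin transform of the Gegenbauer weight $v\mapsto C_\ell^{(n-2)/2}(v)(1-v^2)^{(n-3)/2}[v<1]$, so that $G=\text{D}_\ell\{f\}$ becomes the integral equation $f(t)=2\lvert\sphere^{n-2}\rvert\int_0^1 G(tv)C_\ell^{(n-2)/2}(v)(1-v^2)^{(n-3)/2}\,dv$, and then checks the candidate $G_\ell$ against this. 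Second, the Abel convergence $h_R\to h$ in $\ell_{1,\infty}(-\infty)$ (Lemma~\ref{lem:h}) is not just Tonelli/dominated convergence: the paper writes $h_R$ as a Poisson integral in the angular variable and runs an $\epsilon/3$ argument---approximate $\phi$ by a Lipschitz function, truncate radially, and only then invoke the Poisson-kernel-to-identity estimate of Proposition~\ref{prop:ii_1_9}---to upgrade pointwise convergence to $\ell_{1,\infty}$ convergence. Your outline is correct, but this step is where most of the analytic work hides.
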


\subsection{Proof of Theorem \ref{thm:main_base_case}}

We first characterize the $\text{D}_\ell$ and the $\text{\DH}_\ell.$ We remind the reader that $\regPS'=\PS'\cap\ell_1^\text{loc}(0,\infty).$
\begin{lem}\label{lem:d_inverted}
Let $f\in\PS'$ and $G\in\regPS'.$ Then $G=\text{D}_\ell\{f\}\Leftrightarrow$
\begin{align}
f[t]=2\lvert\sphere^{n-2}\rvert\int_0^1 G(tv)C_\ell^{(n-2)/2}(v)(1-v^2)^{(n-3)/2}\,dv\label{eq:d_inverted}
\end{align}
so the distribution $f[t]$ is a function $f(t),$ whereas $G=\text{\DH}_\ell\{f\}\Leftrightarrow$
\begin{align}
f[t]=2\lvert\sphere^{n-2}\rvert\int_0^1 G(tv)\tfrac{1}{n-1}C_{\ell-1}^{n/2}(v)(1-v^2)^{(n-1)/2}\,\frac{dv}{v}\label{eq:dh_inverted}
\end{align}
\end{lem}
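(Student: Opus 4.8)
The plan is to prove the two equivalences by transforming everything to the Mellin/Fourier side, where the operators $\text{D}_\ell$ and $\text{\DH}_\ell$ are multiplication operators with symbols expressible through the $N_\ell$, and where the right-hand integral transforms become (via the Funk--Hecke formula, Lemma~\ref{lem:funk_hecke}, or rather Proposition~\ref{prop:funk_hecke_gegenbauer}) a Mellin multiplier acting on $G$. Concretely, for the first equivalence: $G = \text{D}_\ell\{f\}$ means by definition $\M_0\{G\} = N_\ell\,\M_0\{f\}$ (using that $G$, being $\M_0^{-1}$ of something, is a genuine $\PS$ distribution, so $\M_0$ is invertible on it, as noted in \S\ref{sec:mellin}). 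So I would first compute the Mellin transform of the right-hand side of \eqref{eq:d_inverted}. Writing $f[t] = 2\lvert\sphere^{n-2}\rvert\int_0^1 G(tv)\,C_\ell^{(n-2)/2}(v)(1-v^2)^{(n-3)/2}\,dv$ and substituting so as to apply $\M_0$: since $\M\{G(tv)\}(s) = v^{-s}\M\{G\}(s)$, I get $\M\{f\}(s) = 2\lvert\sphere^{n-2}\rvert\,\M\{G\}(s)\int_0^1 v^{-s}C_\ell^{(n-2)/2}(v)(1-v^2)^{(n-3)/2}\,dv$. The remaining task is to evaluate that $v$-integral as a ratio of Gamma functions and check it matches $N_\ell(iy)$ with $s=iy$. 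This is a standard Gegenbauer--beta integral; I expect it to reduce, after using the parity of $C_\ell^{(n-2)/2}$ and the Gegenbauer generating function or the explicit hypergeometric representation, to exactly the Gamma-quotient defining $N_\ell$ for $\ell\notin L$.

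The subtlety — and the main obstacle — is the direction of the equivalence and the function-space bookkeeping, not the Gamma-function identity. The definition of $\text{D}_\ell$ produces an element of $\sfP'$ via $\M_\alpha^{-1}$, and in the $\alpha=0$ case $\text{D}_\ell = \M_0^{-1}\{N_\ell\M_0\{\cdot\}\}$ lands in $\PS'$; but the lemma asserts the \emph{additional} regularity that when $G\in\regPS'$, the distribution $f[t]$ is an honest locally integrable function given by that explicit integral. So I would argue: ($\Leftarrow$) if $f$ is \emph{defined} by the RHS integral with $G\in\regPS'$, then the integral converges for a.e.\ $t$ (the integrand is $\ell_1$ on $(0,1)$ in $v$ for fixed $t$ since $G$ is locally integrable near $0$ after the scaling and $(1-v^2)^{(n-3)/2}$ is integrable), defines a locally integrable function, lies in $\PS'$, and the Mellin computation above shows $\M_0\{f\} = N_\ell\M_0\{G\}$, i.e.\ $f = \text{D}_\ell^{-1}\{G\}$ in the appropriate sense, equivalently $G = \text{D}_\ell\{f\}$ after checking $N_\ell$ is invertible as a multiplier (it is nonvanishing away from a discrete set, and more to the point $\M_0\{G\} = N_\ell\M_0\{f\}$ is the defining relation). ($\Rightarrow$) Conversely if $G = \text{D}_\ell\{f\}$ with $f\in\PS'$ and $G\in\regPS'$, then $\M_0\{f\} = N_\ell^{-1}\M_0\{G\}$; I recognize the RHS integral's Mellin transform as precisely $N_\ell^{-1}\M_0\{G\}$ by the computation, so $f$ equals that integral by injectivity of $\M_0$ on $\PS'$. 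The genuinely delicate point is justifying the interchange of $\M$ (an integral over $t$) with the integral over $v$ — i.e.\ a Fubini/Tonelli step at the level of distributions — which I would handle by first establishing it for $G\in\ell_1(0,\infty)$ (genuine functions, where Fubini applies directly via the substitution $t\mapsto e^u$ turning $\M_0$ into $\F$) and then extending by the density/continuity properties of $\M_0:\PS'\to\schwartz'$ and $\iota:\PS'\to\sfP'$.

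For the second equivalence, $G = \text{\DH}_\ell\{f\}$ for $\ell\in L$: here the defining symbol is $N_\ell = N_\ell^1$ with the extra factor structure, and I would proceed identically, computing $\M_0$ of the RHS of \eqref{eq:dh_inverted}. The new features are (i) the measure is $dv/v$ rather than $dv$, reflecting that for even $\ell$ the natural object is an \emph{antiderivative} (cf.\ Definition~\ref{def:F}, where for $\ell\in L$ one works with $F_\ell = -\int^\to f_\ell(v)\,dv/v$), and (ii) the Gegenbauer index is bumped to $C_{\ell-1}^{n/2}$ with weight $(1-v^2)^{(n-1)/2}$ and an extra $1/(n-1)$. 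Substituting $\M\{G(tv)\}(s) = v^{-s}\M\{G\}(s)$, the $v$-integral becomes $\frac{2\lvert\sphere^{n-2}\rvert}{n-1}\int_0^1 v^{-s-1}C_{\ell-1}^{n/2}(v)(1-v^2)^{(n-1)/2}\,dv$, and I must verify this equals the Gamma-quotient defining $N_\ell(iy)$ for $\ell\in L$ — again a beta-type Gegenbauer integral, now with shifted parameters, and I expect the shift $\ell\mapsto\ell-1$, $n/2$ in place of $(n-2)/2$, together with the $v^{-s-1}$, to reproduce exactly the $\Gamma(\tfrac{n+\ell-iy}{2})\Gamma(\tfrac{2-\ell-iy}{2})/(\cdots)$ shape. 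The main obstacles are thus concentrated in two places: the careful distributional Fubini justification (shared with the first case), and matching the two Gegenbauer integrals to the precise Gamma quotients — the latter is mechanical but error-prone, so I would cross-check via the asymptotics \eqref{eq:n_asymp} and the known parity of the Gegenbauer polynomials to make sure no stray sign or factor of $2$ is lost.
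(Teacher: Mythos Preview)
Your approach is essentially the paper's: both identify the Gegenbauer integral kernel with $1/N_\ell$ on the Mellin side (the paper cites Oberhettinger [9.20] for the explicit Gamma-quotient evaluation) and then invert. The substantive difference is in how the ``distributional Fubini'' is handled. Rather than a density argument, the paper first notes that $1/N_\ell\in O_M$ (so $\M_0\{G\}=N_\ell\M_0\{f\}\Leftrightarrow (1/N_\ell)\M_0\{G\}=\M_0\{f\}$), rewrites $1/N_\ell$ as $\M_0$ of an explicit $\ell_1$ kernel via the substitution $v\mapsto 1/t$, and then invokes the Fourier convolution theorem for tempered distributions (Schwartz, (VII,\,8;\,5)): the product $(1/N_\ell)\M_0\{G\}$ becomes $\F\{\E\{G\}*\E\{\text{kernel}\}\}$, and since $\F$ is an isomorphism on $\schwartz'$ one reads off the convolution directly, which after undoing the exponential substitution is exactly \eqref{eq:d_inverted}. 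This bypasses the density/continuity extension you propose and works uniformly for all $G\in\regPS'$. One point you miss in the $\text{\DH}_\ell$ case: the tabulated Mellin integral for the $C_{\ell-1}^{n/2}$ kernel is a priori valid only for $\Re(s)>0$, and the paper uses analytic continuation (both sides being holomorphic for $\Re(s)>-2$, citing Mattner) to reach the imaginary axis $s=-iy$.
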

\begin{rem}
The integrals are well-defined because $G$ is integrable on $[0,t]$ and $C_{\ell-1}^{n/2}(v)/v$ is an even polynomial whenever $\ell\in L.$
\end{rem}
\begin{proof}
Since $1/N_\ell$ is smooth (and well-defined because $N_\ell$ has no zeros), arguing like we did in Appendix \ref{sec:NOM} yields that $1/N_\ell\in O_C\subset O_M,$ whence
\begin{align*}
\M_0\{G\}=N_\ell\M_0\{f\}\Leftrightarrow(1/N_\ell)\M_0\{G\}=\M_0\{f\}
\end{align*}
\fbox{Proof that $G=\text{D}_\ell\{f\}\Leftrightarrow\eqref{eq:d_inverted}$}

Since $\M_0$ is an isomorphism, $G=\text{D}_\ell\{f\}\Leftrightarrow\M_0\{G\}=N_\ell\M_0\{f\}\Leftrightarrow$ $(1/N_\ell)\M_0\{G\}=\M_0\{f\}.$ Now, since \cite[9.20]{oberhettinger}
\begin{align*}
1/N_\ell(y)=\frac{2\displaystyle\pi^{(n-1)/2}\Gamma(\tfrac{2-iy}{2})\Gamma(\tfrac{1-iy}{2})}{\Gamma(\tfrac{n+\ell-iy}{2})\Gamma(\tfrac{2-\ell-iy}{2})}&=\\
2\lvert\sphere^{n-2}\rvert\M\Bigl\{v\mapsto C_\ell^{(n-2)/2}(v)(1-v^2)^{(n-3)/2}[v<1]\Bigr\}(1-iy)&=\\
2\lvert\sphere^{n-2}\rvert\int_0^1 v^{1-iy}C_\ell^{(n-2)/2}(v)(1-v^2)^{(n-3)/2}\,\frac{dv}{v}&=\\
2\lvert\sphere^{n-2}\rvert\int_1^\infty t^{iy-1}C_\ell^{(n-2)/2}(1/t)(1-(1/t)^2)^{(n-3)/2}\,\frac{dt}{t}&=\\
2\lvert\sphere^{n-2}\rvert\M\Bigl\{t\mapsto(1/t)C_\ell^{(n-2)/2}(1/t)(1-(1/t)^2)^{(n-3)/2}[t>1]\Bigr\}(iy)&=\\
\M_0\Bigl\{t\mapsto 2\lvert\sphere^{n-2}\rvert(1/t)C_\ell^{(n-2)/2}(1/t)(1-(1/t)^2)^{(n-3)/2}[t>1]\Bigr\}(y)
\end{align*}
it follows from \cite[(VII, 8; 5)]{schwarz} that $G=\text{D}_\ell\{f\}\Leftrightarrow\F\{\E\{f\}\}=\M_0\{f\}=$
\begin{gather*}
\M_0\{G\}\M_0\Bigl\{t\mapsto 2\lvert\sphere^{n-2}\rvert(1/t)C_\ell^{(n-2)/2}(1/t)(1-(1/t)^2)^{(n-3)/2}[t>1]\Bigr\}\\
=\F\{\E\{G\}\}\F\Bigl\{\E\Bigl\{t\mapsto 2\lvert\sphere^{n-2}\rvert(1/t)C_\ell^{(n-2)/2}(1/t)(1-(1/t)^2)^{(n-3)/2}[t>1]\Bigr\}\Bigr\}\\
=\F\Bigl\{\E\{G\}*\E\Bigl\{t\mapsto 2\lvert\sphere^{n-2}\rvert(1/t)C_\ell^{(n-2)/2}(1/t)(1-(1/t)^2)^{(n-3)/2}[t>1]\Bigr\}\Bigr\}
\end{gather*}
which  is in turn tantamount to \cite[(3)]{fungkang}
\begin{align}\label{eq:f_l_as_convolution}
f(e^u)&=2\lvert\sphere^{n-2}\rvert\int_1^\infty G(e^{u-z})e^{-z}C_\ell^{(n-2)/2}(e^{-z})(1-e^{-2z})^{(n-3)/2}\,dz
\end{align}
because $\F\!:\schwartz'\to\schwartz'$ is an isomorphism. Letting $t=e^u$ and $v=e^{-z}$ finally yields that $G=\text{D}_\ell\{f\}\Leftrightarrow$
\begin{align*}
f(t)=2\lvert\sphere^{n-2}\rvert\int_0^1 G(tv)C_\ell^{(n-2)/2}(v)(1-v^2)^{(n-3)/2}\,dv
\end{align*}
\fbox{Proof that $G=\text{\DH}_\ell\{f\}\Leftrightarrow\eqref{eq:dh_inverted}$}

For $\ell\in L,$ \cite[9.20]{oberhettinger} yields that
\begin{gather*}
\M\Bigl\{v\mapsto 2\lvert\sphere^{n-2}\rvert\tfrac{1}{n-1}C_{\ell-1}^{n/2}(v)(1-v^2)^{(n-1)/2}[v<1](1/v)\Bigr\}(1+s)=\\
\M\Bigl\{v\mapsto 2\lvert\sphere^{n-2}\rvert\tfrac{1}{n-1}C_{\ell-1}^{n/2}(v)(1-v^2)^{(n-1)/2}[v<1]\Bigr\}(s)=\frac{\displaystyle\pi^{(n-1)/2}\Gamma(\tfrac{s}{2})\Gamma(\tfrac{1+s}{2})}{\Gamma(\tfrac{n+\ell+s}{2})\Gamma(\tfrac{2-\ell+s}{2})}
\end{gather*}
for $\Re(s)>0.$ However, since both sides of the equality
\begin{align*}
\M\Bigl\{v\mapsto 2\lvert\sphere^{n-2}\rvert\tfrac{1}{n-1}C_{\ell-1}^{n/2}(v)(1-v^2)^{(n-1)/2}[v<1]\Bigr\}(s)=\frac{\displaystyle\pi^{(n-1)/2}\Gamma(\tfrac{s}{2})\Gamma(\tfrac{1+s}{2})}{\Gamma(\tfrac{n+\ell+s}{2})\Gamma(\tfrac{2-\ell+s}{2})}
\end{align*}
are holomorphic for $\Re(s)>-2$ \cite{mattner}, it follows that
\begin{gather*}
\M\Bigl\{v\mapsto 2\lvert\sphere^{n-2}\rvert\tfrac{1}{n-1}C_{\ell-1}^{n/2}(v)/v\cdot(1-v^2)^{(n-1)/2}[v<1]\Bigr\}(1-iy)\\
=\frac{\displaystyle\pi^{(n-1)/2}\Gamma(\tfrac{-iy}{2})\Gamma(\tfrac{1-iy}{2})}{\Gamma(\tfrac{n+\ell-iy}{2})\Gamma(\tfrac{2-\ell-iy}{2})}
\end{gather*}
because analytic continuations are unique. The rest of the proof now follows mutatis mutandis from the proof of $G=\text{D}_\ell\{f\}\Leftrightarrow\eqref{eq:d_inverted}.$
\end{proof}
We will now establish the effect of the $\text{D}_\ell$ and the $\text{\DH}_\ell$ on $f$ as in Definition \ref{def:gamma_d}.
\begin{lem}\label{lem:dh_inverted}
If $f,F=-\int^\to f(v)\,\frac{dv}{v}\in\PS',$ then $\regPS'\ni G=\text{\DH}_\ell\{F\}\Rightarrow$
\begin{align*}
f(t)=2\lvert\sphere^{n-2}\rvert\int_0^1 G(tv)C_\ell^{(n-2)/2}(v)(1-v^2)^{(n-3)/2}\,dv\label{eq:dh_invertedF}\tag{\ref*{eq:d_inverted}'}
\end{align*}
\end{lem}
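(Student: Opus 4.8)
The plan is to reduce the statement to Lemma~\ref{lem:d_inverted}. The hypothesis pins $F$ down as an antiderivative of $f$: rewriting $F=-\int^\to f(v)\,\frac{dv}{v}$ as $F(t)=-\int_t^\infty\frac{f(v)}{v}\,dv$ and differentiating gives $\partial F=f/t$, i.e.\ $f=t\partial F$; since $F$, $t\partial F$ and $f$ all lie in $\PS'$ (which is closed under $t\partial$), this identity holds in $\PS'$. On the other hand, $\regPS'\ni G=\text{\DH}_\ell\{F\}$ with $F\in\PS'$ is exactly the hypothesis of the $\text{\DH}_\ell$ half of Lemma~\ref{lem:d_inverted}, so $F$ satisfies \eqref{eq:dh_inverted}. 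It therefore suffices to apply $t\partial$ to \eqref{eq:dh_inverted} and recognize the outcome as the right-hand side of \eqref{eq:d_inverted}.

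To differentiate \eqref{eq:dh_inverted} in $t$ without ever differentiating $G$ (which is only locally integrable), I would substitute $s=tv$, so that $t$ occurs only inside the smooth Gegenbauer factor; the boundary term at $s=t$ vanishes because $(1-v^2)^{(n-1)/2}$ does (as $n\geq2$), and reversing the substitution yields $t\partial F(t)=-\tfrac{2\lvert\sphere^{n-2}\rvert}{n-1}\int_0^1 G(tv)\tfrac{d}{dv}\bigl[C_{\ell-1}^{n/2}(v)(1-v^2)^{(n-1)/2}\bigr]\,dv$. Invoking the classical Gegenbauer differentiation identity $\tfrac{d}{dv}\bigl[C_{\ell-1}^{n/2}(v)(1-v^2)^{(n-1)/2}\bigr]=-(n-1)\,C_\ell^{(n-2)/2}(v)(1-v^2)^{(n-3)/2}$ (in the normalization $C_\ell^\alpha(1)=1$ used here; cf.\ \cite{dlmf}) makes the factors $-(n-1)$ and $\tfrac1{n-1}$ cancel and produces exactly $f(t)=2\lvert\sphere^{n-2}\rvert\int_0^1 G(tv)C_\ell^{(n-2)/2}(v)(1-v^2)^{(n-3)/2}\,dv$, since $f=t\partial F$. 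An equivalent route, closer in spirit to the proof of Lemma~\ref{lem:d_inverted}, stays at the Mellin level: from $G=\text{\DH}_\ell\{F\}$ and $f=t\partial F$ one gets $\M_0\{f\}=\tfrac{-iy}{N_\ell^1(y)}\M_0\{G\}$, and a short Gamma-function computation --- using $-iy\,\Gamma(\tfrac{-iy}{2})=2\Gamma(\tfrac{2-iy}{2})$ --- identifies $\tfrac{-iy}{N_\ell^1}$ with the function ``$1/N_\ell^0$'' appearing in the proof of Lemma~\ref{lem:d_inverted}, after which the convolution argument there applies verbatim.

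I expect the main obstacle to be the opening reduction: certifying that ``$F=-\int^\to f(v)\,\frac{dv}{v}\in\PS'$'' really forces $f$ to be regular enough for the fundamental theorem of calculus, and that the resulting $f=t\partial F$, a priori only a $\sfP'$ identity, genuinely transfers to $\PS'$. The one other point needing care is getting the constant in the Gegenbauer differentiation identity right under this paper's nonstandard normalization; everything else is bookkeeping already carried out for Lemma~\ref{lem:d_inverted}.
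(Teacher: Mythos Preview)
Your second route---staying at the Mellin level---is exactly the paper's proof: the paper multiplies $\M_0\{G\}=N_\ell\M_0\{F\}$ by $1/\text{``}N_\ell^0\text{''}\in O_M$, observes that $N_\ell/\text{``}N_\ell^0\text{''}=-iy$ (your Gamma identity $-iy\,\Gamma(\tfrac{-iy}{2})=2\Gamma(\tfrac{2-iy}{2})$), recognizes $-iy\,\M_0\{F\}=\M_0\{t\partial F\}=\M_0\{f\}$, and then the convolution argument from Lemma~\ref{lem:d_inverted} applies verbatim.

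Your primary route---applying $t\partial$ directly to \eqref{eq:dh_inverted} and invoking the Gegenbauer differentiation identity---is a genuinely different argument. It is more elementary and avoids the $O_M$ business for $1/\text{``}N_\ell^0\text{''}$. Interestingly, the paper does establish and use precisely that Gegenbauer identity, but only later (in the proof of Lemma~\ref{lem:g_closed_form}, for the $\ell\in L$ case) and for a different purpose. One caution: when $n=2$ the target kernel $(1-v^2)^{(n-3)/2}$ is singular at $v=1$, so after your substitution $s=tv$ the $t$-derivative of the smooth factor is not uniformly dominated (the singularity moves with $t$), and naive differentiation under the integral is not immediate. A clean fix is to argue in the reverse direction---integrate the claimed right-hand side against $dv/v$ and recover \eqref{eq:dh_inverted} via Fubini, exactly as the paper does in the $\int_t^m f_\ell\,dv/v$ computation inside Lemma~\ref{lem:g_closed_form}. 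The Mellin route sidesteps this entirely, which is presumably why the paper chose it.
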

\begin{proof}
Like in the proof of Lemma \ref{lem:d_inverted}, $1/\text{``}\!N_\ell^0\text{''}\in O_M,$ where
\begin{align*}
1/\text{``}\!N_\ell^0\text{''}:\R\ni y\mapsto\frac{2\displaystyle\pi^{(n-1)/2}\Gamma(\tfrac{2-iy}{2})\Gamma(\tfrac{1-iy}{2})}{\Gamma(\tfrac{n+\ell-iy}{2})\Gamma(\tfrac{2-\ell-iy}{2})}
\end{align*}
Multiplying both sides of $\M_0\{G\}=N_\ell\M_0\{F\}$ by $1/\text{``}\!N_\ell^0\text{''}$ yields that
\begin{align*}
(1/\text{``}\!N_\ell^0\text{''})\M_0\{G\}=(N_\ell/\text{``}\!N_\ell^0\text{''})\M_0\{F\}
\end{align*}
Now, $(N_\ell/\text{``}\!N_\ell^0\text{''})(y)=\frac{2\Gamma(\tfrac{2-iy}{2})}{\Gamma(\tfrac{-iy}{2})}=\frac{2\Gamma(1+\tfrac{-iy}{2})}{\Gamma(\tfrac{-iy}{2})}=-iy,$ so
\begin{gather}\label{eq:F->f}
\begin{gathered}
\frac{2\displaystyle\pi^{(n-1)/2}\Gamma(\tfrac{2-iy}{2})\Gamma(\tfrac{1-iy}{2})}{\Gamma(\tfrac{n+\ell-iy}{2})\Gamma(\tfrac{2-\ell-iy}{2})}\M_0\{G\}[y]=\\
-iy\M_0\{F\}[y]=\F\{\partial\E\{F\}\}[y]=\M_0\{t\mapsto t\partial F(t)\}[y]=\M_0\{f\}[y]
\end{gathered}
\end{gather}
by \cite[(14.28)]{kolk} and \cite[(6)]{fungkang}. The rest of the proof is as the proof of Lemma \ref{lem:d_inverted}.
\end{proof}
\begin{rem}
Actually, even when $G=\text{\DH}_\ell\{F\},$ \eqref{eq:dh_invertedF} uniquely determines $G$ \myuline{up to an} \myuline{additive constant}. The only difference is that ``\!$N_\ell^0$'' has a pole at 0, so the claim follows from the linearity of \eqref{eq:dh_invertedF} as a function $G\mapsto f$ together with the fact that $f=0\Rightarrow(1/\text{``}\!N_\ell^0\text{''})\M_0\{G\}=\M_0\{f\}=0\Rightarrow\M_0\{G\}$ is a multiple of $\delta_0,$ implying that $G$ is a constant function.
\end{rem}
\begin{cor}\label{cor:G_uniquely_determined}
Let $f$ and $\{G_\ell\}_{\ell=0}^\infty$ be as in Definition \ref{def:gamma_d} (so $G_\ell\in\ell_{1,\infty}(0)$ for every $\ell$ and $\{f_\ell(\theta,\cdot)\}_{\ell=0}^\infty\in\sfF$ for all $\theta).$ Then
\begin{align}
f_\ell(\theta,t)=2\lvert\sphere^{n-2}\rvert\int_0^1 G_\ell(\theta,tv)C_\ell^{(n-2)/2}(v)(1-v^2)^{(n-3)/2}\,dv\label{eq:f_in_terms_of_g}\tag{\ref*{eq:d_inverted}''}
\end{align}
If $\ell\notin L,$ \eqref{eq:f_in_terms_of_g} uniquely determines $G_\ell.$ If $\ell\in L,$ then $G_\ell$ is instead uniquely determined by
\begin{align}
F_\ell(\theta,t)=2\lvert\sphere^{n-2}\rvert\int_0^1 G_\ell(\theta,tv)\tfrac{1}{n-1}C_{\ell-1}^{n/2}(v)(1-v^2)^{(n-1)/2}\,\frac{dv}{v}\label{eq:F}\tag{\ref*{eq:dh_inverted}'}
\end{align}
where $F_\ell$ was defined in Definition \ref{def:F}.
\end{cor}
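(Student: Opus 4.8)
The plan is to thread Lemmas \ref{lem:d_inverted} and \ref{lem:dh_inverted} over the variable $\theta;$ since those are the one-dimensional statements, essentially all that remains is to verify their hypotheses fiberwise for almost every $\theta.$

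First I would isolate a conull set of $\theta\in\sphere^{n-1}$ on which all the relevant fibers behave. Because $\{G_\ell\}_{\ell=0}^\infty\in\AAS(0)$ we have $G_\ell\in\ell_{1,\infty}(0)$ for each $\ell,$ hence $\lVert G_\ell(\theta,\cdot)\rVert_\infty<\infty$ off a $\theta$-null set; discarding the countable union of these, we may assume $G_\ell(\theta,\cdot)\in\ell_\infty(0,\infty)$ for every $\ell.$ Since $\ell_\infty(0,\infty)\subset\PS',$ this gives $G_\ell(\theta,\cdot)\in\ell_\infty(0,\infty)\cap\PS'\subset\regPS'.$ On the other side, $\{f_\ell(\theta,\cdot)\}_{\ell=0}^\infty\in\sfF$ directly yields $f_\ell(\theta,\cdot)\in\PS'$ when $\ell\notin L,$ and when $\ell\in L$ it yields $\Phi_\ell(\theta,\cdot)\in\PS'$ with $\Phi_\ell(\theta,\infty)$ existing, whence $F_\ell(\theta,\cdot)=\Phi_\ell(\theta,\infty)-\Phi_\ell(\theta,\cdot)\in\PS'$ (constants lie in $\PS'$) and $f_\ell(\theta,\cdot)=(t\partial)\Phi_\ell(\theta,\cdot)\in\PS'$ by closure of $\PS'$ under $t\partial.$

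With the hypotheses in hand, the two identities fall out of the definition of $\scrD$ (threaded over $\theta$) combined with the lemmas. For $\ell\notin L$ we have $G_\ell(\theta,\cdot)=\text{D}_\ell\{f_\ell(\theta,\cdot)\},$ so the forward implication of Lemma \ref{lem:d_inverted} gives \eqref{eq:f_in_terms_of_g}. For $\ell\in L$ we have $G_\ell(\theta,\cdot)=\text{\DH}_\ell\{F_\ell(\theta,\cdot)\};$ Lemma \ref{lem:dh_inverted} (applicable since $f_\ell(\theta,\cdot)$ and $F_\ell(\theta,\cdot)$ are in $\PS'$) gives \eqref{eq:f_in_terms_of_g} once more, whereas the $\text{\DH}_\ell$ part of Lemma \ref{lem:d_inverted}, applied with $F_\ell(\theta,\cdot)$ in the role of $f,$ gives \eqref{eq:F}.

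For the uniqueness claims I would not invoke injectivity of $\text{D}_\ell$ or $\text{\DH}_\ell$ but rather the ``$\Leftrightarrow$'' already built into Lemma \ref{lem:d_inverted}. If $G,G'\in\ell_{1,\infty}(0)$ both satisfy \eqref{eq:f_in_terms_of_g} (the case $\ell\notin L$), then for a.e.\ $\theta$ both fibers lie in $\regPS'$ and satisfy \eqref{eq:d_inverted} with the same $f_\ell(\theta,\cdot),$ so Lemma \ref{lem:d_inverted} forces each to equal $\text{D}_\ell\{f_\ell(\theta,\cdot)\}$ as a distribution, hence they agree a.e.\ in $t,$ hence $G=G'$ in $\ell_{1,\infty}(0).$ The case $\ell\in L$ is verbatim the same with \eqref{eq:F}, $F_\ell$ in place of $f_\ell,$ and the second ``$\Leftrightarrow$'' of Lemma \ref{lem:d_inverted}. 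The only mildly delicate point is the fiberwise bookkeeping: pinning down a single null set off which $f_\ell(\theta,\cdot),F_\ell(\theta,\cdot)\in\PS'$ and $G_\ell(\theta,\cdot)\in\regPS'$ hold simultaneously for every $\ell,$ and recalling that it is closure of $\PS'$ under $t\partial$ that promotes $\Phi_\ell$ back to $f_\ell$ in the $\ell\in L$ case. Everything else is a transcription of the two one-dimensional lemmas.
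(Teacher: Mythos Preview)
Your proposal is correct and follows essentially the same approach as the paper: restrict to a conull set of $\theta$ where $G_\ell(\theta,\cdot)\in\ell_\infty(0,\infty)\subset\regPS',$ then apply Lemmas \ref{lem:d_inverted} and \ref{lem:dh_inverted} fiberwise. You are in fact more explicit than the paper in verifying the hypotheses (notably that $f_\ell(\theta,\cdot)\in\PS'$ for $\ell\in L$ via closure of $\PS'$ under $t\partial$) and in spelling out the uniqueness argument from the ``$\Leftrightarrow$'' in Lemma \ref{lem:d_inverted}.
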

\begin{proof}
If $G_\ell\in\ell_{1,\infty}(0),$ then $G_\ell(\theta,\cdot)\in\ell_\infty(0,\infty)$ for $\theta\in\Omega,$ where $\Omega\subset\sphere^{n-1}$ is conull. Applying Lemmas \ref{lem:d_inverted} and \ref{lem:dh_inverted} to $f_\ell(\theta,\cdot)$ and $G_\ell(\theta,\cdot)$ with $\theta\in\Omega$ fixed arbitrarily yields the desiderata for almost every $\theta.$
\end{proof}

\fbox{Proof that $\lVert f\rVert_\sfL\leq\lVert f\rVert_\B$ if $f\in\B$}

Let $f\in\B.$ We will step by step derive a closed form for $h$ in terms of $\phi.$ With the help of Corollary \ref{cor:G_uniquely_determined}, we can first derive a closed form for $G_\ell$ in terms of $\phi.$
\begin{lem}\label{lem:g_closed_form}
$\B\subset\sfF$ and, for all $\ell\in\N_0,$
\begin{align*}
G_\ell(\theta,t)=\int\phi(u)Z_\ell(\theta,\hat{u})\cis_\ell(t\lvert u\rvert)\,du\in\ell_{1,\infty}(\underline{d\theta},dt/t)
\end{align*}
\end{lem}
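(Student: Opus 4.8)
Write $f=\F\{\phi\}$ with $\phi\in\ell_1$, so $f(x)=\int\phi(u)\exp(i\langle u\mid x\rangle)\,du$ is bounded and continuous. The plan is to produce a closed form for $f_\ell(\theta,\cdot)$, read off $G_\ell$ from the inversion Lemma \ref{lem:d_inverted}, and separately check the integrability that $\B\subset\sfF$ needs. First, fixing $\theta$ and using that $\phi\in\ell_1$ while $Z_\ell(\theta,\cdot)$ is bounded on the finite-measure sphere, Fubini gives $f_\ell(\theta,t)=\int\phi(u)\bigl(\int e^{it|u|\langle\hat u\mid\alpha\rangle}Z_\ell(\theta,\alpha)\,\underline{d\alpha}\bigr)\,du$. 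I would apply the Funk--Hecke formula (Lemma \ref{lem:funk_hecke}) to the inner integral, with the unit vector $\hat u$ in the role of the distinguished direction, $Y_\ell=Z_\ell(\theta,\cdot)$, and $h(v)=e^{it|u|v}$, obtaining $Z_\ell(\theta,\hat u)\lvert\sphere^{n-2}\rvert\int_{-1}^{+1}e^{it|u|v}C_\ell^{(n-2)/2}(v)(1-v^2)^{(n-3)/2}\,dv$. Since $v\mapsto C_\ell^{(n-2)/2}(v)(1-v^2)^{(n-3)/2}$ has parity $(-1)^\ell$, the exponential may be replaced by $\cis_\ell(t|u|v)$ and $\int_{-1}^{+1}$ by $2\int_0^1$; a second Fubini then yields
\[ f_\ell(\theta,t)=2\lvert\sphere^{n-2}\rvert\int_0^1 G_\ell(\theta,tv)\,C_\ell^{(n-2)/2}(v)(1-v^2)^{(n-3)/2}\,dv,\qquad G_\ell(\theta,s)\coloneq\int\phi(u)Z_\ell(\theta,\hat u)\cis_\ell(s|u|)\,du, \]
which is exactly the shape of \eqref{eq:f_in_terms_of_g}. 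As $|\cis_\ell|\le1$, this $G_\ell$ is jointly continuous with $|G_\ell(\theta,s)|\le\lVert Z_\ell(\theta,\cdot)\rVert_\infty\lVert\phi\rVert_1$, so $G_\ell(\theta,\cdot)\in\ell_\infty(0,\infty)\cap C(0,\infty)\subset\regPS'$ and $\lVert G_\ell\rVert_{1,\infty}\le\lVert\phi\rVert_1\int\lVert Z_\ell(\theta,\cdot)\rVert_\infty\,\underline{d\theta}<\infty$, which is the claimed membership.

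It remains to recognize this $G_\ell$ as the output of $\scrD$. For $\ell\notin L$, $f_\ell(\theta,\cdot)$ is bounded, hence in $\PS'$ — so the defining condition of $\sfF$ for these $\ell$ is automatic — and the display above exhibits $G_\ell(\theta,\cdot)\in\regPS'$ satisfying \eqref{eq:d_inverted} with $f_\ell(\theta,\cdot)$ in place of $f$; by Lemma \ref{lem:d_inverted} this forces $G_\ell(\theta,\cdot)=\text{D}_\ell\{f_\ell(\theta,\cdot)\}$. For $\ell\in L$ I would instead verify \eqref{eq:F} for this $G_\ell$, which is \eqref{eq:dh_inverted} with $F_\ell(\theta,\cdot)$ in place of $f$; granting $F_\ell(\theta,\cdot)\in\PS'$ (established in the next paragraph), Lemma \ref{lem:d_inverted} would then give $G_\ell(\theta,\cdot)=\text{\DH}_\ell\{F_\ell(\theta,\cdot)\}$, completing the identification. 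To check \eqref{eq:F}: substitute the closed form into its right-hand side, swap the $u$- and $v$-integrals, and put $s=v|u|$; this reduces the matter to the one-variable identity
\[ \tfrac1\sigma\int_0^1\cos(\sigma v)\,C_\ell^{(n-2)/2}(v)(1-v^2)^{(n-3)/2}\,dv=-\tfrac1{n-1}\int_0^1\sin(\sigma v)\,C_{\ell-1}^{n/2}(v)(1-v^2)^{(n-1)/2}\,dv, \]
which follows from the Gegenbauer derivative relation $\tfrac{d}{dv}\bigl[(1-v^2)^{(n-1)/2}C_{\ell-1}^{n/2}(v)\bigr]\propto(1-v^2)^{(n-3)/2}C_\ell^{(n-2)/2}(v)$ and an integration by parts whose boundary terms vanish — at $v=1$ because $(n-1)/2>0$, at $v=0$ because $C_{\ell-1}^{n/2}$ is odd — the constant $\tfrac1{n-1}$ in \eqref{eq:F} being precisely the one that makes it balance.

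The crux, and the step I expect to be most delicate, is $\B\subset\sfF$ for $\ell\in L$: one must know that $v\mapsto f_\ell(\theta,v)/v$ is integrable against $dv$ near both $0$ and $\infty$, even though $f_\ell(\theta,\cdot)$ is only a bounded continuous function with no a priori rate of decay (recall $\phi\in\ell_1$ only, not a weighted $\ell_1$). For this I would return to the closed form $f_\ell(\theta,v)=2\lvert\sphere^{n-2}\rvert\int\phi(u)Z_\ell(\theta,\hat u)\kappa_\ell(v|u|)\,du$ with $\kappa_\ell(s)\coloneq\int_0^1\cos(sv)\,C_\ell^{(n-2)/2}(v)(1-v^2)^{(n-3)/2}\,dv$, and prove $|\kappa_\ell(s)|\lesssim_{n,\ell}\min(s^2,s^{-1/2})$: the bound near $0$ from $\kappa_\ell(0)=0$ (orthogonality of $C_\ell^{(n-2)/2}$ to the constants, as $\ell\ge1$) together with $\kappa_\ell$ being even, and the decay at $\infty$ from a single integration by parts (or the Bessel representation of the Gegenbauer transform). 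Since $\int_0^\sigma|\kappa_\ell(s)|\,\tfrac{ds}{s}$ and $\int_\sigma^\infty|\kappa_\ell(s)|\,\tfrac{ds}{s}$ are then bounded uniformly in $\sigma>0$, substituting $s=v|u|$ and using only $\phi\in\ell_1$ shows $f_\ell(\theta,\cdot)/v\in\ell_1^\text{loc}(0,\infty)$ with $\int_t^\infty|f_\ell(\theta,v)|\,\tfrac{dv}{v}<\infty$ for every $t>0$ (and tending to $0$ as $t\to\infty$); hence $\Phi_\ell(\theta,t)\coloneq-\int_t^\infty f_\ell(\theta,v)\,\tfrac{dv}{v}$ is a bounded absolutely continuous antiderivative of $f_\ell(\theta,\cdot)/v$, so $\Phi_\ell(\theta,\cdot)\in\ell_\infty(0,\infty)\subset\PS'$ and $\Phi_\ell(\theta,\infty)=0$ exists — which is the $\sfF$ condition, and at the same time shows $F_\ell(\theta,\cdot)=-\int^\to f_\ell(\theta,v)\,\tfrac{dv}{v}\in\PS'$, closing the gap left above. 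Everything else (the two Fubini swaps, the parity bookkeeping, the Funk--Hecke and Lemma \ref{lem:d_inverted} inputs) is routine.
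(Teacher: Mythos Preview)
Your plan is correct and follows the same skeleton as the paper's proof --- Funk--Hecke to compute $f_\ell$, Lemma~\ref{lem:d_inverted} to identify $G_\ell$, and for $\ell\in L$ the Gegenbauer antiderivative relation $\partial\bigl[-\tfrac1{n-1}C_{\ell-1}^{n/2}(v)(1-v^2)^{(n-1)/2}\bigr]=C_\ell^{(n-2)/2}(v)(1-v^2)^{(n-3)/2}$ together with Riemann--Lebesgue. Two places where your execution differs from (and arguably improves on) the paper: first, after Funk--Hecke you keep the $v$-integral intact, use parity to pass to $\cis_\ell$ on $[0,1]$, and read off \eqref{eq:f_in_terms_of_g} directly; the paper instead evaluates the $v$-integral as a Bessel function via \cite{watson} and then separately re-derives the same Bessel expression from the right-hand side of \eqref{eq:f_in_terms_of_g} using a table entry in \cite{erdelyi}. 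Second, for $\ell\in L$ the paper does not isolate a $\kappa_\ell$-bound but computes $\int_t^m f_\ell(\theta,v)\,dv/v$ directly from \eqref{eq:f_in_terms_of_g} by a long double-Fubini, obtaining existence of $F_\ell$ and the formula \eqref{eq:F} in one stroke; your route via $|\kappa_\ell(s)|\lesssim\min(s^2,s^{-1/2})$ gives the stronger absolute integrability $\int_0^\infty|f_\ell(\theta,v)|\,dv/v<\infty$ and makes the subsequent Fubini swaps unconditional. One small point to make explicit in your write-up: the displayed one-variable identity is $\kappa_\ell(\sigma)/\sigma=-\tfrac1{n-1}\int_0^1\sin(\sigma v)C_{\ell-1}^{n/2}(v)(1-v^2)^{(n-1)/2}\,dv$, which is the $\sigma$-derivative of the relation you actually need for \eqref{eq:F}, namely $-\int_\sigma^\infty\kappa_\ell(s)\,\tfrac{ds}{s}=\tfrac1{n-1}\int_0^1\cos(\sigma v)C_{\ell-1}^{n/2}(v)(1-v^2)^{(n-1)/2}\,\tfrac{dv}{v}$; to pass from one to the other you match both sides at $\sigma\to\infty$ (the right-hand side vanishes by Riemann--Lebesgue, the left by your $\kappa_\ell$-bound), which is the same Riemann--Lebesgue input the paper uses.
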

\begin{proof}
Let $G_\ell$ be as given in the statement of the Lemma. That $\lVert G_\ell\rVert_{1,\infty}\lesssim_{n,\ell}\lVert\phi\rVert_1<\infty$ follows from a Lemma \ref{lem:z_bound}-like argument.

If $f\in\B,$ then $\exists!\phi\in\ell_1:f=\F\{\phi\}.$ Let us compute $f_\ell.$
\begin{align}
f_\ell(\theta,t)&=\int f(t\alpha)Z_\ell(\theta,\alpha)\,\underline{d\alpha}=\iint\phi(u)\exp(it\langle u\mid\alpha\rangle)Z_\ell(\theta,\alpha)\,du\,\underline{d\alpha}\notag\\
&\overset{\text{F}}{=}\int\phi(u)\int\exp(it\lvert u\rvert\langle\hat{u}\mid\alpha\rangle)Z_\ell(\theta,\alpha)\,\underline{d\alpha}\,du\label{eq:zonal_fourier}
\end{align}
Consequently, \eqref{eq:z_propto_c} and Proposition \ref{prop:funk_hecke_gegenbauer} yield that
\begin{align*}
\int\exp(it\lvert u\rvert\langle\hat{u}\mid\alpha\rangle)Z_\ell(\theta,\alpha)\,\underline{d\alpha}&=\\
Z_\ell(\theta,\hat{u})\lvert\sphere^{n-2}\rvert\int_{-1}^{+1}\exp(it\lvert u\rvert v)C^{(n-2)/2}_\ell(v)(1-v^2)^{(n-3)/2}\,dv&=\\
Z_\ell(\theta,\hat{u})(2\pi)^{n/2}i^\ell J_{\ell+(n-2)/2}(t\lvert u\rvert)/(t\lvert u\rvert)^{(n-2)/2}
\end{align*}
by \cite[(3) on pg.\ 50]{watson}. Plugging back into \eqref{eq:zonal_fourier} we see that
\begin{align*}
f_\ell(\theta,t)&=(2\pi)^{n/2}i^\ell\int\phi(u)Z_\ell(\theta,\hat{u})J_{\ell+(n-2)/2}(t\lvert u\rvert)/(t\lvert u\rvert)^{(n-2)/2}\,du
\end{align*}
To show that \eqref{eq:f_in_terms_of_g} holds with the given $G_\ell,$ we have to show that
\begin{gather*}
2\lvert\sphere^{n-2}\rvert\int_0^1\int\phi(u)Z_\ell(\theta,\hat{u})\cis_\ell(tv\lvert u\rvert)C_\ell^{(n-2)/2}(v)(1-v^2)^{(n-3)/2}\,du\,dv=\\
(2\pi)^{n/2}i^\ell\int\phi(u)Z_\ell(\theta,\hat{u})J_{\ell+(n-2)/2}(t\lvert u\rvert)/(t\lvert u\rvert)^{(n-2)/2}\,du
\end{gather*}
Indeed, \cite[pp.\ 38 \& 94]{erdelyi}
\begin{align*}
2\lvert\sphere^{n-2}\rvert\int_0^1\int\phi(u)Z_\ell(\theta,\hat{u})\cis_\ell(tv\lvert u\rvert)C_\ell^{(n-2)/2}(v)(1-v^2)^{(n-3)/2}\,du\,dv&\overset{\text{F}}{=}\\
2\lvert\sphere^{n-2}\rvert\int\phi(u)Z_\ell(\theta,\hat{u})\int_0^1\cis_\ell(t\lvert u\rvert v)C_\ell^{(n-2)/2}(v)(1-v^2)^{(n-3)/2}\,dv\,du&=\\
2\lvert\sphere^{n-2}\rvert\int\phi(u)Z_\ell(\theta,\hat{u})\Bigl(i^\ell 2^{(2-n)/2}\pi\frac{\Gamma(n-2)}{\Gamma(\tfrac{n-2}{2})}J_{\ell+(n-2)/2}(t\lvert u\rvert)/(t\lvert u\rvert)^{(n-2)/2}\Bigr)\,du&=\\
(2\pi)^{n/2}i^\ell\int\phi(u)Z_\ell(\theta,\hat{u})J_{\ell+(n-2)/2}(t\lvert u\rvert)/(t\lvert u\rvert)^{(n-2)/2}\,du
\end{align*}
by the Legendre duplication formula. So \eqref{eq:f_in_terms_of_g} holds for all $\ell.$

Seeing as \eqref{eq:f_in_terms_of_g} uniquely determines $G_\ell$ when $\ell\notin L,$ we are done with that case if $f_\ell(\theta,\cdot)\in\PS';$ indeed, $G_\ell(\theta,t)\asymplim 1$ if $t\downarrow 0$ and vanishes at infinity if $t\uparrow\infty$ by the Riemann-Lebesgue lemma, so the bounded convergence theorem applied to \eqref{eq:f_in_terms_of_g} yields that $f_\ell(\theta,\cdot)\in\UCB(0,\infty)\subset\PS'.$

\fbox{$\ell\in L$}

Courtesy of \cite[pg.\ 737]{prudnikov}:
\begin{align*}
\partial\Bigl[-\tfrac{1}{n-1}C_{\ell-1}^{n/2}(t)(1-t^2)^{(n-1)/2}\Bigr]&=\\
(1-t^2)^{(n-3)/2}\Bigl[tC_{\ell-1}^{n/2}(t)-\tfrac{(\ell-1)(n+\ell-1)}{(n-1)(n+1)}(1-t^2)C_{\ell-2}^{(n+2)/2}(t)\Bigr]&=\\
(1-t^2)^{(n-3)/2}\Bigl[\tfrac{n+\ell-1}{n-1}C_\ell^{n/2}(t)-\tfrac{\ell}{n-1}tC_{\ell-1}^{n/2}(t)\Bigr]&=\\
C_\ell^{(n-2)/2}(t)(1-t^2)^{(n-3)/2}
\end{align*}
wherefrom it follows that $-\tfrac{1}{n-1}C_{\ell-1}^{n/2}(t)(1-t^2)^{(n-1)/2}[t<1]$ is an antiderivative of $C_\ell^{(n-2)/2}(t)(1-t^2)^{(n-3)/2}[t<1].$ Consequently, if $m>t>0,$

\begin{align*}
\int_t^m f_\ell(\theta,v)\,\frac{dv}{v}&=\int_t^m 2\lvert\sphere^{n-2}\rvert\int_0^1 G_\ell(\theta,vr)C_\ell^{(n-2)/2}(r)(1-r^2)^{(n-3)/2}\,dr\,\frac{dv}{v}\\
&\overset{\text{F}}{=}2\lvert\sphere^{n-2}\rvert\int_0^1 C_\ell^{(n-2)/2}(r)(1-r^2)^{(n-3)/2}\int_t^m G_\ell(\theta,vr)\,\frac{dv}{v}\,dr\\
&=2\lvert\sphere^{n-2}\rvert\int_0^1 C_\ell^{(n-2)/2}(r)(1-r^2)^{(n-3)/2}\int_{tr}^{\mathclap{mr}}G_\ell(\theta,p)\,\frac{dp}{p}\,dr\\
&=2\lvert\sphere^{n-2}\rvert\int_0^1 C_\ell^{(n-2)/2}(r)(1-r^2)^{(n-3)/2}\int_r^{\mathclap{mr/t}} G_\ell(\theta,tv)\,\frac{dv}{v}\,dr\\
&=2\lvert\sphere^{n-2}\rvert\int_0^\infty C_\ell^{(n-2)/2}(r)(1-r^2)^{(n-3)/2}[r<1]\int_r^{\mathclap{mr/t}} G_\ell(\theta,tv)\,\frac{dv}{v}\,dr\\
&\overset{\text{F}}{=}2\lvert\sphere^{n-2}\rvert\int_0^\infty G_\ell(\theta,tv)\int_{\mathclap{tv/m}}^v C_\ell^{(n-2)/2}(r)(1-r^2)^{(n-3)/2}[r<1]\,dr\,\frac{dv}{v}\\
&=2\lvert\sphere^{n-2}\rvert\int_0^\infty G_\ell(\theta,tv)\tfrac{1}{n-1}C_{\ell-1}^{n/2}(tv/m)(1-(tv/m)^2)^{(n-1)/2}[tv/m<1]\,\frac{dv}{v}\\
&-2\lvert\sphere^{n-2}\rvert\int_0^\infty G_\ell(\theta,tv)\tfrac{1}{n-1}C_{\ell-1}^{n/2}(v)(1-v^2)^{(n-1)/2}[v<1]\,\frac{dv}{v}\\
&=2\lvert\sphere^{n-2}\rvert\int_0^1 G_\ell(\theta,mr)\tfrac{1}{n-1}C_{\ell-1}^{n/2}(r)(1-r^2)^{(n-1)/2}\,\frac{dr}{r}\\
&-2\lvert\sphere^{n-2}\rvert\int_0^1 G_\ell(\theta,tv)\tfrac{1}{n-1}C_{\ell-1}^{n/2}(v)(1-v^2)^{(n-1)/2}\,\frac{dv}{v}
\end{align*}
Since \eqref{eq:F} uniquely determines $G_\ell,$ we are done if we can show that a)
\begin{align*}
F_\ell(\theta,t)=-\int_t^\to f_\ell(\theta,v)\,\frac{dv}{v}=2\lvert\sphere^{n-2}\rvert\int_0^1 G_\ell(\theta,tv)\tfrac{1}{n-1}C_{\ell-1}^{n/2}(v)(1-v^2)^{(n-1)/2}\,\frac{dv}{v}
\end{align*}
and b) $F_\ell(\theta,\cdot)\in\PS'.$ The former follows precisely when
\begin{align}\label{eq:dct_riemann_lebesgue}
\lim_{m\to\infty}2\lvert\sphere^{n-2}\rvert\int_0^1 G_\ell(\theta,mr)\tfrac{1}{n-1}C_{\ell-1}^{n/2}(r)(1-r^2)^{(n-1)/2}\,\frac{dr}{r}=0
\end{align}
That \eqref{eq:dct_riemann_lebesgue} is indeed true follows from the bounded convergence theorem and the Riemann-Lebesgue lemma (to conclude that $\lim_{m\to\infty}G_\ell(\theta,mr)=0).$ Lastly, that $F_\ell(\theta,\cdot)\in\PS'$ follows from applying the bounded convergence theorem to \eqref{eq:F}.
\end{proof}
Following Definition \ref{def:gamma_d}, Lemma \ref{lem:g_closed_form} yields that
\begin{align*}
h_\ell(\theta,t)=\int\phi(u)Z_\ell(\theta,\hat{u})\cis_\ell(t\lvert u\rvert)\,du\in\ell_{1,\infty}(\underline{d\theta},dt)
\end{align*}
Since $u\mapsto Z_\ell(\theta,\hat{u})\propto C_\ell(\langle\theta\mid\hat{u}\rangle)$ has the same parity as $\ell,$ we may write
\begin{align*}
h_\ell(\theta,t)=\int\phi_e(u)Z_\ell(\theta,\hat{u})\cos(t\lvert u\rvert)\,du
\end{align*}
for even $\ell,$ while for odd $\ell$ the corresponding formula reads
\begin{align*}
h_\ell(\theta,t)=i\int\phi_o(u)Z_\ell(\theta,\hat{u})\sin(t\lvert u\rvert)\,du
\end{align*}
where $\phi_e$ and $\phi_o$ are the even and odd parts of $\phi$ resp. Doing a more precise Lemma \ref{lem:z_bound}-like argument reveals that $\lVert h_\ell\rVert_{1,\infty}\preccurlyeq_n\ell^{\mkern1mu n-2}\lVert\phi\rVert_1$ as $\ell\to\infty,$ so
\begin{align*}
\sum_{\ell=0}^\infty R^\ell\lVert h_\ell\rVert_{1,\infty}<\infty
\end{align*}
for $R\in(0,1).$ It thus follows that
\begin{align*}
h_R(\theta,t)&=\sum_{\ell=0}^\infty R^\ell h_\ell(\theta,t)=\sum_{\ell=0}^\infty R^\ell\int\phi(u)Z_\ell(\theta,\hat{u})\cis_\ell(t\lvert u\rvert)\,du\\
&=\sum_{2\mid\ell}R^\ell\int\phi_e(u)Z_\ell(\theta,\hat{u})\cos(t\lvert u\rvert)\,du+i\sum_{2\nmid\ell}R^\ell\int\phi_o(u)Z_\ell(\theta,\hat{u})\sin(t\lvert u\rvert)\,du\\
&=\sum_{\ell=0}^\infty R^\ell\int\phi_e(u)Z_\ell(\theta,\hat{u})\cos(t\lvert u\rvert)\,du+i\sum_{\ell=0}^\infty R^\ell\int\phi_o(u)Z_\ell(\theta,\hat{u})\sin(t\lvert u\rvert)\,du\\
&\overset{\text{F}}{=}\int\phi_e(u)p(\hat{u},R\theta)\cos(t\lvert u\rvert)\,du+i\int\phi_o(u)p(\hat{u},R\theta)\sin(t\lvert u\rvert)\,du
\end{align*}
\begin{lem}\label{lem:h}
$h_R\to h$ in $\ell_{1,\infty}(-\infty)$ where
\begin{align}\label{eq:h}
h(\theta,t)=\int_0^\infty\rho^{n-1}\phi_e(\theta\rho)\cos(t\rho)\,d\rho+i\int_0^\infty\rho^{n-1}\phi_o(\theta\rho)\sin(t\rho)\,d\rho
\end{align}
\end{lem}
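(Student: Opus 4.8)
The plan is to pass to polar coordinates in the spatial integral defining $h_R$ and then recognize that, in $\ell_{1,\infty}(-\infty)$, the error $h_R-h$ is controlled by how far the spherical Poisson integral of a radial slice of $\phi$ lies from that slice itself; the latter tends to $0$ because the Poisson kernel is an approximate identity on $\ell_1(\sphere^{n-1})$.

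Concretely, I would first substitute $u=\rho\alpha$ (so $\rho=\lvert u\rvert>0$, $\alpha=\hat u$, $du=\rho^{n-1}\,d\rho\,\underline{d\alpha}$) in the closed form for $h_R$ obtained just above; since $p(\cdot,R\theta)$ is bounded for $R<1$ and $\phi\in\ell_1$, Fubini is legitimate and yields
\begin{align*}
h_R(\theta,t)=\int_0^\infty\rho^{n-1}\cos(t\rho)\Bigl(\int\phi_e(\rho\alpha)p(\alpha,R\theta)\,\underline{d\alpha}\Bigr)\,d\rho+i\int_0^\infty\rho^{n-1}\sin(t\rho)\Bigl(\int\phi_o(\rho\alpha)p(\alpha,R\theta)\,\underline{d\alpha}\Bigr)\,d\rho,
\end{align*}
which is exactly \eqref{eq:h} with $\phi_e(\rho\theta)$ and $\phi_o(\rho\theta)$ replaced by $(P_R\phi_e(\rho\,\cdot\,))(\theta)$ and $(P_R\phi_o(\rho\,\cdot\,))(\theta)$, where $P_Rg\coloneq\int g(\alpha)p(\alpha,R\,\cdot\,)\,\underline{d\alpha}$. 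Using $\lvert\cos\rvert,\lvert\sin\rvert\leq1$, subtracting, integrating over $\theta$, and applying Tonelli, I obtain
\begin{align*}
\lVert h_R-h\rVert_{1,\infty}\leq\int_0^\infty\rho^{n-1}\bigl\lVert(P_R-\mathrm{id})\phi_e(\rho\,\cdot\,)\bigr\rVert_{\ell_1(\sphere^{n-1})}\,d\rho+\int_0^\infty\rho^{n-1}\bigl\lVert(P_R-\mathrm{id})\phi_o(\rho\,\cdot\,)\bigr\rVert_{\ell_1(\sphere^{n-1})}\,d\rho.
\end{align*}

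Next I would record the needed properties of $P_R$. Since $Z_\ell$ is symmetric, $p(\alpha,R\theta)=p(\theta,R\alpha)$, so Proposition \ref{prop:ii_1_9}(b) gives $\int p(\alpha,R\theta)\,\underline{d\theta}=1$; together with $p\geq0$ this shows $\lVert P_Rg\rVert_{\ell_1(\sphere^{n-1})}\leq\lVert g\rVert_{\ell_1(\sphere^{n-1})}$. Moreover $P_Rg\to g$ in $\ell_1(\sphere^{n-1})$ as $R\uparrow1$ for every $g\in\ell_1(\sphere^{n-1})$: approximating $g$ in $\ell_1$ by a continuous $\tilde g$ one bounds $\lVert P_Rg-g\rVert_1\leq2\lVert g-\tilde g\rVert_1+\lvert\sphere^{n-1}\rvert\,\lVert P_R\tilde g-\tilde g\rVert_\infty$, and $\lVert P_R\tilde g-\tilde g\rVert_\infty\to0$ by running the proof of \cite[Thm II.1.10]{steinweiss} exactly as in the proof of Theorem \ref{thm:a_undoes_z}. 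Finally, polar coordinates give $\int_0^\infty\rho^{n-1}\lVert\phi_e(\rho\,\cdot\,)\rVert_{\ell_1(\sphere^{n-1})}\,d\rho=\lVert\phi_e\rVert_1<\infty$ (and likewise for $\phi_o$), so in particular $\phi_e(\rho\,\cdot\,),\phi_o(\rho\,\cdot\,)\in\ell_1(\sphere^{n-1})$ for a.e.\ $\rho$.

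I would then finish by dominated convergence applied to the two $\rho$-integrals above, phrased along arbitrary sequences $R_k\uparrow1$: for a.e.\ $\rho$ the integrand $\rho^{n-1}\lVert(P_{R_k}-\mathrm{id})\phi_e(\rho\,\cdot\,)\rVert_1\to0$, while it is dominated by $2\rho^{n-1}\lVert\phi_e(\rho\,\cdot\,)\rVert_1\in\ell_1(0,\infty)$; hence both integrals vanish as $R\uparrow1$, i.e.\ $h_R\to h$ in $\ell_{1,\infty}(-\infty)$. That $h\in\ell_{1,\infty}(-\infty)$ in the first place follows from $\lVert h\rVert_{1,\infty}\leq\lVert\phi_e\rVert_1+\lVert\phi_o\rVert_1\leq2\lVert\phi\rVert_1$. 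The only ingredient that is more than bookkeeping is the $\ell_1$-approximate-identity property of the spherical Poisson kernel; everything else is polar coordinates, Tonelli, and dominated convergence.
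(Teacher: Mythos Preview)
Your argument is correct and in fact cleaner than the paper's. Both proofs start from the same polar-coordinate rewriting of $h_R$, but you and the paper then organize the estimate differently. The paper sets $\Phi(\theta,t)=\int_0^\infty\rho^{n-1}\phi_e(\theta\rho)\cos(t\rho)\,d\rho$, bounds $\lVert h_R-h\rVert_{1,\infty}$ by $\iint p(\alpha,R\theta)\lVert\Phi(\alpha,\cdot)-\Phi(\theta,\cdot)\rVert_\infty\,\underline{d\alpha}\,\underline{d\theta}$, and then has to work rather hard with this $\ell_\infty$-in-$t$ quantity: it approximates $\phi_e$ by a Lipschitz function, truncates radially, and runs the near/far splitting of Proposition~\ref{prop:ii_1_9} by hand. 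You instead push the supremum over $t$ inside the $\rho$-integral immediately (possible because after taking absolute values the bound no longer depends on $t$), which reduces everything to the one-line fact that $P_R$ is an $\ell_1(\sphere^{n-1})$-contraction and an approximate identity there; dominated convergence in $\rho$ then finishes. Your route avoids the Lipschitz/truncation detour entirely, at the modest cost of quoting the $\ell_1$ approximate-identity property of the spherical Poisson kernel (which itself is the standard density-plus-uniform-boundedness argument you sketch). Either way the only substantive analytic input is Proposition~\ref{prop:ii_1_9}; your packaging just uses it more efficiently.
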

\begin{proof}
Let $\Phi:\sphere^{n-1}\times\R\ni(\theta,t)\mapsto\textstyle\int_0^\infty\displaystyle\rho^{n-1}\phi_e(\theta\rho)\cos(t\rho)\,d\rho.$ Note that
\begin{align*}
\lVert\Phi\rVert_{1,\infty}\leq\int\int_0^\infty\rho^{n-1}\lvert\phi_e(\theta\rho)\rvert\,d\rho\,\underline{d\theta}=\lVert\phi_e\rVert_1<\infty
\end{align*}
so $\Phi\in\ell_{1,\infty}(-\infty)$ and $\Phi(\theta,\cdot)$ is well-defined for $\theta\in\Omega,$ where $\Omega$ is conull. We therefore confine $\alpha,\theta\in\Omega$ throughout.
 
We show that
\begin{align*}
\int\phi_e(u)p(\hat{u},R\theta)\cos(t\lvert u\rvert)\,du&=\int p(\alpha,R\theta)\Phi(\alpha,t)\,\underline{d\alpha}
\end{align*}
converges to $\Phi(\theta,t)$ in $\ell_{1,\infty}(\underline{d\theta},dt)$ if $R\uparrow1.$ That $\textstyle\int\displaystyle\phi_o(u)p(\hat{u},R\theta)\sin(t\lvert u\rvert)\,du\to$ $\textstyle\int_0^\infty\displaystyle\rho^{n-1}\phi_o(\theta,\rho)\sin(t\rho)\,d\rho$ in $\ell_{1,\infty}(\underline{d\theta},dt)$ if $R\uparrow1$ may then be proved similarly.

Since $\Phi(\theta,\cdot)$ is continuous by dominated convergence, it follows that
\begin{align*}
\lVert\Phi(\alpha,\cdot)-\Phi(\theta,\cdot)\rVert_\infty=\sup_{t\in\R}\,\lvert\Phi(\alpha,t)-\Phi(\theta,t)\rvert=\sup_{t\in\mathds{Q}}\,\lvert\Phi(\alpha,t)-\Phi(\theta,t)\rvert
\end{align*}
so $(\theta,\alpha)\mapsto\lVert\Phi(\alpha,\cdot)-\Phi(\theta,\cdot)\rVert_\infty,$ being a countable supremum of measurable functions, is measurable.

By Proposition \ref{prop:ii_1_9}(b), $\Phi(\theta,t)=\textstyle\int\displaystyle p(\alpha,R\theta)\Phi(\theta,t)\,\underline{d\alpha},$ so by Proposition \ref{prop:ii_1_9}(a) it suffices to show that
\begin{gather*}
\lVert\int p(\alpha,R\theta)\Phi(\alpha,\cdot)\,\underline{d\alpha}-\Phi(\theta,\cdot)\mid\ell_{1,\infty}(\underline{d\theta},\R)\rVert\\
\leq\iint p(\alpha,R\theta)\lVert\Phi(\alpha,\cdot)-\Phi(\theta,\cdot)\rVert_\infty\,\underline{d\alpha}\,\underline{d\theta}
\end{gather*}
can be made arbitrarily small by choosing $R$ arbitrarily close to 1.

Seeing as integrable Lipschitz continuous functions are dense in $\ell_1,$ it follows that $\forall\epsilon>0$ there exists a Lipschitz continuous $\psi_\epsilon\in\ell_1$ so that $\lVert\phi_e-\psi_\epsilon\rVert_1<\epsilon.$ Letting $\Psi:\Omega\times\R\ni(\theta,t)\mapsto\textstyle\int_0^\infty\displaystyle\rho^{n-1}\psi_\epsilon(\theta\rho)\cos(t\rho)\,d\rho,$ we have
\begin{align*}
\lVert\Phi-\Psi\rVert_{1,\infty}\leq\lVert\phi_e-\psi_\epsilon\rVert_1<\epsilon
\end{align*}
By the same reasoning as earlier, $\alpha\mapsto\lVert\Phi(\alpha,\cdot)-\Psi(\alpha,\cdot)\rVert_\infty$ and $(\theta,\alpha)\mapsto$ $\lVert\Psi(\alpha,\cdot)-\Psi(\theta,\cdot)\rVert_\infty$ are measurable. Ergo, $p(\alpha,R\theta)=p(\theta,R\alpha)\Rightarrow$
\begin{gather*}
\iint p(\alpha,R\theta)\lVert\Phi(\alpha,\cdot)-\Phi(\theta,\cdot)\rVert_\infty\,\underline{d\alpha}\,\underline{d\theta}\leq\iint p(\alpha,R\theta)\lVert\Phi(\alpha,\cdot)-\Psi(\alpha,\cdot)\rVert_\infty\,\underline{d\alpha}\,\underline{d\theta}\\
+\iint p(\alpha,R\theta)\lVert\Psi(\alpha,\cdot)-\Psi(\theta,\cdot)\rVert_\infty\,\underline{d\alpha}\,\underline{d\theta}+\iint p(\alpha,R\theta)\lVert\Psi(\theta,\cdot)-\Phi(\theta,\cdot)\rVert_\infty\,\underline{d\alpha}\,\underline{d\theta}\\
\leq2\lVert\Phi-\Psi\rVert_{1,\infty}+\iint p(\alpha,R\theta)\lVert\Psi(\alpha,\cdot)-\Psi(\theta,\cdot)\rVert_\infty\,\underline{d\alpha}\,\underline{d\theta}\\
<2\epsilon+\iint p(\alpha,R\theta)\lVert\Psi(\alpha,\cdot)-\Psi(\theta,\cdot)\rVert_\infty\,\underline{d\alpha}\,\underline{d\theta}
\end{gather*}
Now let $\Psi_a:\Omega\times\R\ni(\theta,t)\mapsto\textstyle\int_0^a\displaystyle\rho^{n-1}\psi_\epsilon(\theta\rho)\cos(t\rho)\,d\rho$ with $a>0.$ Then
\begin{align*}
\lVert\Psi-\Psi_a\rVert_{1,\infty}\leq\int_{\mathclap{\lvert u\rvert>a}}\,\lvert\psi_\epsilon(u)\rvert\,du\to0
\end{align*}
as $a\to\infty$ by dominated convergence. We may accordingly pick $a$ so large that $\lVert\Psi-\Psi_a\rVert_{1,\infty}<\epsilon,$ wherefrom we may again deduce that
\begin{gather*}
\iint p(\alpha,R\theta)\lVert\Psi(\alpha,\cdot)-\Psi(\theta,\cdot)\rVert_\infty\,\underline{d\alpha}\,\underline{d\theta}<\\
2\epsilon+\iint p(\alpha,R\theta)\lVert\Psi_a(\alpha,\cdot)-\Psi_a(\theta,\cdot)\rVert_\infty\,\underline{d\alpha}\,\underline{d\theta}
\end{gather*}
We are now in a position to adapt the proof of \cite[Thm II.1.10]{steinweiss}; i.e., writing the addend in the above display as $\textstyle\iint=\iint_{\lvert\theta-\alpha\rvert>\delta}+\iint_{\lvert\theta-\alpha\rvert\leq\delta}$ with $\delta>0,$ the $\textstyle\iint_{\lvert\theta-\alpha\rvert\leq\delta}$ term may be bounded as follows:
\begin{gather*}
\iint_{\mathclap{\lvert\theta-\alpha\rvert\leq\delta}}p(\alpha,R\theta)\lVert\Psi_a(\alpha,\cdot)-\Psi_a(\theta,\cdot)\rVert_\infty\,\underline{d\alpha}\,\underline{d\theta}\leq\lvert\sphere^{n-1}\rvert\sup_{\theta\in\Omega}\int_{\mathclap{\lvert\theta-\alpha\rvert\leq\delta}}\,\lVert\Psi_a(\alpha,\cdot)-\Psi_a(\theta,\cdot)\rVert_\infty\,\underline{d\alpha}\\
\lesssim_n\sup_{\theta\in\Omega}\int\nolimits_{\lvert\theta-\alpha\rvert\leq\delta}\int_0^a\rho^{n-1}\lvert\psi_\epsilon(\alpha\rho)-\psi_\epsilon(\theta\rho)\rvert\,d\rho\,\underline{d\alpha}\leq\sup_{\theta\in\Omega}\int\nolimits_{\lvert\theta-\alpha\rvert\leq\delta}\int_0^a\rho^{n-1}[\delta a\mathrm{Lip}(\psi_\epsilon)]\,d\rho\,\underline{d\alpha}\\
\leq\delta a\mathrm{Lip}(\psi_\epsilon)\sup_{\theta\in\Omega}\int\int_0^a\rho^{n-1}\,d\rho\,\underline{d\alpha}=\delta a\mathrm{Lip}(\psi_\epsilon)\lvert K(a)\rvert<\epsilon
\end{gather*}
if we pick $\delta$ small enough. Lastly, using $p(\alpha,R\theta)=p(\theta,R\alpha)$ again,
\begin{gather*}
\iint_{\mathclap{\lvert\theta-\alpha\rvert>\delta}}p(\alpha,R\theta)\lVert\Psi_a(\alpha,\cdot)-\Psi_a(\theta,\cdot)\rVert_\infty\,\underline{d\alpha}\,\underline{d\theta}\leq\iint_{\mathclap{\lvert\theta-\alpha\rvert>\delta}}p(\alpha,R\theta)\lVert\Psi_a(\alpha,\cdot)\rVert_\infty\,\underline{d\alpha}\,\underline{d\theta}\\
+\iint_{\mathclap{\lvert\theta-\alpha\rvert>\delta}}p(\alpha,R\theta)\lVert\Psi_a(\theta,\cdot)\rVert_\infty\,\underline{d\alpha}\,\underline{d\theta}\overset{\text{F}}{=}\int\lVert\Psi_a(\alpha,\cdot)\rVert_\infty\int_{\mathclap{\lvert\theta-\alpha\rvert>\delta}}p(\theta,R\alpha)\,\underline{d\theta}\,\underline{d\alpha}\\
+\int\lVert\Psi_a(\theta,\cdot)\rVert_\infty\int_{\mathclap{\lvert\theta-\alpha\rvert>\delta}}p(\alpha,R\theta)\,\underline{d\alpha}\,\underline{d\theta}\to0
\end{gather*}
as $R\uparrow1$ by Proposition \ref{prop:ii_1_9}(c) in conjunction with dominated convergence. It thereby follows that $\textstyle\iint_{\lvert\theta-\alpha\rvert>\delta}<\epsilon$ if we pick $R$ close enough to 1.

All in all, putting everything together, we have shown that
\begin{align*}
\lVert\int p(\alpha,R\theta)\Phi(\alpha,\cdot)\,\underline{d\alpha}-\Phi(\theta,\cdot)\mid\ell_{1,\infty}(\underline{d\theta},\R)\rVert<6\epsilon
\end{align*}
if we choose $R$ close enough to 1. It thus follows that $\textstyle\int\phi_e(u)p(\hat{u},R\theta)\cos(t\lvert u\rvert)\,du$ converges to $\textstyle\int_0^\infty\displaystyle\rho^{n-1}\phi_e(\theta\rho)\cos(t\rho)\,d\rho$ in $\ell_{1,\infty}(\underline{d\theta},dt)$ if $R\uparrow1.$
\end{proof}
Expanding the even and odd parts of $\phi$ in \eqref{eq:h} yields that
\begin{align*}
h(\theta,t)=\frac{1}{2}\int_0^\infty\rho^{n-1}\phi(\theta\rho)\exp(it\rho)\,d\rho+\frac{1}{2}\int_0^\infty\rho^{n-1}\phi(-\theta\rho)\exp(-it\rho)\,d\rho
\end{align*}
Now, $\lVert f\rVert_\sfL\leq\lVert f\rVert_\B$ is tantamount to $\lVert h\rVert_{1,\infty}\leq\lVert\phi\rVert_1,$ and plainly
\begin{align*}
\lVert h\rVert_{1,\infty}&\leq\frac{1}{2}\int\int_0^\infty\rho^{n-1}\lvert\phi(\theta\rho)\rvert\,d\rho\,\underline{d\theta}+\frac{1}{2}\int\int_0^\infty\rho^{n-1}\lvert\phi(-\theta\rho)\rvert\,d\rho\,\underline{d\theta}\\
&=\frac{1}{2}\int\,\lvert\phi(u)\rvert\,du+\frac{1}{2}\int\,\lvert\phi(-u)\rvert\,du=\lVert\phi\rVert_1
\end{align*}
\fbox{Proof that $\lVert f\rVert_\sfE\leq\lVert f\rVert_\sfL$ if $f\in\sfL$}

The crux of the argument is to show that $f_\ell(\theta,t)=\textstyle\int\displaystyle h_\ell(w,\langle w\mid t\theta\rangle)\,\underline{dw},$ which we do by expanding $f_\ell$ and $h_\ell$ w.r.t.\ the $\{S_{\ell j}\}_j$ from {\S\,}\ref{sec:zonal}. Ergo, letting
\begin{align*}
f_{\ell j}(t)\coloneq\int f_\ell(\theta,t)S_{\ell j}(\theta)\,\underline{d\theta}\overset{\text{F}}{=}\int f(t\alpha)\int S_{\ell j}(\theta)Z_\ell(\theta,\alpha)\,\underline{d\theta}\,\underline{d\alpha}=\int f(t\alpha)S_{\ell j}(\alpha)\,\underline{d\alpha}
\end{align*}
it follows that
\begin{align*}
\sum_j f_{\ell j}(t)S_{\ell j}(\theta)=\int f(t\alpha)\Bigl[\sum_j S_{\ell j}(\alpha)S_{\ell j}(\theta)\Bigr]\,\underline{d\alpha}=\int f(t\alpha)Z_\ell(\theta,\alpha)\,\underline{d\alpha}=f_\ell(\theta,t)
\end{align*}
\begin{lem}\label{lem:fl_ps_equiv}
$f_\ell(\theta,\cdot)\in\PS'$ for all $\theta$ is tantamount to $f_{\ell j}\in\PS'$ for all $j.$
\end{lem}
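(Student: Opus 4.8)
The plan is to trade the integral defining $f_{\ell j}$ for a finite linear combination, exploiting that the space of degree-$\ell$ spherical harmonics is $N(\ell,n)$-dimensional. Two relations are available: the expansion $f_\ell(\theta,t)=\sum_{j=1}^{N(\ell,n)}f_{\ell j}(t)S_{\ell j}(\theta)$ derived just above the lemma, and the definition $f_{\ell j}(t)=\int f_\ell(\theta,t)S_{\ell j}(\theta)\,\underline{d\theta}=\int f(t\alpha)S_{\ell j}(\alpha)\,\underline{d\alpha}.$ I would first record that $f_\ell(\theta,\cdot)$ and each $f_{\ell j}$ are continuous on $(0,\infty)$ — the former by definition, the latter by dominated convergence since $f\in C$ — hence lie in $\ell_1^\text{loc}(0,\infty);$ so ``$\in\PS'$'' refers unambiguously to the associated $\regPS'$ distribution, and, the inclusion $\regPS'\hookrightarrow\PS'$ being linear, any pointwise $\C$-linear identity among such continuous functions descends to an identity in $\PS'.$

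The implication from ``$f_{\ell j}\in\PS'$ for all $j$'' to ``$f_\ell(\theta,\cdot)\in\PS'$ for all $\theta$'' is then immediate: fix $\theta,$ read the expansion as presenting $f_\ell(\theta,\cdot)$ as the finite combination $\sum_j S_{\ell j}(\theta)f_{\ell j}$ of members of the vector space $\PS',$ and conclude.

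For the converse I would avoid integrating the $\PS'$-valued map $\theta\mapsto f_\ell(\theta,\cdot)$ against $S_{\ell j}$ (which would drag in vector-valued integration) and instead invoke \eqref{eq:slj_matrix}: choose $\alpha_1,\ldots,\alpha_N$ (with $N=N(\ell,n)$) for which $M\coloneq(S_{\ell j}(\alpha_i))_{i,j=1}^{N}$ is invertible. Evaluating the expansion at $\theta=\alpha_i$ gives $f_\ell(\alpha_i,\cdot)=\sum_{j}M_{ij}f_{\ell j}$ for each $i,$ i.e.\ $(f_\ell(\alpha_i,\cdot))_i=M\,(f_{\ell j})_j;$ multiplying by $M^{-1}$ yields $f_{\ell j}=\sum_i(M^{-1})_{ji}f_\ell(\alpha_i,\cdot),$ a finite $\C$-linear combination of the $f_\ell(\alpha_i,\cdot)\in\PS'.$ Hence $f_{\ell j}\in\PS'.$

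I do not expect a genuine obstacle here: the argument is powered entirely by $N(\ell,n)<\infty,$ the vector-space structure of $\PS',$ and the invertibility fact \eqref{eq:slj_matrix}. The one point requiring (minor) care is that the pointwise relations among the continuous functions $f_\ell(\theta,\cdot),$ $f_{\ell j},$ and $f_\ell(\alpha_i,\cdot)$ really do pass to $\PS',$ which is precisely the linearity remark made at the outset.
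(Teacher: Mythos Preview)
Your proposal is correct and mirrors the paper's proof almost exactly: the forward direction uses the finite expansion $f_\ell(\theta,\cdot)=\sum_j S_{\ell j}(\theta)f_{\ell j}$, and the converse evaluates at the points $\alpha_1,\ldots,\alpha_N$ from \eqref{eq:slj_matrix} and inverts the matrix $(S_{\ell j}(\alpha_i))_{i,j}$ to express each $f_{\ell j}$ as a finite linear combination of the $f_\ell(\alpha_i,\cdot)\in\PS'$. Your added remarks on continuity and the passage from pointwise identities to $\PS'$ identities are a welcome touch of rigor but do not alter the strategy.
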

\begin{proof}
If $f_{\ell j}\in\PS'$ for all $j,$ then plainly $f_\ell(\theta,\cdot)=\sum_j S_{\ell j}(\theta)f_{\ell j}\in\PS'$ for all $\theta.$

Conversely, suppose $f_\ell(\theta,\cdot)\in\PS'$ for all $\theta.$ Picking $\alpha_1,\ldots,\alpha_N$ as in \eqref{eq:slj_matrix}:
\begin{align*}
\begin{Bmatrix}
S_{\ell,1}(\alpha_1)&\cdots&S_{\ell N}(\alpha_1)\\
\vdots&\ddots&\vdots\\
S_{\ell,1}(\alpha_N)&\cdots&S_{\ell N}(\alpha_N)
\end{Bmatrix}
^{\!-1}
\begin{Bmatrix}
f_\ell(\alpha_1,\cdot)\\
\vdots\\
f_\ell(\alpha_N,\cdot)
\end{Bmatrix}
=
\begin{Bmatrix}
f_{\ell,1}\\
\vdots\\
f_{\ell N}
\end{Bmatrix}
\end{align*}
where $N=N(\ell,n);$ ergo $f_{\ell j}\in\PS'$ for all $j.$
\end{proof}
\begin{lem}\label{lem:hlj}
$h_\ell(\theta,t)=\sum_{j=1}^{\mathclap{N(\ell,n)}} h_{\ell j}(t)S_{\ell j}(\theta)$ where $h_{\ell j}(t)\coloneq\int h_\ell(\theta,t)S_{\ell j}(\theta)\,\underline{d\theta}$
\end{lem}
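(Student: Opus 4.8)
The plan is to show that, for almost every $t$, the function $\sphere^{n-1}\ni\theta\mapsto h_\ell(\theta,t)$ is (almost everywhere equal to) a spherical harmonic of degree $\ell$. Granting this, expanding in the orthonormal basis $\{S_{\ell j}\}_j$ of degree-$\ell$ spherical harmonics forces $h_\ell(\theta,t)=\sum_{j}c_j(t)S_{\ell j}(\theta)$, and orthonormality pins down $c_j(t)=\int h_\ell(\theta,t)S_{\ell j}(\theta)\,\underline{d\theta}=h_{\ell j}(t)$, which is exactly the assertion. So everything reduces to producing the degree-$\ell$ structure.

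To get that structure I would push the (finite) expansion $f_\ell(\theta,\cdot)=\sum_{j=1}^{N(\ell,n)}S_{\ell j}(\theta)f_{\ell j}$ — valid in $\PS'$ since the $f_{\ell j}$ lie in $\PS'$ by Lemma \ref{lem:fl_ps_equiv} — through the operators that define $G_\ell$. For $\ell\notin L$, linearity of $\text{D}_\ell$ gives $G_\ell(\theta,\cdot)=\text{D}_\ell\{f_\ell(\theta,\cdot)\}=\sum_j S_{\ell j}(\theta)\,\text{D}_\ell\{f_{\ell j}\}$. For $\ell\in L$, I would first pick $\alpha_1,\dots,\alpha_N$ as in \eqref{eq:slj_matrix}, write each $f_{\ell j}$ as a finite linear combination of the $f_\ell(\alpha_i,\cdot)\in\sfF$, deduce that $F_{\ell j}\coloneq-\int^\to f_{\ell j}(v)\,\frac{dv}{v}$ makes sense and that $F_\ell(\theta,\cdot)=\sum_j S_{\ell j}(\theta)F_{\ell j}$, and then apply $\text{\DH}_\ell$ to get $G_\ell(\theta,\cdot)=\sum_j S_{\ell j}(\theta)\,\text{\DH}_\ell\{F_{\ell j}\}$. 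In either case set $G_{\ell j}$ to be $\text{D}_\ell\{f_{\ell j}\}$ resp.\ $\text{\DH}_\ell\{F_{\ell j}\}$.

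It remains to upgrade this distributional identity to a pointwise one. Since $\{G_\ell\}_\ell\in\AAS(0)$, $G_\ell(\theta,\cdot)\in\ell_\infty(0,\infty)$ for $\theta$ in a conull set $\Omega$; as the set of $N$-tuples making the matrix in \eqref{eq:slj_matrix} invertible is open and nonempty, hence of positive measure, it meets $\Omega^N$, so we may take the $\alpha_i$ above in $\Omega$. Inverting that matrix exhibits each $G_{\ell j}$ as a finite linear combination of the honest functions $G_\ell(\alpha_i,\cdot)$, so $G_{\ell j}\in\ell_\infty(0,\infty)$, and therefore $G_\ell(\theta,t)=\sum_j S_{\ell j}(\theta)G_{\ell j}(t)$ for a.e.\ $\theta$ and every $t>0$. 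Recalling $h_\ell(\theta,t)=G_\ell(\theta,t)$ for $t>0$ and $h_\ell(\theta,t)=(-1)^\ell G_\ell(\theta,-t)$ for $t<0$, the even/odd extension $h_{\ell j}^\sharp$ of $G_{\ell j}$ (namely $h_{\ell j}^\sharp(t)=G_{\ell j}(t)$ for $t>0$, $(-1)^\ell G_{\ell j}(-t)$ for $t<0$) satisfies $h_\ell(\theta,t)=\sum_j S_{\ell j}(\theta)h_{\ell j}^\sharp(t)$ for a.e.\ $(\theta,t)$; integrating against $S_{\ell j}$ and using orthonormality identifies $h_{\ell j}^\sharp$ with $h_{\ell j}$.

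The only delicate point, and hence the main obstacle, is the bookkeeping in the passage between distributions and functions: $\text{D}_\ell,\text{\DH}_\ell$ act on $\PS'$-valued objects, so a priori the coefficient distributions $G_{\ell j}$ (and, for $\ell\in L$, the $F_{\ell j}$) are merely elements of $\sfP'$; one must invoke the invertibility \eqref{eq:slj_matrix} twice — once to make sense of the $F_{\ell j}$ and once to see, on a conull set of $\theta$, that the $G_{\ell j}$ are genuine $\ell_\infty(0,\infty)$ functions — before the desired a.e.\ pointwise identity, and with it the formula for $h_{\ell j}$, can be read off. Measurability of $t\mapsto h_{\ell j}(t)$ is then automatic from joint measurability of $h_\ell$ together with the $\ell_{1,\infty}$ bound.
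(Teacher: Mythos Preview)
Your approach is correct and follows the same overall line as the paper: expand $f_\ell(\theta,\cdot)=\sum_j S_{\ell j}(\theta)f_{\ell j}$, push through $\text{D}_\ell$ (resp.\ $\text{\DH}_\ell$ applied to $F_\ell$), and read off the degree-$\ell$ spherical-harmonic structure of $G_\ell$, hence of $h_\ell$. The only real difference is in how you pass from the distributional identity $G_{\ell,\theta}=\sum_j S_{\ell j}(\theta)G_{\ell j}$ to a pointwise one: the paper tests against $\omega\in\sfP$, integrates in $\theta$ against $S_{\ell j}$, and uses Fubini to identify $G_{\ell j}[t]=\int G_\ell(\theta,t)S_{\ell j}(\theta)\,\underline{d\theta}$ directly as a function, whereas you argue that the invertibility locus in $(\sphere^{n-1})^N$ is open and of positive measure, pick the $\alpha_i$ inside $\Omega$, and invert the matrix to see each $G_{\ell j}\in\ell_\infty(0,\infty)$. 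Both work; the paper's Fubini route is marginally slicker (no measure-theoretic detour), while yours makes the regularity of the $G_{\ell j}$ explicit before integrating. For $\ell\in L$ the paper gets to $F_{\ell j}$ via dominated convergence rather than matrix inversion, but the outcome is the same. One small imprecision: once you identify two locally integrable distributions you only get equality for a.e.\ $t$, not ``every $t>0$''; this is harmless here since $h_\ell$ is an $\ell_{1,\infty}$ element.
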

\begin{proof}
We first consider the case $\ell\notin L.$ Lemma \ref{lem:fl_ps_equiv} allows us to conclude that
\begin{align*}
G_{\ell,\theta}\coloneq\text{D}_\ell\{f_\ell(\theta,\cdot)\}=\sum_i S_{\ell i}(\theta)\text{D}_\ell\{f_{\ell i}\}\eqcolon\sum_i S_{\ell i}(\theta)G_{\ell i}
\end{align*}
By Definition \ref{def:gamma_d}, $G_{\ell,\theta}[t]=G_\ell(\theta,t)\in\ell_{1,\infty}(\underline{d\theta},dt/t).$ Plugging in $\omega\in\sfP$ and integrating against $S_{\ell j}(\theta)\,\underline{d\theta}$ yields that
\begin{align*}
\int G_{\ell,\theta}(\omega)S_{\ell j}(\theta)\,\underline{d\theta}=\sum_i G_{\ell i}(\omega)\int S_{\ell i}(\theta)S_{\ell j}(\theta)\,\underline{d\theta}=G_{\ell j}(\omega)
\end{align*}
However, $G_{\ell,\theta}(\omega)=\textstyle\int_0^\infty\displaystyle G_{\ell,\theta}(t)\omega(t)\,dt$ and hence $G_{\ell j}(\omega)=$
\begin{align*}
\int G_{\ell,\theta}(\omega)S_{\ell j}(\theta)\,\underline{d\theta}=\int S_{\ell j}(\theta)\int_0^\infty G_{\ell,\theta}(t)\omega(t)\,dt\,\underline{d\theta}\overset{\text{F}}{=}\int_0^\infty\omega(t)\int G_\ell(\theta,t)S_{\ell j}(\theta)\,\underline{d\theta}\,dt
\end{align*}
so $G_{\ell j}[t]=\textstyle\int\displaystyle G_\ell(\theta,t)S_{\ell j}(\theta)\,\underline{d\theta}.$ By how $h_\ell$ is defined, it follows that
\begin{center}
$h_\ell(\theta,t)=\sum_j h_{\ell j}(t)S_{\ell j}(\theta)$ because $h_{\ell j}(t)=
\begin{dcases*}
G_{\ell j}(t)&if $t>0$ \\
(-1)^\ell G_{\ell j}(-t)&if $t<0$
\end{dcases*}$
\end{center}
\fbox{$\ell\in L$}

Let $m>t>0.$ By the dominated convergence theorem (as convergent sequences are bounded eo ipso)
\begin{align*}
\lim_{m\to\infty}\int_t^m f_{\ell j}(v)\,\frac{dv}{v}\overset{\text{F}}{=}\lim_{m\to\infty}\int S_{\ell j}(\theta)\int_t^m f_\ell(\theta,v)\,\frac{dv}{v}\,\underline{d\theta}=-\int S_{\ell j}(\theta)F_\ell(\theta,t)\,\underline{d\theta}
\end{align*}
so $F_{\ell j}(t)\coloneq\int S_{\ell j}(\theta)F_\ell(\theta,t)\,\underline{d\theta}=-\int_t^\to f_{\ell j}(v)\,\frac{dv}{v}$ and hence
\begin{align*}
\sum_j F_{\ell j}(t)S_{\ell j}(\theta)=-\int_t^\to\Bigl[\sum_j f_{\ell j}(v)S_{\ell j}(\theta)\Bigr]\,\frac{dv}{v}=-\int_t^\to f_\ell(\theta,v)\,\frac{dv}{v}=F_\ell(\theta,t)
\end{align*}
The rest of the argument follows mutatis mutandis like the case $\ell\notin L$ (with $\text{\DH}_\ell$ instead of $\text{D}_\ell$ and $F_\ell$ instead of $f_\ell$ etc.).
\end{proof}
Since $C_\ell^\alpha$ has the same parity as $\ell,$ \eqref{eq:f_in_terms_of_g} amounts to
\begin{align*}
f_\ell(\theta,t)=\lvert\sphere^{n-2}\rvert\int_{-1}^{+1}h_\ell(\theta,tv)C_\ell^{(n-2)/2}(v)(1-v^2)^{(n-3)/2}\,dv
\end{align*}
Now, using our newfound expansion of $h_\ell,$ Lemma \ref{lem:hlj}, and Lemma \ref{lem:funk_hecke},
\begin{align*}
\int h_\ell(w,\langle w\mid t\theta\rangle)\,\underline{dw}&=\sum_j\int h_{\ell j}(\langle w\mid t\theta\rangle)S_{\ell j}(w)\,\underline{dw}=\sum_j\int h_{\ell j}(t\langle w\mid\theta\rangle)S_{\ell j}(w)\,\underline{dw}\\
&=\sum_j S_{\ell j}(\theta)\lvert\sphere^{n-2}\rvert\int_{-1}^{+1} h_{\ell j}(tv)C_\ell^{(n-2)/2}(v)(1-v^2)^{(n-3)/2}\,dv\\
&=\lvert\sphere^{n-2}\rvert\int_{-1}^{+1}h_\ell(\theta,tv)C_\ell^{(n-2)/2}(v)(1-v^2)^{(n-3)/2}\,dv=f_\ell(\theta,t)
\end{align*}
For all $R\in(0,1),$
\begin{gather*}
\int f(t\alpha)p(\alpha,R\theta)\,\underline{d\alpha}\overset{\text{F}}{=}\sum_{\ell=0}^\infty R^\ell f_\ell(\theta,t)=\\
\sum_{\ell=0}^\infty R^\ell\int h_\ell(w,\langle w\mid t\theta\rangle)\,\underline{dw}\overset{\text{F}}{=}\int\sum_{\ell=0}^\infty R^\ell h_\ell(w,\langle w\mid t\theta\rangle)\,\underline{dw}
\end{gather*}
Taking $R\uparrow1,$ Proposition \ref{prop:ii_1_10} and the fact that $h_R\to h$ in $\ell_{1,\infty}(-\infty)$ yield that
\begin{align*}
f(t\theta)=\int h(w,\langle w\mid t\theta\rangle)\,\underline{dw}
\end{align*}
i.e., $f(x)=\textstyle\int\displaystyle h(w,\langle w\mid x\rangle)\,\underline{dw}$ for all $x\neq0$ (as $t>0).$ Since $\textstyle\int\displaystyle h(w,\langle w\mid\!{}\cdot{}\rangle)\,\underline{dw}$ is (sequentially) continuous by the dominated convergence theorem and $f\in C$ as well, this establishes that $\sfL\subset\sfE.$ The norm inequality is trivial.

\section{Barron spaces \& neural networks}\label{sec:main}

In this section we prove the main result of this paper and apply it to neural networks. Throughout this section $\alpha\in\N.$

\subsection{Main result}\label{sec:main_statement}

To state the main result of the paper, we need a couple more definitions.
\begin{defn}
The \myuline{Barron space} $\B^\alpha\coloneq\{f\in\B:\lvert u\rvert^\alpha\phi(u)\in\ell_1(du)\}$ where $\phi\in\ell_1$ is the unique function such that $f=\F\{\phi\}.$ We endow the Barron space with the following norm: $\lVert f\mid\B^\alpha\rVert=\lVert u\mapsto\lvert u\rvert^\alpha\phi(u)\mid\ell_1(du)\rVert.$
\end{defn}
The Barron spaces are clearly Banach spaces.
\begin{defn}\label{def:n_alpha}
The space $\sfE^\alpha$ consists of those functions $f\in C$ for which $\exists h^\alpha\in\ell_{1,\infty}(-\infty)$ such that, for all $r>0,$
\begin{align}\label{eq:f=expectation}
\forall x\in K(r):f(x)=\mathds{E}_{w,b}[2r\lvert\sphere^{n-1}\rvert h^\alpha(w,b)\delta^{(-\alpha)}(\langle w\mid x\rangle-b)]
\end{align}
where $w$ and $b$ are uniformly distributed on $\sphere^{n-1}$ on $[-r,r]$ resp. We endow the space $\sfE^\alpha$ with the following seminorm:
\begin{gather*}
\lVert f\mid\sfE^\alpha\rVert\coloneq\inf\Bigl\{\lVert h^\alpha\rVert_{1,\infty}:\eqref{eq:f=expectation}\text{ for all }r>0\Bigr\}
\end{gather*}
\end{defn}
While \eqref{eq:f=expectation} may be interpreted as an ``infinitely wide'' shallow neural network with random inner weights and biases (cf.\ {\S\,}\ref{sec:main_NN}), we can of course rewrite it as
\begin{align*}
\forall x\in K(r):f(x)=\iint h^\alpha(w,b)\delta^{(-\alpha)}(\langle w\mid x\rangle-b)[-r\leq b\leq r]\,db\,\underline{dw}
\end{align*}
so, compared to $\sfE,$ instead of integrating $h(w,\langle w\mid x\rangle)$ w.r.t.\ $w,$ we are now integrating $(h^\alpha(w,\cdot)[-r\leq\cdot\leq r]*\delta^{(-\alpha)})(\langle w\mid x\rangle)$ w.r.t.\ $w.$

The main result of the paper, plainly an analogue of Theorem \ref{thm:main_base_case}, may now be phrased as follows:
\begin{theorem}\label{thm:main}
$\B^\alpha\subset\sfL^{\!\alpha}\subset\sfE^\alpha$ with $\lVert f\mid\sfE^\alpha\rVert\leq\lVert f\mid\sfL^{\!\alpha}\rVert$ if $f\in\sfL^{\!\alpha}$ and $\lVert f\mid\sfL^{\!\alpha}\rVert\leq$ $\lVert f\mid\B^\alpha\rVert$ if $f\in\B^\alpha$
\end{theorem}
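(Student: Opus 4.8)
\emph{Proof proposal.} The plan is to run the proof of Theorem~\ref{thm:main_base_case} verbatim, with the $\alpha{=}0$ operators $\text{D}_\ell,\text{\DH}_\ell$ replaced by $\text{D}_\ell^\alpha,\text{\DH}_\ell^\alpha$, the algebraic bridge being that the new operators are $\alpha$-th derivatives of the old ones. By Corollary~\ref{cor:d_decomp}, $\text{D}_\ell^\alpha=\partial^\alpha\text{D}_\ell$; and for $\ell\in L$, combining Lemma~\ref{lem:dh_decomp} ($\text{\DH}_\ell^\alpha=\partial^{\alpha-1}t^{-1}\text{\DH}_\ell$) with $\text{\DH}_\ell\{f_\ell\}=t\partial\,\text{\DH}_\ell\{F_\ell\}$ — which holds because $f_\ell=t\partial F_\ell$ and $\text{\DH}_\ell$ is an $\M_0$-multiplier operator, so $\M_0\{\text{\DH}_\ell\{f_\ell\}\}=N_\ell^1(-iy)\M_0\{F_\ell\}=-iy\,\M_0\{\text{\DH}_\ell\{F_\ell\}\}$ — yields $\text{\DH}_\ell^\alpha\{f_\ell\}=\partial^\alpha\text{\DH}_\ell\{F_\ell\}$. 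Writing $G_\ell$ for the quantity the $\alpha{=}0$ theory attaches to $f$, we thus get $G_\ell^\alpha=\partial^\alpha G_\ell$, hence $h_\ell^\alpha=\partial_t^\alpha h_\ell$ (the sign twists $(-1)^{\ell+\alpha}$ vs.\ $(-1)^\ell$ are compatible since $\partial_t^\alpha$ contributes a factor $(-1)^\alpha$ on $t<0$) and $h_R^\alpha=\partial_t^\alpha h_R$.

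For $\B^\alpha\subset\sfL^{\!\alpha}$: if $f=\F\{\phi\}\in\B^\alpha$ then $|\cdot|^\alpha\phi\in\ell_1$, the Bessel-function formula for $f_\ell(\theta,\cdot)$ from the proof of Lemma~\ref{lem:g_closed_form} puts it in $\UCB(0,\infty)\subset\PS'$, and Lemma~\ref{lem:g_closed_form} gives $G_\ell(\theta,t)=\int\phi(u)Z_\ell(\theta,\hat u)\cis_\ell(t|u|)\,du$; differentiating under the integral (legitimate since $|\cdot|^k\phi\in\ell_1$ for $0\le k\le\alpha$) gives $G_\ell^\alpha(\theta,t)=i^\alpha\int|u|^\alpha\phi(u)Z_\ell(\theta,\hat u)\cis_{\ell+\alpha}(t|u|)\,du$. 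A Lemma~\ref{lem:z_bound}-type estimate gives $\lVert h_\ell^\alpha\rVert_{1,\infty}\preccurlyeq_n\ell^{\mkern1mu n-2}\lVert|\cdot|^\alpha\phi\rVert_1$, so $\{G_\ell^\alpha\}\in\AAS(0)$ and $h_R^\alpha$ exists. Rerunning the proof of Lemma~\ref{lem:h} with $|\cdot|^\alpha\phi$ in place of $\phi$ (still in $\ell_1$) and the bounded smooth kernels $\partial^\alpha\cos,\partial^\alpha\sin$ in place of $\cos,\sin$ — the density/truncation/Poisson-kernel argument is insensitive to this change — gives $h_R^\alpha\to h^\alpha$ in $\ell_{1,\infty}(-\infty)$ with, using $|\theta\rho|=\rho$ to simplify,
\begin{align*}
h^\alpha(\theta,t)=\partial_t^\alpha h(\theta,t)=\tfrac12\int_0^\infty\rho^{n-1}(i\rho)^\alpha\phi(\theta\rho)e^{it\rho}\,d\rho+\tfrac12\int_0^\infty\rho^{n-1}(-i\rho)^\alpha\phi(-\theta\rho)e^{-it\rho}\,d\rho.
\end{align*}
This establishes $f\in\sfL^{\!\alpha}$, and $\lVert h^\alpha\rVert_{1,\infty}\le\lVert|\cdot|^\alpha\phi\rVert_1=\lVert f\mid\B^\alpha\rVert$ is read off directly.

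For $\sfL^{\!\alpha}\subset\sfE^\alpha$: mimic the proof that $\sfL\subset\sfE$. Expand $h_\ell^\alpha(\theta,t)=\sum_j h_{\ell j}^\alpha(t)S_{\ell j}(\theta)$ (the analogue of Lemma~\ref{lem:hlj}, via Lemma~\ref{lem:fl_ps_equiv}-style reasoning and the invertible matrix~\eqref{eq:slj_matrix}). The heart of the matter is the per-degree identity $f_\ell(\theta,t)=\int\bigl(h_\ell^\alpha(w,\cdot)[-r\le\cdot\le r]*\delta^{(-\alpha)}\bigr)(\langle w\mid t\theta\rangle)\,\underline{dw}$ for $t\le r$. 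By Funk--Hecke (Lemma~\ref{lem:funk_hecke}) its right side equals $|\sphere^{n-2}|\int_{-1}^{+1}\bigl(h_\ell^\alpha(\theta,\cdot)[-r\le\cdot\le r]*\delta^{(-\alpha)}\bigr)(tv)C_\ell^{(n-2)/2}(v)(1-v^2)^{(n-3)/2}\,dv$, while Lemma~\ref{lem:d_inverted} together with $\text{D}_\ell^\alpha=\partial^\alpha\text{D}_\ell$ (resp.\ its $\text{\DH}$ counterpart) gives $f_\ell(\theta,t)=|\sphere^{n-2}|\int_{-1}^{+1}\tilde h_\ell(\theta,tv)C_\ell^{(n-2)/2}(v)(1-v^2)^{(n-3)/2}\,dv$, where $\tilde h_\ell$ is the parity-$(-1)^\ell$ extension of the $\alpha$-fold antiderivative of $G_\ell^\alpha$ that lies in $\regPS'$. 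Since $h_\ell^\alpha(\theta,\cdot)[-r\le\cdot\le r]*\delta^{(-\alpha)}$ is also an $\alpha$-fold antiderivative of $h_\ell^\alpha(\theta,\cdot)$, the two differ by a polynomial of degree $\le\alpha-1$, which the Gegenbauer integral annihilates once $\ell\ge\alpha$; the finitely many degrees $\ell<\alpha$ must be dispatched by hand, checking that the residual polynomial contributes nothing after the dual Radon integration — this is where it is essential that $\text{D}_\ell^\alpha,\text{\DH}_\ell^\alpha$ single out exactly the $\PS$-normalized antiderivative, and where the shape of Definition~\ref{def:n_alpha} (the bias interval $[-r,r]$ and the ``for all $r$'' quantifier) earns its keep. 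Granting the per-degree identity, Abel-sum over $\ell$ — the $\delta^{(-\alpha)}$-convolution commutes with the Abel sum because convolution against $\delta^{(-\alpha)}[-2r\le\cdot\le 2r]$ is a bounded operation on $\ell_\infty([-r,r])$ — and let $R\uparrow1$, using $h_R^\alpha\to h^\alpha$ in $\ell_{1,\infty}(-\infty)$ and $\int f(t\alpha)p(\alpha,R\theta)\,\underline{d\alpha}\to f(t\theta)$ (Proposition~\ref{prop:ii_1_10}), to obtain $f(x)=\int\bigl(h^\alpha(w,\cdot)[-r\le\cdot\le r]*\delta^{(-\alpha)}\bigr)(\langle w\mid x\rangle)\,\underline{dw}$ for all $x\in K(r)$ and all $r>0$, which is \eqref{eq:f=expectation}; the bound $\lVert f\mid\sfE^\alpha\rVert\le\lVert h^\alpha\rVert_{1,\infty}=\lVert f\mid\sfL^{\!\alpha}\rVert$ is then immediate.

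The main obstacle is exactly this per-degree identity in the second inclusion: proving the $\alpha$-analogue of Lemma~\ref{lem:d_inverted} and, above all, taming the polynomial ambiguity of higher-order antiderivatives at the low degrees $\ell<\alpha$, where the $\PS$-normalization of $\text{D}_\ell^\alpha,\text{\DH}_\ell^\alpha$ must exactly cancel the contribution of the bias-interval boundary. Everything else — the first inclusion, and all the convergence and interchange bookkeeping (differentiating under integrals, Abel-summability estimates, passing to the limit $R\uparrow1$) — is a faithful transcription of the $\alpha=0$ case.
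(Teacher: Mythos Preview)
Your first inclusion ($\B^\alpha\subset\sfL^{\!\alpha}$) matches the paper almost exactly: the paper too establishes $G_\ell^\alpha=\partial^\alpha G_\ell$ (the unnumbered lemma just after the start of the proof), differentiates under the integral against the closed form from Lemma~\ref{lem:g_closed_form}, and reruns Lemma~\ref{lem:h} with $\phi^\circ=\lvert\cdot\rvert^\alpha\phi$ in place of $\phi$, splitting into cases according to $\alpha\,\%\,4$.

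Your second inclusion diverges from the paper in two ways. First, the paper does \emph{not} argue via ``two $\alpha$-fold antiderivatives differ by a polynomial of degree $\le\alpha-1$, and Gegenbauer orthogonality kills it for $\ell\ge\alpha$''. Instead it shows directly, by repeated integration by parts (Lemma~\ref{lem:h_circ_h} for $\ell\notin L$, Lemma~\ref{lem:h_circ_h1} for $\ell\in L,\alpha>1$), that the cut-off convolution $h_\ell^\circ$ coincides on $[-r,r]$ with the \emph{specific} antiderivative $h_{\ell,\theta}$ already picked out by $\text{D}_\ell$ (resp.\ with $\int h_{\ell,\theta}^1$). This handles every $\ell$ uniformly; there is no separate low-degree analysis, and your ``dispatch by hand'' step is exactly what Lemmas~\ref{lem:h_circ_h} and~\ref{lem:h_circ_h1} replace. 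Second, for $\ell\in L$ the paper does not work with an $\alpha$-fold antiderivative of parity $(-1)^\ell$: it uses the factorization $\text{\DH}_\ell^\alpha=\partial^{\alpha-1}t^{-1}\text{\DH}_\ell$ (Lemma~\ref{lem:dh_decomp}) to pass to an $(\alpha-1)$-fold antiderivative $h_{\ell,\theta}^1$ of parity $(-1)^{\ell+1}$, and then invokes the inversion formula~\eqref{eq:dh_inverted} rather than~\eqref{eq:d_inverted}. This is why the paper splits into three cases ($\ell\notin L$; $\ell\in L,\alpha=1$; $\ell\in L,\alpha>1$), a structure absent from your sketch.

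Your real gap is the low-degree step: you concede that for $\ell<\alpha$ the polynomial discrepancy is not killed by the Gegenbauer weight, and you offer only a gesture toward ``$\PS$-normalization'' as a cancellation mechanism. That is not a proof, and it is unclear how it would be made into one. The paper's route never lets this discrepancy arise, but its price is the integration-by-parts identity $h_\ell^\circ=h_{\ell,\theta}\cdot[-r\le\cdot\le r]$ (and its $\ell\in L$ analogue), which is the technical content you are missing.
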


\subsection{Proof of Theorem \ref{thm:main}}

\fbox{Proof that $\lVert f\mid\sfL^{\!\alpha}\rVert\leq\lVert f\mid\B^\alpha\rVert$ if $f\in\B^\alpha$}

By definition, $\B^\alpha\subset\B,$ so Theorem \ref{thm:main_base_case} yields that $f\in\B^\alpha\subset\B\subset\sfL;$ it follows that $G_\ell$ and $h_\ell$ as defined in Definition \ref{def:gamma_d} are still well-defined.
\begin{lem}
If $f\in\sfL,$ then $\M_\alpha^{-1}\Bigl\{N_\ell^\alpha\M_0\{f_\ell(\theta,\cdot)\}\Bigr\}=\partial^\alpha G_\ell(\theta,\cdot).$
\end{lem}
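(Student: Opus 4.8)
Fix $\theta\in\sphere^{n-1}$ once and for all. Because $f\in\sfL$, Definition \ref{def:gamma_d} guarantees $\{f_\ell(\theta,\cdot)\}_{\ell=0}^\infty\in\sfF$; thus $f_\ell(\theta,\cdot)\in\PS'$ whenever $\ell\notin L$, whereas for $\ell\in L$ the distribution $F_\ell(\theta,\cdot)=-\int^\to f_\ell(\theta,v)\,\tfrac{dv}{v}$ lies in $\PS'$ (its defining antiderivative being a $\PS$ distribution with a limit at $\infty$), and since $f_\ell(\theta,\cdot)=t\partial F_\ell(\theta,\cdot)$ by the fundamental theorem of calculus ($f_\ell$ being continuous), $f_\ell(\theta,\cdot)\in\PS'$ as well, $\PS'$ being closed under $t\partial$. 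Hence $\M_\alpha^{-1}\{N_\ell^\alpha\M_0\{f_\ell(\theta,\cdot)\}\}$ is exactly $\text{D}_\ell^\alpha\{f_\ell(\theta,\cdot)\}$ when $\ell\notin L$ and $\text{\DH}_\ell^\alpha\{f_\ell(\theta,\cdot)\}$ when $\ell\in L$, and I would split the argument accordingly.

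For $\ell\notin L$ the statement is immediate from Corollary \ref{cor:d_decomp}, which says $\text{D}_\ell^\alpha=\partial^\alpha\text{D}_\ell$, combined with the fact that $G_\ell(\theta,\cdot)=\text{D}_\ell\{f_\ell(\theta,\cdot)\}$ by Definition \ref{def:gamma_d}: indeed $\M_\alpha^{-1}\{N_\ell^\alpha\M_0\{f_\ell(\theta,\cdot)\}\}=\text{D}_\ell^\alpha\{f_\ell(\theta,\cdot)\}=\partial^\alpha\text{D}_\ell\{f_\ell(\theta,\cdot)\}=\partial^\alpha G_\ell(\theta,\cdot)$.

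For $\ell\in L$, Lemma \ref{lem:dh_decomp} gives $\text{\DH}_\ell^\alpha\{f_\ell(\theta,\cdot)\}=\partial^{\alpha-1}t^{-1}\text{\DH}_\ell\{f_\ell(\theta,\cdot)\}$, so it remains to pin down $\text{\DH}_\ell\{f_\ell(\theta,\cdot)\}$. Since $\text{\DH}_\ell=\M_0^{-1}\{N_\ell\M_0\{\cdot\}\}$ acts by multiplication by $N_\ell$ on the Mellin side and $\M_0\{t\partial\psi\}=-iy\,\M_0\{\psi\}$ (the identity used in \eqref{eq:F->f} and in the proof of Lemma \ref{lem:t_alpha_partial_alpha}, via \cite[(14.28)]{kolk} and \cite[(6)]{fungkang}), I would apply $\M_0$ and use $f_\ell(\theta,\cdot)=t\partial F_\ell(\theta,\cdot)$, $G_\ell(\theta,\cdot)=\text{\DH}_\ell\{F_\ell(\theta,\cdot)\}$ (Definition \ref{def:gamma_d}), and $\M_0\{G_\ell(\theta,\cdot)\}=N_\ell\M_0\{F_\ell(\theta,\cdot)\}$ (recall $G_\ell(\theta,\cdot)$ is a $\PS$ distribution) to obtain
\begin{align*}
\M_0\{\text{\DH}_\ell\{f_\ell(\theta,\cdot)\}\}&=N_\ell\M_0\{f_\ell(\theta,\cdot)\}=-iy\,N_\ell\M_0\{F_\ell(\theta,\cdot)\}\\
&=-iy\,\M_0\{G_\ell(\theta,\cdot)\}=\M_0\{t\partial G_\ell(\theta,\cdot)\}.
\end{align*}
Because $\M_0$ is injective on $\PS'$ and both $\text{\DH}_\ell\{f_\ell(\theta,\cdot)\}$ and $t\partial G_\ell(\theta,\cdot)$ are $\PS$ distributions, $\text{\DH}_\ell\{f_\ell(\theta,\cdot)\}=t\partial G_\ell(\theta,\cdot)$; substituting this and using that $\sfP'$ is closed under multiplication by $t^{-1}\in C^\infty(0,\infty)$ with $t^{-1}\cdot t=1$, I conclude $\partial^{\alpha-1}t^{-1}(t\partial G_\ell(\theta,\cdot))=\partial^{\alpha-1}\partial G_\ell(\theta,\cdot)=\partial^\alpha G_\ell(\theta,\cdot)$, which is the claim.

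The one place where care is needed is the $\PS'$-versus-$\sfP'$ bookkeeping in the $\ell\in L$ case: that $f_\ell(\theta,\cdot)=t\partial F_\ell(\theta,\cdot)$ really lands in $\PS'$ (so $\text{\DH}_\ell^\alpha$ may be applied), that $\text{\DH}_\ell$ commutes with $t\partial$ at the distributional level, and that the cancellation $t^{-1}\cdot t=1$ in the last step must be carried out in $\sfP'$ --- which is closed under multiplication by every $M\in C^\infty(0,\infty)$ --- rather than in $\PS'$, which is not closed under multiplication by $t^{-1}$. Each of these is settled by the facts about $\PS'$ and $\sfP'$ collected in {\S\,}\ref{sec:fungkang}, so I expect no genuine difficulty beyond this bookkeeping.
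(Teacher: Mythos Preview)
Your proof is correct and follows essentially the same route as the paper's. For $\ell\notin L$ both proofs invoke Corollary \ref{cor:d_decomp} directly; for $\ell\in L$ both rely on the same two Mellin-side identities, namely $\M_0\{G_\ell(\theta,\cdot)\}=N_\ell\M_0\{F_\ell(\theta,\cdot)\}$ and $\M_0\{f_\ell(\theta,\cdot)\}=-iy\,\M_0\{F_\ell(\theta,\cdot)\}$ (this is \eqref{eq:F->f}), together with the factorization underlying Lemma \ref{lem:dh_decomp}. The only organizational difference is that the paper works entirely on the Mellin side, starting from $\partial^\alpha G_\ell(\theta,\cdot)$ via \eqref{eq:partial_alpha} and rewriting the multiplier until it becomes $N_\ell^\alpha$, whereas you start from the left-hand side, invoke Lemma \ref{lem:dh_decomp} to peel off $\partial^{\alpha-1}t^{-1}$, identify $\text{\DH}_\ell\{f_\ell(\theta,\cdot)\}=t\partial G_\ell(\theta,\cdot)$, and then cancel $t^{-1}t$ in $\sfP'$; your version is slightly more explicit about why $f_\ell(\theta,\cdot)\in\PS'$ when $\ell\in L$, which the paper leaves implicit in its appeal to \eqref{eq:F->f}.
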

\begin{proof}
If $\ell\notin L,$ this is the content of Corollary \ref{cor:d_decomp}, as $G_\ell\in\ell_{1,\infty}(0)\Rightarrow G_\ell(\theta,\cdot)\in$ $\ell_\infty(0,\infty)\subset\PS'$ for almost every $\theta.$ If $\ell\in L$ instead, \eqref{eq:partial_alpha} and \eqref{eq:F->f} yield that
\begin{gather*}
\partial^\alpha G_\ell(\theta,\cdot)=\M_\alpha^{-1}\biggl\{\frac{\Gamma(1-iy)}{\Gamma(1-iy-\alpha)}\M_0\{G_\ell(\theta,\cdot)\}[y]\biggr\}\\
=\M_\alpha^{-1}\biggl\{\frac{-iy\Gamma(-iy)}{\Gamma(1-iy-\alpha)}N_\ell(y)\M_0\{F_\ell(\theta,\cdot)\}[y]\biggr\}\\
=\M_\alpha^{-1}\biggl\{\frac{\Gamma(-iy)}{\Gamma(1-iy-\alpha)}N_\ell(y)\M_0\{f_\ell(\theta,\cdot)\}[y]\biggr\}=\M_\alpha^{-1}\Bigl\{N_\ell^\alpha\M_0\{f_\ell(\theta,\cdot)\}\Bigr\}\tag*{\qedhere}
\end{gather*}
\end{proof}
It follows that $\partial^\alpha G_\ell(\theta,\cdot)=G_\ell^\alpha(\theta,\cdot)\in\ell_{1,\infty}(\underline{d\theta},\R)$ and hence $\partial^\alpha h_\ell(\theta,t)=h_\ell^\alpha(\theta,t)$ for $t>0.$ If $t<0,$ then $\partial^\alpha h_\ell(\theta,t)=\partial^\alpha[(-1)^\ell G_\ell(\theta,-t)]=(-1)^{\ell+\alpha}G_\ell^\alpha(\theta,-t)=$ $h_\ell^\alpha(\theta,t)$ as well, so $\partial^\alpha h_\ell(\theta,\cdot)=h_\ell^\alpha(\theta,\cdot).$

Let $\phi^\circ(u)\coloneq\lvert u\rvert^\alpha\phi(u)\in\ell_1(du).$ Then
\begin{align*}
h_\ell^\alpha(\theta,t)=\partial^\alpha h_\ell(\theta,t)&=\partial^\alpha\int\phi(u)Z_\ell(\theta,\hat{u})\cis_\ell(t\lvert u\rvert)\,du\\
&=\int\phi(u)Z_\ell(\theta,\hat{u})\partial^\alpha[\cis_\ell(t\lvert u\rvert)]\,du\\
&=\int\phi^\circ(u)Z_\ell(\theta,\hat{u})\cis_\ell(t\lvert u\rvert+\alpha\tfrac{\pi}{2})\,du
\end{align*}
where the exchange of differentiation and integration is justified by dominated convergence.

We claim $\lVert h^\alpha\rVert_{1,\infty}\leq\lVert\phi^\circ\rVert_1.$ If $\alpha\equiv0\,\%\,4,$ the argument is the same as in the proof of Theorem \ref{thm:main_base_case}, and the reasoning is analogous when $\alpha\not\equiv0\,\%\,4;$ e.g., if $\alpha\equiv1\,\%\,4,$ the parity of $h_\ell^\alpha(\theta,\cdot)$ is opposite the parity of $\ell,$ so
\begin{align*}
h_\ell^\alpha(\theta,t)=-\int\phi_e^\circ(u)Z_\ell(\theta,\hat{u})\sin(t\lvert u\rvert)\,du
\end{align*}
for even $\ell,$ while for odd $\ell$ the corresponding formula reads
\begin{align*}
h_\ell^\alpha(\theta,t)=i\int\phi_o^\circ(u)Z_\ell(\theta,\hat{u})\cos(t\lvert u\rvert)\,du
\end{align*}
Like before, $\textstyle\sum_{\ell=0}^\infty R^\ell\lVert h_\ell^\alpha\rVert_{1,\infty}<\infty
$ for $R\in(0,1),$ so $h_R^\alpha(\theta,t)\coloneq\textstyle\sum_{\ell=0}^\infty R^\ell h_\ell^\alpha(\theta,t)=$
\begin{align*}
i\sum_{2\nmid\ell}R^\ell\int\phi_o^\circ(u)Z_\ell(\theta,\hat{u})\cos(t\lvert u\rvert)\,du&-\sum_{2\mid\ell}R^\ell\int\phi_e^\circ(u)Z_\ell(\theta,\hat{u})\sin(t\lvert u\rvert)\,du\\
=i\sum_{\ell=0}^\infty R^\ell\int\phi_o^\circ(u)Z_\ell(\theta,\hat{u})\cos(t\lvert u\rvert)\,du&-\sum_{\ell=0}^\infty R^\ell\int\phi_e^\circ(u)Z_\ell(\theta,\hat{u})\sin(t\lvert u\rvert)\,du\\
\overset{\text{F}}{=}i\int\phi_o^\circ(u)p(\hat{u},R\theta)\cos(t\lvert u\rvert)\,du&-\int\phi_e^\circ(u)p(\hat{u},R\theta)\sin(t\lvert u\rvert)\,du
\end{align*}
Mimicking Lemma \ref{lem:h} and expanding the even and odd parts of $\phi^\circ$ yields that
\begin{align*}
h^\alpha(\theta,t)&=i\int_0^\infty\rho^{n-1}\phi_o^\circ(\theta\rho)\cos(t\rho)\,d\rho-\int_0^\infty\rho^{n-1}\phi_e^\circ(\theta\rho)\sin(t\rho)\,d\rho\\
&=\frac{i}{2}\int_0^\infty\rho^{n-1}\phi^\circ(\theta\rho)\exp(it\rho)\,d\rho-\frac{i}{2}\int_0^\infty\rho^{n-1}\phi^\circ(-\theta\rho)\exp(-it\rho)\,d\rho
\end{align*}
from which $\lVert h^\alpha\rVert_{1,\infty}\leq\lVert\phi^\circ\rVert_1$ follows like in the proof of Theorem \ref{thm:main_base_case}.

All in all, we have thus shown that
\begin{align*}
\lVert f\mid\sfL^{\!\alpha}\rVert=\lVert h^\alpha\rVert_{1,\infty}\leq\lVert\phi^\circ\rVert_1=\lVert f\mid\B^\alpha\rVert
\end{align*}
\fbox{Proof that $\lVert f\mid\sfE^\alpha\rVert\leq\lVert f\mid\sfL^{\!\alpha}\rVert$ if $f\in\sfL^{\!\alpha}$}

We first define
\begin{align*}
h_\ell^\circ(\theta,t)\coloneq[t\leq r]\int h_\ell^\alpha(\theta,b)[-r\leq b]\delta^{(-\alpha)}(t-b)\,db
\end{align*}
Both $\delta^{(-\alpha)}$ and $b\mapsto h_\ell^\alpha(\theta,b)[-r\leq b]$ have supports bounded on the left so their convolution is well-defined. The crux of the proof is that
\begin{align}\label{eq:fhcirc}
f_\ell(\theta,t)=\int h_\ell^\circ(w,\langle w\mid t\theta\rangle)\,\underline{dw}\qquad\text{for }t\in(0,r]
\end{align}
To show \eqref{eq:fhcirc} we will need to distinguish three cases, but first we show that \eqref{eq:fhcirc} yields the desideratum.
\begin{lem}\label{lem:h_circ_norm_bound}
$\lVert h_\ell^\circ\rVert_{1,\infty}\leq\lVert h_\ell^\alpha\rVert_{1,\infty}(2r)^\alpha/\alpha!$
\end{lem}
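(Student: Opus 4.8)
The inequality $\lVert h_\ell^\circ\rVert_{1,\infty}\leq\lVert h_\ell^\alpha\rVert_{1,\infty}(2r)^\alpha/\alpha!$ is a pointwise-in-$\theta$ estimate followed by integration over $\sphere^{n-1}$. Fixing $\theta$ for which $h_\ell^\alpha(\theta,\cdot)\in\ell_\infty(\R)$ (a conull set of $\theta$), I would bound $\lVert h_\ell^\circ(\theta,\cdot)\rVert_\infty$ by $\lVert h_\ell^\alpha(\theta,\cdot)\rVert_\infty\,(2r)^\alpha/\alpha!$, and then integrate:
\begin{align*}
\lVert h_\ell^\circ\rVert_{1,\infty}=\int\lVert h_\ell^\circ(\theta,\cdot)\rVert_\infty\,\underline{d\theta}\leq\frac{(2r)^\alpha}{\alpha!}\int\lVert h_\ell^\alpha(\theta,\cdot)\rVert_\infty\,\underline{d\theta}=\frac{(2r)^\alpha}{\alpha!}\lVert h_\ell^\alpha\rVert_{1,\infty}.
\end{align*}

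**The pointwise bound.** Recall $h_\ell^\circ(\theta,t)=[t\leq r]\int h_\ell^\alpha(\theta,b)[-r\leq b]\,\delta^{(-\alpha)}(t-b)\,db$ and that for $\alpha\in\N$ we have $\delta^{(-\alpha)}(s)=s_+^{\alpha-1}/(\alpha-1)!$, which is supported on $s\geq0$. Hence the integrand is supported on $-r\leq b\leq t$, and when additionally $t\leq r$ this forces $b\in[-r,t]\subset[-r,r]$, an interval of length $t+r\leq 2r$. Therefore, for any $t$ (the bound being trivial when $t>r$ since then $h_\ell^\circ(\theta,t)=0$),
\begin{align*}
\lvert h_\ell^\circ(\theta,t)\rvert\leq\lVert h_\ell^\alpha(\theta,\cdot)\rVert_\infty\int_{-r}^{t}\frac{(t-b)^{\alpha-1}}{(\alpha-1)!}\,db=\lVert h_\ell^\alpha(\theta,\cdot)\rVert_\infty\cdot\frac{(t+r)^\alpha}{\alpha!}\leq\lVert h_\ell^\alpha(\theta,\cdot)\rVert_\infty\cdot\frac{(2r)^\alpha}{\alpha!}.
\end{align*}
Taking the supremum over $t$ gives $\lVert h_\ell^\circ(\theta,\cdot)\rVert_\infty\leq\lVert h_\ell^\alpha(\theta,\cdot)\rVert_\infty(2r)^\alpha/\alpha!$, as needed. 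One should also remark that $h_\ell^\circ$ is jointly measurable (the convolution of a measurable function with the locally integrable kernel $\delta^{(-\alpha)}$ is measurable in $(\theta,t)$), so $\lVert h_\ell^\circ\rVert_{1,\infty}$ is well-defined.

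**Where the difficulty lies.** The estimate itself is essentially a one-line computation; the only thing requiring care is bookkeeping the supports so that the convolution integral is genuinely over an interval of length at most $2r$ rather than an a priori unbounded range — this is exactly where the cutoff $[t\leq r]$ in the definition of $h_\ell^\circ$ earns its keep. I would also make sure to note that $h_\ell^\alpha(\theta,\cdot)\in\ell_\infty$ for a.e.\ $\theta$ (which holds since, by the preceding work, $\{h_\ell^\alpha\}\subset\ell_{1,\infty}(-\infty)$, so each $h_\ell^\alpha\in\ell_{1,\infty}(-\infty)$), so that the right-hand side of the pointwise bound is finite a.e.\ and the integration step is legitimate.
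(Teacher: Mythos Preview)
Your proof is correct and follows essentially the same route as the paper: bound the convolution integrand by $\lVert h_\ell^\alpha(\theta,\cdot)\rVert_\infty$ times the kernel, compute $\int_{-r}^t\delta^{(-\alpha)}(t-b)\,db=(t+r)^\alpha/\alpha!$ on $[-r,r]$, take the sup in $t$, and integrate in $\theta$. The paper's version is slightly terser (it compresses the sup-then-integrate step into a single $\ell_\infty(dt)$ norm computation) and omits your measurability remarks, but the argument is the same.
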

\begin{proof}
Plainly,
\begin{gather*}
\lvert\int\displaystyle h_\ell^\alpha(\theta,b)[-r\leq b]\delta^{(-\alpha)}(t-b)\,db\rvert\leq\lVert h_\ell^\alpha(\theta,\cdot)\rVert_\infty\int\delta^{(-\alpha)}(t-b)[-r\leq b]\,db\\
\begin{align*}
\Rightarrow\lVert h_\ell^\circ\rVert_{1,\infty}&\leq\lVert h_\ell^\alpha\rVert_{1,\infty}\lVert[t\leq r]\textstyle\int\displaystyle\delta^{(-\alpha)}(t-b)[-r\leq b]\,db\mid\ell_\infty(dt)\rVert\\
&=\lVert h_\ell^\alpha\rVert_{1,\infty}\lVert[-r\leq t\leq r](r+t)^\alpha/\alpha!\mid\ell_\infty(dt)\rVert\\
&=\lVert h_\ell^\alpha\rVert_{1,\infty}(2r)^\alpha/\alpha!\tag*{\qedhere}
\end{align*}
\end{gather*}
\end{proof}
\begin{lem}\label{lem:h_circ_aas}
$\{h_\ell^\circ\}_{\ell=0}^\infty\in\AAS(-\infty)$
\end{lem}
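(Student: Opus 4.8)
The plan is to realize the passage $h_\ell^\alpha\mapsto h_\ell^\circ$ as one bounded linear operator applied termwise, and then transport the absolute Abel summability of $\{h_\ell^\alpha\}_{\ell=0}^\infty$ across that operator. First I would record that, since $f\in\sfL^{\!\alpha}$, Definition \ref{def:adz} gives $\{G_\ell^\alpha\}_{\ell=0}^\infty\in\AAS(0)$, which is equivalent (per the note in Definition \ref{def:adz}) to $\{h_\ell^\alpha\}_{\ell=0}^\infty\in\AAS(-\infty)$; in particular every $h_\ell^\alpha\in\ell_{1,\infty}(-\infty)$, we have $\sum_{\ell=0}^\infty R^\ell\lVert h_\ell^\alpha\rVert_{1,\infty}<\infty$ for $R\in(0,1)$, and $h_R^\alpha\coloneq\sum_{\ell=0}^\infty R^\ell h_\ell^\alpha\to h^\alpha$ in $\ell_{1,\infty}(-\infty)$ as $R\uparrow1$.

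Next I would introduce the operator
\begin{align*}
T\colon\ell_{1,\infty}(-\infty)\ni h\longmapsto\Bigl((\theta,t)\mapsto[t\leq r]\int h(\theta,b)[-r\leq b]\delta^{(-\alpha)}(t-b)\,db\Bigr),
\end{align*}
so that $h_\ell^\circ=T\{h_\ell^\alpha\}$ directly from the definition of $h_\ell^\circ$. For fixed $\theta$ the inner integral ranges over the bounded set $\{-r\leq b\leq t\}$ and vanishes unless $t\geq-r$, so $T\{h\}$ is well-defined; joint measurability of $T\{h\}$ follows from the measurability of $(\theta,t,b)\mapsto h(\theta,b)[-r\leq b]\delta^{(-\alpha)}(t-b)$ and Fubini's theorem, so $T\{h\}\in\ell_{1,\infty}(-\infty)$ once its norm is known to be finite. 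The computation in the proof of Lemma \ref{lem:h_circ_norm_bound} goes through verbatim with an arbitrary $h\in\ell_{1,\infty}(-\infty)$ in place of $h_\ell^\alpha$ and yields $\lVert T\{h\}\rVert_{1,\infty}\leq\tfrac{(2r)^\alpha}{\alpha!}\lVert h\rVert_{1,\infty}$; hence $T$ is linear and bounded, therefore continuous.

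With $T$ in hand, condition (1) of absolute Abel summability is immediate: $\sum_{\ell=0}^\infty R^\ell\lVert h_\ell^\circ\rVert_{1,\infty}\leq\tfrac{(2r)^\alpha}{\alpha!}\sum_{\ell=0}^\infty R^\ell\lVert h_\ell^\alpha\rVert_{1,\infty}<\infty$ for $R\in(0,1)$, so $h_R^\circ\coloneq\sum_{\ell=0}^\infty R^\ell h_\ell^\circ$ exists in $\ell_{1,\infty}(-\infty)$ by completeness. For condition (2), since $\sum_\ell R^\ell h_\ell^\alpha$ converges absolutely in $\ell_{1,\infty}(-\infty)$ and $T$ is continuous and linear, $T$ commutes with the series, so $h_R^\circ=\sum_{\ell=0}^\infty R^\ell T\{h_\ell^\alpha\}=T\{h_R^\alpha\}$. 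Letting $R\uparrow1$ and using $h_R^\alpha\to h^\alpha$ together with continuity of $T$ gives $h_R^\circ\to T\{h^\alpha\}$ in $\ell_{1,\infty}(-\infty)$; thus $\{h_\ell^\circ\}_{\ell=0}^\infty\in\AAS(-\infty)$ with Abel sum $h^\circ=T\{h^\alpha\}$, i.e.\ $h^\circ(\theta,t)=[t\leq r]\int h^\alpha(\theta,b)[-r\leq b]\delta^{(-\alpha)}(t-b)\,db$.

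The only step that is not entirely soft is the boundedness of $T$ on $\ell_{1,\infty}(-\infty)$, and even that is essentially Lemma \ref{lem:h_circ_norm_bound} applied to a general element; the rest is the standard fact that a continuous linear map commutes with absolutely convergent series and with limits. The minor care point worth stating explicitly is the joint measurability of $T\{h\}$, needed so that it genuinely belongs to $\ell_{1,\infty}(-\infty)$ rather than merely having finite $\lVert\cdot\rVert_{1,\infty}$.
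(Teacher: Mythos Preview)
Your proof is correct and essentially the same as the paper's: both use the bound of Lemma \ref{lem:h_circ_norm_bound} to control the partial sums, identify $h_R^\circ$ with the convolution-type integral applied to $h_R^\alpha$, and pass to the limit using $h_R^\alpha\to h^\alpha$ in $\ell_{1,\infty}(-\infty)$. The only cosmetic difference is that you package the construction as a single bounded linear operator $T$ and invoke its continuity, whereas the paper does the interchange of sum and integral by Fubini and takes the limit by dominated convergence; your framing is a bit cleaner and makes the norm convergence $h_R^\circ\to h^\circ$ entirely transparent.
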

\begin{proof}
Let $R\in(0,1).$ By Lemma \ref{lem:h_circ_norm_bound} in conjunction with the fact that
\begin{align*}
\{h_\ell^\alpha\}_{\ell=0}^\infty\in\AAS(-\infty)\Rightarrow\sum_{\ell=0}^\infty R^\ell\lVert h_\ell^\alpha\rVert_{1,\infty}<\infty
\end{align*}
it follows that $h_R^\circ\coloneq\textstyle\sum_{\ell=0}^\infty\displaystyle R^\ell h_\ell^\circ\in\ell_{1,\infty}(-\infty)$ exists. In fact,
\begin{align*}
h_R^\circ(\theta,t)\overset{\text{F}}{=}[t\leq r]\int h_R^\alpha(\theta,b)[-r\leq b]\delta^{(-\alpha)}(t-b)\,db
\end{align*}
for almost every $\theta$ since $h_R^\alpha(\theta,\cdot)\in\ell_\infty$ for almost every $\theta.$ Dominated convergence now yields that
\begin{align}\label{eq:h_circ_h_alpha}
h^\circ(\theta,t)=[t\leq r]\int h^\alpha(\theta,b)[-r\leq b]\delta^{(-\alpha)}(t-b)\,db
\end{align}
so $\lVert h^\circ\rVert_{1,\infty}\leq\lVert h^\alpha\rVert_{1,\infty}(2r)^\alpha/\alpha!<\infty$ analogously to Lemma \ref{lem:h_circ_norm_bound}.
\end{proof}

\begin{lem}\label{lem:final}
If $f_\ell(\theta,t)=\textstyle\int\displaystyle h_\ell^\circ(w,\langle w\mid t\theta\rangle)\,\underline{dw}$ for $t\in(0,r]$ as in \eqref{eq:fhcirc}, then
\begin{align*}
\forall x\in K(r):f(x)&=\int h^\circ(w,\langle w\mid x\rangle)\,\underline{dw}\\
&=\iint h^\alpha(w,b)\delta^{(-\alpha)}(\langle w\mid x\rangle-b)[-r\leq b\leq r]\,db\,\underline{dw}
\end{align*}
where $h^\circ\coloneq\textstyle\osum_{\ell=0}^\infty\displaystyle h_\ell^\circ.$
\end{lem}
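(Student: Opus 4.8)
The plan is to mirror, almost verbatim, the endgame of the proof of Theorem~\ref{thm:main_base_case}, with $h^\circ$ and the $h_\ell^\circ$ in the roles there played by $h$ and the $h_\ell.$ First I would dispatch the second equality, which is purely cosmetic: for $x\in K(r)$ one has $\langle w\mid x\rangle\in[-r,r]$ for every $w,$ so the factor $[t\leq r]$ in \eqref{eq:h_circ_h_alpha} is vacuous at $t=\langle w\mid x\rangle,$ while $\delta^{(-\alpha)}(\langle w\mid x\rangle-b)$ is supported in $\{b\leq\langle w\mid x\rangle\leq r\},$ so there the constraint $[-r\leq b]$ may be replaced by $[-r\leq b\leq r].$ Integrating \eqref{eq:h_circ_h_alpha} against $\underline{dw}$ then gives
\begin{align*}
\int h^\circ(w,\langle w\mid x\rangle)\,\underline{dw}=\iint h^\alpha(w,b)\delta^{(-\alpha)}(\langle w\mid x\rangle-b)[-r\leq b\leq r]\,db\,\underline{dw}\qquad(x\in K(r)).
\end{align*}

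It therefore remains to prove $f(x)=\int h^\circ(w,\langle w\mid x\rangle)\,\underline{dw}$ for $x\in K(r).$ Write $x=t\theta$ with $t=\lvert x\rvert\in(0,r]$ and $\theta=\hat x$ (the point $x=0$ is handled at the very end), and fix $R\in(0,1).$ By Theorem~\ref{thm:a_undoes_z} (and the Fubini step in its proof), $\int f(t\alpha)p(\alpha,R\theta)\,\underline{d\alpha}=\sum_{\ell=0}^\infty R^\ell f_\ell(\theta,t);$ substituting the hypothesis \eqref{eq:fhcirc} rewrites the right-hand side as $\sum_{\ell=0}^\infty R^\ell\int h_\ell^\circ(w,\langle w\mid t\theta\rangle)\,\underline{dw}.$ Since $\sum_{\ell=0}^\infty R^\ell\lVert h_\ell^\circ\rVert_{1,\infty}<\infty$ (Lemma~\ref{lem:h_circ_norm_bound} together with $\{h_\ell^\alpha\}_{\ell=0}^\infty\in\AAS(-\infty)$), Tonelli lets me interchange the sum with the $\underline{dw}$-integral, and $\sum_\ell R^\ell h_\ell^\circ(w,\cdot)=h_R^\circ(w,\cdot)$ in $\ell_\infty$ for a.e.\ $w;$ hence
\begin{align*}
\int f(t\alpha)p(\alpha,R\theta)\,\underline{d\alpha}=\int\sum_{\ell=0}^\infty R^\ell h_\ell^\circ(w,\langle w\mid t\theta\rangle)\,\underline{dw}=\int h_R^\circ(w,\langle w\mid t\theta\rangle)\,\underline{dw}.
\end{align*}

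Finally I would let $R\uparrow1.$ The left-hand side tends to $f(t\theta)$ by Theorem~\ref{thm:a_undoes_z} (equivalently, by Proposition~\ref{prop:ii_1_10}), since $\sphere^{n-1}\ni\alpha\mapsto f(t\alpha)$ is continuous. For the right-hand side I would invoke the bound $\lvert\int h(w,\langle w\mid y\rangle)\,\underline{dw}\rvert\leq\int\lVert h(w,\cdot)\rVert_\infty\,\underline{dw}=\lVert h\rVert_{1,\infty},$ valid for every $h\in\ell_{1,\infty}(-\infty)$ and every $y,$ which exhibits $\dualradon$ as a norm-$\leq1$ operator $\ell_{1,\infty}(-\infty)\to\ell_\infty;$ since $h_R^\circ\to h^\circ$ in $\ell_{1,\infty}(-\infty)$ by Lemma~\ref{lem:h_circ_aas}, it follows that $\int h_R^\circ(w,\langle w\mid t\theta\rangle)\,\underline{dw}\to\int h^\circ(w,\langle w\mid t\theta\rangle)\,\underline{dw},$ whence $f(x)=\int h^\circ(w,\langle w\mid x\rangle)\,\underline{dw}$ for every $x\in K(r)\setminus\{0\}.$ For $x=0$ one observes that the right-hand side is continuous on the open ball $\{\lvert y\rvert<r\}$ (dominated convergence, using that $h^\circ(w,\cdot)$ is continuous on $(-r,r)$ by \eqref{eq:h_circ_h_alpha}, that $\langle w\mid y\rangle\in(-r,r)$ there, and that $\lVert h^\circ(w,\cdot)\rVert_\infty\in\ell_1(\underline{dw})$), so it coincides with the continuous function $f$ on all of $\{\lvert y\rvert<r\}.$ The real work of the section lies in \eqref{eq:fhcirc} itself (the three-case verification still to come), which this lemma takes as given; within the lemma the only delicate point is the limit $R\uparrow1$ on the $\dualradon$-side, which goes through precisely because $h_R^\circ\to h^\circ$ in the $1,\infty$-norm --- it is that norm's uniform control in the second variable that survives restriction to the slice $y\mapsto\langle w\mid y\rangle$ --- with everything else being bookkeeping inherited from Theorem~\ref{thm:main_base_case}.
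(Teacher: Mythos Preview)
Your proof is correct and hews to the endgame of Theorem~\ref{thm:main_base_case}: form the Poisson integral, swap the sum inside via Fubini (licensed by Lemma~\ref{lem:h_circ_norm_bound} and the $\AAS$ hypothesis), and let $R\uparrow1$ on both sides, using Proposition~\ref{prop:ii_1_10} on the $f$-side and the $\ell_{1,\infty}$-convergence from Lemma~\ref{lem:h_circ_aas} on the $h^\circ$-side. The paper's own proof is the same in spirit but packages the limit differently: rather than invoking Proposition~\ref{prop:ii_1_10} for pointwise convergence, it first realizes $f^\circ\coloneq\osum_\ell f_\ell^\circ$ as an element of $\ell_{1,\infty}(0)$ and then appeals to Theorem~\ref{thm:a_undoes_z} to identify $f_{\cdot\mid t}^\circ$ with $f_{\cdot\mid t}$ in $\ell_1(\sphere^{n-1})$, obtaining equality for almost every $\theta$ before upgrading to all of $K(r)$ by continuity. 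Your route via the pointwise Poisson limit is a shade more direct (you get every $\theta$ at once and only need continuity for the single point $x=0$); the paper's detour through $\ell_1(\sphere^{n-1})$ buys nothing extra here.
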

\begin{proof}
Letting $f_\ell^\circ(\theta,t)\coloneq\textstyle\int\displaystyle h_\ell^\circ(w,\langle w\mid t\theta\rangle)\,\underline{dw},$ Lemma \ref{lem:h_circ_aas} facilitates that
\begin{align*}
\sum_{\ell=0}^\infty R^\ell f_\ell^\circ(\theta,t)\overset{\text{F}}{=}\int\sum_{\ell=0}^\infty R^\ell h_\ell^\circ(w,\langle w\mid t\theta\rangle)\,\underline{dw}=\int h_R^\circ(w,\langle w\mid t\theta\rangle)\,\underline{dw}
\end{align*}
for all $R\in(0,1).$ Dominated convergence (again facilitated by Lemma \ref{lem:h_circ_aas}) now yields that
\begin{align*}
f^\circ(\theta,t)\coloneq\osum_{\ell=0}^\infty f_\ell^\circ(\theta,t)=\int h^\circ(w,\langle w\mid t\theta\rangle)\,\underline{dw}
\end{align*}
Plainly $f^\circ\in\ell_{1,\infty}(0)\supset\{f_\ell^\circ\}_{\ell=0}^\infty,$ so $\{f_\ell^\circ\}_{\ell=0}^\infty\in\AAS(0).$

While $t\leq r,$ \eqref{eq:h_circ_h_alpha} may be rewritten as
\begin{align*}
h^\circ(\theta,t)=\int h^\alpha(\theta,b)\delta^{(-\alpha)}(t-b)[-r\leq b\leq r]\,db
\end{align*}
so we may conclude that
\begin{align*}
f^\circ(\theta,t)=\int h^\circ(w,\langle w\mid t\theta\rangle)\,\underline{dw}=\iint h^\alpha(w,b)\delta^{(-\alpha)}(\langle w\mid t\theta\rangle-b)[-r\leq b\leq r]\,db\,\underline{dw}
\end{align*}
which implies the desideratum for almost all $x\in K(r)\setminus\{0\}$ if $f^\circ(\theta,t)=f(t\theta)$ for almost all $\theta$ while $t\in(0,r].$

By Lemma \ref{lem:h_circ_aas}, $h^\circ\in\ell_{1,\infty}(-\infty),$ so $\textstyle\int\displaystyle h^\circ(w,\langle w\mid\!{}\cdot{}\rangle)\,\underline{dw}$ is (sequentially) continuous by the dominated convergence theorem. Since $f\in C$ as well, $f^\circ(\theta,t)=f(t\theta)$ for almost every $\theta$ while $t\in(0,r]$ in fact implies the desideratum $\forall x\in K(r)$ because conull sets are dense (contraposition readily reveals why).

All that remains is therefore to show that $f^\circ(\theta,t)=f(t\theta)$ for almost all $\theta$ while $t\in(0,r]$ is arbitrarily fixed.

Let $t\in(0,r].$ Since $\{f_\ell^\circ\}_{\ell=0}^\infty\in\AAS(0),$ it follows that $\{f_{\ell\mid t}^\circ\}_{\ell=0}^\infty$ is absolutely Abel summable in $\ell_1(\sphere^{n-1}),$ where $f_{\ell\mid t}^\circ(\theta)\coloneq f_\ell^\circ(\theta,t).$ By assumption, $f_{\ell\mid t}^\circ=f_{\ell\mid t},$ so
\begin{align*}
f_{\cdot\mid t}^\circ=\osum_{\ell=0}^\infty f_{\ell\mid t}^\circ=\osum_{\ell=0}^\infty f_{\ell\mid t}=f_{\cdot\mid t}
\end{align*}
in $\ell_1(\sphere^{n-1})$ by Theorem \ref{thm:a_undoes_z}, where $f_{\cdot\mid t}^\circ(\theta)\coloneq f^\circ(\theta,t).$ This is exactly what remained to be shown.
\end{proof}
Through Lemma \ref{lem:final}, \eqref{eq:fhcirc} is readily seen to yield the desideratum, so the time has come to prove \eqref{eq:fhcirc}. Of the three cases, we start with $\ell\notin L.$

Let $G_{\ell,\theta}\coloneq\text{D}_\ell\{f_\ell(\theta,\cdot)\}\in\sfP'.$ By Corollary \ref{cor:d_decomp}, $G_\ell^\alpha(\theta,t)=\partial^\alpha G_{\ell,\theta}[t],$ so $G_{\ell,\theta}$ is locally bounded by Lemma \ref{lem:distributional_antiderivative}. If $t<0,$ we thus have
\begin{gather*}
\partial^\alpha h_{\ell,\theta}(t)=\partial^\alpha[(-1)^\ell G_{\ell,\theta}(-t)]=(-1)^{\ell+\alpha}G_\ell^\alpha(\theta,-t)=h_\ell^\alpha(\theta,t)\\
\text{where }h_{\ell,\theta}(t)\coloneq\begin{dcases*}
G_{\ell,\theta}(t)&if $t>0$\\
(-1)^\ell G_{\ell,\theta}(-t)&if $t<0$
\end{dcases*}
\end{gather*}
Consequently, $h_{\ell,\theta}$ is locally bounded and $h_\ell^\alpha(\theta,\cdot)=\partial^\alpha h_\ell(\theta,\cdot)\eqcolon h_{\ell,\theta}^{(\alpha)}.$
\begin{lem}\label{lem:h_circ_h}
$h_\ell^\circ(\theta,t)=h_{\ell,\theta}(t)[-r\leq t\leq r]$ when $\ell\notin L$ for all $r>0$ and $\alpha\in\N$
\end{lem}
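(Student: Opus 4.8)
The plan is to split according to whether $t\in[-r,r]$. For $t>r$ the factor $[t\leq r]$ already forces $h_\ell^\circ(\theta,t)=0$; for $t<-r$ the integrand vanishes identically, since $\delta^{(-\alpha)}(t-b)\neq 0$ requires $b<t<-r$, which is incompatible with $[-r\leq b]$, so again $h_\ell^\circ(\theta,t)=0$. Both values agree with $h_{\ell,\theta}(t)[-r\leq t\leq r]$, leaving only $t\in[-r,r]$ at issue.

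Next I would fix $t\in[-r,r]$ and rewrite the defining integral. Since $\delta^{(-\alpha)}(s)=s_+^{\alpha-1}/(\alpha-1)!$ (and is the Heaviside function when $\alpha=1$), it becomes $h_\ell^\circ(\theta,t)=\int_{-r}^{t}h_\ell^\alpha(\theta,b)\tfrac{(t-b)^{\alpha-1}}{(\alpha-1)!}\,db$; i.e.\ on $(-\infty,r)$ the function $h_\ell^\circ(\theta,\cdot)$ is the $\alpha$-fold indefinite integral of $b\mapsto h_\ell^\alpha(\theta,b)[-r\leq b]$ anchored at the left endpoint $-r$. Because $\partial^\alpha\delta^{(-\alpha)}=\delta$, this yields $\partial^\alpha h_\ell^\circ(\theta,\cdot)=h_\ell^\alpha(\theta,\cdot)$ on $(-r,r)$. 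Invoking the identity $h_\ell^\alpha(\theta,\cdot)=\partial^\alpha h_{\ell,\theta}$ recorded just before the lemma — valid as locally integrable functions, with no singular contribution at $0$, since $G_{\ell,\theta}$ is locally bounded by Corollary \ref{cor:d_decomp} and Lemma \ref{lem:distributional_antiderivative} — I get $\partial^\alpha\bigl(h_\ell^\circ(\theta,\cdot)-h_{\ell,\theta}\bigr)=0$ on $(-r,r)$, so by iterating \cite[Thm 4.3]{kolk} the difference coincides a.e.\ on $(-r,r)$ with a polynomial of degree at most $\alpha-1$.

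The hard part will be showing this polynomial vanishes. The derivative of order $k\leq\alpha-1$ of the convolution $\bigl(h_\ell^\alpha(\theta,\cdot)[-r\leq\cdot]\bigr)\ast\delta^{(-\alpha)}$ is again a convolution of the same function with $\delta^{(-(\alpha-k))}$, hence continuous and identically zero on $(-\infty,-r)$; in particular $h_\ell^\circ(\theta,\cdot)$ and its first $\alpha-1$ derivatives vanish at $-r$, so the polynomial — which near $-r$ equals $h_\ell^\circ(\theta,\cdot)-h_{\ell,\theta}$ — must be minus the degree-$(\alpha-1)$ Taylor polynomial of $h_{\ell,\theta}$ at $-r$. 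Checking that this Taylor polynomial drops out, i.e.\ controlling the $\alpha$ integration constants produced by undoing $\partial^\alpha$, is the delicate step; here I would lean on the concrete form of $G_{\ell,\theta}=\text{D}_\ell\{f_\ell(\theta,\cdot)\}$ (Lemma \ref{lem:d_inverted}), on Lemma \ref{lem:distributional_antiderivative}, and on the reflection parity built into $h_{\ell,\theta}$. I expect this endpoint bookkeeping to be the main obstacle; disposing of $t\notin[-r,r]$ and reducing to a polynomial difference are routine.
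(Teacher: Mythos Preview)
Your plan coincides with the paper's: repeated integration by parts, stripping one derivative off $h_{\ell,\theta}^{(\alpha)}$ and one antiderivative off $\delta^{(-\alpha)}$ at each step, until one lands on $\int_{-r}^{t} h_{\ell,\theta}'(b)\,db$ and invokes the fundamental theorem of calculus. The paper records the inductive step as
\[
\int h_{\ell,\theta}^{(\alpha)}(b)\,[-r\leq b]\,\delta^{(-\alpha)}(t-b)\,db
=\int h_{\ell,\theta}^{(\alpha-1)}(b)\,[-r\leq b]\,\delta^{(1-\alpha)}(t-b)\,db,
\]
justified by ``$\partial\{h_{\ell,\theta}^{(\alpha-1)}(t)[-r\leq t]\}=h_{\ell,\theta}^{(\alpha)}(t)[-r\leq t]$ except at $t=-r$'', and then iterates down to the FTC step.

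You are right that the crux is the boundary contribution at $b=-r$. Each integration by parts over $[-r,t]$ throws off a term $-h_{\ell,\theta}^{(k)}(-r)\,(t+r)^{k}/k!$, and the final FTC step yields $h_{\ell,\theta}(t)-h_{\ell,\theta}(-r)$ rather than $h_{\ell,\theta}(t)$; altogether exactly the degree-$(\alpha-1)$ Taylor polynomial of $h_{\ell,\theta}$ at $-r$ appears, as you predicted. The paper's phrasing acknowledges the exceptional point $t=-r$ but then silently discards the resulting boundary terms, so the difficulty you flag is not resolved there either. The tools you propose --- the explicit description of $G_{\ell,\theta}$ via Lemma~\ref{lem:d_inverted}, reflection parity, Lemma~\ref{lem:distributional_antiderivative} --- do not by themselves force $h_{\ell,\theta}^{(k)}(-r)=0$ for every $r>0$ and $k<\alpha$ (that would entail $h_{\ell,\theta}\equiv 0$), so neither your sketch nor the paper's argument, as written, closes this gap.
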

\begin{proof}
It suffices to show that
\begin{align*}
\int h_{\ell,\theta}^{(\alpha)}(b)[-r\leq b]\delta^{(-\alpha)}(t-b)\,db=h_{\ell,\theta}(t)[-r\leq t]
\end{align*}
We can use integration by parts to derive that
\begin{align*}
\int h_{\ell,\theta}^{(\alpha)}(b)[-r\leq b]\delta^{(-\alpha)}(t-b)\,db=\int h_{\ell,\theta}^{(\alpha-1)}(b)[-r\leq b]\delta^{(1-\alpha)}(t-b)\,db
\end{align*}
because $\partial\{h_{\ell,\theta}^{(\alpha-1)}(t)[-r\leq t]\}=h_{\ell,\theta}^{(\alpha)}(t)[-r\leq t]$ except at $t=-r.$ Hence $h_{\ell,\theta}^{(\alpha-1)}$ is differentiable (and thus continuous) on $(-r,\infty)$ which means the integral on the right is well-defined. Repeatedly integrating by parts yields that
\begin{align*}
\int h_{\ell,\theta}^{(\alpha)}(b)[-r\leq b]\delta^{(-\alpha)}(t-b)\,db&=\int h_{\ell,\theta}^{(1)}(\theta,b)[-r\leq b]\delta^{(-1)}(t-b)\,db\\
&=h_{\ell,\theta}(t)[-r\leq t]
\end{align*}
by the fundamental theorem of calculus.
\end{proof}

Let $f_\ell^\circ(\theta,t)\coloneq\lvert\sphere^{n-2}\rvert\int_{-1}^{+1}h_\ell^\circ(\theta,tv)C_\ell^{(n-2)/2}(v)(1-v^2)^{(n-3)/2}\,dv.$
\begin{lem}\label{lem:f=fcirc}
$f_\ell^\circ(\theta,\cdot)=f_\ell(\theta,\cdot)$ on $(0,r]$ whenever $\ell\notin L$
\end{lem}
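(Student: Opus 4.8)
The plan is to strip the truncation indicator off $h_\ell^\circ$ on the range where it is harmless, fold the resulting integral over $[-1,1]$ down onto $[0,1]$ by parity, and then recognise what is left as the integral representation of $f_\ell(\theta,\cdot)$ furnished by Lemma~\ref{lem:d_inverted}.

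First I would note that for $t\in(0,r]$ and $v\in[-1,+1]$ one has $\lvert tv\rvert\leq t\leq r$, so Lemma~\ref{lem:h_circ_h} gives $h_\ell^\circ(\theta,tv)=h_{\ell,\theta}(tv)$ on this range (the indicator $[-r\leq tv\leq r]$ being $1$). Hence, for $t\in(0,r]$,
\begin{align*}
f_\ell^\circ(\theta,t)=\lvert\sphere^{n-2}\rvert\int_{-1}^{+1}h_{\ell,\theta}(tv)\,C_\ell^{(n-2)/2}(v)(1-v^2)^{(n-3)/2}\,dv.
\end{align*}

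Next I would split this integral at $v=0$. On $0<v\leq1$ we have $tv>0$, so $h_{\ell,\theta}(tv)=G_{\ell,\theta}(tv)$; on $-1\leq v<0$ we have $tv<0$, so $h_{\ell,\theta}(tv)=(-1)^{\ell}G_{\ell,\theta}(-tv)$. Since $C_\ell^{(n-2)/2}$ has the same parity as $\ell$ and $(1-v^2)^{(n-3)/2}$ is even, the substitution $v\mapsto-v$ carries the $\int_{-1}^{0}$ piece onto a copy of the $\int_{0}^{1}$ piece, the two factors $(-1)^{\ell}$ cancelling, leaving
\begin{align*}
f_\ell^\circ(\theta,t)=2\lvert\sphere^{n-2}\rvert\int_{0}^{1}G_{\ell,\theta}(tv)\,C_\ell^{(n-2)/2}(v)(1-v^2)^{(n-3)/2}\,dv\qquad(t\in(0,r]).
\end{align*}

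Finally I would apply Lemma~\ref{lem:d_inverted} with $f=f_\ell(\theta,\cdot)$ and $G=G_{\ell,\theta}$. Here $f_\ell(\theta,\cdot)\in\PS'$ since $f\in\sfL^{\!\alpha}$, and $G_{\ell,\theta}=\text{D}_\ell\{f_\ell(\theta,\cdot)\}$ is a $\PS$ distribution (being in the image of $\M_0^{-1}$) which was already seen to be locally bounded, hence $G_{\ell,\theta}\in\regPS'$; Lemma~\ref{lem:d_inverted} then identifies the right-hand side of the last display with $f_\ell(\theta,t)$, which is the assertion. The only point that really needs attention is the membership $G_{\ell,\theta}\in\regPS'$, required to legitimise Lemma~\ref{lem:d_inverted}; this has already been arranged in the surrounding argument, so beyond the one-line parity bookkeeping I do not anticipate a genuine obstacle.
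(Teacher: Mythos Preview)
Your proof is correct and follows essentially the same route as the paper's: use Lemma~\ref{lem:h_circ_h} to drop the truncation on $[-r,r]$, fold $[-1,1]$ to $[0,1]$ by the parity of $C_\ell^{(n-2)/2}$ and the definition of $h_{\ell,\theta}$, and invoke Lemma~\ref{lem:d_inverted}. You are slightly more explicit than the paper in spelling out the parity bookkeeping and in verifying $G_{\ell,\theta}\in\regPS'$, but the argument is the same.
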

\begin{proof}
If $t\in(0,r],$ then $tv\in[-r,r]$ while $v\in[-1,1],$ so, by Lemma \ref{lem:h_circ_h},
\begin{align*}
f_\ell^\circ(\theta,t)&=\lvert\sphere^{n-2}\rvert\int_{-1}^{+1}h_\ell^\circ(\theta,tv)C_\ell^{(n-2)/2}(v)(1-v^2)^{(n-3)/2}\,dv\\
&=\lvert\sphere^{n-2}\rvert\int_{-1}^{+1}h_{\ell,\theta}(tv)C_\ell^{(n-2)/2}(v)(1-v^2)^{(n-3)/2}\,dv\\
&=2\lvert\sphere^{n-2}\rvert\int_0^1 G_{\ell,\theta}(tv)C_\ell^{(n-2)/2}(v)(1-v^2)^{(n-3)/2}\,dv=f_\ell(\theta,t)
\end{align*}
The last equality follows from Lemma \ref{lem:d_inverted} and the penultimate inequality follows from the parity of $C_\ell^\alpha.$
\end{proof}
In light of Lemma \ref{lem:f=fcirc}, we need only prove that $f_\ell^\circ(\theta,t)=\textstyle\int\displaystyle h_\ell^\circ(w,\langle w\mid t\theta\rangle)\,\underline{dw}$ for $t\in(0,r].$
\begin{lem}\label{lem:hljcirc}
Let $\ell\notin L.$ While $t\in[-r,r],$
\begin{center}
$h_\ell^\circ(\theta,t)=\sum_{j=1}^{\mathclap{N(\ell,n)}} h_{\ell j}^\circ(t)S_{\ell j}(\theta)$ with $h_{\ell j}^\circ(t)\coloneq\int h_\ell^\circ(\theta,t)S_{\ell j}(\theta)\,\underline{d\theta}$
\end{center}
\end{lem}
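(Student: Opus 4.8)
The plan is to reduce everything to Lemma~\ref{lem:hlj} by way of Lemma~\ref{lem:h_circ_h}. Fix $t\in[-r,r]$ and work on the conull set of $\theta$ on which the identities appearing in the proofs of Lemmas~\ref{lem:hlj} and~\ref{lem:h_circ_h} all hold. By Lemma~\ref{lem:h_circ_h} we have $h_\ell^\circ(\theta,t)=h_{\ell,\theta}(t)[-r\leq t\leq r]$; and, as recorded just before Lemma~\ref{lem:h_circ_h} (using Lemma~\ref{lem:distributional_antiderivative} and Corollary~\ref{cor:d_decomp}), $h_{\ell,\theta}$ is the locally bounded function $h_\ell(\theta,\cdot)$ of Definition~\ref{def:gamma_d}. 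Lemma~\ref{lem:hlj} therefore yields
\[
h_\ell^\circ(\theta,t)=\Bigl(\sum_{j=1}^{N(\ell,n)}h_{\ell j}(t)S_{\ell j}(\theta)\Bigr)[-r\leq t\leq r]=\sum_{j=1}^{N(\ell,n)}\bigl(h_{\ell j}(t)[-r\leq t\leq r]\bigr)S_{\ell j}(\theta).
\]

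Next I would read off the coefficients. Multiplying the previous display by $S_{\ell j}(\theta)$, integrating $\underline{d\theta}$ over $\sphere^{n-1}$ (legitimate because the sum is finite and $t$ is held fixed), and invoking orthonormality of $\{S_{\ell j}\}_j$ gives $h_{\ell j}^\circ(t)=\int h_\ell^\circ(\theta,t)S_{\ell j}(\theta)\,\underline{d\theta}=h_{\ell j}(t)[-r\leq t\leq r]$ for every $j$. Substituting this back into the same display produces $h_\ell^\circ(\theta,t)=\sum_{j=1}^{N(\ell,n)}h_{\ell j}^\circ(t)S_{\ell j}(\theta)$ for $t\in[-r,r]$, which is exactly the assertion. (One could instead argue directly from the convolution defining $h_\ell^\circ$, expand $h_\ell^\alpha(\theta,\cdot)$ in the $S_{\ell j}$ and use linearity of the convolution; the bookkeeping is the same.)

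I do not expect any real obstacle: this lemma is simply the $h^\circ$-analogue of Lemma~\ref{lem:hlj}, and its proof is pure orthonormal-expansion bookkeeping. The single point requiring care — already discharged in the run-up to Lemma~\ref{lem:h_circ_h} — is that $h_\ell(\theta,\cdot)$, a priori merely a $\PS$ distribution, is genuinely a locally bounded function, so that the pointwise-in-$t$ manipulations above make sense. The hypothesis $\ell\notin L$ is what keeps $\text{D}_\ell$ (not $\text{\DH}_\ell$) in play and makes Lemma~\ref{lem:h_circ_h} (valid for all $\alpha\in\N$) applicable; with that in place the argument goes through verbatim.
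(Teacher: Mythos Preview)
There is a genuine gap. You invoke Lemma~\ref{lem:hlj} to obtain the expansion $h_\ell(\theta,t)=\sum_j h_{\ell j}(t)S_{\ell j}(\theta)$, but Lemma~\ref{lem:hlj} is proved under the hypotheses of Definition~\ref{def:gamma_d}, i.e.\ for $f\in\sfL$. In the present context we only have $f\in\sfL^{\!\alpha}$, and the two are not comparable: membership in $\sfL^{\!\alpha}$ gives $G_\ell^\alpha\in\ell_{1,\infty}(0)$, whereas the proof of Lemma~\ref{lem:hlj} uses $G_\ell\in\ell_{1,\infty}(0)$ to justify its Fubini step. Your identification $h_{\ell,\theta}=h_\ell(\theta,\cdot)$ is formally fine as an equality of functions, but it does not supply the missing hypothesis. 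The care point you flag --- that $h_{\ell,\theta}$ is a genuine locally bounded function for each fixed $\theta$ --- is only half the story: what is actually needed is that $\theta\mapsto\lVert G_{\ell,\theta}\rVert_\infty$ be integrable over $\sphere^{n-1}$, and for $f\in\sfL^{\!\alpha}$ there is no reason this should hold globally in $t$.

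The paper's proof handles exactly this point. Rather than citing Lemma~\ref{lem:hlj}, it reruns that argument with test functions $\omega\in\sfP$ whose support lies in $(0,r]$, and it justifies the Fubini step on that restricted range using the integrability of $\theta\mapsto\lVert G_{\ell,\theta}\mid\ell_\infty(0,r]\rVert=\lVert h_{\ell,\theta}\mid\ell_\infty[-r,r]\rVert$, which follows from Lemmas~\ref{lem:h_circ_h} and~\ref{lem:h_circ_aas} (since $h_\ell^\circ\in\ell_{1,\infty}$). Your overall strategy is the right one, but you must either add this localisation step yourself or establish $f\in\sfL$ first --- and the latter is not claimed anywhere.
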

\begin{proof}
Lemma \ref{lem:fl_ps_equiv} still allows us to conclude that
\begin{align*}
G_{\ell,\theta}=\text{D}_\ell\{f_\ell(\theta,\cdot)\}=\sum_i S_{\ell i}(\theta)\text{D}_\ell\{f_{\ell i}\}\eqcolon\sum_i S_{\ell i}(\theta)G_{\ell i}
\end{align*}
By  Lemma \ref{lem:h_circ_h} (and Lemma \ref{lem:h_circ_aas}), $\theta\mapsto\lVert G_{\ell,\theta}\mid\ell_\infty(0,r]\rVert=\lVert h_{\ell,\theta}\mid\ell_\infty[-r,r]\rVert$ is integrable. Plugging in $\omega\in\sfP$ \myuline{with support contained in} $(0,r]$ and subsequently integrating against $S_{\ell j}(\theta)\,\underline{d\theta}$ yields that
\begin{align*}
\int G_{\ell,\theta}(\omega)S_{\ell j}(\theta)\,\underline{d\theta}=\sum_i G_{\ell i}(\omega)\int S_{\ell i}(\theta)S_{\ell j}(\theta)\,\underline{d\theta}=G_{\ell j}(\omega)
\end{align*}
However, $G_{\ell,\theta}(\omega)=\textstyle\int_0^\infty\displaystyle G_{\ell,\theta}(t)\omega(t)\,dt$ and hence $G_{\ell j}(\omega)=$
\begin{align*}
\int G_{\ell,\theta}(\omega)S_{\ell j}(\theta)\,\underline{d\theta}=\int S_{\ell j}(\theta)\int_0^\infty G_{\ell,\theta}(t)\omega(t)\,dt\,\underline{d\theta}\overset{\text{F}}{=}\int_0^\infty\omega(t)\int G_\ell(\theta,t)S_{\ell j}(\theta)\,\underline{d\theta}\,dt
\end{align*}
so $G_{\ell j}[t]=\textstyle\int\displaystyle G_\ell(\theta,t)S_{\ell j}(\theta)\,\underline{d\theta}.$ By how $h_{\ell,\theta}$ is defined, it follows that
\begin{center}
$h_{\ell,\theta}=\sum_j S_{\ell j}(\theta)h_{\ell j}$ with $h_{\ell j}(t)\coloneq\int h_{\ell,\theta}(t)S_{\ell j}(\theta)\,\underline{d\theta}=
\begin{dcases*}
G_{\ell j}(t)&if $t>0$ \\
(-1)^\ell G_{\ell j}(-t)&if $t<0$
\end{dcases*}$
\end{center}
The desideratum now follows from Lemma \ref{lem:h_circ_h} because
\begin{align*}
h_{\ell j}^\circ(t)=\int h_\ell^\circ(\theta,t)S_{\ell j}(\theta)\,\underline{d\theta}=h_{\ell j}(t)[-r\leq t\leq r]\tag*{\qedhere}
\end{align*}
\end{proof}
\begin{lem}
$f_\ell^\circ(\theta,t)=\textstyle\int\displaystyle h_\ell^\circ(w,\langle w\mid t\theta\rangle)\,\underline{dw}$ for $t\in(0,r]$ and $\ell\notin L$
\end{lem}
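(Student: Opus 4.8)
The plan is to repeat, essentially verbatim, the computation from the proof of Theorem~\ref{thm:main_base_case} that yielded $f_\ell(\theta,t)=\int h_\ell(w,\langle w\mid t\theta\rangle)\,\underline{dw}$, but with $h_\ell^\circ$ in place of $h_\ell$ and with every quantity restricted to the compactum on which the expansion of Lemma~\ref{lem:hljcirc} is valid.

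First I would fix $t\in(0,r]$ and note that $\lvert\langle w\mid t\theta\rangle\rvert\le\lvert w\rvert\,\lvert t\theta\rvert=t\le r$ for every $w\in\sphere^{n-1}$, so Lemma~\ref{lem:hljcirc} applies at the point $\langle w\mid t\theta\rangle$ and gives the finite expansion $h_\ell^\circ(w,\langle w\mid t\theta\rangle)=\sum_{j=1}^{N(\ell,n)}h_{\ell j}^\circ(t\langle w\mid\theta\rangle)S_{\ell j}(w)$. Next I would check that each $v\mapsto h_{\ell j}^\circ(tv)$ is integrable on $[-1,1]$: by Lemma~\ref{lem:hljcirc} it equals $h_{\ell j}(tv)[-r\le tv\le r]$, and $h_{\ell j}$ is bounded on $[-r,r]$ since $\theta\mapsto\lVert h_{\ell,\theta}\mid\ell_\infty[-r,r]\rVert$ is integrable (as established in the proof of Lemma~\ref{lem:hljcirc}) and $h_{\ell j}(t)=\int h_{\ell,\theta}(t)S_{\ell j}(\theta)\,\underline{d\theta}$. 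This lets me apply the Funk--Hecke formula (Lemma~\ref{lem:funk_hecke}) termwise, with $Y_\ell=S_{\ell j}$ and integrable kernel $v\mapsto h_{\ell j}^\circ(tv)$, turning $\int h_{\ell j}^\circ(t\langle w\mid\theta\rangle)S_{\ell j}(w)\,\underline{dw}$ into $S_{\ell j}(\theta)\lvert\sphere^{n-2}\rvert\int_{-1}^{+1}h_{\ell j}^\circ(tv)C_\ell^{(n-2)/2}(v)(1-v^2)^{(n-3)/2}\,dv$.

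Finally I would sum over $j$ (a finite sum, so interchanging it with the $v$-integral is free), and apply Lemma~\ref{lem:hljcirc} a second time --- legitimately, since $tv\in[-r,r]$ for all $v\in[-1,1]$ when $t\in(0,r]$ --- to recombine $\sum_j h_{\ell j}^\circ(tv)S_{\ell j}(\theta)$ into $h_\ell^\circ(\theta,tv)$. What remains is $\lvert\sphere^{n-2}\rvert\int_{-1}^{+1}h_\ell^\circ(\theta,tv)C_\ell^{(n-2)/2}(v)(1-v^2)^{(n-3)/2}\,dv$, which is by definition $f_\ell^\circ(\theta,t)$, finishing the proof.

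The only thing requiring genuine care --- the ``main obstacle'' such as it is --- is the bookkeeping of domain restrictions: the expansion of Lemma~\ref{lem:hljcirc} only holds while the radial argument lies in $[-r,r]$, so one must verify this both at $\langle w\mid t\theta\rangle$ and at $tv$ for $v\in[-1,1]$ (both hold precisely because $t\le r$), and one must confirm the $h_{\ell j}^\circ(t\,\cdot\,)$ serving as Funk--Hecke kernels are integrable on $[-1,1]$. Once these points are settled the rest is mechanical, exactly as in the corresponding step of Theorem~\ref{thm:main_base_case}.
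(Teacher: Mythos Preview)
Your proposal is correct and follows essentially the same route as the paper: expand $h_\ell^\circ$ via Lemma~\ref{lem:hljcirc}, apply Funk--Hecke (Lemma~\ref{lem:funk_hecke}) termwise, and recombine using Lemma~\ref{lem:hljcirc} again to recover the defining integral of $f_\ell^\circ$. Your version is in fact more careful than the paper's terse display in spelling out why the domain restriction $[-r,r]$ is respected at both invocations of Lemma~\ref{lem:hljcirc} and why the Funk--Hecke kernels are integrable.
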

\begin{proof}
Lemma \ref{lem:hljcirc} and Lemma \ref{lem:funk_hecke} yield that
\begin{gather}
\int h_\ell^\circ(w,\langle w\mid t\theta\rangle)\,\underline{dw}=\sum_j\int h_{\ell j}^\circ(t\langle w\mid\theta\rangle)S_{\ell j}(w)\,\underline{dw}\notag\\
=\sum_j S_{\ell j}(\theta)\lvert\sphere^{n-2}\rvert\int_{-1}^1 h_{\ell j}^\circ(tv)C_\ell^{(n-2)/2}(v)(1-v^2)^{(n-3)/2}\,dv\label{eq:final}\\
=\lvert\sphere^{n-2}\rvert\int_{-1}^{+1}h_\ell^\circ(\theta,tv)C_\ell^{(n-2)/2}(v)(1-v^2)^{(n-3)/2}\,dv=f_\ell^\circ(\theta,t)\tag*{\qedhere}
\end{gather}
\end{proof}

\fbox{$\ell\in L,\alpha=1$}

Lemma \ref{lem:dh_decomp} tells us that $\text{\DH}_\ell^1=t^{-1}\text{\DH}_\ell,$ so
\begin{align}\label{eq:gltheta}
G_{\ell,\theta}[t]\coloneq\text{\DH}_\ell\{f_\ell(\theta,\cdot)\}[t]=G_\ell^1(\theta,t)t
\end{align}
and hence, by Lemma \ref{lem:d_inverted},
\begin{align*}
f_\ell(\theta,t)&=2\lvert\sphere^{n-2}\rvert\int_0^1 G_\ell^1(\theta,tv)(tv)\tfrac{1}{n-1}C_{\ell-1}^{n/2}(v)(1-v^2)^{(n-1)/2}\,\frac{dv}{v}\\
&=2t\lvert\sphere^{n-2}\rvert\int_0^1 G_\ell^1(\theta,tv)\tfrac{1}{n-1}C_{\ell-1}^{n/2}(v)(1-v^2)^{(n-1)/2}\,dv\\
&=t\lvert\sphere^{n-2}\rvert\int_{-1}^{+1}h_\ell^1(\theta,tv)\tfrac{1}{n-1}C_{\ell-1}^{n/2}(v)(1-v^2)^{(n-1)/2}\,dv
\end{align*}
Let $f_\ell^\circ(\theta,t)\coloneq\lvert\sphere^{n-2}\rvert\int_{-1}^{+1}h_\ell^\circ(\theta,tv)C_\ell^{(n-2)/2}(v)(1-v^2)^{(n-3)/2}\,dv$ again.
\begin{lem}\label{lem:f_circ_f}
$f_\ell^\circ(\theta,\cdot)=f_\ell(\theta,\cdot)$ on $(0,r]$ when $\ell\in L,\alpha=1$
\end{lem}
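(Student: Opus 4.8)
The plan is to unwind $f_\ell^\circ(\theta,t)$ by a single integration by parts in the radial variable and land on the formula for $f_\ell(\theta,t)$ displayed just above the Lemma. First I would make the defining convolution for $h_\ell^\circ$ explicit: with $\alpha=1$ we have $\delta^{(-1)}(t-b)=[b<t]$, so for $t\in[-r,r]$ (and $\theta$ in the conull set on which $G_\ell^1(\theta,\cdot)\in\ell_\infty(0,\infty)$, exactly as in Lemmas \ref{lem:h_circ_h} and \ref{lem:f=fcirc}) one gets $h_\ell^\circ(\theta,t)=\int_{-r}^t h_\ell^1(\theta,b)\,db$, which is absolutely continuous in $t$ with $\partial_t h_\ell^\circ(\theta,t)=h_\ell^1(\theta,t)$ a.e.; here $h_\ell^1(\theta,\cdot)\in\ell_\infty(\R)$ since it is the odd extension of $G_\ell^1(\theta,\cdot)$ ($\ell$ even, so $(-1)^{\ell+1}=-1$). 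I would then record two parity facts: $h_\ell^1(\theta,\cdot)$ being odd has zero integral over $[-r,r]$, which forces $h_\ell^\circ(\theta,\cdot)$ to be even on $[-r,r]$; and $C_\ell^{(n-2)/2}$ is even while $C_{\ell-1}^{n/2}$ is odd ($\ell-1$ odd), so $C_{\ell-1}^{n/2}(0)=0$.

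Next, fixing $t\in(0,r]$, note $tv\in[-r,r]$ for $v\in[-1,1]$, so in $f_\ell^\circ(\theta,t)=\lvert\sphere^{n-2}\rvert\int_{-1}^{1}h_\ell^\circ(\theta,tv)C_\ell^{(n-2)/2}(v)(1-v^2)^{(n-3)/2}\,dv$ the integrand is even in $v$ and the integral folds to $2\lvert\sphere^{n-2}\rvert\int_0^1$. Now integrate by parts against $-\tfrac{1}{n-1}C_{\ell-1}^{n/2}(v)(1-v^2)^{(n-1)/2}$, which (by the $\cite{prudnikov}$ identity already established in the $\ell\in L$ part of the proof of Lemma \ref{lem:g_closed_form}) is an antiderivative of $C_\ell^{(n-2)/2}(v)(1-v^2)^{(n-3)/2}$. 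The boundary term vanishes at $v=1$ because of the factor $(1-v^2)^{(n-1)/2}$ and at $v=0$ because $C_{\ell-1}^{n/2}(0)=0$. Since $\partial_v[h_\ell^\circ(\theta,tv)]=t\,h_\ell^1(\theta,tv)=t\,G_\ell^1(\theta,tv)$ for a.e.\ $v\in(0,1]$ (using $tv>0$), what remains is $2t\lvert\sphere^{n-2}\rvert\int_0^1 G_\ell^1(\theta,tv)\tfrac{1}{n-1}C_{\ell-1}^{n/2}(v)(1-v^2)^{(n-1)/2}\,dv$, which is exactly $f_\ell(\theta,t)$ by the formula for $f_\ell(\theta,t)$ displayed before the Lemma.

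The only step needing genuine care is the integration by parts at the available regularity: $v\mapsto h_\ell^\circ(\theta,tv)$ is merely absolutely continuous on $[0,1]$ with an $\ell_\infty$ derivative, and for $n=2$ the weight $(1-v^2)^{(n-3)/2}$ has an integrable singularity at $v=1$, so one should check that both $u\,w'$ and $u'\,w$ lie in $\ell_1[0,1]$ and that the boundary evaluation at $v=1$ is a legitimate limit; this is routine. The remaining bookkeeping — that everything is asserted only for $\theta$ in the conull set where $G_\ell^1(\theta,\cdot)\in\ell_\infty(0,\infty)$ — is harmless for the subsequent integration over $\theta$, just as in the companion lemmas.
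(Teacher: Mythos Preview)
Your proof is correct and is essentially the same as the paper's: both hinge on the antiderivative identity for $C_\ell^{(n-2)/2}(v)(1-v^2)^{(n-3)/2}$ and the relation $h_\ell^\circ=\int_{-r}^{\cdot}h_\ell^1$, with the only cosmetic difference that you fold to $[0,1]$ by parity and integrate by parts, whereas the paper stays on $[-1,1]$ and casts the same step as a Fubini interchange (thereby sidestepping the boundary-term check you flag for $n=2$).
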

\begin{proof}
Let $t\in(0,r].$ Then $\lvert z\rvert\leq 1\Rightarrow\lvert tz\rvert\leq r.$ Ditto with $z$ replaced by $v.$ Ergo:
\begin{gather*}
f_\ell(\theta,t)=t\lvert\sphere^{n-2}\rvert\int h_\ell^1(\theta,tz)[-r\leq tz\leq r]\tfrac{1}{n-1}C_{\ell-1}^{n/2}(z)(1-z^2)^{(n-1)/2}[-1\leq z\leq 1]\,dz\\
=t\lvert\sphere^{n-2}\rvert\int_{\mathclap{-\infty}}^\infty h_\ell^1(\theta,tz)[-r\leq tz\leq r]\int_z^{\infty}C_\ell^{(n-2)/2}(v)(1-v^2)^{(n-3)/2}[-1\leq v\leq 1]\,dv\,dz\\
\overset{\text{F}}{=}t\lvert\sphere^{n-2}\rvert\int_{\mathclap{-\infty}}^\infty C_\ell^{(n-2)/2}(v)(1-v^2)^{(n-3)/2}[-1\leq v\leq 1]\int_{\mathclap{-\infty}}^v h_\ell^1(\theta,tz)[-r\leq tz\leq r]\,dz\,dv\\
=\lvert\sphere^{n-2}\rvert\int_{-1}^{+1}C_\ell^{(n-2)/2}(v)(1-v^2)^{(n-3)/2}\int_{\mathclap{-\infty}}^{tv} h_\ell^1(\theta,b)[-r\leq b\leq r]\,db\,dv\\
=\lvert\sphere^{n-2}\rvert\int_{-1}^{+1}C_\ell^{(n-2)/2}(v)(1-v^2)^{(n-3)/2}[tv\leq r]\int h_\ell^1(\theta,b)[-r\leq b\leq tv]\,db\,dv\\
=\lvert\sphere^{n-2}\rvert\int_{-1}^{+1}h_\ell^\circ(\theta,tv)C_\ell^{(n-2)/2}(v)(1-v^2)^{(n-3)/2}\,dv=f_\ell^\circ(\theta,t)
\end{gather*}
where the last two equalities follow from how we defined $h_\ell^\circ$ and $f_\ell^\circ.$
\end{proof}
In light of Lemma \ref{lem:f_circ_f}, it suffices to prove that $f_\ell^\circ(\theta,t)=\textstyle\int\displaystyle h_\ell^\circ(w,\langle w\mid t\theta\rangle)\,\underline{dw}$ for $t\in(0,r].$

\begin{lem}\label{lem:f_circ_h_circ}
$f_\ell^\circ(\theta,t)=\textstyle\int\displaystyle h_\ell^\circ(w,\langle w\mid t\theta\rangle)\,\underline{dw}$ for $t\in(0,r]$ and $\ell\in L,\alpha=1$
\end{lem}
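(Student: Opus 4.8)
The plan is to mirror the argument given for the case $\ell\notin L$ (Lemma \ref{lem:hljcirc} together with the lemma following it): first decompose $h_\ell^\circ$ into spherical harmonics, and then invoke the Funk--Hecke formula.

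First I would establish the $\ell\in L$ analogue of Lemma \ref{lem:hljcirc}, namely that $h_\ell^\circ(\theta,t)=\sum_{j=1}^{N(\ell,n)}h_{\ell j}^\circ(t)S_{\ell j}(\theta)$ for $t\in[-r,r]$, where $h_{\ell j}^\circ(t)\coloneq\int h_\ell^\circ(\theta,t)S_{\ell j}(\theta)\,\underline{d\theta}$. Since $f\in\sfL^{\!1}$ forces $f_\ell(\theta,\cdot)\in\PS'$ for every $\theta$, Lemma \ref{lem:fl_ps_equiv} gives $f_{\ell j}\in\PS'$ for all $j$, so $G_{\ell j}\coloneq\text{\DH}_\ell\{f_{\ell j}\}\in\sfP'$ is defined and, by linearity of $\text{\DH}_\ell$, $G_{\ell,\theta}=\text{\DH}_\ell\{f_\ell(\theta,\cdot)\}=\sum_i S_{\ell i}(\theta)G_{\ell i}$. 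By \eqref{eq:gltheta} we have $G_{\ell,\theta}[t]=tG_\ell^1(\theta,t)$, which for a.e.\ $\theta$ is a locally bounded function with $\theta\mapsto\lVert G_{\ell,\theta}\mid\ell_\infty(0,r]\rVert\leq r\lVert G_\ell^1(\theta,\cdot)\rVert_\infty$ integrable because $G_\ell^1\in\ell_{1,\infty}(0)$. Feeding in $\omega\in\sfP$ supported in $(0,r]$ and integrating against $S_{\ell j}(\theta)\,\underline{d\theta}$, exactly as in Lemma \ref{lem:hljcirc}, a Fubini step yields $G_{\ell j}[t]=t\int G_\ell^1(\theta,t)S_{\ell j}(\theta)\,\underline{d\theta}$; dividing by $t$ and using $h_\ell^1(\theta,b)=G_\ell^1(\theta,b)$ for $b>0$ and $=-G_\ell^1(\theta,-b)$ for $b<0$ (as $\ell$ is even), one gets $h_\ell^1(\theta,b)=\sum_j h_{\ell j}^1(b)S_{\ell j}(\theta)$ for a.e.\ $(\theta,b)$. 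Because $h_\ell^\circ(\theta,\cdot)$ is the convolution of the bounded function $h_\ell^1(\theta,\cdot)[-r\leq\cdot]$ with $\delta^{(-1)}$ (the Heaviside function), it is continuous in $t$, so pulling out the finite sum yields the stated decomposition for every $t\in[-r,r]$ and a.e.\ $\theta$.

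With the decomposition in hand, the conclusion is the same computation as in the case $\ell\notin L$: each $h_{\ell j}^\circ$ is locally bounded, so Lemma \ref{lem:funk_hecke} applies to $v\mapsto h_{\ell j}^\circ(tv)$ on $[-1,1]$, and for $t\in(0,r]$ (so $tv\in[-r,r]$ whenever $v\in[-1,1]$)
\begin{align*}
\int h_\ell^\circ(w,\langle w\mid t\theta\rangle)\,\underline{dw}
&=\sum_j\int h_{\ell j}^\circ(t\langle w\mid\theta\rangle)S_{\ell j}(w)\,\underline{dw}\\
&=\sum_j S_{\ell j}(\theta)\lvert\sphere^{n-2}\rvert\int_{-1}^{+1}h_{\ell j}^\circ(tv)C_\ell^{(n-2)/2}(v)(1-v^2)^{(n-3)/2}\,dv\\
&=\lvert\sphere^{n-2}\rvert\int_{-1}^{+1}h_\ell^\circ(\theta,tv)C_\ell^{(n-2)/2}(v)(1-v^2)^{(n-3)/2}\,dv=f_\ell^\circ(\theta,t),
\end{align*}
the last equality being the definition of $f_\ell^\circ$ preceding Lemma \ref{lem:f_circ_f} (combined with Lemma \ref{lem:f_circ_f}, this closes \eqref{eq:fhcirc} in the last remaining case).

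The main obstacle I anticipate is the bookkeeping in the first step: one must verify that $G_{\ell,\theta}$, equivalently $h_\ell^\circ(\theta,\cdot)$, is genuinely a (locally bounded, continuous) function with the integrability in $\theta$ needed to justify the Fubini exchanges and to upgrade the a.e.-in-$t$ identity to a pointwise one on $[-r,r]$. This is exactly where \eqref{eq:gltheta} together with $G_\ell^1\in\ell_{1,\infty}(0)$ does the work, playing the role that Lemmas \ref{lem:distributional_antiderivative} and \ref{lem:h_circ_h} play when $\ell\notin L$.
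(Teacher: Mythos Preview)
Your proposal is correct and follows essentially the same route as the paper: use \eqref{eq:gltheta} together with $G_\ell^1\in\ell_{1,\infty}(0)$ to get the needed integrability of $\theta\mapsto\lVert G_{\ell,\theta}\mid\ell_\infty(0,r]\rVert$, repeat the Lemma~\ref{lem:hljcirc} argument with $\text{\DH}_\ell$ in place of $\text{D}_\ell$ to obtain $h_\ell^1(\theta,t)t=\sum_j S_{\ell j}(\theta)h_{\ell j}^1(t)t$, divide by $t$ and extend by parity, then integrate to the decomposition of $h_\ell^\circ$ and conclude via the Funk--Hecke computation \eqref{eq:final}. The paper compresses the first step into a ``mutatis mutandis'' reference, whereas you spell it out, but the content is the same.
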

\begin{proof}
By Definition \ref{def:adz}, $G_\ell^1\in\ell_{1,\infty}(0),$ whence $\theta\mapsto\lVert G_{\ell,\theta}\mid\ell_\infty(0,r]\rVert$ is integrable (recall \eqref{eq:gltheta}). We may thus repeat the bulk of the proof of Lemma \ref{lem:hljcirc} mutatis mutandis (replacing every instance of $\text{D}_\ell$ with $\text{\DH}_\ell$ etc.) to obtain that
\begin{center}
$h_\ell^1(\theta,t)t=\sum_j S_{\ell j}(\theta)h_{\ell j}^1(t)t$ where $h_{\ell j}^1(t)\coloneq\int h_\ell^1(\theta,t)S_{\ell j}(\theta)\,\underline{d\theta}$
\end{center}
Dividing by $t>0$ thus allows us to conclude that
\begin{gather*}
h_\ell^\circ(\theta,t)=[t\leq r]\int h_\ell^1(\theta,b)[-r\leq b\leq t]\,db\\
=\sum_j S_{\ell j}(\theta)[t\leq r]\int h_{\ell j}^1(b)[-r\leq b\leq t]\,db\eqcolon\sum_j S_{\ell j}(\theta)h_{\ell j}^\circ(t)
\end{gather*}
Lastly, the computations in \eqref{eq:final} may be repeated verbatim.
\end{proof}

\fbox{$\ell\in L,\alpha>1$}

This final case combines ideas from both of the previous cases.

Let $G_{\ell,\theta}^1\coloneq t^{-1}\text{\DH}_\ell\{f_\ell(\theta,\cdot)\}\in\sfP'.$ Lemma \ref{lem:dh_decomp} yields that $G_\ell^\alpha(\theta,t)=\partial^{\alpha-1}G_{\ell,\theta}^1[t],$ whence $G_{\ell,\theta}^1$ is locally bounded by Lemma \ref{lem:distributional_antiderivative}. If $t<0,$ we thus have
\begin{gather*}
\partial^{\alpha-1}h_{\ell,\theta}^1(t)=(-1)^\ell\partial^{\alpha-1}[G_{\ell,\theta}^1(-t)]=(-1)^{\ell+\alpha}G_\ell^\alpha(\theta,-t)=h_\ell^\alpha(\theta,t)\\
\text{where }h_{\ell,\theta}^1(t)\coloneq\begin{dcases*}
G_{\ell,\theta}^1(t)&if $t>0$\\
(-1)^{\ell+1}G_{\ell,\theta}^1(-t)&if $t<0$
\end{dcases*}
\end{gather*}
Consequently $h_{\ell,\theta}^1$ is locally bounded and $h_\ell^\alpha(\theta,\cdot)=\partial^{\alpha-1}h_{\ell,\theta}^1\eqcolon(h_{\ell,\theta}^1)^{(\alpha-1)}.$
\begin{lem}\label{lem:h_circ_h1}
$h_\ell^\circ(\theta,t)=[t\leq r]\int h_{\ell,\theta}^1(b)[-r\leq b\leq t]\,db$ whenever $\ell\in L,\alpha>1$
\end{lem}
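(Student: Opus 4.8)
The plan is to mirror the proof of Lemma~\ref{lem:h_circ_h}, the $\ell\notin L$ companion of the present statement; the only structural change is that here $h_\ell^\alpha(\theta,\cdot)=(h_{\ell,\theta}^1)^{(\alpha-1)}$ carries one fewer derivative than a full $\delta^{(-\alpha)}$, so the integration-by-parts chain should terminate at a single primitive of $h_{\ell,\theta}^1$ rather than at $h_{\ell,\theta}^1$ itself. Unwinding the definition of $h_\ell^\circ$ and inserting $h_\ell^\alpha(\theta,\cdot)=(h_{\ell,\theta}^1)^{(\alpha-1)}$, the claim becomes: for every $t\leq r$,
\begin{align*}
\int(h_{\ell,\theta}^1)^{(\alpha-1)}(b)[-r\leq b]\delta^{(-\alpha)}(t-b)\,db=\int h_{\ell,\theta}^1(b)[-r\leq b]\delta^{(-1)}(t-b)\,db,
\end{align*}
and the right-hand side equals $\textstyle\int h_{\ell,\theta}^1(b)[-r\leq b\leq t]\,db$ up to a null set since $\delta^{(-1)}(t-b)=[b<t]$; multiplying through by the factor $[t\leq r]$ then yields the lemma. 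To justify the manipulation I would first note that, since $(G_{\ell,\theta}^1)^{(\alpha-1)}=G_\ell^\alpha(\theta,\cdot)\in\ell_\infty(0,\infty)\subset\ell_1^\text{loc}(0,\infty)$ for almost every $\theta$ (Definition~\ref{def:adz}), iterating Lemma~\ref{lem:distributional_antiderivative} makes $G_{\ell,\theta}^1$ and its distributional derivatives of order $\leq\alpha-2$ continuous on $(0,\infty)$; by the parity extension the same holds for $h_{\ell,\theta}^1$ away from $0$, the function $(h_{\ell,\theta}^1)^{(\alpha-1)}$ is locally bounded on $\R$, and the integrand on the left above is supported in the compact interval $[-r,t]$, so every integral occurring is absolutely convergent.

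Then I would run the integration-by-parts chain exactly as in Lemma~\ref{lem:h_circ_h}: using $\partial_b\{(h_{\ell,\theta}^1)^{(j-1)}(b)[-r\leq b]\}=(h_{\ell,\theta}^1)^{(j)}(b)[-r\leq b]$ away from $b=-r$ together with $\partial_b[\delta^{(-(j+1))}(t-b)]=-\delta^{(-j)}(t-b)$, one obtains
\begin{align*}
\int(h_{\ell,\theta}^1)^{(j)}(b)[-r\leq b]\delta^{(-(j+1))}(t-b)\,db=\int(h_{\ell,\theta}^1)^{(j-1)}(b)[-r\leq b]\delta^{(-j)}(t-b)\,db
\end{align*}
for $j=\alpha-1,\alpha-2,\dots,1$; after $\alpha-1$ such steps the left side has collapsed to $\textstyle\int h_{\ell,\theta}^1(b)[-r\leq b]\delta^{(-1)}(t-b)\,db$, which is the desired right-hand side. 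This is precisely the reduction performed in Lemma~\ref{lem:h_circ_h}, and the distributional counterpart of the explicit Gegenbauer-primitive computation used in Lemma~\ref{lem:f_circ_f} for $\alpha=1$; once the identity is in hand it feeds into the surrounding argument toward \eqref{eq:fhcirc} for this last case just as in the previous two.

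I expect the main obstacle to be the boundary bookkeeping in each integration by parts: the truncated primitive $(h_{\ell,\theta}^1)^{(j-1)}(b)[-r\leq b]$ is not literally an antiderivative of the truncated integrand at the endpoint $b=-r$, and the parity splice produces a similar junction at $b=0$, so one must verify that neither endpoint contribution survives the convolution with $\delta^{(-(j+1))}$. Both checks go through exactly as in the proof of Lemma~\ref{lem:h_circ_h}, the continuity of the intermediate derivatives $(h_{\ell,\theta}^1)^{(j)}$ for $j\leq\alpha-2$ that was established in the first step being precisely the ingredient that makes them work.
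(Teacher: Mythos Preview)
Your proposal is correct and follows essentially the same route as the paper's own proof: reduce to the identity $\int(h_{\ell,\theta}^1)^{(\alpha-1)}(b)[-r\leq b]\delta^{(-\alpha)}(t-b)\,db=\int h_{\ell,\theta}^1(b)[-r\leq b\leq t]\,db$ and collapse it by $\alpha-1$ integrations by parts, exactly mirroring Lemma~\ref{lem:h_circ_h}. Your regularity justification via iterated Lemma~\ref{lem:distributional_antiderivative} and your explicit flagging of the endpoint issues at $b=-r$ and $b=0$ are slightly more detailed than the paper's treatment, but the argument is the same.
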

\begin{proof}
It suffices to show that
\begin{align*}
\int\delta^{(-\alpha)}(t-b)(h_{\ell,\theta}^1)^{(\alpha-1)}(b)[-r\leq b]\,db=\int h_{\ell,\theta}^1(b)[-r\leq b\leq t]\,db
\end{align*}
because multiplying by $[t\leq r]$ on both sides yields the desideratum. We can use integration by parts to derive that
\begin{align*}
\int\delta^{(-\alpha)}(t-b)(h_{\ell,\theta}^1)^{(\alpha-1)}(b)[-r\leq b]\,db=\int\delta^{(1-\alpha)}(t-b)(h_{\ell,\theta}^1)^{(\alpha-2)}(b)[-r\leq b]\,db
\end{align*}
as $\partial\{(h_{\ell,\theta}^1)^{(\alpha-2)}(t)[-r\leq t]\}=(h_{\ell,\theta}^1)^{(\alpha-1)}(t)[-r\leq t]$ except at $t=-r,$ so $(h_{\ell,\theta}^1)^{(\alpha-2)}$ is differentiable (and thus continuous) on $(-r,\infty).$ Hence the integral on the right is well-defined. Repeatedly integrating by parts yields that
\begin{gather*}
\int\delta^{(-\alpha)}(t-b)(h_{\ell,\theta}^1)^{(\alpha-1)}(b)[-r\leq b]\,db=\int\delta^{(-1)}(t-b)h_{\ell,\theta}^1(\theta,b)[-r\leq b]\,db\\
=\int h_{\ell,\theta}^1(b)[-r\leq b\leq t]\,db\tag*{\qedhere}
\end{gather*}
\end{proof}
Yet again, let $f_\ell^\circ(\theta,t)\coloneq\lvert\sphere^{n-2}\rvert\int_{-1}^{+1}h_\ell^\circ(\theta,tv)C_\ell^{(n-2)/2}(v)(1-v^2)^{(n-3)/2}\,dv.$

The rest of the proof is mutatis mutandis (replace $G_\ell^1(\theta,\cdot)$ with $G_{\ell,\theta}^1$ etc.) the \\ proof for $\alpha=1.$

\subsection{Random neural networks}\label{sec:main_NN}

In this section we consider approximating $f\in\sfE^\alpha$ on $K\coloneq K(r)$ by a random neural network on a rich enough probability space $(\Omega,\Sigma,\P)$ of the form
\begin{align*}
f_m(x)=\sum_{j=1}^m a_j\delta^{(-\alpha)}(\langle w_j\mid x\rangle-b_j)
\end{align*}
where the $(w_j,b_j)$ are independent random draws from the probability law $H$ with pdf $\lvert h\rvert\div\lVert h\rVert_1$ (supposing $\lVert h\rVert_1>0)$ and
\begin{align*}
h:\sphere^{n-1}\times\R\ni(w,b)\mapsto h^\alpha(w,b)[-r\leq b\leq r]
\end{align*}
In this section we will not use $h$ (the $f_\ell)$ from earlier, so in this section $h$ (the $f_m)$ \\ is (are) always as in the (pen)ultimate display. We will motivate our choice of $f_m$ (and $h)$ momentarily.

Specifically, we want to know how large $m$ needs to be to have $\lVert f-f_m\rVert_{\infty,K}\leq\epsilon$ w.h.p., where $\lVert\cdot\rVert_{\infty,K}\coloneq\lVert{}\cdot{}\!\mid\ell_\infty(K)\rVert$ and $\epsilon>0$ is small. Theorems \ref{thm:random_nn_barron} answers that question for $\alpha>1.$

Note that $\lVert h\rVert_1>0\Leftrightarrow\{h\neq0\}$ has positive measure $\Leftrightarrow\{h^\alpha\neq0\}$ has positive measure $\Leftrightarrow\lVert f\mid\sfE^\alpha\rVert=\lVert h^\alpha\rVert_{1,\infty}>0.$

If $\lVert f\mid\sfE^\alpha\rVert\in(0,\infty)$ we may write $f(x)=\iint\delta^{(-\alpha)}(\langle w\mid x\rangle-b)h(w,b)\,db\,\underline{dw}=$
\begin{align*}
\mathds{E}_{(w,b)\sim H}\Bigl(\lVert h\rVert_1\varphi(w,b)\delta^{(-\alpha)}(\langle w\mid x\rangle-b)\Bigr)
\end{align*}
where $\varphi=h\div\lvert h\rvert$ if $h\neq 0$ and $\varphi=1$ (or any other complex number with unit modulus) if $h=0,$ so a natural choice for the $a_j$ would be to take
\begin{align*}
a_j\coloneq \lVert h\rVert_1\varphi(w_j,b_j)/m
\end{align*}
i.e., $f-f_m=\mathds{E}(\xi)-\frac{1}{m}\sum_{j=1}^m\xi_j$ with $\xi$ being an i.i.d.\ copy of the $\xi_j$ given by
\begin{align*}
\xi_j(x)\coloneq\lVert h\rVert_1\varphi(w_j,b_j)\delta^{(-\alpha)}(\langle w_j\mid x\rangle-b_j)
\end{align*}
\begin{rem}
The reason we do not take the $(w_j,b_j)$ to be i.u.d.\ on $\sphere^{n-1}\times[-r,r]$ is because then $a_j=2r\lvert\sphere^{n-1}\rvert h^\alpha(w_j,b_j)/m,$ but $w\mapsto\lVert h^\alpha(w,\cdot)\rVert_\infty$ need not even be $\ell_2(\sphere^{n-1}),$ let alone $\ell_\infty(\sphere^{n-1}),$ and $\lVert h^\alpha(w_j,\cdot)\rVert_\infty$ possibly having infinite variance does not permit us to use the central limit theorem to get that
\begin{align*}
\text{``}\Bigl(\mathds{E}(\xi)-\frac{1}{m}\sum_{j=1}^m\xi_j\Bigr)\sqrt{m}\leadsto N(0,\mathds{V}(\xi))\text{''}
\end{align*}
where $\leadsto$ and $\mathds{V}$ denote convergence in law and the variance resp. In actuality the situation is a little more complicated, hence the quotes, but this is the high-level idea. By doing it the way we did, $\xi$ is bounded (see Lemma \ref{lem:xi_bound} below) so we can use the usual Chernoff machinery.
\end{rem}
Before stating the main theorem of this section, we need some preparation.
\begin{lem}\label{lem:xi_bound}
$\P$-almost everywhere $\lVert\xi_j\rVert_{\infty,K}\leq 2r\lVert f\mid \sfE^\alpha\rVert\delta^{(-\alpha)}(2r)$ if $f\in\sfE^\alpha$
\end{lem}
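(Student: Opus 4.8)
The plan is to bound $\lvert\xi_j(x)\rvert$ pointwise in $x\in K=K(r)$, uniformly over $\P$-almost every realisation of $(w_j,b_j)$, and then simply take the supremum over $x$ to read off the $\ell_\infty(K)$ norm. The ingredients are: $\lvert\varphi\rvert\equiv1$ by construction, the monotonicity of $\delta^{(-\alpha)}$, a Cauchy--Schwarz bound on $\langle w_j\mid x\rangle$, and the elementary estimate $\lVert h\rVert_1\le 2r\lVert h^\alpha\rVert_{1,\infty}$.

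First I would note that $(w_j,b_j)$ lies $\P$-almost surely in $\sphere^{n-1}\times[-r,r]$: the law $H$ has density $\lvert h\rvert/\lVert h\rVert_1$, and $h(w,b)=h^\alpha(w,b)[-r\le b\le r]$ vanishes (a.e.) off that slab, so $H(\{\lvert b\rvert>r\})=0$. Fix such a realisation and any $x\in K$. Since $\lvert w_j\rvert=1$, Cauchy--Schwarz gives $\lvert\langle w_j\mid x\rangle\rvert\le\lvert x\rvert\le r$, hence $\langle w_j\mid x\rangle-b_j\in[-2r,2r]$. As $\alpha>1$, the map $\delta^{(-\alpha)}\colon b\mapsto b_+^{\alpha-1}/(\alpha-1)!$ is nonnegative and nondecreasing on all of $\R$, so $0\le\delta^{(-\alpha)}(\langle w_j\mid x\rangle-b_j)\le\delta^{(-\alpha)}(2r)$. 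Therefore $\lvert\xi_j(x)\rvert=\lVert h\rVert_1\lvert\varphi(w_j,b_j)\rvert\,\delta^{(-\alpha)}(\langle w_j\mid x\rangle-b_j)\le\lVert h\rVert_1\,\delta^{(-\alpha)}(2r)$, a bound free of $x$, so $\lVert\xi_j\rVert_{\infty,K}\le\lVert h\rVert_1\,\delta^{(-\alpha)}(2r)$ $\P$-a.s.

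It then remains to estimate $\lVert h\rVert_1$. I would compute
\[
\lVert h\rVert_1=\int\int_{-r}^{r}\lvert h^\alpha(w,b)\rvert\,db\,\underline{dw}\le 2r\int\lVert h^\alpha(w,\cdot)\rVert_\infty\,\underline{dw}=2r\lVert h^\alpha\rVert_{1,\infty},
\]
and, taking $h^\alpha$ to (nearly) realise the infimum in Definition \ref{def:n_alpha} so that $\lVert h^\alpha\rVert_{1,\infty}=\lVert f\mid\sfE^\alpha\rVert$, I combine this with the previous paragraph to conclude $\lVert\xi_j\rVert_{\infty,K}\le 2r\lVert f\mid\sfE^\alpha\rVert\,\delta^{(-\alpha)}(2r)$.

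There is no genuine obstacle; the only points that want care are the $\P$-a.s.\ confinement of $b_j$ to $[-r,r]$ (so that the argument of $\delta^{(-\alpha)}$ lies in the \emph{bounded} interval $[-2r,2r]$ rather than all of $\R$) and the monotonicity of $\delta^{(-\alpha)}$, which is exactly what lets the supremum over $x\in K$ be controlled at the right endpoint $2r$. Both survive unchanged at $\alpha=1$, where $\delta^{(-1)}$ is the Heaviside step and $\delta^{(-1)}(2r)=1$, so the hypothesis $\alpha>1$ plays no essential role in this particular estimate.
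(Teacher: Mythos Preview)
Your proof is correct and follows essentially the same route as the paper: bound $\lvert\xi_j(x)\rvert$ via $\lvert\varphi\rvert=1$, the monotonicity of $\delta^{(-\alpha)}$, and Cauchy--Schwarz on $\langle w_j\mid x\rangle$, then use $\lVert h\rVert_1\le 2r\lVert h^\alpha\rVert_{1,\infty}$. Your parenthetical about ``nearly realising the infimum'' is in fact more careful than the paper, which simply writes $\lVert h^\alpha\rVert_{1,\infty}=\lVert f\mid\sfE^\alpha\rVert$ without comment; strictly speaking the bound one gets is $2r\lVert h^\alpha\rVert_{1,\infty}\delta^{(-\alpha)}(2r)$ for the particular $h^\alpha$ chosen to build $f_m$, and one then takes $h^\alpha$ close to the infimum.
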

\begin{proof}
Since $\lvert\varphi\rvert=1$ and $\delta^{(-\alpha)}\geq 0$ is nondecreasing,
\begin{align*}
\lvert\xi_j(x)\rvert=\lVert h\rVert_1\lvert\varphi(w_j,b_j)\delta^{(-\alpha)}(\langle w_j\mid x\rangle-b_j)\rvert\leq\lVert h\rVert_1\delta^{(-\alpha)}(r+r)
\end{align*}
because $(w_j,b_j)\sim H$ implies that $\P$-almost everywhere $\lvert w_j\rvert=1$ and $\lvert b_j\rvert\leq r,$ ergo $x\in K(r)\Rightarrow\langle w_j\mid x\rangle\leq\lvert w_j\rvert.\lvert x\rvert\leq r.$ The desideratum now readily follows from the fact that $\lVert h\rVert_1\leq 2r\lVert h^\alpha\rVert_{1,\infty}=2r\lVert f\mid\sfE^\alpha\rVert.$
\end{proof}
Let $N_S^\delta$ be the $\delta$-covering number of $S\subset\R^n$ by Euclidean balls \cite[Def.\ 4.2.2]{vershynin}. A corollary of \cite[Thm 7.1.1]{boroczky} is that
\begin{prop}\label{prop:boroczky}
Let $S\subset\R^n$ be a convex compactum with nonempty interior. Then, for all $\delta\in(0,\rho),$ where $\rho$ is the maximal radius of any ball contained in $S,$
\begin{align*}
\theta_n\lvert S\rvert(1-\delta/\rho)^n\leq N_S^\delta\lvert K(\delta)\rvert\leq\theta_n\lvert S\rvert(1+\delta/\rho)^n
\end{align*}
and $\theta_n$ is a constant depending on $n$ only. Ergo, $N_S^\delta\lvert K(\delta)\rvert\approx\theta_n\lvert S\rvert$ for small $\delta.$
\end{prop}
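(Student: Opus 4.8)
The plan is to deduce the bounds from \cite[Thm 7.1.1]{boroczky} by an elementary rescaling, taking $\theta_n$ to be the covering density of $\R^n$ by Euclidean balls — a finite constant, at least $1,$ depending only on $n.$ Since $x\mapsto x/\delta$ maps $\delta$-balls bijectively to unit balls, it induces a bijection between $\delta$-coverings of $S$ and unit coverings of $S/\delta,$ so $N_S^\delta=N_{S/\delta}^{\,1};$ moreover $S/\delta$ is again a convex compactum with nonempty interior, of inradius $\rho/\delta>1,$ which is where the hypothesis $\delta<\rho$ enters. It therefore suffices to prove the statement in the normalised case $\delta=1$ (so that the inradius exceeds $1$), which is \cite[Thm 7.1.1]{boroczky}. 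Unwinding: applying the $\delta=1$ inequalities to $S/\delta,$ substituting $\lvert S/\delta\rvert=\lvert S\rvert/\delta^n$ and $N_{S/\delta}^{\,1}=N_S^\delta,$ multiplying through by $\delta^n,$ and using $\lvert K(\delta)\rvert=\delta^n\lvert K(1)\rvert$ recovers the displayed chain. The closing remark follows since $(1\pm\delta/\rho)^n\to1$ as $\delta\downarrow0.$

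Should \cite[Thm 7.1.1]{boroczky} instead be phrased via parallel bodies — bounding $N_S^\delta$ from below by $\theta_n\lvert S_{-\delta}\rvert/\lvert K(\delta)\rvert$ and from above by $\theta_n\lvert S_{+\delta}\rvert/\lvert K(\delta)\rvert,$ where $S_{-\delta}\coloneq\{x:x+K(\delta)\subseteq S\}$ and $S_{+\delta}\coloneq S+K(\delta)$ — one finishes instead with two elementary convex-geometric inclusions. Translate so that some ball of radius $\rho$ contained in $S$ is centred at the origin, i.e.\ $K(\rho)\subseteq S.$ Then $(1-\delta/\rho)\,S\subseteq S_{-\delta},$ since for $x\in S$ and $z\in K(\delta)$ the point $(1-\delta/\rho)x+z=(1-\delta/\rho)x+(\delta/\rho)(\rho z/\delta)$ is a convex combination of $x\in S$ and $\rho z/\delta\in K(\rho)\subseteq S;$ and $S_{+\delta}\subseteq(1+\delta/\rho)\,S,$ since any $x+z$ with $x\in S,$ $z\in K(\delta)$ equals $(1+\delta/\rho)$ times the convex combination $\tfrac{1}{1+\delta/\rho}x+\tfrac{\delta/\rho}{1+\delta/\rho}(\rho z/\delta)$ of $x$ and $\rho z/\delta,$ both in $S.$ Taking $n$-dimensional volumes gives $\lvert S_{-\delta}\rvert\geq(1-\delta/\rho)^n\lvert S\rvert$ and $\lvert S_{+\delta}\rvert\leq(1+\delta/\rho)^n\lvert S\rvert,$ and substituting these into the parallel-body bounds (after multiplying through by $\lvert K(\delta)\rvert$) yields the claim; the last sentence again follows from $(1\pm\delta/\rho)^n\to1.$

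I expect the only genuine difficulty to be reconciling notation and hypotheses with \cite[Thm 7.1.1]{boroczky}: pinning down its exact normalisation (parallel bodies versus the explicit factors $(1\pm\mathrm{inradius}^{-1})^n$ versus an asymptotic form with a quantified remainder — in the latter two cases the parallel-body step is unnecessary) and confirming that $0<\delta<\rho$ is precisely the range in which both the cited estimate and, in the second route, the inclusions above are valid (so that $S_{-\delta}\neq\varnothing$ and $1-\delta/\rho>0$). Everything else is the routine scaling-and-volume computation above; convexity of $S$ is used only for the inclusions, compactness only to guarantee $N_S^\delta<\infty,$ and the nonempty-interior hypothesis only to ensure $\rho>0$ and $\lvert S\rvert>0.$
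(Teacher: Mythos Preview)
The paper does not give its own proof of this proposition: it is stated verbatim as ``a corollary of \cite[Thm 7.1.1]{boroczky}'' and then simply used. Your proposal correctly supplies the routine reduction the paper elides --- the rescaling $N_S^\delta=N_{S/\delta}^{\,1}$ together with the parallel-body inclusions $(1-\delta/\rho)S\subseteq S_{-\delta}$ and $S_{+\delta}\subseteq(1+\delta/\rho)S$ are exactly the steps needed to pass from B{\"o}r{\"o}czky's formulation to the displayed inequalities --- so there is nothing to compare against and nothing wrong with what you wrote.
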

It is known that $n\preccurlyeq\theta_n\preccurlyeq n\ln n$ with reasonable constants \cite[Thm 8.2.1]{boroczky}.

Handling $\alpha=1$ is trickier than $\alpha>1,$ so we deal with $\alpha>1$ first.

\fbox{$\alpha>1$}

The following general lemma (notation not matching with the rest of the paper) will be proved in Appendix \ref{sec:appendix}.
\begin{lem}\label{lem:general_sup_norm_prob_bound}
Let $K\subset\R^\lambda$ and $(\Omega,\Sigma,\P)$ be a probability space rich enough to house $m$ i.i.d.\ copies of $f,$ denoted $\{f_m\}_{m=1}^n,$ where $f:\Omega\times K\to\C$ is such that $f(\omega,\cdot)$ is $\ell$-Lipschitz continuous for almost every $\omega\in\Omega$ and $f(x)$ is a random variable bounded by $b>0$ for every $x\in K.$ Then, while $n\geq 4\lambda(b/\epsilon)^2$ for some $\epsilon>0$ we have that
\begin{align*}
\P\Bigl\{\bigl\lVert\mathds{E}(f)-\frac{1}{n}\sum_{m=1}^n f_m\bigr\rVert_{\infty,K}>\epsilon\Bigr\}\leq 2N_K^\delta\delta^\lambda(\ell\sqrt{e}/\lambda)^\lambda\zeta^\lambda e^{-(\epsilon/4)\zeta}
\end{align*}
where $\delta,\zeta$ are defined by
\begin{align*}
\ell\delta=\frac{\lambda}{\zeta}\text{ with }\zeta=(\epsilon n/b^2)\biggl(1+\sqrt{\displaystyle 1-4\lambda(b/\epsilon)^2/n}\biggr)
\end{align*}
Note that $\zeta\approx 2\epsilon n/b^2$ for large $n,$ so in this r{\'e}gime we have that\begin{align*}
\P\Bigl\{\bigl\lVert\mathds{E}(f)-\frac{1}{n}\sum_{m=1}^n f_m\bigr\rVert_{\infty,K}>\epsilon\Bigr\}\lessapprox 2\lvert K\rvert\Theta_\lambda(2\pi\lambda)^{-\lambda/2}(\epsilon\ell n/b^2)^\lambda\exp(-(n/2)(\epsilon/b)^2)
\end{align*}
where $\Theta_\lambda\coloneq\theta_\lambda\sqrt{\pi}(\lambda^3+\lambda^2+\tfrac{1}{2}\lambda+\tfrac{1}{30})^{1/6}$
\end{lem}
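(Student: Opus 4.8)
The plan is to run the classical \emph{covering $+$ union bound $+$ Chernoff} scheme: use the Lipschitz hypothesis to trade the supremum over $K$ for a maximum over a finite $\delta$-net, control each net point by a concentration inequality for bounded i.i.d.\ averages, and pick $\delta$ (equivalently $\zeta$) to balance the cardinality of the net against the concentration exponent --- that optimisation is exactly what conjures up the closed form for $\zeta$.

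Concretely, I would set $S_n\coloneq\mathds{E}(f)-\tfrac1n\sum_{m=1}^n f_m$ and fix a minimal $\delta$-net $\mathcal{N}\subset K$, so $\#\mathcal{N}=N_K^\delta$. Each copy $f_m(\omega,\cdot)$ is $\ell$-Lipschitz for a.e.\ $\omega$, and $\mathds{E}(f)$ is $\ell$-Lipschitz since $\lvert\mathds{E}f(x)-\mathds{E}f(y)\rvert\leq\mathds{E}\lvert f(x)-f(y)\rvert\leq\ell\lvert x-y\rvert$, so $S_n$ is a.s.\ continuous and $2\ell$-Lipschitz, whence
\begin{align*}
\lVert S_n\rVert_{\infty,K}\leq\max_{x_0\in\mathcal{N}}\lvert S_n(x_0)\rvert+2\ell\delta.
\end{align*}
A short rearrangement of the defining formula for $\zeta$ gives $2\ell\delta=2\lambda/\zeta=\tfrac12\bigl(\epsilon-\sqrt{\epsilon^2-4\lambda b^2/n}\bigr)$, hence $\epsilon-2\ell\delta=\tfrac12\bigl(\epsilon+\sqrt{\epsilon^2-4\lambda b^2/n}\bigr)\in(0,\epsilon)$; the hypothesis $n\geq4\lambda(b/\epsilon)^2$ is precisely what keeps the radicand nonnegative. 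A union bound then reduces matters to estimating $\P\{\lvert S_n(x_0)\rvert>\epsilon-2\ell\delta\}$ for a single $x_0$.

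For that single probability I would invoke Hoeffding's inequality for the i.i.d.\ summands $f_1(x_0),\dots,f_n(x_0)$, each bounded by $b$ (splitting into real and imaginary parts if one insists on being punctilious about complex values), giving $\P\{\lvert S_n(x_0)\rvert>t\}\leq2\exp(-nt^2/(2b^2))$. The remaining work is algebra: with $t=\epsilon-2\ell\delta$, substituting the closed forms of $\delta,\zeta$ one verifies $n(\epsilon-2\ell\delta)^2/(2b^2)=(\epsilon/4)\zeta-\lambda/2$, which together with $\ell\delta\zeta=\lambda$ (the relation $\ell\delta=\lambda/\zeta$ rewritten) turns the per-point bound into
\begin{align*}
2e^{\lambda/2}e^{-(\epsilon/4)\zeta}=2\delta^\lambda\Bigl(\frac{\ell\sqrt{e}}{\lambda}\Bigr)^{\!\lambda}\zeta^\lambda e^{-(\epsilon/4)\zeta};
\end{align*}
multiplying by $\#\mathcal{N}=N_K^\delta$ yields the first displayed estimate. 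The chosen $\delta$ is not ad hoc: it minimises $\delta\mapsto N_K^\delta\exp(-n(\epsilon-2\ell\delta)^2/(2b^2))$ once $N_K^\delta\propto\delta^{-\lambda}$ is used, and solving the ensuing quadratic in $\ell\delta$, keeping the admissible (smaller) root, produces exactly the stated $\zeta$; recording the bound before cancelling keeps the $n$-dependence visible.

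For the asymptotic form, $\zeta\to2\epsilon n/b^2$ as $n\to\infty$ makes $e^{-(\epsilon/4)\zeta}$ into $\exp(-(n/2)(\epsilon/b)^2)$ and $(\ell\sqrt{e}/\lambda)^\lambda\zeta^\lambda$ into, up to a power of $2$, $(\epsilon\ell n/b^2)^\lambda$; Proposition \ref{prop:boroczky} gives $N_K^\delta\delta^\lambda\approx\theta_\lambda\lvert K\rvert/\lvert K(1)\rvert$ (the $\delta$'s cancel), and evaluating $1/\lvert K(1)\rvert=\Gamma(1+\lambda/2)/\pi^{\lambda/2}$ via Ramanujan's refinement of Stirling's formula, $\Gamma(1+x)\approx\sqrt{\pi}\,(x/e)^x\sqrt[6]{8x^3+4x^2+x+\tfrac{1}{30}}$, at $x=\lambda/2$ turns $\theta_\lambda\lvert K\rvert/\lvert K(1)\rvert$ into $\lvert K\rvert\Theta_\lambda(\lambda/(2\pi e))^{\lambda/2}$, after which collecting the powers of $\lambda$ and $e$ gives the displayed approximation. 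I expect the main obstacle to be bookkeeping rather than ideas: one must check that the stated $\delta,\zeta$ genuinely come out of the net-versus-concentration optimisation, verify the exact identity $n(\epsilon-2\ell\delta)^2/(2b^2)=(\epsilon/4)\zeta-\lambda/2$ that legitimises the compact form of the bound, and carry all constants faithfully through the Stirling/Ramanujan step --- the probabilistic content (Lipschitz truncation, Hoeffding, a union bound) being entirely routine.
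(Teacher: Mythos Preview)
Your proposal is correct and follows essentially the same route as the paper's proof: a minimal $\delta$-net, the $2\ell$-Lipschitz bound to pass from $K$ to net points, Hoeffding at each point, and the optimisation in $\delta$ that produces the stated $\zeta$, followed by Proposition \ref{prop:boroczky} and the Ramanujan--Stirling estimate (the paper cites Alzer for the latter) to collapse $N_K^\delta\delta^\lambda/\lvert K(1)\rvert$ into the $\Theta_\lambda$ constant. Your algebraic identity $n(\epsilon-2\ell\delta)^2/(2b^2)=(\epsilon/4)\zeta-\lambda/2$ is exactly the verification the paper leaves to the reader when it says the stated $\delta$ ``locally minimizes'' the relevant expression.
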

\begin{rem}
By the triangle inequality, $\mathds{E}(f)$ is also $\ell$-Lipschitz, so $K$ being separable (as a subset of a separable metric space) yields that
\begin{center}
$\bigl\lVert\mathds{E}(f)-\frac{1}{n}\sum_{m=1}^n f_m\bigr\rVert_{\infty,K}$ is really a countable supremum and thus measurable.
\end{center}
\end{rem}
We are now ready to prove the main theorem of this section for $\alpha>1.$
\begin{theorem}[$\alpha>1$]\label{thm:random_nn_barron}
Let $f\in\sfE^\alpha$ with $\lVert f\mid\sfE^\alpha\rVert>0.$ Then, for large $m,$
\begin{align*}
\P\Bigl\{\lVert f-f_m\rVert_{\infty,K}>\epsilon\Bigr\}&\lessapprox 2\lvert K(\tfrac{\alpha-1}{2})\rvert\Theta_n(2\pi n)^{-n/2}(\epsilon m/\Lambda)^n\exp(-(m/2)(\epsilon/\Lambda)^2)
\end{align*}
provided $\Lambda\sqrt{4n/m}\leq\epsilon,$ where $\Lambda\coloneq 2r\lVert f\mid \sfE^\alpha\rVert\delta^{(-\alpha)}(2r)$ is the bound from Lemma \ref{lem:xi_bound}. To get a more insightful bound, we may set
\begin{align}\label{eq:geq4rem}
\epsilon=\Lambda\sqrt{\frac{kn\ln m}{m}}\text{ with }k\ln m\geq 4
\end{align}
which, after simplfying, yields that
\begin{align*}
\P\Biggl\{\lVert f-f_m\rVert_{\infty,K}>\Lambda\sqrt{\frac{kn\ln m}{m}}\Biggr\}\lessapprox 2\lvert K(\tfrac{\alpha-1}{2})\rvert\Theta_n\biggl(\frac{k\ln m}{\displaystyle 2\pi m^{k-1}}\biggr)^{\!\!n/2}
\end{align*}
for $m\gg e^{4/k}$ (see subsequent remark).
\end{theorem}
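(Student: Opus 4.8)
The plan is to reduce the theorem to a single application of Lemma \ref{lem:general_sup_norm_prob_bound}, under the dictionary: its ambient dimension $\lambda$ is our $n$; its number of i.i.d.\ copies (its ``$n$'') is our $m$; its compactum $K$ is $K(r)$; its random function $f(\omega,\cdot)$ is our $\xi(\cdot)$; its bound $b$ is $\Lambda$; and its Lipschitz constant $\ell$ is an explicit bound on $\mathrm{Lip}(\xi)$ obtained below. Since $\lVert f\mid\sfE^\alpha\rVert=\lVert h^\alpha\rVert_{1,\infty}>0$ we have, as already noted, $\lVert h\rVert_1>0$, so $H$, $\varphi$, $\xi$ and the $\xi_j$ are well defined and $f-f_m=\mathds{E}(\xi)-\frac1m\sum_{j=1}^m\xi_j$. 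Lemma \ref{lem:xi_bound} supplies the boundedness hypothesis: $\xi_j(x)$ is $\P$-a.s.\ bounded by $\Lambda$ for every $x\in K(r)$. Measurability of $\lVert f-f_m\rVert_{\infty,K}$ is provided by the remark following Lemma \ref{lem:general_sup_norm_prob_bound} once Lipschitz continuity is in hand.

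The one substantive step is the Lipschitz estimate, and this is where $\alpha>1$ (i.e.\ $\alpha\geq2$) enters: for such $\alpha$ the function $\delta^{(-\alpha)}$ is $C^1$ with $\partial\delta^{(-\alpha)}=\delta^{(-(\alpha-1))}$, which is nonnegative and nondecreasing. For $x\in K(r)$, $b\in[-r,r]$ and $\lvert w\rvert=1$ one has $\lvert\langle w\mid x\rangle-b\rvert\leq 2r$, hence $\lvert\nabla_x\delta^{(-\alpha)}(\langle w\mid x\rangle-b)\rvert=\lvert w\rvert\,\delta^{(-(\alpha-1))}(\langle w\mid x\rangle-b)\leq\delta^{(-(\alpha-1))}(2r)$. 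Consequently $\xi(\omega,\cdot)$ is $\ell$-Lipschitz on $K(r)$ for $\P$-almost every $\omega$, with
\[
\ell\;:=\;\lVert h\rVert_1\,\delta^{(-(\alpha-1))}(2r)\;\leq\;2r\,\lVert f\mid\sfE^\alpha\rVert\,\delta^{(-(\alpha-1))}(2r),
\]
the last inequality exactly as in the proof of Lemma \ref{lem:xi_bound}.

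Now Lemma \ref{lem:general_sup_norm_prob_bound} applies. Its hypothesis $n\geq 4\lambda(b/\epsilon)^2$ becomes $m\geq 4n(\Lambda/\epsilon)^2$, which is the stated proviso $\Lambda\sqrt{4n/m}\leq\epsilon$, and its large-$m$ (i.e.\ $\lessapprox$) conclusion reads
\[
\P\{\lVert f-f_m\rVert_{\infty,K}>\epsilon\}\;\lessapprox\;2\lvert K(r)\rvert\,\Theta_n(2\pi n)^{-n/2}\bigl(\epsilon\ell m/\Lambda^2\bigr)^n\exp\bigl(-(m/2)(\epsilon/\Lambda)^2\bigr).
\]
To match the claimed form I would write $(\epsilon\ell m/\Lambda^2)^n=(\ell/\Lambda)^n(\epsilon m/\Lambda)^n$ and bound the ratio sharply: since $\delta^{(-(\alpha-1))}(2r)=(2r)^{\alpha-2}/(\alpha-2)!$ and $\delta^{(-\alpha)}(2r)=(2r)^{\alpha-1}/(\alpha-1)!$,
\[
\frac{\ell}{\Lambda}\;\leq\;\frac{\delta^{(-(\alpha-1))}(2r)}{\delta^{(-\alpha)}(2r)}\;=\;\frac{\alpha-1}{2r},\qquad\text{so}\qquad\lvert K(r)\rvert\Bigl(\tfrac{\ell}{\Lambda}\Bigr)^n\leq r^n\lvert K(1)\rvert\Bigl(\tfrac{\alpha-1}{2r}\Bigr)^n=\bigl\lvert K(\tfrac{\alpha-1}{2})\bigr\rvert,
\]
which gives the first displayed bound. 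For the second, substituting $\epsilon=\Lambda\sqrt{kn\ln m/m}$ yields $(\epsilon m/\Lambda)^n=(knm\ln m)^{n/2}$ and $\exp(-(m/2)(\epsilon/\Lambda)^2)=m^{-kn/2}$, so the $n$-dependent factors collapse to $\bigl(\tfrac{knm\ln m}{2\pi n}\bigr)^{n/2}m^{-kn/2}=\bigl(\tfrac{k\ln m}{2\pi m^{k-1}}\bigr)^{n/2}$; the proviso $\Lambda\sqrt{4n/m}\leq\epsilon$ becomes $k\ln m\geq4$, i.e.\ $m\geq e^{4/k}$, and the ``large $m$'' underlying Lemma \ref{lem:general_sup_norm_prob_bound}'s asymptotics forces $m\gg e^{4/k}$.

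The main obstacle, such as it is, is the Lipschitz estimate together with the bookkeeping of constants: one must verify that $x\mapsto\delta^{(-\alpha)}(\langle w\mid x\rangle-b)$ is uniformly (over $w$ on the sphere and $b\in[-r,r]$) Lipschitz on $K(r)$ — precisely the point that fails for $\alpha=1$ and forces that case to be treated separately — and one must extract the clean ratio $\ell/\Lambda\leq(\alpha-1)/(2r)$ so that $\lvert K(r)\rvert$ absorbs into $\lvert K(\tfrac{\alpha-1}{2})\rvert$. Everything else is a direct invocation of Lemma \ref{lem:general_sup_norm_prob_bound} followed by elementary algebraic simplification.
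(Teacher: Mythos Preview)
Your proposal is correct and follows essentially the same approach as the paper: bound $\xi$ via Lemma \ref{lem:xi_bound}, bound $\mathrm{Lip}(\xi)$ by differentiating $\delta^{(-\alpha)}$ (using $\alpha>1$), and apply Lemma \ref{lem:general_sup_norm_prob_bound}. You actually give more detail than the paper does on the simplification step, making explicit the ratio $\ell/\Lambda\leq(\alpha-1)/(2r)$ that collapses $\lvert K(r)\rvert(\ell/\Lambda)^n$ into $\lvert K(\tfrac{\alpha-1}{2})\rvert$, and writing out the substitution $\epsilon=\Lambda\sqrt{kn\ln m/m}$ line by line; the paper merely says ``after simplifying a bit'' at both places.
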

\begin{rem}
We need $k\ln m\geq 4$ in \eqref{eq:geq4rem} because $n\geq 4\lambda(b/\epsilon)^2$ in Lemma \ref{lem:general_sup_norm_prob_bound}, but as we want $m$ large and $k>1$ this is not a problem. Taking $\epsilon$ as in \eqref{eq:geq4rem} means that $4\lambda(b/\epsilon)^2/n$ corresponds to $4/(k\ln m),$ which still vanishes in the limit as $m\to\infty,$ hence the analogue of $\zeta\approx 2\epsilon n/b^2$ for large $n$ still applies.
\end{rem}
\begin{proof}
By Lemma \ref{lem:xi_bound} $\xi(x)$ is $\P$-almost everywhere bounded for all $x\in K.$ To bound the Lipschitz constant of $\xi$ it suffices to uniformly bound $\lvert\nabla\xi\rvert.$ Letting $(w,b)\sim H,$
\begin{align*}
\nabla\xi(x)=\lVert h\rVert_1\varphi(w,b)\delta^{(1-\alpha)}(\langle w\mid x\rangle-b)w
\end{align*}
so it readily follows that $\P$-almost everywhere $\xi$ is Lipschitz continuous on $K$ with Lipschitz constant $2r\lVert f\mid\sfE^\alpha\rVert\delta^{(1-\alpha)}(2r).$ Accordingly, by Lemma \ref{lem:general_sup_norm_prob_bound}, after simpli- fying a bit,
\begin{align*}
\P\Bigl\{\lVert f-f_m\rVert_{\infty,K}>\epsilon\Bigr\}&\lessapprox 2\lvert K\rvert\Theta_n(2\pi n)^{-n/2}\biggl(\frac{\alpha-1}{2r}\biggr)^{\!\!n}(\epsilon m/\Lambda)^n\exp(-(m/2)(\epsilon/\Lambda)^2)
\end{align*}
provided $\Lambda\sqrt{4n/m}\leq\epsilon.$ The desideratum now follows by simplifying some more.
\end{proof}
\newpage
Theorem \ref{thm:random_nn_barron} shows that the accuracy rate $m^{-1/2}$ (up to log factors) obtained in \cite{barron} also applies to random neural networks with activation function $\delta^{(-\alpha)}$ whilst $\alpha>1.$ Indeed, the probabilities in Theorem \ref{thm:random_nn_barron} will be $<1$ for large enough $m,$ whence there must be realizations of $f_m$ such that $\lVert f-f_m\rVert_{\infty,K}\leq\epsilon$ if $m$ is large enough.

\fbox{$\alpha=1$}

Since $\delta^{(-1)}$ is not (Lipschitz) continuous, we cannot directly apply Lemma \ref{lem:general_sup_norm_prob_bound} in this case. Unfortunately the author ran out of time to work on this case. We did obtain two lemmas which may be helpful in working on the problem:

\begin{lem}\label{lem:N^1_lip}
Any $f\in\sfE^1$ is $\lVert f\mid\sfE^1\rVert$-Lipschitz continuous.
\end{lem}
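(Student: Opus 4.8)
The plan is to read the estimate straight off the defining representation \eqref{eq:f=expectation} with $\alpha=1,$ rewritten (as in the text) as the double integral against $\delta^{(-1)}.$ Fix $f\in\sfE^1$ and two points $x,y\in\R^n,$ and pick any $r>0$ with $x,y\in K(r).$ Let $h^1\in\ell_{1,\infty}(-\infty)$ be any kernel representing $f$ (for every radius, hence in particular on $K(r)$). For each point of $K(r)$ the double integral defining $f$ is absolutely convergent --- the inner $b$-integral is supported on a set of length $\leq 2r$ and $\int\lVert h^1(w,\cdot)\rVert_\infty\,\underline{dw}<\infty$ --- so subtracting the representations at $x$ and at $y$ is legitimate and gives
\[
f(x)-f(y)=\iint h^1(w,b)\bigl[\delta^{(-1)}(\langle w\mid x\rangle-b)-\delta^{(-1)}(\langle w\mid y\rangle-b)\bigr][-r\leq b\leq r]\,db\,\underline{dw}.
\]

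Next I would bound the inner $b$-integral for each fixed $w.$ Since $\delta^{(-1)}(s)=[s>0],$ the map $b\mapsto\delta^{(-1)}(\langle w\mid x\rangle-b)-\delta^{(-1)}(\langle w\mid y\rangle-b)$ equals $\pm1$ on the interval $I_w$ with endpoints $\langle w\mid x\rangle$ and $\langle w\mid y\rangle$ and vanishes off $I_w;$ moreover $\lvert\langle w\mid x\rangle\rvert\leq\lvert w\rvert\,\lvert x\rvert\leq r$ and likewise $\lvert\langle w\mid y\rangle\rvert\leq r$ because $\lvert w\rvert=1$ for a.e.\ $w\in\sphere^{n-1},$ so $I_w\subset[-r,r]$ and the truncation factor is identically $1$ on $I_w.$ Hence the inner integral equals $\pm\int_{I_w}h^1(w,b)\,db,$ whose modulus is at most $\lvert I_w\rvert\,\lVert h^1(w,\cdot)\rVert_\infty=\lvert\langle w\mid x-y\rangle\rvert\,\lVert h^1(w,\cdot)\rVert_\infty\leq\lvert x-y\rvert\,\lVert h^1(w,\cdot)\rVert_\infty.$ Integrating over $\sphere^{n-1}$ then gives $\lvert f(x)-f(y)\rvert\leq\lVert h^1\rVert_{1,\infty}\,\lvert x-y\rvert.$

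Finally, since this inequality holds for every admissible kernel $h^1,$ taking the infimum over such $h^1$ yields $\lvert f(x)-f(y)\rvert\leq\lVert f\mid\sfE^1\rVert\,\lvert x-y\rvert;$ as $x,y$ were arbitrary, this is exactly the assertion (and it re-derives $f\in C,$ which is anyway built into $\sfE^1$). I do not expect a genuine obstacle here: the only steps needing a modicum of care are the sign-and-length bookkeeping for the difference of two Heaviside functions and the passage to the limit along near-optimal kernels, $\lVert f\mid\sfE^1\rVert$ being an infimum rather than a minimum; the absolute convergence used to split the integral is already part of the well-definedness of $\sfE^1.$
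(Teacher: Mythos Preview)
Your proposal is correct and follows essentially the same route as the paper: both subtract the two representations, bound $\lvert\delta^{(-1)}(\langle w\mid x\rangle-b)-\delta^{(-1)}(\langle w\mid y\rangle-b)\rvert$ by the indicator of the interval with endpoints $\langle w\mid x\rangle,\langle w\mid y\rangle,$ and then pull out $\lVert h^1(w,\cdot)\rVert_\infty$ before integrating over $\sphere^{n-1}.$ Your version is in fact slightly more self-contained: you compute the length bound $\lvert\langle w\mid x-y\rangle\rvert\leq\lvert x-y\rvert$ directly, whereas the paper quotes this step from an external reference, and you make explicit the passage to the infimum over admissible kernels which the paper leaves implicit.
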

\begin{proof}
By Definition \ref{def:n_alpha}, $\exists h^1\in\ell_{1,\infty}(-\infty)$ such that
\begin{align*}
f(x)=\mathds{E}_{w,b}[2r\lvert\sphere^{n-1}\rvert h^1(w,b)\delta^{(-1)}(\langle w\mid x\rangle-b)]
\end{align*}
Consequently,
\begin{align*}
\lvert f(x)-f(u)\rvert\leq 2r\lvert\sphere^{n-1}\rvert\cdot\mathds{E}_{w,b}\Bigl[\lvert h^1(w,b)\rvert.\lvert\delta^{(-1)}(\langle w\mid x\rangle-b)-\delta^{(-1)}(\langle w\mid u\rangle-b)\rvert\Bigr]
\end{align*}
Now, $\lvert\delta^{(-1)}(\langle w\mid x\rangle-b)-\delta^{(-1)}(\langle w\mid u\rangle-b)\rvert$ is exactly the indicator of the event that $x,u$ lie on different sides of the random hyperplane $x\mapsto\langle w\mid x\rangle-b,$ so from the proof of Theorem V in \cite{sampta2025} we know that (taking $p,r\to 0$ there)
\begin{align*}
\mathds{E}_b\Bigl[\lvert\delta^{(-1)}(\langle w\mid x\rangle-b)-\delta^{(-1)}(\langle w\mid u\rangle-b)\rvert\Bigr]\leq\frac{\lvert x-u\rvert}{2r}
\end{align*}
so $\lvert f(x)-f(u)\rvert\leq\lvert x-u\rvert\int\lVert h^1(w,\cdot)\rVert_\infty\,\underline{dw}=\lVert h^1\rVert_{1,\infty}\lvert x-u\rvert.$
\end{proof}

\begin{lem}[$\alpha=1$]\label{lem:sup_measurable}
$\lVert f-f_m\rVert_{\infty,K}$ is measurable.
\end{lem}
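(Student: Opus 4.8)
The plan is to turn the uncountable supremum over $K$ into a countable one, exploiting that — for every fixed realization of the weights and biases — $f-f_m$ is continuous away from finitely many hyperplanes. First I would fix once and for all a jointly measurable representative of $h^\alpha$ (hence of $h$, of $\lvert h\rvert$, and of $\varphi=h/\lvert h\rvert$ extended by $1$ on $\{h=0\}$), so that $a_j=\lVert h\rVert_1\varphi(w_j,b_j)/m$ is a genuine $\Sigma$-measurable random variable, being a measurable function composed with the random variable $(w_j,b_j)$. Put $H_j\coloneq\{x\in\R^n:\langle w_j\mid x\rangle=b_j\}$; since $(w_j,b_j)\sim H$ forces $\lvert w_j\rvert=1$ almost surely, each $H_j$ is a genuine affine hyperplane, so $K'\coloneq K\setminus\bigcup_{j=1}^m H_j$ is relatively open and dense in $K=K(r)$ (it contains $K^\circ$ with finitely many hyperplanes removed, which is dense in $K^\circ$, which is dense in $K$). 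On $K'$ one has $f_m(x)=\sum_{j:\langle w_j\mid x\rangle>b_j}a_j$, a locally constant function, so $f-f_m$ is continuous on the open set $K'$ (recall $f\in C$ by Definition \ref{def:n_alpha}, or invoke Lemma \ref{lem:N^1_lip}). As $\delta^{(-1)}$ is undefined at $0$, $\lVert f-f_m\rVert_{\infty,K}$ is the supremum of $\lvert f-f_m\rvert$ over the set $K'$ on which $f_m$ is defined — equivalently, since $\bigcup_j H_j$ is Lebesgue-null, its essential supremum over $K$ — and by continuity this equals $\sup_{x\in Q\cap K'}\lvert f(x)-f_m(x)\rvert$ for any fixed countable dense subset $Q$ of $K^\circ$; note $Q\cap K'$ is automatically nonempty and dense in $K'$ because $Q$ is dense in $K^\circ$ and $K'$ is open.

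Next I would fix such a $Q$ and, for each individual $x\in Q$, set $\chi_x(\omega)\coloneq[\langle w_j\mid x\rangle\neq b_j\text{ for all }j]$ and
\begin{align*}
\Xi_x(\omega)\coloneq\chi_x(\omega)\cdot\Bigl\lvert f(x)-\sum_{j=1}^m a_j\,[\langle w_j\mid x\rangle>b_j]\Bigr\rvert .
\end{align*}
Since each $\omega\mapsto\langle w_j\mid x\rangle-b_j$ is measurable, each $a_j$ is measurable, and $f(x)$ is a constant, $\Xi_x$ is a measurable function of $\omega$; moreover $\Xi_x(\omega)=\chi_x(\omega)\,\lvert f(x)-f_m(x)\rvert$, because on $\{\chi_x=1\}$ the bracket $[\langle w_j\mid x\rangle>b_j]$ coincides with $\delta^{(-1)}(\langle w_j\mid x\rangle-b_j)$, while on $\{\chi_x=0\}$ both displayed expressions vanish. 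Since the numbers $\lvert f(x)-f_m(x)\rvert$ with $x\in Q\cap K'$ are precisely the nonzero values among $\{\Xi_x(\omega):x\in Q\}$ and $Q\cap K'\neq\emptyset$, I obtain
\begin{align*}
\lVert f-f_m\rVert_{\infty,K}=\sup_{x\in Q}\Xi_x(\omega),
\end{align*}
a countable supremum of measurable functions, hence measurable.

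The only genuine subtlety — and the step I would watch most carefully — is that the discontinuity set $\bigcup_jH_j$ of $f_m$ moves with $\omega$, so there is no $\omega$-independent finite or countable set on which evaluating $f-f_m$ captures its supremum; the whole argument hinges on the observation that one fixed countable dense $Q\subset K^\circ$ nonetheless works for every realization simultaneously, because $Q\cap K'(\omega)$ is automatically dense in the open set $K'(\omega)$. The remaining ingredients — measurability of $\varphi$ and of the $a_j$, and the elementary fact that removing finitely many hyperplanes from the ball $K$ leaves an open dense set — are routine.
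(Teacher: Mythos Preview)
Your proof is correct and takes a somewhat different route than the paper's. The paper rewrites $\lVert f-f_m\rVert_{\infty,K}$ as the finite maximum $\max_{J\subseteq[m]}\lVert f-\sum_{j} a_j\,\delta^{(-1)}_{[j\in J]}(\langle w_j\mid\cdot\rangle-b_j)\rVert_{\infty,K}$ over all $2^m$ choices of the value of $\delta^{(-1)}$ at $0$, and then argues that for each fixed $J$ the inner function is ``continuous along a sector,'' so separability of $K$ reduces each inner supremum to a countable one. You instead work directly on $K'=K\setminus\bigcup_jH_j$, where $f_m$ is unambiguously defined and continuous, and make explicit the key point that a \emph{single} countable dense $Q\subset K^\circ$ serves uniformly in $\omega$ because $Q\cap K'(\omega)$ is dense in $K'(\omega)$ for every realization. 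Your route avoids the $2^m$ case split and the somewhat informal ``sector'' language; the paper's route has the minor advantage of absorbing the boundary behavior into the finite maximum. The two readings of $\lVert f-f_m\rVert_{\infty,K}$ coincide almost surely (they could differ only if two of the $H_j$ coincide, a null event under the continuous law $H$).

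One small imprecision worth tightening: your justification ``$Q\cap K'$ is dense in $K'$ because $Q$ is dense in $K^\circ$ and $K'$ is open'' is not quite enough as stated, since $K'$ is only relatively open in $K$ and may contain points of $\partial K$ that are not in $\overline{Q}\subset\overline{K^\circ}=K$ \emph{a priori via that reasoning}. The clean fix is a two-step density: $Q\cap K'$ is dense in the genuinely open set $K^\circ\setminus\bigcup_jH_j$ (since $Q$ is dense in $K^\circ$ and this set is open), and $K^\circ\setminus\bigcup_jH_j$ is dense in $K'$ (any $x\in K'\cap\partial K$ has a Euclidean neighborhood disjoint from $\bigcup_jH_j$, and that neighborhood meets $K^\circ$). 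This is exactly the content of your informal remark, just with the topology made precise.
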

\begin{proof}
While $\delta^{(-1)}(b)=1$ if $b>0$ and $=0$ if $b<0,$ we never specified $\delta^{(-1)}(0).$ Hence, let $\delta^{(-1)}_c(0)=c.$ It then follows that
\begin{align*}
\lVert f-f_m\rVert_{\infty,K}=\max_{J\subseteq[m]}\lVert f-\sum_{j=1}^m a_j\delta^{(-1)}_{[J\ni j]}(\langle w_j\mid\!{}\cdot{}\rangle-b_j)\rVert_{\infty,K}
\end{align*}
As $\R^n$ is separable, so is $K$ being a subset of a separable metric space, hence the desideratum follows because $\textstyle\sum_{j=1}^m\displaystyle a_j\delta^{(-1)}_{[J\ni j]}(\langle w_j\mid\!{}\cdot{}\rangle-b_j)$ is everywhere continuous along a sector and $f$ is continuous pursuant to Lemma \ref{lem:N^1_lip}.
\end{proof}

\subsection{Discussion}\label{sec:main_discussion}

Lemma \ref{lem:dh_decomp} and Corollary \ref{cor:d_decomp} suggest that $\text{\DH}_\ell^\alpha$ and $\text{D}_\ell^\alpha$ combine two different kinds of smoothness; Theorems \ref{thm:main} \& \ref{thm:random_nn_barron} partly substantiate this. Looking at $\text{D}_\ell^\alpha=\partial^\alpha\text{D}_\ell,$ the proof of Theorem \ref{thm:main} suggests that $\partial^\alpha$ manifests itself via the activation function $\delta^{(-\alpha)}.$ The substantiation is only partial though, since the $(n-1)/2$ ``units of smoothness'' from $\text{D}_\ell^0$ (cf.\ \eqref{eq:n_asymp}) do not quite jibe with the accuracy rate of $m^{-1/2}\ln m$ we got from \eqref{eq:geq4rem}; for according to the curse of dimensionality, a piece of mathematical folklore which asserts that accuracy \\ rates are often of the form
\begin{align*}
(\text{number of terms})^{-(\text{smoothness})/(\text{dimension})}
\end{align*}
we should, with $(n-1)/2$ units of smoothness, expect an accuracy rate of
\begin{align*}
m^{-((n-1)/2)/n}=m^{1/(2n)-1/2}
\end{align*}
which is slightly worse than the accuracy rate we are able to obtain. This might lead one to believe that there is perhaps a slightly smoother version of $\sfL^{\!\alpha}$ which subsumes $\B^\alpha$ (at least for $\alpha>1)$ and would provide \emph{the} nonstandard notion of smoothness which sees Barron space conform with the curse of dimensionality. Perhaps our ``lifting'' of $\B\subset\sfL$ to $\B^\alpha\subset\sfL^{\!\alpha}$ was too na{\"i}ve?

Actually, \cite[Cor.\ 6.10]{pinkus} offers some evidence to the contrary, namely that $\tfrac{n-1}{2}$ \emph{is} the right amount of smoothness to attribute to the $\B^\alpha$ spaces. Per \cite[Cor.\ 6.10]{pinkus} $\inf\{\lVert f-f_m\rVert_2:f_m\}\lesssim m^{-s/n}$ where $f$ has square integrable weak derivatives of orders $0,\ldots,s\leq\tfrac{n-1}{2}+\alpha$ and $f_m$ is of the form
\begin{align*}
\R^n\ni x\mapsto\sum_{j=1}^m a_j\delta^{(-\alpha)}(\langle w_j\mid x\rangle-b_j)\qquad(a_j\in\C,b_j\in\R,w_j\in\R^n)
\end{align*}
Note that $\alpha\in\N$ is one more than the smoothness of $\delta^{(-\alpha)}.$

The first similarity between $\B^\alpha\subset\sfL^{\!\alpha}$ and \cite[Cor.\ 6.10]{pinkus} is that the $\tfrac{n-1}{2}+\alpha$ in the statement of \cite[Cor.\ 6.10]{pinkus} is exactly the total smoothness of $\text{D}_\ell^\alpha,$ which has $\tfrac{n-1}{2}$ units of nonclassical smoothness and $\alpha$ units of classical smoothness. The second similarity is that $f$ having square integrable weak derivatives up to order $\alpha$ is in \\ a very real sense the $\ell_2$ analogue of the $\B^\alpha$ space. Indeed,
\begin{align*}
\F\{\lvert\cdot\rvert^2\phi\}=-\Delta\F\{\phi\}\Rightarrow\B^\alpha=(-\Delta)^{\alpha/2}\F\{\ell_1\}\cap\F\{\ell_1\}
\end{align*}
so the $\ell_2$ analogue of $\B^\alpha\subset\F\{\ell_1\}\subset\ell_\infty$ would be
\begin{align*}
(-\Delta)^{\alpha/2}\F\{\ell_2\}\cap\F\{\ell_2\}=(-\Delta)^{\alpha/2}\ell_2\cap\ell_2
\end{align*}
because 1 is the H{\"o}lder conjugate of $\infty$ (to get a space $\subset\ell_\infty$ we need to put $\ell_1$ inside the Fourier transforms) and 2 is its own H{\"o}lder conjugate (we wanted a space $\subset\ell_2).$ That $(-\Delta)^{\alpha/2}\ell_2\cap\ell_2$ is exactly the space of functions with square integrable weak derivatives up to order $\alpha$ has long since been known; cf., e.g., Remark 3 in {\S\,}2.2.2 of \cite{triebel}.

Whether more evidence may be found substantiating that the $\B^\alpha$ spaces do (not) \\ in fact defy the curse of dimensionality, albeit with nonstandard smoothness, is, \\ if you ask the author, a very interesting direction for future research.

We now discuss how our results relate to those in \cite{radonNN,felix,liflyand} in detail.

If we insist $\text{D}_\ell$ has $\tfrac{n-1}{2}$ units of nonstandard smoothness, then $\B$ has $\tfrac{n-1}{2}$ units of nonstandard smoothness too (since $\B\subset\sfL).$ The result that motivated this paper, \cite[Thm 1]{liflyand}, substantiates this; in a nutshell, it is shown that $t^\alpha\partial^\alpha f_0$ is a conti- nuous function on $(0,\infty)$ vanishing at the endpoints for all $\alpha\in$ $[0,\tfrac{n-1}{2}]$ whenever $f\in\B.$ Although (Corollary \ref{cor:t_alpha_partial_alpha})
\begin{align*}
\iota t^\alpha\partial^\alpha=\M_0^{-1}\biggl\{\frac{\Gamma(1-iy)}{\Gamma(1-iy-\alpha)}\M_0\{\cdot\}\biggr\}
\end{align*}
differs from $\text{D}_0=\M_0^{-1}\{N_0\M_0\{\cdot\}\}=$
\begin{align*}
\M_0^{-1}\biggl\{\frac{\displaystyle\Gamma(\tfrac{n-iy}{2})}{2\displaystyle\pi^{(n-1)/2}\Gamma(\tfrac{1-iy}{2})}\M_0\{\cdot\}\biggr\}=\tfrac{1}{2}\M_0^{-1}\biggl\{\frac{\displaystyle\Gamma(\tfrac{n-1}{2}+\tfrac{1-iy}{2})}{\displaystyle\pi^{(n-1)/2}\Gamma(\tfrac{1-iy}{2})}\M_0\{\cdot\}\biggr\}
\end{align*}
the fact that (cf.\ \eqref{eq:n_asymp}) as $\lvert y\rvert\to\infty$
\begin{center}
$\biggl\lvert\frac{\Gamma(1-iy)}{\Gamma(1-iy-\alpha)}\biggr\rvert\sim\lvert y\rvert^\alpha$ and $\biggl\lvert\frac{\displaystyle\Gamma(\tfrac{n-1}{2}+\tfrac{1-iy}{2})}{\displaystyle\pi^{(n-1)/2}\Gamma(\tfrac{1-iy}{2})}\biggr\rvert\sim\biggl\{\frac{\lvert y\rvert}{2\pi}\biggr\}^{\!(n-1)/2}$
\end{center}
have such similar asymptotic behavior if $\alpha=\tfrac{n-1}{2}$ is surely no coincidence.

We also showed that $\sfL\subset\sfE;$ more precisely, we showed that if $f\in\sfL$ has $\tfrac{n-1}{2}$ units of atypical smoothness and $h=\scrA\{\scrD\{\scrZ\{f\}\}\}$, then $f=\dualradon\{h\}.$ In other words, $h=\scrA\{\scrD\{\scrZ\{\dualradon\{h\}\}\}\};$ this relation is redolent of $\varphi\propto_n\!\mathcal{R}\{(-\Delta)^{(n-1)/2}\dualradon\{\varphi\}\},$ where $\varphi$ is an even Schwartz function on $\sphere^{n-1}\times\R$ and $\mathcal{R}$ is the Radon transform \cite[Lemma 2]{radonNN}.

In \cite{radonNN}, like herein, infinitely wide shallow neural networks are assimilated to the dual Radon transform, except there only the ReLU $(\delta^{(-2)})$ activation function is considered and infinitely wide shallow ReLU networks are defined to be functions \\ of the form
\begin{align*}
\R^n\ni x\mapsto\iint\Bigl(\delta^{(-2)}(\langle w\mid x\rangle-b)-\delta^{(-2)}(-b)\Bigr)\alpha(\underline{dw},db)
\end{align*}
so that the integrand is bounded, where $\alpha$ is any signed Radon measure on $\sphere^{n-1}\times\R$ with finite total variation.

Lastly, Theorem \ref{thm:main} warrants comparison with \cite[{\S\,}7]{felix}. In our notation, \cite[{\S\,}7]{felix} investigates the relations between the $\B^\alpha$ spaces with a slightly different norm and $\sfE(\delta^{(-1)})$ \& $\sfE(\delta^{(-2)}).$ We recall that $\sfE(\phi)$ (defined in {\S\,}\ref{sec:intro}) is the space of functions
$f:U\to\R$ (with $U\subset\R^n$ nonempty) of the form
\begin{center}
$f(x)=\textstyle\int\displaystyle a\cdot\phi(\langle w\mid x\rangle+c)\,\mu(da,dw,dc)$ where $\phi:\R\to\R$ \\ and $\mu$ is a Borel probability measure on $\R\times\R^n\times\R$
\end{center}
Despite the differences between $\sfE^\alpha$ and $\sfE(\delta^{(-\alpha)}),$ their results substantiate ours:

In \cite[Lemma 7.1]{felix} it is shown that $\B^\alpha\subset\sfE(\delta^{(-\alpha)})$ for $\alpha=1,2$ if $U$ is bounded; it \\ is also shown that $\sfE(\delta^{(-2)})\subset\sfE(\delta^{(-1)})$ if $U$ is bounded, which, together with the conclusion of \cite[Prop.\ 7.4]{felix}, i.e., $\B^1\not\subset\sfE(\delta^{(-2)})$ if $U$ has nonempty interior, makes for a very interesting pair of results whose adaptions to the framework herein are an interesting direction for future research as well.

{%
\appendix

\section{Barron functions need not be very smooth}\label{sec:barron_example}

Let $\sigma_0:\R\ni u\mapsto\bigl[\lvert u\rvert>1\bigr]\div\lvert u\rvert^{n+1}$ be the radial part of $\sigma;$ i.e., $\sigma=\sigma_0\circ\lvert\cdot\rvert.$

The Fourier transform of $\sigma$ may be Lipschitz continuous, but that is all it is:
\begin{theorem*}
$\F\{\sigma\}(x)=\lvert\sphere^{n-1}\rvert\frac{\pi\lvert x\rvert}{n-1}+G(x)$ where $\Delta G$ is continuous
\end{theorem*}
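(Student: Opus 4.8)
The plan is to establish the identity by applying the Laplacian to both sides (in the sense of tempered distributions), which converts the stated smoothness of $G$ into a computation about the Fourier transform of a Riesz kernel. First note $\sigma\in\ell_1(\R^n)$: indeed $\int\sigma=\lvert\sphere^{n-1}\rvert\int_1^\infty r^{-2}\,dr=\lvert\sphere^{n-1}\rvert<\infty$, so $\F\{\sigma\}\in C_0$ by the Riemann--Lebesgue lemma. Hence, once the coefficient $c$ of $\lvert x\rvert$ is fixed, $G\coloneq\F\{\sigma\}-c\lvert\cdot\rvert$ is automatically a continuous function, and the only content of the theorem is that $\Delta G$ is continuous. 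Since $\lvert y\rvert^2\sigma(y)=[\lvert y\rvert>1]\lvert y\rvert^{1-n}$ and $\Delta\F\{\phi\}=\F\{-\lvert\cdot\rvert^2\phi\}$, we get
\begin{align*}
\Delta\F\{\sigma\}=-\F\bigl\{[\lvert\cdot\rvert>1]\lvert\cdot\rvert^{1-n}\bigr\}=-\F\{\lvert\cdot\rvert^{1-n}\}+\F\bigl\{[\lvert\cdot\rvert\leq1]\lvert\cdot\rvert^{1-n}\bigr\}.
\end{align*}

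The second summand on the right is the Fourier transform of an $\ell_1(\R^n)$ function (because $\int_0^1 r^{1-n}r^{n-1}\,dr=1$), hence a continuous bounded function, which I call $\beta$. The first summand is a Riesz kernel: $\lvert y\rvert^{1-n}$ is locally integrable and tempered, homogeneous of degree $1-n>-n$, so its Fourier transform is the radial distribution homogeneous of degree $-1$, namely $\F\{\lvert\cdot\rvert^{1-n}\}=\gamma_n\lvert\cdot\rvert^{-1}$, where $\gamma_n$ is the constant given by the classical formula for $\F\{\lvert y\rvert^{-s}\}$ with $0<s<n$ (adapted to the normalization $\F\{\phi\}(x)=\int\phi(u)\exp(i\langle u\mid x\rangle)\,du$ employed here; see, e.g., \cite{steinweiss}); explicitly $\gamma_n=2\pi^{(n+1)/2}/\Gamma(\tfrac{n-1}{2})$. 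As $\beta$ and $\lvert\cdot\rvert^{-1}$ are both locally integrable this yields $\Delta\F\{\sigma\}=-\gamma_n\lvert\cdot\rvert^{-1}+\beta$.

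On the other side, $\Delta\lvert x\rvert=(n-1)\lvert x\rvert^{-1}$ as distributions on $\R^n$ (integrate by parts twice; this is legitimate because $\nabla\lvert x\rvert$ is bounded and $\lvert x\rvert^{-1}\in\ell_1^{\text{loc}}$ for $n\geq2$, and there is no point mass at the origin). Hence $\Delta(c\lvert x\rvert)=c(n-1)\lvert x\rvert^{-1}$, and choosing $c$ with $c(n-1)=-\gamma_n$ makes the $\lvert x\rvert^{-1}$ terms cancel: $\Delta G=\Delta\F\{\sigma\}-c(n-1)\lvert\cdot\rvert^{-1}=\beta$, which is continuous. It then remains only to identify the coefficient: it is forced to be $c=-\gamma_n/(n-1)$, which one rewrites into the form on the right-hand side of the theorem using $(n-1)\Gamma(\tfrac{n-1}{2})=2\Gamma(\tfrac{n+1}{2})$ and $\lvert\sphere^{n-1}\rvert=2\pi^{n/2}/\Gamma(\tfrac n2)$.

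The step I expect to be the main obstacle is exactly this last one --- getting every factor of $2$ and $\pi$ in $\gamma_n$ right under the present Fourier normalization, and justifying cleanly that $\F\{\lvert\cdot\rvert^{1-n}\}$ is the honest locally integrable function $\gamma_n\lvert\cdot\rvert^{-1}$ (no finite-part regularization is needed since $1-n>-n$) together with the distributional identity $\Delta\lvert x\rvert=(n-1)\lvert x\rvert^{-1}$. As an independent check --- and an alternative argument that also displays the full behavior of $\F\{\sigma\}$ near the origin --- one can compute the radial Fourier transform directly: with $\nu=\tfrac{n-2}{2}$ and $\int_{\sphere^{n-1}}\exp(ir\langle\alpha\mid x\rangle)\,\underline{d\alpha}=(2\pi)^{n/2}(r\lvert x\rvert)^{-\nu}J_\nu(r\lvert x\rvert)$ one obtains $\F\{\sigma\}(x)=(2\pi)^{n/2}\lvert x\rvert\int_{\lvert x\rvert}^\infty s^{-2-\nu}J_\nu(s)\,ds$; writing $J_\nu(s)=s^\nu j(s^2)$ with $j$ entire and integrating the power series term by term gives $\F\{\sigma\}(x)=(2\pi)^{n/2}\bigl(j(0)+c'\lvert x\rvert+\lvert x\rvert^2 q(\lvert x\rvert^2)\bigr)$ with $q$ entire, so that $G=(2\pi)^{n/2}\bigl(j(0)+\lvert x\rvert^2 q(\lvert x\rvert^2)\bigr)$ is real-analytic on all of $\R^n$ (hence $\Delta G$ is continuous), $(2\pi)^{n/2}j(0)=\lvert\sphere^{n-1}\rvert=\F\{\sigma\}(0)$, and $(2\pi)^{n/2}c'=c$ is the value at $\mu=-1-\nu$ of the analytically continued Mellin--Bessel integral $\int_0^\infty s^{\mu-1}J_\nu(s)\,ds$.
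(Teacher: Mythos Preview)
Your distributional argument is correct and takes a genuinely different route from the paper's. The paper computes $\F\{\sigma\}$ directly via a one-dimensional projection formula for radial Fourier transforms, evaluates the resulting inner integral explicitly as $\cos z-\tfrac{\pi}{2}\lvert z\rvert+z\,\Si(z)$, splits off the $\lvert z\rvert$ term, and then checks continuity of $\Delta G$ by differentiating under the integral sign in the remaining piece. Your approach --- apply $\Delta$ in $\schwartz'$, recognize $\F\{\lvert\cdot\rvert^{1-n}\}$ as a Riesz kernel, and use $\Delta\lvert x\rvert=(n-1)\lvert x\rvert^{-1}$ --- is shorter and more conceptual, and isolates exactly why the non-smooth part is a multiple of $\lvert x\rvert$. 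The paper's approach, on the other hand, yields an explicit integral representation of $G$ and of $\Delta G$.

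Two remarks. First, your worry about the constant is well placed, but do not try to force your answer to match the printed sign: your $c=-\gamma_n/(n-1)$ is negative, whereas the theorem as stated has a positive coefficient. The paper's own computation in fact also produces a \emph{negative} multiple of $\lvert x\rvert$ (it writes $\F\{\sigma\}=G-\pi\lvert\sphere^{n-1}\rvert\lvert x\rvert/(n-1)$), so the sign in the statement is a slip; a direct radial computation at $n=3$ confirms your value $c=-\pi^2$. Second, your alternative Bessel-function check is morally right, but the phrase ``integrating the power series term by term'' is not a valid justification because the integral is over $[\lvert x\rvert,\infty)$, where the Taylor series of $J_\nu$ is not uniformly convergent. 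The clean fix is to differentiate $I(t)\coloneq\int_t^\infty s^{-2-\nu}J_\nu(s)\,ds$, obtaining $I'(t)=-j(0)t^{-2}-(\text{entire even function of }t)$, and then integrate back to get $I(t)=C+j(0)t^{-1}+(\text{entire odd function of }t)$; multiplying by $(2\pi)^{n/2}\lvert x\rvert$ gives precisely your decomposition, with $G$ real-analytic.
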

\begin{proof}
Since $\sigma$ is radial, so is $\F\{\sigma\}.$ Moreover, \cite[{\S\,}3.3]{ehm} 
\begin{align*}
\F\{\sigma\}(x)&=\lvert\sphere^{n-1}\rvert\int\cos(t\lvert x\rvert)\int_{\lvert t\rvert}^\infty\sigma_0(u)u(u^2-t^2)^{(n-3)/2}\,du\,dt\\
&=2\lvert\sphere^{n-1}\rvert\int_0^\infty\cos(t\lvert x\rvert)\int_t^\infty\sigma_0(u)u(u^2-t^2)^{(n-3)/2}\,du\,dt\\
&\overset{\text{F}}{=}2\lvert\sphere^{n-1}\rvert\int_0^\infty\sigma_0(u)u\int_0^u\cos(t\lvert x\rvert)(u^2-t^2)^{(n-3)/2}\,dt\,du
\end{align*}
Substituting $dt=u\,dv$ yields that
\begin{align*}
\F\{\sigma\}(x)&=2\lvert\sphere^{n-1}\rvert\int_0^\infty\sigma_0(u)u^{n-1}\int_0^1\cos(uv\lvert x\rvert)(1-v^2)^{(n-3)/2}\,dv\,du\\
&\overset{\text{F}}{=}2\lvert\sphere^{n-1}\rvert\int_0^1(1-v^2)^{(n-3)/2}\int_0^\infty\sigma_0(u)u^{n-1}\cos(uv\lvert x\rvert)\,du\,dv\\
&=2\lvert\sphere^{n-1}\rvert\int_0^1(1-v^2)^{(n-3)/2}\int_1^\infty u^{-2}\cos(uv\lvert x\rvert)\,du\,dv
\end{align*}
Let us investigate $\textstyle\int_1^\infty\displaystyle u^{-2}\cos(uz)\,du.$
\begin{align*}
\int_1^\infty u^{-2}\cos(uz)\,du=\int_1^\infty u^{-2}\cos(u\lvert z\rvert)\,du=\lvert z\rvert\int_{\lvert z\rvert}^\infty w^{-2}\cos(w)\,dw
\end{align*}
where $dw=\lvert z\rvert\,du.$ Integration by parts now yields that
\begin{align*}
\int_1^\infty u^{-2}\cos(uz)\,du&=-\lvert z\rvert\Bigl[\cos(w)/w\Bigr]_{\lvert z\rvert}^\infty-\lvert z\rvert\int_{\lvert z\rvert}^\infty\frac{\sin(w)}{w}\,dw\\
&=\cos(z)-\lvert z\rvert\lim_{m\to\infty}\int_{\lvert z\rvert}^m\frac{\sin(w)}{w}\,dw
\end{align*}
By definition of the sine integral $\Si$ \cite[{\S\,}6.2(ii)]{dlmf}:
\begin{align*}
\int_{\lvert z\rvert}^m\frac{\sin(w)}{w}\,dw=\int_0^m\frac{\sin(w)}{w}\,dw-\int_0^{\lvert z\rvert}\frac{\sin(w)}{w}\,dw=\Si(m)-\Si(\lvert z\rvert)
\end{align*}
Since Si is odd and $\Si(\infty)=\pi/2,$ plugging back in yields that
\begin{align*}
\int_1^\infty u^{-2}\cos(uz)\,du=\cos(z)-\tfrac{\pi}{2}\lvert z\rvert+z\Si(z)
\end{align*}
All in all, we therefore find that
\begin{align*}
\F\{\sigma\}(x)&=2\lvert\sphere^{n-1}\rvert\int_0^1(1-v^2)^{(n-3)/2}\Bigl(\cos(v\lvert x\rvert)+v\lvert x\rvert\Si(v\lvert x\rvert)-\tfrac{\pi}{2}v\lvert x\rvert\Bigr)\,dv\\
&=2\lvert\sphere^{n-1}\rvert\int_0^1(1-v^2)^{(n-3)/2}\Bigl(\cos(v\lvert x\rvert)+v\lvert x\rvert\Si(v\lvert x\rvert)\Bigr)\,dv\\
&-\pi\lvert\sphere^{n-1}\rvert.\lvert x\rvert\int_0^1 v(1-v^2)^{(n-3)/2}\,dv
\end{align*}
Note that $\textstyle\int_0^1\displaystyle v(1-v^2)^{(n-3)/2}\,dv=1/(n-1)$ (substitute $y=1-v^2).$ Letting $G(x)$ be the minuend, all that remains to be shown is that $\Delta G$ is continuous. Since $G$ \\ is radial, $\Delta G(x)=\partial_\circ^2 G(x)+(n-1)\partial_\circ G(x)/\lvert x\rvert,$ where $\partial_\circ$ denotes the derivative w.r.t.\ $\lvert x\rvert$ \cite[Lemma 1.4.1]{dai}; using this formula it is easy to check that
\begin{align*}
\Delta G(x)=2\lvert\sphere^{n-1}\rvert\int_0^1 v^2(1-v^2)^{(n-3)/2}\biggl(\frac{\sin(v\lvert x\rvert)}{v\lvert x\rvert}+(n-1)\frac{\Si(v\lvert x\rvert)}{v\lvert x\rvert}\biggr)\,dv
\end{align*}
where the exchange of differentiation and integration is justified by bounded convergence. Since sin and Si are odd, entire functions, $\partial G$ is (sequentially) continuous by bounded convergence again.
\end{proof}

\section{Proof of Lemma \ref{lem:NOM}}\label{sec:NOM}

The idea is to show that $N_\ell^\alpha\in O_C\subset O_M$ \cite[pg.\ 441]{horvath}; now, $N\in O_C$ iff $N$ is the product of a polynomial and a smooth function all of whose derivatives vanish at infinity \cite[Example 2.12.9]{horvath}. Illustrating this for all $n\geq2$ and $\ell\in\N_0$ would soon get cumbersome in terms of notation, so we instead sketch the proof for $N_5^\alpha$ with $n=3$ and $n=2.$

If $n=3,$ then $N_5^\alpha(t)$ is proportional to
\begin{align*}
\frac{\displaystyle\Gamma(\tfrac{8-it}{2})(-1)^\alpha(it)_\alpha}{\displaystyle\Gamma(\tfrac{2-it}{2})(\tfrac{1+it}{2})_2}
\end{align*}
Since $\Gamma(\tfrac{8-it}{2})=(\tfrac{2-it}{2})_3\Gamma(\tfrac{2-it}{2}),$ taking $(-1)^\alpha(it)^\alpha(\tfrac{2-it}{2})_3$ as the polynomial, we need only show $1/(\tfrac{1+it}{2})_2$ is a smooth function all of whose derivatives vanish at infinity. Plainly functions of the form $\R\ni t\mapsto 2/(\sigma+it)$ are smooth functions all of whose derivatives vanish at infinity, and the product of two smooth functions all of whose derivatives vanish at infinity is patently again a smooth function all of whose deri- vatives vanish at infinity.

If $n=2,$
then $N_5^\alpha(t)$ is proportional to
\begin{align*}
\frac{\displaystyle\Gamma(\tfrac{7-it}{2})(-1)^\alpha(it)_\alpha}{\displaystyle\Gamma(\tfrac{2-it}{2})(\tfrac{1+it}{2})_2}
\end{align*}
Taking $(-1)^\alpha(it)^\alpha(\tfrac{2-it}{2})_3$ as the polynomial again and remembering that the product of two smooth functions all of whose derivatives vanish at infinity is patently again a smooth function all of whose derivatives vanish at infinity, it suffices to show $\Gamma(\tfrac{7-it}{2})\div\Gamma(\tfrac{8-it}{2})$ is a smooth function all of whose derivatives vanish at infinity.

It follows pretty directly from the definitions of the di- and polygamma functions that any derivative of $\Gamma(t)$ is of the form $\Gamma(t)$ times a polynomial in the di- and polygamma functions. Interestingly, a strikingly similar statement is true for $1\div\Gamma(t);$ every derivative of $1\div\Gamma(t)$ is of the form $1\div\Gamma(t)$ times a (different) polynomial in the di- and polygamma functions. So, when applying the product rule to $\Gamma(\tfrac{7-it}{2})\div\Gamma(\tfrac{8-it}{2})$ (repeatedly), we get $\Gamma(\tfrac{7-it}{2})\div\Gamma(\tfrac{8-it}{2})$ times a polynomial in the di- and polygamma functions with arguments $\tfrac{7-it}{2}$ and $\tfrac{8-it}{2}.$

Now, the digamma function of $\tfrac{7-it}{2}$ or $\tfrac{8-it}{2}$ grows logarithmically in $\lvert t\rvert$ as $\lvert t\rvert\to\infty$ \cite[(5.11.2)]{dlmf} and the polygamma functions vanish at infinity, so the polynomials \\ in the di- and polygamma functions with arguments $\tfrac{7-it}{2}$ and $\tfrac{8-it}{2}$ grow poly- logarithmically in $\lvert t\rvert$ as $\lvert t\rvert\to\infty.$ Fortunately,
\begin{align}\label{eq:gamma_quotient_asymp}
\lvert \Gamma(\tfrac{7-it}{2})\div\Gamma(\tfrac{8-it}{2})\rvert\sim\sqrt{2/\lvert t\rvert}\qquad\text{as }\lvert t\rvert\to\infty
\end{align}
thus $\Gamma(\tfrac{7-it}{2})\div\Gamma(\tfrac{8-it}{2})$ is a smooth function all of whose derivatives vanish at infinity. Note that \eqref{eq:gamma_quotient_asymp}, like \eqref{eq:n_asymp}, follows from the fact that
\begin{align*}
\lvert\Gamma(\sigma+it)\rvert\sim\sqrt{2\pi}e^{-\pi\lvert t\rvert/2}\lvert t\rvert^{\sigma-1/2}\qquad\text{uniformly as }\lvert t\rvert\to\infty
\end{align*}
for $\sigma$ in any compact subset of $\R$ \cite[pg.\ 244]{pribitkin}.

\section{Proof of Lemma \ref{lem:general_sup_norm_prob_bound}}\label{sec:appendix}

We repeat Lemma \ref{lem:general_sup_norm_prob_bound} (and subsequent remark) here for convenience.

\begin{lem}
Let $K\subset\R^\lambda$ be a convex compactum with nonempty interior and $(\Omega,\Sigma,\P)$ be a probability space rich enough to house $m$ i.i.d.\ copies of $f,$ denoted $\{f_m\}_{m=1}^n,$ where $f:\Omega\times K\to\C$ is such that $f(\omega,\cdot)$ is $k$-Lipschitz continuous for almost every $\omega\in\Omega$ and $f(x)$ is a random variable bounded by $b>0$ for every $x\in K.$ Then, while $n\geq 4\lambda(b/\epsilon)^2$ for some $\epsilon>0$ we have that
\begin{align*}
\P\Bigl\{\bigl\lVert\mathds{E}(f)-\frac{1}{n}\sum_{m=1}^n f_m\bigr\rVert_{\infty,K}>\epsilon\Bigr\}\leq 2N_K^\delta\delta^\lambda(k\sqrt{e}/\lambda)^\lambda\zeta^\lambda e^{-(\epsilon/4)\zeta}
\end{align*}
where $\delta,\zeta$ are defined by
\begin{align}
k\delta=\frac{\lambda}{\zeta}\text{ with }\zeta=(\epsilon n/b^2)\biggl(1+\sqrt{\displaystyle 1-4\lambda(b/\epsilon)^2/n}\biggr)\label{eq:kdelta}
\end{align}
Note that $\zeta\approx 2\epsilon n/b^2$ for large $n,$ so in this r{\'e}gime we have that
\begin{align*}
\P\Bigl\{\bigl\lVert\mathds{E}(f)-\frac{1}{n}\sum_{m=1}^n f_m\bigr\rVert_{\infty,K}>\epsilon\Bigr\}\lessapprox 2\lvert K\rvert\Theta_\lambda(2\pi\lambda)^{-\lambda/2}(\epsilon kn/b^2)^\lambda\exp(-(n/2)(\epsilon/b)^2)
\end{align*}
where $\Theta_\lambda\coloneq\theta_\lambda\sqrt{\pi}(\lambda^3+\lambda^2+\tfrac{1}{2}\lambda+\tfrac{1}{30})^{1/6}$
\end{lem}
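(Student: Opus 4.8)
The plan is a standard $\epsilon$-net plus concentration argument, followed by an optimization of the net size. Write $g_n\coloneq\mathds{E}(f)-\tfrac1n\sum_{m=1}^n f_m$. Since $f(\omega,\cdot)$ is $k$-Lipschitz for almost every $\omega$, so is $\tfrac1n\sum_m f_m$, and $\mathds{E}(f)$ is $k$-Lipschitz by the triangle inequality under the expectation; hence $g_n$ is $2k$-Lipschitz almost surely, which (together with the separability of $K\subset\R^\lambda$, cf.\ the remark) makes $\lVert g_n\rVert_{\infty,K}$ a countable supremum and therefore measurable.

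First I would fix $\delta>0$ with $2k\delta<\epsilon$ and a minimal $\delta$-net $\mathcal N\subset K$ with $\lvert\mathcal N\rvert=N_K^\delta$. The Lipschitz bound gives $\lVert g_n\rVert_{\infty,K}\le\max_{y\in\mathcal N}\lvert g_n(y)\rvert+2k\delta$, so the event $\{\lVert g_n\rVert_{\infty,K}>\epsilon\}$ is contained in $\{\max_{y\in\mathcal N}\lvert g_n(y)\rvert>\epsilon-2k\delta\}$. For each fixed $y$ the variables $\{f_m(y)\}_{m=1}^n$ are i.i.d.\ and bounded by $b$, so Hoeffding's inequality (applied to real and imaginary parts separately in the complex case) gives $\P\{\lvert g_n(y)\rvert>s\}\le 2\exp(-ns^2/(2b^2))$, and a union bound over $\mathcal N$ produces
\begin{align*}
\P\bigl\{\lVert g_n\rVert_{\infty,K}>\epsilon\bigr\}\le 2N_K^\delta\exp\bigl(-\tfrac{n}{2b^2}(\epsilon-2k\delta)^2\bigr).
\end{align*}

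Next I would choose $\delta$ near-optimally. Treating $N_K^\delta$ as $\propto\delta^{-\lambda}$, the right-hand side is essentially minimized at a critical point of $\delta\mapsto\delta^{-\lambda}\exp(-\tfrac{n}{2b^2}(\epsilon-2k\delta)^2)$, i.e.\ at a root of $4k^2\delta^2-2k\epsilon\delta+\lambda b^2/n=0$; the hypothesis $n\ge4\lambda(b/\epsilon)^2$ is exactly what makes the discriminant nonnegative, and the smaller root is $\delta=\lambda/(k\zeta)$ with $\zeta=(\epsilon n/b^2)\bigl(1+\sqrt{1-4\lambda(b/\epsilon)^2/n}\bigr)$, which automatically satisfies $2k\delta<\epsilon$. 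Substituting this $\delta$ back into the defining quadratic yields $n/b^2=\zeta^2/(2(\epsilon\zeta-2\lambda))$ and hence the clean identity $\tfrac{n}{2b^2}(\epsilon-2k\delta)^2=\tfrac\epsilon4\zeta-\tfrac\lambda2$, so that $\exp(-\tfrac{n}{2b^2}(\epsilon-2k\delta)^2)=e^{\lambda/2}e^{-(\epsilon/4)\zeta}$; rewriting $e^{\lambda/2}=\delta^\lambda(k\sqrt e/\lambda)^\lambda\zeta^\lambda$ (using $k\delta\zeta=\lambda$) turns the displayed inequality into the claimed exact bound.

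Finally, for the asymptotic form I would let $n\to\infty$, so that $\delta\to0$ and Proposition \ref{prop:boroczky} gives $N_K^\delta\delta^\lambda\approx\theta_\lambda\lvert K\rvert/\lvert K(1)\rvert=\theta_\lambda\lvert K\rvert\,\Gamma(\tfrac\lambda2+1)\pi^{-\lambda/2}$; Ramanujan's factorial approximation $\Gamma(\tfrac\lambda2+1)\approx\sqrt\pi\,(\tfrac\lambda{2e})^{\lambda/2}(\lambda^3+\lambda^2+\tfrac12\lambda+\tfrac1{30})^{1/6}$ then yields $N_K^\delta\delta^\lambda\approx\Theta_\lambda\lvert K\rvert(\tfrac\lambda{2e\pi})^{\lambda/2}$, while $\zeta\approx2\epsilon n/b^2$ makes $(k\sqrt e/\lambda)^\lambda\zeta^\lambda\approx(2k\sqrt e\,\epsilon n/(\lambda b^2))^\lambda$ and $e^{-(\epsilon/4)\zeta}\approx\exp(-(n/2)(\epsilon/b)^2)$; collecting the powers of $\lambda$ gives the displayed estimate. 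I expect the only delicate points to be getting the complex-valued Hoeffding constant and the measurability right, and carrying out the quadratic optimization so that the identity $\tfrac{n}{2b^2}(\epsilon-2k\delta)^2=\tfrac\epsilon4\zeta-\tfrac\lambda2$ falls out cleanly; the rest is bookkeeping with the Stirling/Ramanujan asymptotics for the volume of the unit ball.
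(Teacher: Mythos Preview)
Your proposal is correct and follows essentially the same route as the paper: a $\delta$-net plus union bound, the $2k$-Lipschitz reduction to net points, Hoeffding at each point, optimization of $(k\delta)^{-\lambda}\exp(-\tfrac{n}{2b^2}(\epsilon-2k\delta)^2)$ yielding the stated $\zeta$, and then the asymptotic simplification via Proposition~\ref{prop:boroczky} and the Ramanujan/Alzer bound for $\Gamma(\tfrac{\lambda}{2}+1)$. In fact you supply more detail than the paper does at the optimization step---the paper simply asserts that \eqref{eq:kdelta} locally minimizes the relevant expression to $(\sqrt{e}/\lambda)^\lambda\zeta^\lambda e^{-(\epsilon/4)\zeta}$, whereas you derive the quadratic in $k\delta$ and the identity $\tfrac{n}{2b^2}(\epsilon-2k\delta)^2=\tfrac{\epsilon}{4}\zeta-\tfrac{\lambda}{2}$ explicitly.
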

\begin{rem}
By the triangle inequality, $\mathds{E}(f)$ is also $k$-Lipschitz, so $K$ being separable (as a subset of a separable metric space) yields that
\begin{center}
$\bigl\lVert\mathds{E}(f)-\frac{1}{n}\sum_{m=1}^n f_m\bigr\rVert_{\infty,K}$ is really a countable supremum and thus measurable.
\end{center}
\end{rem}
Let $B_z^\delta$ be the (separable) metric ball in the metric space $K$ with centre $z$ and radius $\delta.$
\begin{proof}
Let $N$ be a minimal $\delta$-net of $K$ with corresponding covering number $N_K^\delta.$ Then
\begin{align*}
\P\Bigl\{\bigl\lVert\mathds{E}(f)-\frac{1}{n}\sum_{m=1}^n f_m\bigr\rVert_{\infty,K}>\epsilon\Bigr\}\leq\sum_{z\in N}\P\Bigl\{\bigl\lVert\mathds{E}(f)-\frac{1}{n}\sum_{m=1}^n f_m\bigr\rVert_{\infty,B_z^\delta}>\epsilon\Bigr\}
\end{align*}
Since $\mathds{E}(f)-\frac{1}{n}\sum_{m=1}^n f_m$ is $2k$-Lipschitz,
\begin{align*}
\bigl\lvert\mathds{E}(f(z))-\frac{1}{n}\sum_{m=1}^n f_m(z)\bigr\rvert\geq\bigl\lvert\mathds{E}(f(x))-\frac{1}{n}\sum_{m=1}^n f_m(x)\bigr\rvert-2k\delta
\end{align*}
for all $x\in B_z^\delta$ and $z\in N,$ so for all $z\in N,$
\begin{align*}
\P\Bigl\{\bigl\lVert\mathds{E}(f)-\frac{1}{n}\sum_{m=1}^n f_m\bigr\rVert_{\infty,B_z^\delta}>\epsilon\Bigr\}\leq\P\Bigl\{\bigl\lvert\mathds{E}(f(z))-\frac{1}{n}\sum_{m=1}^n f_m(z)\bigr\rvert>\epsilon-2k\delta\Bigr\}
\end{align*}
Since $n\geq 4\lambda(b/\epsilon)^2\Rightarrow k\delta\leq\epsilon/4<\epsilon/2,$ Hoeffding's inequality yields that
\begin{align*}
\P\Bigl\{\bigl\lvert\mathds{E}(f(z))-\frac{1}{n}\sum_{m=1}^n f_m(z)\bigr\rvert>\epsilon-2k\delta\Bigr\}\leq 2\exp(-\tfrac{1}{2}(n/b^2)(\epsilon-2k\delta)^2)
\end{align*}
so all in all we have that
\begin{align*}
\P\Bigl\{\bigl\lVert\mathds{E}(f)-\frac{1}{n}\sum_{m=1}^n f_m\bigr\rVert_{\infty,K}>\epsilon\Bigr\}\leq 2N_K^\delta(k\delta)^\lambda(k\delta)^{-\lambda}\exp(-2(n/b^2)(\epsilon/2-k\delta)^2)
\end{align*}
One can verify that \eqref{eq:kdelta} locally minimizes $(k\delta)^{-\lambda}\exp(-2(n/b^2)(\epsilon/2-k\delta)^2)$ at
\begin{align*}
(\sqrt{e}/\lambda)^\lambda \zeta^\lambda e^{-(\epsilon/4)\zeta}
\end{align*}
which readily yields that $\P\Bigl\{\bigl\lVert\mathds{E}(f)-\frac{1}{n}\sum_{m=1}^n f_m\bigr\rVert_{\infty,K}>\epsilon\Bigr\}\lessapprox$
\begin{align*}
2N_K^\delta\lvert K(\delta)\rvert\frac{\displaystyle(\sqrt{e}/\lambda)^\lambda}{\lvert K(1)\rvert}(\epsilon kn/b^2)^\lambda\exp(-(n/2)(\epsilon/b)^2)
\end{align*}
Now, from \cite{alzer} we know that
\begin{align*}
\frac{1}{\lvert K(1)\rvert}=\pi^{-\lambda/2}\Gamma(\tfrac{\lambda}{2}+1)<\sqrt{\pi}(\lambda^3+\lambda^2+\tfrac{1}{2}\lambda+\tfrac{1}{30})^{1/6}\biggl(\frac{\lambda}{2\pi e}\biggr)^{\!\!\lambda/2}
\end{align*}
so, by Proposition \ref{prop:boroczky}, for large $n,$
\begin{align*}
N_K^\delta\lvert K(\delta)\rvert\frac{\displaystyle(\sqrt{e}/\lambda)^\lambda}{\lvert K(1)\rvert}\lessapprox\lvert K\rvert\Theta_\lambda(2\pi\lambda)^{-\lambda/2}\tag*{\qedhere}
\end{align*}
\end{proof}
}%


\begin{thebibliography}{99}

\bibitem{fungkang} \emph{Generalized Mellin Transforms I} by Fung Kang. Scientia Sinica Vol.\ III, \\ No.\ 6, 1958 

\bibitem{kolk} \emph{Distributions: Theory and Applications} by Duistermaat \& Kolk. Birkh{\"a}user, 2010.

\bibitem{oberhettinger} \emph{Tables of Mellin Transforms} by Oberhettinger. Springer-Verlag, 1974

\bibitem{pribitkin} \emph{Laplace's Integral, the Gamma Function, and Beyond} by Pribitkin. Amer.\ Math.\ Monthly, Vol.\ 109, No.\ 3 (Mar., 2002)

\bibitem{schwarz} \emph{Th{\'e}orie des distributions} by Schwartz. Hermann, 1978

\bibitem{erdelyi} \emph{Tables of Integral Transforms, Volume 1} by Erd{\'e}lyi et al. McGraw-Hill, 1954

\bibitem{prudnikov} \emph{Integrals and Series, Volume 2} by Prudnikov et al. Gordon and Breach, 1992

\bibitem{horvath} \emph{Topological Vector Spaces and Distributions} by Horv{\'a}th. Addison-Wesley, \\ 1966

\bibitem{megginson} \emph{An Introduction to Banach Space Theory} by Megginson. Springer, 1998

\bibitem{steinweiss} \emph{Introduction to Fourier Analysis on Euclidean Spaces} by Stein and Weiss. Princeton University Press, 1972

\bibitem{mueller} \emph{Spherical Harmonics} by M{\"u}ller. Springer-Verlag, 1966

\bibitem{watson} \emph{A treatise on the theory of Bessel functions} by Watson, 2nd ed. Cambridge Univeristy Press, 1952


\bibitem{oberlin} \emph{Mapping Properties of the Radon Transform} by Oberlin and Stein. Indiana Univ.\ Math.\ J., Vol.\ 31, No.\ 5 (1982)

\bibitem{hardy} \emph{Orders of Infinity} by Hardy. Cambridge University Press, 1910

\bibitem{dlmf} \emph{NIST Digital Library of Mathematical Functions} by Olver et al. Release 1.2.4 of 2025{\kern1pt}-{\kern1pt}03{\kern1pt}-15. \url{https://dlmf.nist.gov/}

\bibitem{radonNN} \emph{A Function Space View of Bounded Norm Infinite Width ReLU Nets: The Multivariate Case} by Ongie et al. ICLR 2020

\bibitem{mattner} \emph{Complex differentiation under the integral} by Mattner. NAW 5/2.

\bibitem{felix} \emph{Neural network approximation and estimation of classifiers with classification boundary in a Barron class} by Caragea et al. Ann.\ Appl.\ Probab., 2023, Vol.\ 33, No.\ 4


\bibitem{barron} \emph{Neural net approximation} by Barron. Proc.\ Seventh Yale Workshop on Adaptive and Learning Systems, May 20{\kern1pt}-22, 1992

\bibitem{liflyand} \emph{Necessary Conditions for Integrability of the Fourier Transform} by Liflyand. Georgian Math.\ J. Volume 16 (2009)

\bibitem{bresler} \emph{A Corrective View of Neural Networks: Representation, Memorization and Learning} by Bresler and Nagaraj. Proceedings of Machine Learning Research vol 125

\bibitem{needell} \emph{Random Vector Functional Link Networks for Function Approximation on Manifolds} by Needell et al. Front.\ Appl.\ Math.\ Stat.\ 10:1284706

\bibitem{hsu} \emph{On the Approximation Power of Two-Layer Networks of Random ReLUs} by Hsu et al. Proceedings of Machine Learning Research vol 134

\bibitem{boroczky} \emph{Finite Packing and Covering} by B{\"o}r{\"o}czky. Cambridge University Press, 2004


\bibitem{vershynin} \emph{High-Dimensional Probability} by Vershynin. Cambridge University Press, \\ 2018

\bibitem{sampta2025} \emph{Separating balls with partly random hyperplanes with a view to partly random neural networks} by Schavemaker. \url{https://arxiv.org/abs/2505.10470v1}


\bibitem{chernoff} \emph{A guided tour of Chernoff bounds} by Hagerup and R{\"u}b. Inform.\ Process.\\ Lett.\ 33 (1989/90)

\bibitem{alzer} \emph{On Ramanujan's Double Inequality for the Gamma Function} by Alzer. Bull.\ London Math.\ Soc.\ 35 (2003)

\bibitem{ehm} \emph{Convolution Roots of Radial Positive Definite Functions with Compact \\ Support} by Ehm et al. Trans.\ Amer.\ Math.\ Soc. Volume 356, Number 11

\bibitem{dai} \emph{Approximation Theory and Harmonic Analysis on Spheres and Balls} by Dai and Xu. Springer, 2013

\bibitem{pinkus} \emph{Approximation theory of the MLP model in neural networks} by Pinkus. Acta Numerica (1999)

\bibitem{triebel} \emph{Theory of Function Spaces} by Triebel. Birkh{\"a}user, 1983

\end{thebibliography}
\end{document}